\tikzset{
  symbol/.style={
    draw=none,
    every to/.append style={
      edge node={node [sloped, allow upside down, auto=false]{$#1$}}}
  }
}
\newcommand*{\centerfloat}{%
  \parindent \z@
  \leftskip \z@ \@plus 1fil \@minus \textwidth
  \rightskip\leftskip
  \parfillskip \z@skip}
\definecolor{amethyst}{rgb}{0.6, 0.4, 0.8}
\definecolor{kellygreen}{rgb}{0.3, 0.73, 0.09}
\definecolor{americanrose}{rgb}{1.0, 0.01, 0.24}
\definecolor{teal}{rgb}{0, 0.5, 0.5}
\definecolor{lime}{rgb}{0.75, 1, 0}
\definecolor{darklime}{rgb}{0.25, .33, 0}
\newcommand{\addresseshere}{%
  \enddoc@text\let\enddoc@text\relax
}
    \title{The nonsymmetric shuffle theorem}
   \author{J. Blasiak}
   \author{M. Haiman}
   \author{J. Morse}
   \author{A. Pun}
   \author{G. H. Seelinger}
    \address[Blasiak]{
    Dept.\ of Mathematics\\
    Drexel University\\
    Philadelphia, PA}
    \email{jblasiak@gmail.com}
   \address[Haiman]{Dept.\ of Mathematics\\
            University of California\\
            Berkeley, CA}
   \email{mhaiman@math.berkeley.edu}
   \address[Morse]{
   Dept.\ of Mathematics\\
            University of Virginia\\
            Charlottesville, VA}
   \email{morsej@virginia.edu}
    \address[Pun]{Dept.\ of Mathematics, CUNY-Baruch College, New York,
    NY}
    \email{anna.pun@baruch.cuny.edu}
   \address[Seelinger]{Dept.\ of Mathematics\\
   University of Michigan\\
   Ann Arbor, MI}
   \email{ghseeli@umich.edu}
   \thanks{Authors were supported by NSF Grants DMS-2154282 and 2452208
(J.B.), DMS-2154281 and 2452209 (J.M.), DMS-2303175 (G.S.) and
DMS-1929284 (all authors, in residence at MathematicsCollaborate@ICERM)}
\newtheorem{thm}{Theorem}[subsection]
\newtheorem{lemma}[thm]{Lemma}
\newtheorem{prop}[thm]{Proposition}
\newtheorem{cor}[thm]{Corollary}
\newtheorem{conj}[thm]{Conjecture}
\theoremstyle{definition}
\newtheorem{defn}[thm]{Definition}
\newtheorem{propdef}[thm]{Proposition-Definition}
\theoremstyle{remark}
\newtheorem{example}[thm]{Example}
\newtheorem{problem}[thm]{Problem}
\newtheorem{remark}[thm]{Remark}
\newcommand{\Dcal}{{\mathcal D}}
\newcommand{\PP}{{\mathsf{P}}}   
\newcommand{\Q}{{\mathsf{Q}}}   
\DeclareMathOperator{\maxtdinv}{maxtdinv}
\DeclareMathOperator{\area}{area}
\DeclareMathOperator{\Area}{Area}
\DeclareMathOperator{\content}{content}
\DeclareMathOperator{\FWPF}{FWPF}
\newcommand{\umnab}{\bar{\nabla}}   
\newcommand{\modnab}{\boldsymbol{\nabla}}
\DeclareFontFamily{U}{mathx}{\hyphenchar\font45}
\DeclareFontShape{U}{mathx}{m}{n}{
      <5> <6> <7> <8> <9> <10>
      <10.95> <12> <14.4> <17.28> <20.74> <24.88>
      mathx10
      }{}
\DeclareSymbolFont{mathx}{U}{mathx}{m}{n}
\DeclareMathAccent{\widecheck}{0}{mathx}{"71}
\newcommand{\Mnab}{\widecheck{\nabla}}
\renewcommand{\H}{\mathcal{ H}}
\newcommand{\Pas}{\mathcal{P}_{\rm as}}
\DeclareMathOperator{\Par}{Par}
\DeclareMathOperator{\spn}{span}
\renewcommand{\P}{\mathcal{P}}
\newcommand{\barw}{\bar{w}}
\newcommand{\barwb}{\widecheck{w}}
\newcommand{\N}{\mathcal{N}}
\DeclareMathOperator{\st}{st}  
\DeclareMathOperator{\rot}{rot}
\DeclareMathOperator{\pairs}{CompPar}
\newcommand{\pairsr}{\ensuremath{\NN^r \times \Par}}
\newcommand{\tE}{ \check{\mathsf H} }   
\newcommand{\tEnoflip}{ \mathsf H }   
\newcommand{\tEcal}{\check{\mathcal E}}  
\newcommand{\stE}{\check{J}}  
\newcommand{\flagh}{\ensuremath{\mathsf{h}}}
\newcommand{\nsC}{\ensuremath{\mathsf{C}}}
\newcommand{\alg}{\nsC}
\newcommand{\brho}{\ensuremath{\boldsymbol{\rho}}}
\newcommand{\Qdot}{\ensuremath{\dot{\mathbf{Q}}}}
\newcommand{\Qddot}{\ensuremath{\ddot{\mathbf{Q}}}}
\DeclareMathOperator{\tdinv}{tdinv}
\newcommand{\wtdinv}{\rm{tdinv}'}
\newcommand{\flagGcal}{\ensuremath{\mathcal{G}}}
\DeclareMathOperator{\chr}{ch}
\newcommand{\idelm}{\text{\rm id}}
\newcommand{\fh}{\flagh}
\newcommand{\pibold}{{\boldsymbol \pi }}
\newcommand{\Weyl}{\pibold }
\DeclareMathOperator{\dg}{dg}
\newcommand{\cc}[1]{#1}
\DeclareMathOperator{\ord}{ord}
\newcommand{\hsym}{\widehat{\Scal}}
\newcommand{\thbold}{{\boldsymbol \theta }}
\newcommand{\kapbold}{{\boldsymbol \kappa }}
\newcommand{\Y}{Y}   
\newcommand{\YIW}{Y}  
\newcommand{\X}{X}  
\newcommand{\Grow}{\mathsf{G}}   
\newcommand{\Growpm}{\mathsf{G}^\pm}   
\newcommand{\Gcol}{\tilde{\mathsf{G}}}   
\newcommand{\Gcolpm}{\tilde{\mathsf{G}}^\pm}   
\newcommand{\Pisf}{{\mathsf \Pi }}
\newcommand{\NN}{{\mathbb N}}
\newcommand{\QQ}{{\mathbb Q}}
\newcommand{\DD}{{\mathbb D}}
\newcommand{\RR}{{\mathbb R}}
\newcommand{\ZZ}{{\mathbb Z}}
\renewcommand{\AA}{{\mathbb A}}
\newcommand{\kk}{{\mathbf k}}
\newcommand{\nubold}{{\boldsymbol \nu }}
\newcommand{\mubold}{{\boldsymbol \mu }}
\newcommand{\etabold}{{\boldsymbol \eta }}
\newcommand{\Acal}{{\mathcal A}}
\newcommand{\Ecal}{{\mathcal E}}
\newcommand{\Gcal}{{\mathcal G}}
\newcommand{\Pcal}{{\mathcal P}}
\newcommand{\Scal}{{\mathcal S}}
\newcommand{\ctild}{\tilde{c}}
\newcommand{\Htild}{\tilde{H}}
\DeclareMathOperator{\dinv}{dinv}
\DeclareMathOperator{\inv}{inv}
\DeclareMathOperator{\pol}{pol}
\DeclareMathOperator{\GL}{GL}
\DeclareMathOperator{\SL}{SL}
\DeclareMathOperator{\FSSYT}{FSW}
\DeclareMathOperator{\FCSSYT}{\widetilde{FSW}}
\DeclareMathOperator{\FCSSYTpm}{\widetilde{FSW}{}^{\pm}\hspace{-0.1em}}
\newcommand{\defeq}{\overset{\text{{\em def}}}{=}}
\newcommand{\xx}{{\mathbf x}}
\newcommand{\spa}{\hspace{.3mm}}
\newcommand{\bx}[2]{{\boldsymbol {#1}[#2]}}
\renewcommand{\SS}{S}
\newcommand{\crc}[1]{-#1}
\newlength{\mycellsize}
\newcommand\mytbl[1]{
\vcenter{
\let\\=\cr
\baselineskip=-16000pt \lineskiplimit=16000pt \lineskip=0pt
\halign{&\mytblcell{##}\cr#1\crcr}}}
\newcommand{\mytblcell}[1]{{%
\def \arg{#1}\def \void{}%
\ifx \void \arg
\vbox to \mycellsize{\vfil \hrule width \mycellsize height 0pt}%
\else \unitlength=\mycellsize
\begin{picture}(1,1)
\put(0,0){\makebox(1,1){$#1\vphantom{\crc{#1}}$}}
\put(0,0){\line(1,0){1}}
\put(0,1){\line(1,0){1}}
\put(0,0){\line(0,1){1}}
\put(1,0){\line(0,1){1}}
\end{picture}%
\fi}}
\newlength{\cellsize}
\newcommand\mytableau[1]{
\vcenter{
\let\\=\cr
\baselineskip=-16000pt \lineskiplimit=16000pt \lineskip=0pt
\halign{&\mytableaucell{##}\cr#1\crcr}}}
\newcommand{\mytableaucell}[1]{{%
\def \arg{#1}\def \void{}%
\ifx \void \arg
\vbox to \cellsize{\vfil \hrule width \cellsize height 0pt}%
\else \unitlength=\cellsize
\begin{picture}(1,1)
\put(0,0){\makebox(1,1){$#1\vphantom{\crc{#1}}$}}
\put(0,0){\line(1,0){1}}
\put(0,1){\line(1,0){1}}
\put(0,0){\line(0,1){1}}
\put(1,0){\line(0,1){1}}
\end{picture}%
\fi}}
\DeclareMathSymbol{\shortminus}{\mathbin}{AMSa}{"39}
\newcounter{rownum}
\newcommand{\nth}[2]{
    \foreach \xx[count=\kk] in #1{
      \ifnum\kk=#2
        \xx
      \fi
    }
}
\begin{document}


\begin{abstract}

The shuffle conjecture of Haglund et al.\  expresses the symmetric function $\nabla e_n$ as a sum over labeled Dyck paths.
Here $\nabla$ is an operator on symmetric functions defined in terms of its diagonal action on the basis of modified Macdonald polynomials.
The shuffle conjecture was later refined by Haglund-Morse-Zabrocki to the compositional shuffle conjecture, expressing $\nabla  C_\alpha$ as a sum over labeled Dyck paths with touchpoints specified by $\alpha$,
where $C_\alpha$ is
a compositional Hall-Littlewood polynomial.
Carlsson-Mellit settled both versions  by developing the theory of a variant of the DAHA called the double Dyck path algebra.

In a recent paper, we discovered a notion of nonsymmetric plethsym
which led us to a construction of modified nonsymmetric Macdonald polynomials
$\mathsf{H}_{\eta|\lambda}(\mathbf{x};q,t)$.
These polynomials
Weyl symmetrize to their symmetric counterparts and are conjecturally atom positive.
Here we introduce a nonsymmetric version  $\boldsymbol{\nabla}$ of $\nabla$, now acting diagonally on the basis given by the functions
$\mathsf{H}_{\eta|\lambda}(\mathbf{x};q,t)$.
Weaving together our theory with results of Carlsson-Mellit and Mellit,
we establish a nonsymmetric version of the compositional shuffle theorem,
which equates $\boldsymbol{\nabla}^{-1}$ applied to a nonsymmetric version $\mathsf{C}_\alpha$ of $C_\alpha$ with a sum over flagged
labeled Dyck paths with touchpoints given by $\alpha$.
This combinatorial sum is conjecturally atom positive, refining the known Schur positivity of its symmetric counterpart.

\end{abstract}

\maketitle

\section{Introduction}
\label{s intro}

The introduction of Macdonald's symmetric functions in the 1980s sparked investigations in
many directions and ultimately gave rise to several robust theories.  Our focus is on the
unification of two of the major theories.
The first of these emerged from efforts to prove Macdonald's positivity conjecture, which asserted
the Schur positivity of the plethystically modified
Macdonald polynomials $\Htild_\mu(\xx;q,t) $.
Garsia and Haiman~\cite{GarsiaHaiman} approached this problem by constructing modules $\mathcal M_\mu$
whose Frobenius series are the $\Htild_\mu$. Their work launched a broad program centered around
the space of diagonal harmonics ${\rm DH}_n$,
an $\SS_n$-module containing all the $\mathcal M_\mu$.

Studies of the Frobenius series $\mathcal{F}({\rm DH}_n)$
drew
on both algebraic and combinatorial
methods.  Haiman~\cite{Haiman02} proved
that $\mathcal F({\rm DH}_n)=\nabla e_n$, where
$\nabla$ is an operator defined by a diagonal action on
$\Htild_\mu$. It was later discovered that $\nabla$ yields an automorphism of the elliptic Hall algebra
of Burban and Schiffmann \cite{BurbSchi12}.
Meanwhile,
Haglund et al.\ \cite{HaHaLoReUl05}
formulated the {\em shuffle conjecture}, an identity giving a positive expansion of
$\nabla e_n$
in terms of
the combinatorial
LLT basis.
The
shuffle conjecture
took over a decade to prove, during which time a wealth of new conjectures and ideas emerged, significantly broadening the scope of the field. Ultimately, Carlsson and Mellit~\cite{CarMel} established
the {\it compositional shuffle conjecture},
a more general formula for $\nabla$ applied to any compositional Hall–Littlewood polynomial
conjectured by Haglund-Morse-Zabrocki~\cite{HagMoZa12}.

In parallel with the developments
around modified Macdonald polynomials and diagonal harmonics,
a separate study of nonsymmetric Macdonald polynomials $E_\alpha(x_1,\ldots,x_n;q,t)$ $(\alpha \in \ZZ^n)$ was initiated by Opdam~\cite{Opdam}, Macdonald~\cite{Macdonald03}, and Cherednik~\cite{Cheredniknsmac}.
This nonsymmetric framework drew on techniques from representation theory and topology, and quickly developed into a rich subject in its own right. Notably, it also shed new light on the symmetric theory.
For example, the polynomials $E_\alpha$ satisfy
a recurrence due to Knop and Sahi~\cite{Knop97, Sahiinterpolation},
which has no symmetric analog.
Knop \cite{Knop97} also constructed integral forms $\Ecal_\alpha $ of the nonsymmetric Macdonald polynomials $E_
\alpha$ from which the
symmetric integral forms
can be recovered by symmetrization using Hecke algebra operators.

It was long unclear whether
the theory of plethystically modified Macdonald polynomials \(\Htild_\mu\) could be enriched by lifting it to the nonsymmetric setting.
In~\cite{BHMPS-nspleth}, we constructed such a lift by defining a
  nonsymmetric plethysm operator $\Pisf_r$.
Applying $\Pisf_r$ to a stable version of
 the $\Ecal_\alpha$'s, we constructed
a family of modified $r$-nonsymmetric Macdonald polynomials,
$\tEnoflip_{\eta|\lambda}(\xx;q,t)$,
which satisfy properties bearing a striking resemblance to
those
of the symmetric polynomials $\Htild_\mu$.
In particular,
the $\tEnoflip_{\eta|\lambda}$
are monomial positive and are conjecturally positive in the Demazure atom basis\footnote{More precisely, we conjecture that the $\tEnoflip_{\eta|\lambda}(x_1,\dots, x_N, 0, \dots;q,t)$ are atom positive. See \S\ref{ss atom pos 2}. The same comment applies to other atom positivity conjectures mentioned before \S\ref{ss atom pos}.}.
Moreover, $\tEnoflip_{\eta|\lambda}$ admits a combinatorial expansion as a positive sum of {\it flagged LLT polynomials},
a nonsymmetric variant of LLT polynomials.

Here we develop
 a nonsymmetric diagonal harmonics program
 by reinterpreting key ideas from Carlsson-Mellit~\cite{CarMel}
in the new framework of modified polynomials $\tEnoflip_{\eta|\lambda}$ from~\cite{BHMPS-nspleth}.
Carlsson and Mellit introduce an algebra $\AA_{t,q}$, called the double Dyck path algebra.
They
deduce the compositional shuffle theorem by applying an operator
$d_- \in\AA_{t,q}$ to
 an identity which is in effect a nonsymmetric lift of this theorem.
A key ingredient in
their setup
is a family of
 nonsymmetric polynomials
that are used as intermediate steps in building LLT polynomials. One of our main insights
here
is that
the images
 of these intermediate objects
under the nonsymmetric plethysm $\Pisf_r$
 are precisely flagged LLT polynomials.
This perspective leads us to a nonsymmetric $\nabla$ operator that acts diagonally on the modified polynomials $\tEnoflip_{\eta|\lambda}(\xx;q^{-1}, t)$.
Pulling these ideas together, we
obtain a nonsymmetric compositional shuffle theorem fully parallel to the one in the symmetric case, expressed in terms of combinatorial objects
with conjectural atom positivity,
refining the known Schur positivity of their symmetric counterparts.

\subsection{Main theorems}
We now give an account of our main results, deferring a few definitions to later in the paper.

The compositional shuffle theorem,
conjectured by Haglund-Morse-Zabrocki \cite{HagMoZa12} and proved by Carlsson-Mellit \cite{CarMel},
states that for any strict composition $\alpha= (\alpha_1,\dots, \alpha_r)$ of size  $k$ (i.e., $\alpha \in \ZZ_+^r$ and  $\sum_{i=1}^r \alpha_i = k$),
\begin{align}
\label{e intro comp shuffle}
\nabla C_{\alpha}(\xx ; t)
= \sum_{\pi \in \DD^\alpha_{k,k} }
\sum_{\substack{\text{word parking} \\ \text{functions $w$ on  $\pi$}}}
 q^{\area(\pi)} t^{\dinv(\pi, w)}
 \xx^{\content(w)}.
\end{align}
On the algebraic
left hand
side, $C_{\alpha}(\xx ;t)$ is the compositional Hall-Littlewood polynomial from \cite{HagMoZa12}.
On the combinatorial
right hand side,
$\DD^\alpha_{k,k}$ denotes the set of Dyck paths from  $(0,0)$ to  $(k,k)$
which touch the diagonal exactly at the points
$(k_{j},k_{j})$, where $k_{j} = \sum _{i < j}\alpha _{i}$ for $j=1,\ldots, r+1$;
a \emph{word parking function} on $\pi$ is a map from the north steps of
$\pi$ to  $\ZZ_+$ which is strictly increasing going north along vertical runs;
$\area(\pi )$ is the number of full lattice squares between $\pi $ and the diagonal, and $\dinv$ is another integer statistic on Dyck paths
(see \S \ref{ss Dyck path stats} and \eqref{e dinv HMZ def}).

The right hand side of \eqref{e intro comp shuffle} can be packaged as a sum of LLT polynomials for columns, and these are known to be Schur positive. As such,
\begin{align}
\label{e pos nab intro}
\nabla C_{\alpha} \text{ is Schur positive.}
\end{align}

Our nonsymmetric compositional shuffle theorem takes place in the space of \emph{almost symmetric polynomials}
$\Pas = \bigcup_{r \ge 0} \P(r)$, where
$\P(r) = \QQ(q,t)[x_1,\dots,x_r] \otimes \Lambda_{\QQ(q,t)}(x_{r+1}, \dots)$.
We work with a
variant of the modified $r$-nonsymmetric Macdonald polynomials introduced in \cite{BHMPS-nspleth}, namely,
$\tE_{\eta|\lambda}(\xx;q,t) =
q^{\mathsf{n}(\mu^*)} \tEnoflip_{\eta| \lambda}(\xx;q^{-1},t) \in \Pas$,
where $\mu =(\eta; \lambda)_+$ is the partition rearrangement of the concatenation of  $\eta$ and  $\lambda$.
The  $\tE_{\eta|\lambda}$ are indexed by pairs  $(\eta|\lambda)$, where $\eta \in \NN^r$ with
$\eta_r >0$, and  $\lambda$ a partition.
We show that  $\tE_{\eta|\lambda}$ Weyl symmetrizes to
$\omega\Htild_{(\eta;\lambda)_+}(\xx;q,t)$.

Here, we define a nonsymmetric version $\modnab$ of  $\omega\nabla\omega$ whose eigenvectors are the
$\tE_{\eta| \lambda}(\xx;q,t)$. Specifically,  $\modnab \colon \Pas \to \Pas$ is the  $\QQ(q,t)$-linear map determined by
\begin{align}
\label{e mod nabla intro}
\modnab \tE_{\eta| \lambda}(\xx ;q,t) =
q^{\mathsf{n}(\mu^*)}t^{\mathsf{n}(\mu)}\tE_{\eta| \lambda}(\xx ;q,t),  \quad \text{for  $\mu = (\eta; \lambda)_+$}.
\end{align}

In our nonsymmetric compositional shuffle theorem, the $C_\alpha$ are replaced by \emph{nonsymmetric compositional Hall-Littlewood polynomials}  $\nsC_\alpha$, which Weyl symmetrize to  $C_\alpha(\xx;t^{-1})$.  They are given by
\begin{align}\label{e:nsCalpha}
\nsC_\alpha =  (-t)^{|\alpha|-r}\pol
\Big(\frac{x_1^{\alpha_1} \cdots x_r^{\alpha_r}}{\prod_{1 \le i < j \le r}(1-t x_i/x_j)}  \Big),
\end{align}
where   $\pol$ is the linear
projection from Laurent polynomials onto ordinary polynomials
determined by its action on Demazure characters, as follows:
$\pol(\Dcal_\alpha) = \Dcal_\alpha$ for $\alpha \in \NN^r$ and $\pol(\Dcal_\alpha)=0$ for $\alpha \in \ZZ^r \setminus \NN^r$.
On symmetric functions,  $\pol$ is the polynomial truncation operator which truncates $\GL_r$-characters (indexed by  $\lambda_1 \ge \cdots \ge \lambda_r$) to polynomial ones (indexed by  $\lambda_1 \ge \cdots \ge \lambda_r \ge 0$).

\begin{thm}
\label{t mod nsshuffle intro}
For every strict composition  $\alpha = (\alpha_1, \dots, \alpha_r)$ of size $k$,
\begin{align}
\label{et mod nsshuffle intro}
\modnab^{-1} \nsC_\alpha
 =
\sum_{\pi \in \DD^\alpha_{k,k} } \sum_{\substack{\text{flagged word parking}\\
\text{functions $w$ on  $\pi$ } }} q^{-\area(\pi)}
t^{-\dinv(\pi,w)} \xx^{\content(w)},
\end{align}
where the sum is now over word parking functions  $w$ satisfying the following flagging condition:
for each  $j = 1,\dots, r$, the entry of  $w$ on the north step from the
touchpoint $(k_{j},k_{j})$ is at most $j$, where $k_{j} =\sum_{i < j}\alpha_i$.
\end{thm}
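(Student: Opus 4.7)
The plan is to obtain \eqref{et mod nsshuffle intro} by applying the nonsymmetric plethysm $\Pisf_r$ of \cite{BHMPS-nspleth} to the nonsymmetric identity in the double Dyck path algebra $\AA_{t,q}$ that Carlsson--Mellit~\cite{CarMel} (with input from subsequent work of Mellit) establish en route to the symmetric compositional shuffle theorem. That identity lives in a nonsymmetric polynomial ring: its left-hand side is a word in the $\AA_{t,q}$-generators applied to a vacuum, producing an $\alpha$-indexed nonsymmetric Hall--Littlewood-type polynomial, and its right-hand side is a sum over $\pi\in\DD^\alpha_{k,k}$, weighted by $q^{\area(\pi)}t^{\dinv(\pi,w)}$, of the nonsymmetric intermediates that Carlsson--Mellit use to build LLT polynomials. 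Applying the $q\mapsto q^{-1}$ involution implicit in the transition from $\tEnoflip$ to $\tE$ (which exchanges $\nabla$ with $\modnab^{-1}$ after Weyl symmetrization) and then $\Pisf_r$ term by term should produce \eqref{et mod nsshuffle intro} directly.

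On the algebraic side I would verify: (i) that $\Pisf_r$ sends the raw Hall--Littlewood-type object, namely $\pol\bigl(x_1^{\alpha_1}\cdots x_r^{\alpha_r}/\prod_{i<j}(1-tx_i/x_j)\bigr)$, to a scalar multiple of $\nsC_\alpha$, which is essentially the definition \eqref{e:nsCalpha} together with the compatibility of $\Pisf_r$ with $\pol$; and (ii) that the Carlsson--Mellit diagonal operator, whose eigenvalues on their nonsymmetric basis are $q^{\mathsf{n}(\mu^*)}t^{\mathsf{n}(\mu)}$, corresponds under $\Pisf_r$ to $\modnab$ as defined in \eqref{e mod nabla intro}. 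On the combinatorial side, the insight advertised in the introduction is exactly what is needed: $\Pisf_r$ carries each Carlsson--Mellit nonsymmetric intermediate attached to a Dyck path $\pi$ to the flagged LLT polynomial attached to $\pi$. A flagged LLT polynomial expands as $\sum_w \xx^{\content(w)}$ over word parking functions $w$ on $\pi$ satisfying exactly the flagging condition in the theorem, because under Carlsson--Mellit's dictionary the north step out of the touchpoint $(k_j,k_j)$ occupies one of the first $j$ slots of their nonsymmetric polynomial, and $\Pisf_r$ forces polynomiality in those slots. Summing over $\pi$ yields the right-hand side of \eqref{et mod nsshuffle intro}.

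The main obstacle I expect is item (ii): pinning down the precise identification of $\modnab^{-1}$ with the $\Pisf_r$-image of the relevant Carlsson--Mellit diagonal operator. Although the eigenvalues match formally, $\modnab$ is defined intrinsically on the $\tE_{\eta|\lambda}$-basis, whereas the Carlsson--Mellit operator lives on a different nonsymmetric basis in a different polynomial ring. The needed dictionary requires showing that $\Pisf_r$ carries Carlsson--Mellit's eigenvectors, after the $q\leftrightarrow q^{-1}$ twist, onto scalar multiples of the $\tE_{\eta|\lambda}$, and that the scalars align exactly with the exponents $\mathsf{n}(\mu)$, $\mathsf{n}(\mu^*)$ of $\mu=(\eta;\lambda)_+$. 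Once this intertwining is in hand, the theorem is a routine unwinding of the Carlsson--Mellit nonsymmetric identity, with the flagging condition on $w$ emerging automatically from polynomial truncation.
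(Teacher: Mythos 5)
Your high-level strategy matches the paper's: both apply $\Pisf_r$ to a Carlsson--Mellit/Mellit nonsymmetric shuffle identity and recognize the $\Pisf_r$-images of the intermediate objects as flagged LLT polynomials. You also correctly locate the hard step at your item (ii), the identification of $\modnab$ with (the $\Pisf_r$-conjugate of) the Mellit-side nabla. But the fix you suggest --- ``show $\Pisf_r$ carries Carlsson--Mellit's eigenvectors onto scalar multiples of $\tE_{\eta|\lambda}$ with matching eigenvalues'' --- does not quite engage the difficulty. The operator $\Mnab$ induced by the algebra endomorphism $N$ of $\AA_{t,q}$ is not defined on any eigenbasis; it is characterized by $\Mnab(1)=1$ and the intertwining relations $\Mnab\circ L = N(L)\circ \Mnab$. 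Consequently one cannot just compare eigenvalues. The paper's actual mechanism is to introduce the \emph{unmodified} operator $\umnab$ acting diagonally on the $\stE_{\eta|\lambda}$, and then prove $\umnab = (-1)^d\Mnab^\PP$ (Theorem~\ref{t two nablas}) by showing that both operators satisfy the same intertwining relations with $T_i$, $d_-^\PP$, and $\X_1$ and fix the vacuum. The $T_i$ and $d_-^\PP$ relations follow from Bechtloff Weising's results; the $\X_1$ relation is Lemma~\ref{l x1 to y1x1}, which is the real work --- it needs a Monk-type rule for $x_1\stE_{\kapbold}$ (Proposition~\ref{p Monk stable}), obtained by pushing Halverson--Ram's finite-rank Monk rule through a delicate $t$-adic limit (Lemma~\ref{l Hecke sym nsmac}, Corollary~\ref{c E orbit sums}). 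None of this is foreshadowed in your sketch, and there is no obvious shortcut around it.

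A second, subtler omission is the role of the $\QQ(q,t)$-antilinear involution $\barwb$. Mellit's identity produces $\rho^*_{m,1}$ applied to a vacuum-word, not $\Mnab^{-m}$ directly; Proposition~\ref{p barw nabla} converts $\rho^*_{m,1}(L)$ into $\barwb\,\Mnab^{-m}\,L\,\Mnab^{m}\,\barwb$, and the resulting $\barwb^\PP$ is what turns the signed flagged \emph{row} LLT polynomials in Theorem~\ref{t ns shuffle unmod} into signed flagged \emph{column} LLT polynomials (Proposition~\ref{p flip LLT}), whence the final combinatorics (strictly increasing vertical runs, i.e.\ the word parking functions in the theorem) rather than the weakly decreasing variant. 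Without this step, applying $\Pisf_r$ to the raw Mellit identity gives a statement about $\alg_{1,1}^\alpha$ and weak (column-decreasing) parking functions, not the clean $\modnab^{-1}\nsC_\alpha$ formula with $\dinv(\pi,w)$. So your proposal correctly identifies the outline of the argument and the location of the gap, but the gap is wider than your sketch acknowledges and requires the $\umnab$/$\Mnab$ intertwining machinery, the Monk rule input, and the $\barwb$ twist, none of which you supply.
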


See \S\ref{ss examples} for examples illustrating this theorem.
We prove that \eqref{e intro comp shuffle} can be recovered by Weyl symmetrizing \eqref{et mod nsshuffle intro}
(see \S\ref{ss nablashuffle} for why  $\modnab, q,t$ are inverted in \eqref{et mod nsshuffle intro} but not \eqref{e intro comp shuffle}).

The right hand side of \eqref{et mod nsshuffle intro} can be packaged as a sum of the flagged LLT polynomials introduced in \cite{BHMPS-nspleth}.
We showed there that these Weyl symmetrize to ordinary LLT polynomials, and conjectured that they are atom positive.
This would imply that $\modnab^{-1} \nsC_\alpha$ is atom positive, thereby giving a nonsymmetric strengthening of \eqref{e pos nab intro}.
This also raises the question of whether  $\modnab^{-1} \nsC_\alpha$
has a natural representation-theoretic interpretation,
 which we discuss further in \S\ref{ss atom pos}.

In \cite{BeGaSeXi16}, Bergeron et al.\ proposed the  $(km,kn)$-compositional shuffle conjecture,
where the combinatorial side is now replaced with a sum over Dyck paths $\DD^\alpha_{km,kn}$ from $(0,0)$ to  $(km,kn)$ with touchpoints given by $\alpha$.
This was proved by Mellit \cite{Mellit16}, building on the methods in \cite{CarMel}.  Our methods
yield a nonsymmetric counterpart for this version as well.  We now describe this result and
along the way outline its proof and that of Theorem \ref{t mod nsshuffle intro}.

Our approach builds on a framework around signed and unsigned flagged variants of LLT polynomials introduced in
\cite{BHMPS-nspleth}.  A key discovery there was the \emph{$r$-nonsymmetric plethysm map} $\Pisf_r$, a nonsymmetric variant of the plethystic transformation $g[\xx] \mapsto g[\xx/(1-t)]$.
It is the  $\QQ(q,t)$-linear map  $\Pisf_r \colon \P(r) \to \P(r)$
determined by its action on elements
$f(x_1,\dots, x_r) g(\xx)$ by
\begin{align}
\label{e: nspleth def}
\Pisf_r(f(x_1, \dots, x_r) g(\xx)) =
g\Big[\frac{\xx}{(1-t)}\Big]
\pol
\Big( \frac{f(x_1, \dots, x_r)}{\prod_{1 \le i < j \le r} (1-t \spa x_i/x_j)} \Big),
\end{align}
where $g$ is symmetric in  $\xx = x_1,x_2,\dots$.
Remarkably, $\Pisf_r$ maps signed flagged LLT polynomials to unsigned ones (see Theorem \ref{t signed to unsigned}).

Here we capitalize on this
result by showing that it meshes beautifully with objects
introduced in Carlsson and Mellit's proof of the shuffle theorem \cite{CarMel}. More specifically,
we show that certain nonsymmetric polynomials used in their proof to
construct symmetric LLT polynomials inductively are,
after an (ordinary) plethystic transformation $\PP_r^{-1}$, equal to our signed flagged LLT polynomials
for a sequence of
single-row skew shapes (see Theorem~\ref{t Gcal vs chiPP}).
This enables us to interpret an intermediate step in Mellit's proof of the $(km,kn)$-compositional shuffle conjecture as
saying that, for a certain
operator $\brho _{m,n}^{\alpha }$ in the double Dyck path algebra
$\AA_{t,q}$, the expression $\PP_r^{-1} \brho _{m,n}^{\alpha } \cdot 1$ is equal to a sum of signed flagged LLT
polynomials.
Applying  $\Pisf_r$ then yields our nonsymmetric $(km,kn)$-shuffle theorem, which expresses  $\Pisf_r \,\PP_r^{-1} \brho_{m,n}^\alpha \cdot 1$
as a sum of (unsigned) flagged LLT polynomials $\Grow_r(\pi', \Sigma_\pi)$:
\begin{align}
\label{e ns shuffle kmkn mod intro}
\Pisf_r \,\PP_r^{-1} \brho_{m,n}^\alpha \cdot 1 =
\sum_{\pi \in \DD^\alpha_{km,kn} }  q^{\area(\pi)} t^{\dinv(\pi) - \maxtdinv(\pi) } \,
\Grow_r(\pi', \Sigma_\pi)\,,
\end{align}
with the complete statement and notation explained in Theorem \ref{t ns shuffle kmkn mod}.

The advantage of this framing is that we now obtain a statement that Weyl symmetrizes to the original symmetric  $(km,kn)$-shuffle theorem, and
where the flagged LLT polynomials $\Grow_r(\pi', \Sigma_\pi)$ are conjecturally atom positive.
Hence, this gives a conjectural strengthening of
the Schur positivity of the symmetric  $(km,kn)$-shuffle theorem.

Theorem \ref{t mod nsshuffle intro} can then be deduced from \eqref{e ns shuffle kmkn mod intro}
by relating $\modnab$ to the endomorphism  $N$ of  $\AA_{t,q}$ introduced in \cite{Mellit16};
this, in turn, is proved using results of Bechtloff Weising \cite{BechtloffWeising23} and Ion-Wu \cite{IonWu}
as well as
a Monk's rule for nonsymmetric Macdonald polynomials due to
Halverson-Ram \cite{HalversonRam}.

\subsection{Atom positivity and representation theory}
\label{ss atom pos}

A guiding principle in our work is that atom positivity in a polynomial signals the presence of underlying representation-theoretic structure. Here, we sketch ideas for a representation-theoretic interpretation of $\modnab^{-1} \nsC_\alpha$, following some preliminaries on Demazure characters and atoms.

The \emph{Demazure operator} or \emph{isobaric divided difference operator $\pi_i$} is given by
\vspace{-1mm}
\begin{align}
\pi_i (f) &= (x_i f - s_i(x_i f))/(x_i - x_{i+1}).  \\[-6.5mm]
\notag
\end{align}
For  $w\in \SS_N$, define the operator $\pi_w := \pi_{s_{j_1}} \cdots \pi_{s_{j_d}}$, where
$w = s_{j_1}s_{j_2}\cdots s_{j_d}$ is any reduced expression.  This does not depend on the choice of reduced expression since the $\pi_i$ satisfy the braid relations.
For  $\alpha\in \ZZ^N$, the \emph{Demazure character} $\Dcal_\alpha$ is
\begin{align}
\Dcal_\alpha = \pi_{w} (\mathbf{x}^{\alpha_+}),
\end{align}
where  $w$ is the shortest permutation such that $w(\alpha_+) = \alpha$.
The \emph{Demazure atom}  $\Acal_\alpha$ is defined similarly but with $\hat{\pi}_w$ in
place of  $\pi_w$, defined from  $\hat{\pi}_i  = \pi_i - 1$.

Demazure characters $\{\Dcal_\alpha\}_{\alpha \in \ZZ^N}$ and atoms  $\{\Acal_\alpha\}_{\alpha \in \ZZ^N}$
are each a basis of $\QQ(q,t)[x_1^{\pm 1}, \dots, x_N^{\pm 1}]$.
Demazure characters are positive sums of Demazure atoms:
\begin{align}
\label{e:dem to atom}
\Dcal_{\beta} =  \sum_{\substack{\alpha  \, \in \, \SS_N \beta, \\[.8mm]  w_\alpha \le w_\beta }} \Acal_\alpha,
\end{align}
where  $\SS_N \beta$ denotes the  $\SS_N$ orbit of  $\beta$, $w_\alpha$ is the shortest permutation such that  $w_\alpha(\alpha_+)= \alpha$,
 and  $\le$ is Bruhat order on  $\SS_N$.
For the longest permutation  $w_0$ of  $\SS_N$,  $\pi_{w_0}$ is the \emph{Weyl symmetrization operator}.
It takes the monomial $\xx^\lambda$ indexed by a weakly decreasing weight $\lambda \in \NN^N$ to the
Schur function $s_{\lambda }(x_1, \dots, x_N)$.

Consider the Lie algebra $\mathfrak{gl}_N$ of  $N \times N$ matrices and its Lie subalgebra  $\mathfrak{b} \subset \mathfrak{gl}_N$ of upper triangular matrices.
For $\nu = (\nu_1 \ge  \cdots \ge \nu_N) \in \ZZ^N$,
let $V(\nu)$ be the irreducible  $\mathfrak{gl}_N$-module of highest weight $\nu$.
The \emph{Demazure module} $D(\alpha) \subset V(\alpha_+)$
is the  $\mathfrak{b}$-module  $U(\mathfrak{b}) \spa u_\alpha$, where  $u_\alpha$ is an element of the (one-dimensional) $\alpha$-weight space of $V(\alpha_+)$.
The \emph{Demazure atom module} $A(\alpha)$
is the quotient of $D(\alpha)$ by the sum of all Demazure modules properly contained in  $D(\alpha)$.
The characters $\chr(D(\alpha))$ and $\chr(A(\alpha))$ are $\Dcal_\alpha$ and $\Acal_\alpha$, respectively.

Just as a symmetric polynomial being Schur positive suggests it is the character of a natural  $\mathfrak{gl}_N$-module,
a nonsymmetric polynomial being Demazure character (resp.\ Demazure atom) positive suggests it is the character of a  $\mathfrak{b}$-module with a filtration
whose subquotients are isomorphic to
Demazure modules (resp. Demazure atom modules).
Such filtrations have been
extensively
 studied from the geometric perspective---see, e.g.,
\cite{Joseph85, Mathieu, Polo, vanderKallenBModule}.

For any polynomial $f$, we have
\begin{align}
\label{e atom key pos}
\text{$f$ is Demazure character positive} \implies
\text{$f$ is atom positive} \implies
\text{$\pi_{w_0} (f)$ is Schur positive}.
\end{align}
Thus, Demazure character and atom positivity are natural strengthenings of Schur positivity to the nonsymmetric setting.

For  $f\in \P(r)$, write  $f[\xx_N]$ as shorthand for $f(x_1, \dots, x_{N}, 0, \dots)$.
Any $f \in  \P(r)$ of degree  $d$ is determined by
$f[\xx_{r+d}]$, so it is mostly harmless to work with $f(\xx)$ and $f[\xx_{r+d}]$ interchangeably.
See also Remark \ref{r stable atom vs atom}.

The conjectured atom positivity of  $(\modnab^{-1} \nsC_\alpha)[\xx_N]$ then suggests the following problem:
\begin{problem}
\label{pr rep theory}
Find a natural representation-theoretic interpretation of
$\modnab^{-1} \nsC_\alpha$.  More precisely, find a (natural) $\mathfrak{b}$-module whose character is  $(\modnab^{-1} \nsC_\alpha)[\xx_N]$, with a
filtration whose subquotients are Demazure atom modules.
\end{problem}

A possible starting point for this problem is a construction of Feigin-Loktev \cite{FeiginLoktev}
of a $\mathfrak{gl}_N$-module ${\rm W}_{n}^N$ which is Schur-Weyl dual to the diagonal harmonics module ${\rm DH}_n$.  Using the $\mathcal{F}({\rm DH}_n) = \nabla e_n$ theorem, they showed that ${\rm W}_{n}^N$ has character $(\nabla e_n)[\xx_N]$.
One might then expect to find the $\mathfrak{b}$-module in Problem \ref{pr rep theory} as a subquotient of  ${\rm Res}_{\spa \mathfrak{b}} {\rm W}_n^N$.
Also note that the symmetric version of this problem---find a representation-theoretic interpretation for  $\nabla C_\alpha$---is still open;
some partial progress has been made by Bergeron-Descouens-Zabrocki \cite{BDZ}.

\subsection{Organization}
To prove Theorem~\ref{t mod nsshuffle intro} and its
  $(km,kn)$-generalization, Theorem~\ref{t ns shuffle kmkn mod}, we need to
provide the language necessary to state both the algebraic and
combinatorial sides, along with their connections to previously known
objects.
To establish this groundwork,
we summarize definitions from~\cite{CarMel, IonWu, Mellit16} in \S\ref{s ddpa}
and establish properties of the \(r\)-nonsymmetric plethysm map (\S\ref{s nspleth}), flagged LLT
polynomials (\S\ref{s LLT}), and modified \(r\)-nonsymmetric Macdonald polynomials (\S\ref{s mod nsmac}).
In Section~\ref{s ns comp kmkn thm}, we prove
Theorem~\ref{t ns shuffle kmkn mod} using the
machinery from~\cite{Mellit16} and results from \S\ref{s LLT}.
In Section~\ref{s ns nab ns shuffle}, we establish results about \(\modnab\) that we use
to deduce Theorem~\ref{t mod nsshuffle intro} from the $m=n=1$ case of Theorem~\ref{t ns shuffle kmkn mod}.
Additionally, we provide examples of Theorem \ref{t mod nsshuffle intro} in \S\ref{ss examples} and a
summary of notation in
Figure~\ref{fig:summary diagram}.

\section{Preliminaries}
\label{S:Prelim}

We fix some notation used throughout the paper.

We use the shorthand $[n] = \{1,2,\dots, n\}$.

Integer partitions are written $\lambda = (\lambda _{1}\geq \cdots
\geq \lambda _{l})$, possibly with trailing zeroes.
We set $|\lambda| = \lambda _{1}+\cdots +\lambda _{l}$, let $\ell(\lambda )$ be the
number of nonzero parts, and define the integer statistic \(\mathsf{n}(\lambda) = \sum_i (i-1) \lambda_i\).
The conjugate partition of $\lambda$ is denoted $\lambda^*$.
Let  $\Par$ denote the set of partitions.

We use the notation $\xx_N = x_1, \dots, x_N$, $\xx= x_1, x_2, \dots$,
and for $f(\xx) \in \P(r)$, we write
$f[\xx_N]$ for $f(\xx)|_{x_{N+1} = x_{N+2} = \cdots = 0}$.
For $\alpha \in \ZZ^N$, set $\xx^\alpha = x_1^{\alpha_1} \cdots x_N^{\alpha_N}$.

\subsection{Symmetric functions}
Let $\Lambda(\xx) = \Lambda _{\QQ(q,t)}(\xx)$ be the algebra of symmetric
functions in the infinite alphabet $\xx$, with coefficients in the field $\QQ (q,t)$.
We follow the notation of Macdonald~\cite{Macdonald95} for the graded
bases of $\Lambda(\xx)$.  We write $\omega \colon \Lambda(\xx) \rightarrow
\Lambda(\xx)$ for the involutory $\QQ(q,t)$-algebra automorphism determined by
$ \omega s_{\lambda } = s_{\lambda^*}$ for Schur functions $s_{\lambda}$.

Given $f\in \Lambda(\xx)$
and any expression $A$
involving indeterminates, the plethystic evaluation $f[A]$ is defined
by writing $f$ as a polynomial
in the power-sums $p_{k}$ and evaluating with $p_{k}\mapsto p_{k}[A]$,
where $p_{k}[A]$ is the result of substituting $a^{k}$ for every
indeterminate $a$ occurring in $A$.  The variables $q, t$ from our
ground field count as indeterminates.
When  $\xx_r$ and  $\xx$ are used in a plethystic formula, we interpret these as $\xx_r = x_1 + \cdots + x_r$ and  $\xx = x_1 + x_2 + \cdots$.
For
example, the evaluation $f[\xx/(1-t)]$ is the image of $f(\xx)$ under
the $\QQ(q,t)$-algebra automorphism of $\Lambda(\xx)$ that sends $p_{k}$ to
$p_{k}/(1-t^{k})$.

We fix notation for the series
\begin{equation}\label{e:Omega}
\Omega = 1 + \sum _{k>0} h_{k} = \exp \sum _{k>0} \frac{p_{k}}{k},
\quad \text{or}\quad \Omega [a_{1}+a_{2}+\cdots -b_{1}-b_{2}-\cdots ]
= \frac{\prod_{i} (1-b_{i})}{\prod_{i} (1-a_{i})}.
\end{equation}

\subsection{Strong and \texorpdfstring{$t$}{t}-adic convergence}
\label{ss convergence}

For  $a \in \QQ(q,t)$, write $a= b/c$ with  $b, c \in \QQ[q,t]$. The order of vanishing at  $t=0$
of  $a$, denoted  $\ord (a)$, is the order of vanishing of  $b$ minus the order of vanishing of  $c$.
For  $e \in \ZZ$, let  $(\QQ(q,t)[\xx_N])_e$ denote the subset of  $\QQ(q,t)[\xx_N]$ consisting of linear combinations
of monomials $\xx^\lambda$ with coefficients  $a_\lambda$ satisfying  $\ord (a_\lambda) \ge e$.
By convention, $\ord(0) = \infty $ and  $0 \in (\QQ(q,t)[\xx_N])_e$ for all  $e$.

We say that a sequence of polynomials  $g_{N_0}, g_{N_0+1}, \dots$, with
$g_{N}(\xx_N) \in \QQ(q,t)[\xx_N]$, {\em converges strongly to  $f(\xx) \in \P(r)$} if
\begin{equation}
g_{N}(\xx_{N}) = f[\xx_N]
\end{equation}
for all  $N \ge N_0$.

We say that a sequence of polynomials
$g_{N}(\xx_N) \in \QQ(q,t)[\xx_N]$ {\em converges $t$-adically to  $f(\xx) \in \P(r)$} if for all $e>0$, we have
\begin{equation}
\big( g_{N}(\xx_{N}) - f[\xx_N] \big) \in (\QQ(q,t)[\xx_N])_e
\end{equation}
for sufficiently large  $N$.

\begin{remark}
These are the same notions of convergence as in \cite[\S 2.6]{BHMPS-nspleth}. The definition of $t$-adic convergence here is closely related to
that in \cite{IonWu, BechtloffWeising23}.
In particular, if  $g_N$ converges  $t$-adically to  $f$ in the sense of
\cite{BechtloffWeising23, IonWu}, then it also does in the sense above.
\end{remark}

\section{The double Dyck path algebra}
\label{s ddpa}

Much of this paper involves stable versions of the double affine Hecke algebra and nonsymmetric Macdonald polynomials.

The double affine Hecke algebra (DAHA) or Cherednik algebra $\H_N$ is a $\QQ(q,t)$-algebra
generated by  $\X_1^{\pm 1},\dots, \X_N^{\pm 1}$,  $\Y_1^{\pm 1}, \dots, \Y_N^{\pm 1}$, and  $T_1, \dots,  T_{N-1}$.
It contains two copies of the type  $\tilde{A}_{N-1}$ extended affine Hecke algebra, and
reduces to
the group algebra  $\QQ [(\ZZ^N \times \ZZ^N) \rtimes \SS_N]$
when the parameters  $q$ and  $t$ are set to 1.
There is an action of   $\H_N$ on  $\QQ(q,t)[x_1^{\pm 1},\dots, x_N^{\pm 1}]$ in which $\X_i$ acts by multiplication by  $x_i$, the $T_i$ act by Demazure-Lusztig operators,
and the $\Y_i$ act by Cherednik operators (further discussed in \S\ref{ss IonWu DAHA}).
The nonsymmetric Macdonald polynomials  $E_\alpha$ can then be constructed as the joint eigenfunctions of $\Y_1,\dots, \Y_N \in \H_N$.

The double Dyck path algebra  $\AA_{t,q}$ of Carlsson-Mellit \cite{CarMel} and the $+$ stable limit DAHA  $\H^+$ of Ion-Wu \cite{IonWu} are
two algebras which both aim to capture the idea of a limiting object for the double affine Hecke algebras  $\H_N$ as  $N \to \infty$.  The latter accomplishes this through a certain $t$-adic limit, whereas the former avoids directly defining any kind of limit and instead contains a copy of something that is close to $\H_N$ for each  $N$ together with extra elements to go between these copies.  Because of these extra elements,  $\AA_{t,q}$ should be thought of as being bigger than  $\H^+$ (see Theorem \ref{t IW two actions} for the precise relationship).

In this section we describe the algebras $\AA_{t,q}$ and  $\H^+$ and some of their natural representations. In \S\ref{s mod nsmac} we will introduce stable versions of the  $E_\alpha$'s and their relation to  $\H^+$.

\subsection{The Dyck path algebra \texorpdfstring{$\AA_t$}{A\_t} and double Dyck path algebra \texorpdfstring{$\AA_{t,q}$}{A\_{t,q}}}

In this paper, we use the same notation for the (double) Dyck path algebra and its natural representation as Mellit \cite{Mellit16} except that we swap
the role of  $q$ and  $t$ to match the traditional DAHA notation, and we use  $W$ in place of  $X$ to avoid confusion with our notation $\xx=x_1,x_2,\dots$.

\begin{defn}
Let \(\Qdot\) be the quiver with vertex set \(\NN\), with arrows
labeled \(d_+\) from \(r \to r+1\), arrows labeled \(d_-\) from
\(r+1 \to r\), and with \(r-1\) loops from \(r\) to  $r$, labeled
\(T_1,\ldots,T_{r-1}\).
The \emph{Dyck path algebra} \(\AA_t\) is the path algebra of the quiver \(\Qdot\),
over the field  $\QQ(q,t)$, with the following relations:
  \begin{gather}
   \label{eq:Hecke relations}
  \begin{gathered}
   (T_i-1)(T_i+t) = 0\,,\ 1\leq i \leq r-1\,,\\
  T_i T_{i+1} T_i = T_{i+1} T_i T_{i+1}\,,\ 1
    \leq i \leq r-2\,,  \\
  T_i T_j = T_j T_i\,,\ |i-j| > 1\,,
    \end{gathered}\\[1mm]
   d_-^2 T_{r-1} = d_-^2\,,\ T_i d_- = d_- T_i\,,\ 1 \leq i \leq r-2\,,\\[1mm]
  \label{eq:Forward loop-path commutation}
  T_1 d_+^2 = d_+^2\,,\ d_+ T_i = T_{i+1} d_+\,,\ 1 \leq i \leq r-1\,,\\[1mm]
  \label{eq:Forward-back commutation ending}
  d_-[d_+, d_-] T_{r-1} =
    t\, [d_+, d_-] d_- \,,\ r \geq 2\,,\\[1mm]
  \label{eq:Forward-back commutation beginning}
  T_1[d_+,d_-] d_+ = t  \spa d_+[d_+, d_-]\,,\ r \geq 1\,,
   \end{gather}
   where each line is an identity of paths starting at vertex \(r\) and we impose these relations for all  $r \in \NN$.  We write  $e_r$ for the idempotent at vertex  $r$.

 Note that to simplify notation, many arrows have the same name, so to resolve potential ambiguity,
 we adopt the convention that whenever a product of elements of  $\AA_t$ is specified to start at vertex  $r$, we interpret this as the unique sequence of arrows which concatenate head to tail.
For example, in
\eqref{eq:Forward loop-path commutation}, $T_1 d_+^2$ is the directed path which begins with the arrow from  $r$ to  $r+1$ labeled  $d_+$, followed by the arrow from
$r+1$ to  $r+2$ labeled  $d_+$, and then the loop  $T_1$ from $r+2$ to  $r+2$.
\end{defn}

\begin{defn}\label{def:double Dyck path algebra}
Let \(\Qddot\) be the quiver formed from \(\Qdot\) by adding additional arrows labeled
\(d_+^*\) from \(r \to r+1\).
The \emph{double Dyck path algebra} \(\AA_{t,q}\) is the path
algebra of the quiver \(\Qddot\),
over the field  $\QQ(q,t)$, with relations~\eqref{eq:Hecke relations}--\eqref{eq:Forward-back commutation beginning} and the following additional relations, each regarded as a path starting at
vertex \(r\):
  \begin{gather}
  \label{eq:Forward loop-path commutation star} T_1(d_+^*)^2 =
    (d_+^*)^2\,,\ d_+^* T_i = T_{i+1} d_+^*\,, 1 \leq i \leq r-1\,,\\[.6mm]
  \label{eq:Forward-back commutation ending star} t d_-[d_+^*, d_-]  =
    [d_+^*, d_-] d_- T_{r-1} \,,\ r \geq 2\,,\\[.6mm]
  \label{eq:Forward-back commutation beginning star} t[d_+^*, d_-]
    d_+^* = T_1 d_+^*[d_+^*, d_-] \,, r \geq 1\,,\\[.6mm]
  \label{eq:Forward twisted loop-path commutation} d_+ z_i = z_{i+1}
     d_+\,,\ d_+^* y_i = y_{i+1} d_+^*\,,\ 1 \leq i \leq r \,,
    \\[.6mm]
  \label{eq:Star to unstar} z_1 d_+ = -qt^{r+1} y_1 d_+^* \,,
  \end{gather}
  where the elements \(y_i, z_i\), for \(1 \leq i \leq r\), are the following loops at vertex \(r\):
  \begin{align}
  y_1 & = \frac{1}{t^{r-1}(t-1)} [d_+, d_-] T_{r-1} \cdots T_1\,,\\[.6mm]
  y_{i+1} & = t \, T_i^{-1} y_i T_i^{-1}\,,\ 1 \leq i \leq r-1 \,,\\[.6mm]
  \label{e zi def 1}
  z_1 & = \frac{t^r}{1-t} [d_+^*, d_-] T_{r-1}^{-1} \cdots T_1^{-1}\,,\\[.6mm]
  \label{e zi def 2}
  z_{i+1} & = t^{-1} T_i z_i T_i\,,\ 1 \leq i \leq r-1\,.
  \end{align}
  We write  $e_r$ for the idempotent at vertex  $r$.
\end{defn}

\begin{remark}
\label{rem:why ddpa}
There is a $\QQ(q,t)$-algebra homomorphism $\rho \colon \AA_t \to \AA_{t,q}$ determined uniquely by
\begin{align}
e_r \mapsto e_r,
\ \ T_i \mapsto T_i,
\ \ d_- \mapsto d_-,
\ \ d_+ \mapsto d_+,
\end{align}
and a $\QQ$-algebra homomorphism $\rho^* \colon \AA_t \to \AA_{t,q}$ determined uniquely by
\begin{align}
q \mapsto q^{-1}, \ \ t  \mapsto t^{-1} , \ \
e_r \mapsto e_r,
\ \ T_i \mapsto T_i^{-1},
\ \ d_- \mapsto d_-,
\ \ d_+ \mapsto d_+^*.
\end{align}
Thus, $\AA_{t,q}$ contains the copy $\rho(\AA_t)$ of $\AA_t$ and the twisted copy $\rho^{*}(\AA_t)$ of $\AA_t$.
In~\cite{CarMel}, the authors introduce \(\AA_{t,q}\) as the free
product of these two copies with
additional relations between them.
\end{remark}

\subsection{Three actions of  \texorpdfstring{$\AA_{t,q}$}{A\_{t,q}}}
\label{ss two actions}

Let
\(V_r = \QQ(q,t)[y_1,\ldots,y_r] \otimes \Lambda_{\QQ(q,t)}(W)\)
and \(V_* = (V_r)_{r \geq 0}\).
Mellit \cite{Mellit16} and Carlsson-Mellit \cite{CarMel}
each define an action of  $\AA_{t,q}$ on  $V_*$; we write $d_+, d^*_+, d_-, T_i$ for the generators of  $\AA_{t,q}$ with Mellit's action and $d^{CM}_+, d^{* CM}_+, d^{CM}_-, T^{CM}_i$ for the action defined by Carlsson-Mellit
(they both use the same conventions for the generators and relations of the algebra  $\AA_{t,q}$).
We will also work with
an action obtained from Mellit's version by twisting by a plethystic transformation.
We first describe Mellit's version.

\begin{prop}
\label{p Mellit action}
The following formulas define a quiver representation of
\(\AA_{t,q}\) on \(V_*\).
The action of the arrows originating at quiver vertex  $r$ are given by
\begin{align}
\label{e Ti def 1}
T_i F &=  s_i F + (1-t) y_i \frac{F - s_i F}{y_i - y_{i+1}}, \\
\label{e d- def}
d_- F &=  \langle y_r^0 \rangle F[y_1,\ldots,y_r; W-(t-1)y_r]  \Omega[-y_r^{-1} W],\\
\label{e d+ def}
d_+ F &=  -T_1 \cdots T_r(y_{r+1} F[y_1,\ldots,y_r; W+(t-1)y_{r+1}]), \\
\label{e d+* def}
d_+^*F &=  \gamma \spa (F[y_1,\ldots,y_r; W+(t-1)y_{r+1}]).
\end{align}
where $F[y_1,\ldots,y_r; W] \in V_r$,  $i \in [r-1]$, and
$\gamma \colon V_{r+1} \to V_{r+1}$ is the operator given by  $\gamma \spa (G[y_1,\ldots,y_{r+1};W])
 = G[y_2,\ldots,y_{r+1},q y_1; W]$.

In addition, \eqref{e Ti def 1}--\eqref{e d+ def} imply that the loop $y_i \in \AA_{t,q}$ at vertex  $r$ acts on  $V_r$ as multiplication by  $y_i$, for any $i \le r$.
\end{prop}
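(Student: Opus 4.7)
The plan is to verify, one family at a time, that the operators \eqref{e Ti def 1}--\eqref{e d+* def} satisfy every defining relation of $\AA_{t,q}$, and then separately establish the closing claim that each loop $y_i$ acts as multiplication by $y_i$. Much of this work has been carried out in \cite{CarMel, Mellit16} for closely related actions, so large portions of the verification can be imported after tracking the plethystic change of alphabet relating Mellit's action to Carlsson--Mellit's; what I sketch below is the shape of a self-contained verification.

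First, the Hecke algebra relations \eqref{eq:Hecke relations} for $T_i$ follow from the standard fact that the Demazure--Lusztig operators in $y_1, \ldots, y_r$ satisfy the finite Hecke algebra presentation. Next, for the $d_-$--$T_i$ and $d_+$--$T_i$ relations, the key observation is that $d_-$ only interacts with the variable $y_r$ and the alphabet $W$, while $d_+$ inserts a new variable $y_{r+1}$ and conjugates by $T_1 \cdots T_r$; the commutations $T_i d_- = d_- T_i$ for $i \le r-2$ and $d_+ T_i = T_{i+1} d_+$ then follow from direct computation, as do the degenerate identities $d_-^2 T_{r-1} = d_-^2$ and $T_1 d_+^2 = d_+^2$. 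The forward-back commutations \eqref{eq:Forward-back commutation ending} and \eqref{eq:Forward-back commutation beginning} require computing $[d_+, d_-]$ explicitly; this commutator mixes Hecke pieces with multiplication by $y$-variables, and conjugating by $d_\pm$ on either side produces the stated identities.

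For the starred relations, $d_+^*$ differs from $d_+$ by the absence of the multiplication by $y_{r+1}$ and the $-T_1 \cdots T_r$ prefactor, compensated by the cyclic shift $\gamma$ that multiplies $y_1$ by $q$. The analogues \eqref{eq:Forward loop-path commutation star}--\eqref{eq:Forward-back commutation beginning star} then follow from computations parallel to those for the unstarred generators, together with the observation that $\gamma$ intertwines suitably with the $T_i$'s. The closing claim that each $y_i$ acts on $V_r$ as multiplication by $y_i$ is obtained by first computing $[d_+, d_-] T_{r-1} \cdots T_1$ and identifying it, up to the constant $t^{r-1}(t-1)$, with multiplication by $y_1$; the recursion $y_{i+1} = t \, T_i^{-1} y_i T_i^{-1}$ combined with the Hecke identity $T_i^{-1} (\cdot y_i) T_i^{-1} = t^{-1} (\cdot y_{i+1})$ for multiplication operators then propagates this to all $i \le r$.

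The main obstacle will be relation \eqref{eq:Star to unstar}: $z_1 d_+ = -q t^{r+1} y_1 d_+^*$. This is the only relation that genuinely links the two copies $\rho(\AA_t)$ and $\rho^*(\AA_t)$ sitting inside $\AA_{t,q}$ (see Remark \ref{rem:why ddpa}), and it requires unraveling both $z_1$ via $[d_+^*, d_-]$ and $y_1$ via $[d_+, d_-]$, then composing with $d_+$ on one side and $d_+^*$ on the other and matching the outputs. The factor of $q$ in the scalar $-q t^{r+1}$ comes directly from the $\gamma$-shift built into $d_+^*$, while the sign and accumulated powers of $t$ arise from the string of Hecke conjugations introduced by the $T_{r-1}^{-1} \cdots T_1^{-1}$ factor in $z_1$ versus the $T_{r-1} \cdots T_1$ factor in $y_1$. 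Propagating all of these bookkeeping factors through the nested plethystic substitutions in the alphabet $W$ is the delicate part, and is where I would invest the most care.
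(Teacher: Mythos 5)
The paper does not prove this proposition; it is Mellit's action, recalled from \cite{Mellit16}, and the paper relies on that citation rather than re-deriving that the formulas satisfy the $\AA_{t,q}$ relations. Your plan --- a case-by-case direct verification of the relations --- is essentially the verification done in the cited source, so it is a sound approach and in that sense matches the implicit proof.

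Two points of caution about your sketch. First, the relationship between Mellit's and Carlsson--Mellit's actions is not a plethystic change of alphabet: as Proposition~\ref{CarMel vs Mel} records, they differ by the rescaling $\Q_r \colon y_1\cdots y_r v \mapsto v$ together with signs and an extra factor of $y_1$ on $d_+^*$; the plethysm $\PP_r$ of \eqref{e Pr def} relates $V_*$ to the $\Pas$-action, which is a different comparison altogether, so "tracking the plethystic change of alphabet" will not let you import the Carlsson--Mellit verification. Second, your sketch omits relation \eqref{eq:Forward twisted loop-path commutation}, which also crosses between the copies $\rho(\AA_t)$ and $\rho^*(\AA_t)$ --- so your claim that \eqref{eq:Star to unstar} is the only such relation is inaccurate. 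The half $d_+^* y_i = y_{i+1} d_+^*$ of \eqref{eq:Forward twisted loop-path commutation} becomes nearly immediate once the closing claim about $y_i$ is established (since $\gamma$ shifts $y_i \mapsto y_{i+1}$), but $d_+ z_i = z_{i+1} d_+$ requires first working out the action of $z_i$, so it is not free and should be addressed explicitly.
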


For the action defined by Carlsson-Mellit, we only need the following result relating the two actions.
Let  $\Q_r$ denote the $\QQ(q,t)$-linear isomorphism between the subspace  $y_1\cdots y_rV_r \subseteq V_r$ and  $V_r$ given by
\begin{align}
\label{e Q def}
\Q_r \colon y_1 \cdots y_r V_r \to V_r,  \quad \quad y_1 \cdots y_r v \mapsto  v.
\end{align}

\begin{prop}
\label{CarMel vs Mel}
The spaces $(y_1 \cdots y_r V_r)_{ r \ge 0}$ form a submodule for Mellit's action of $\AA_{t,q}$ on  $V_*$, and we have the following identities of operators on  $V_*$:
\begin{align}
\label{e CarMel vs Mel 1}
T_i^{CM}  &=\Q_r \spa T_i \spa \Q_r^{-1}, \\
\label{e CarMel vs Mel 2}
d_-^{CM}&= \Q_{r-1} \spa (-d_-) \spa \Q_{r}^{-1},   \\
\label{e CarMel vs Mel 3}
 d_+^{CM} &= \Q_{r+1} \spa (-d_+)\spa \Q_{r}^{-1},  \\
\label{e CarMel vs Mel 4}
 \spa d_+^{* CM}   &=\Q_{r+1}\spa y_1 d_+^* \spa \Q_{r}^{-1},
\end{align}
where  $i < r$, and $T_i^{CM}$ and $T_i$ are loops at quiver vertex  $r$.
\end{prop}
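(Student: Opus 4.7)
The plan is to verify the proposition by direct computation, comparing the explicit formulas for Mellit's action given in Proposition~\ref{p Mellit action} with those for Carlsson-Mellit's action from~\cite{CarMel}. The argument splits naturally into two stages: first, check the submodule claim, and second, verify each of the four operator identities by conjugating Mellit's formula through $\Q_r$ and matching against the Carlsson-Mellit formula.

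For the submodule property, I would check each generator in turn. The operator $T_i$ preserves $y_1 \cdots y_r V_r$ since $s_i$ fixes the product $y_1\cdots y_r$; writing $F = y_1\cdots y_r G$ gives $F - s_i F = y_1\cdots y_r (G - s_i G)$, so the divided-difference piece in \eqref{e Ti def 1} again lies in $y_1\cdots y_r V_r$. For $d_+$ and $d_+^*$ applied to $y_1\cdots y_r G$, the plethystic substitution $W \mapsto W+(t-1)y_{r+1}$ does not touch any $y_i$; in \eqref{e d+ def} the explicit $y_{r+1}$ factor supplies the missing variable, and $T_1\cdots T_r$ preserves $y_1\cdots y_{r+1} V_{r+1}$ by the argument just given; in \eqref{e d+* def} the operator $\gamma$ cycles $y_1 \mapsto q y_1$ through the $y_i$, which again preserves the full product up to a scalar after renumbering. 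For $d_-$, applied to $y_1\cdots y_r G$, the explicit $y_r$ factor combines with the operator $\langle y_r^0 \rangle$ to pick out the $y_r^{-1}$-coefficient of $G[y_1,\dots,y_r; W-(t-1)y_r]\,\Omega[-y_r^{-1}W]$, yielding an element of $y_1\cdots y_{r-1} V_{r-1}$.

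For the four identities, I would evaluate the right-hand side on a generic $v \in V_r$ by first lifting to $y_1\cdots y_r v$ via $\Q_r^{-1}$, applying Mellit's formula to this lift, and finally stripping off the factor on the target space via $\Q_{r'}$. Comparing the resulting expression term by term with the corresponding Carlsson-Mellit formula then yields the claimed equality. The sign in \eqref{e CarMel vs Mel 2} and \eqref{e CarMel vs Mel 3} reflects the minus sign built into \eqref{e d+ def} that is absent from Carlsson-Mellit's convention, while the extra factor $y_1$ in \eqref{e CarMel vs Mel 4} tracks the precise interaction of $\gamma$ with conjugation by the $\Q$'s, i.e., the fact that after one cycling the variable previously called $y_1$ becomes $q y_1$ and reappears in slot $r+1$.

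The main obstacle is notational rather than conceptual: keeping careful track of how the plethystic substitution, the Hecke operators $T_i$, the cycling operator $\gamma$, and the coefficient-extraction $\langle y_r^0\rangle$ interact, particularly for the $d_+^*$ identity \eqref{e CarMel vs Mel 4}, where the $q$-twist from $\gamma$ must combine cleanly with the $y_1$ factor on the right. This bookkeeping is standard in the double Dyck path algebra literature, and the proof amounts to a careful but routine matching of formulas.
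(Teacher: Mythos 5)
Your high-level strategy (peel off $y_1\cdots y_r$ via $\Q_r^{-1}$, apply Mellit's formula, push back down) is sound in principle and agrees with what the paper does for identity \eqref{e CarMel vs Mel 4}; for \eqref{e CarMel vs Mel 1}--\eqref{e CarMel vs Mel 3} the paper simply cites \cite[Remark 3.11]{Mellit16}, whereas you propose redoing the computation directly, which is a fine alternative.

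However, your argument for the submodule claim breaks at $d_+^*$. You assert that $\gamma$ ``preserves the full product up to a scalar after renumbering.'' That is false. Since $\gamma$ is the algebra automorphism $y_i \mapsto y_{i+1}$ ($1 \le i \le r$), $y_{r+1} \mapsto q y_1$, one has
\begin{equation*}
d_+^*(y_1\cdots y_r G) \;=\; \gamma\bigl(y_1\cdots y_r\,G[y_1,\dots,y_r;W+(t-1)y_{r+1}]\bigr) \;=\; y_2\cdots y_{r+1}\,\gamma\bigl(G[\,\cdot\,]\bigr),
\end{equation*}
which lies in $y_2\cdots y_{r+1}V_{r+1}$, \emph{not} in $y_1\cdots y_{r+1}V_{r+1}$; no renumbering inside the fixed space $V_{r+1}$ repairs the missing $y_1$. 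So $d_+^*$ by itself does not preserve the family $(y_1\cdots y_r V_r)_r$; only $T_i$, $d_-$, $d_+$ do (i.e., the copy of $\AA_t$, cf.\ Theorem~\ref{t At iso V}), while for the starred arrow it is $y_1 d_+^*$ that maps $y_1\cdots y_r V_r$ into $y_1\cdots y_{r+1}V_{r+1}$. This is precisely why the extra $y_1$ appears on the right-hand side of \eqref{e CarMel vs Mel 4} --- a fact you acknowledge a few lines later, but which directly contradicts your earlier claim that $d_+^*$ preserves the product. Your treatment of $T_i$, $d_-$, $d_+$ is correct. Separately, the key observation powering the paper's three-line proof of \eqref{e CarMel vs Mel 4} --- that $d_+^{*CM}$ as written in \cite[Theorem 6.1]{CarMel} is literally the same formula as $d_+^*$ in \eqref{e d+* def}, so only the $\Q$'s and the $y_1$ factor need tracking --- is worth making explicit; once you have it, the verification $y_1 d_+^*\Q_r^{-1}F = \Q_{r+1}^{-1}d_+^{*CM}F$ is immediate.
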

\begin{proof}
Identities \eqref{e CarMel vs Mel 1}--\eqref{e CarMel vs Mel 3} are immediate
from \cite[Remark 3.11]{Mellit16}.
The action of  $d_+^{* CM}$, as defined in \cite[Theorem 6.1]{CarMel}, is the same as $d_+^*$ given in \eqref{e d+* def}.
Then \eqref{e CarMel vs Mel 4} holds by the following direct computation. For any
$F[y_1,\ldots,y_r; W] \in V_r$,
\begin{multline*}
y_1 d_+^* \Q_r^{-1}F = y_1 d_+^* (y_1 \cdots y_r F)
= y_1 \spa \gamma \spa (y_1 \cdots y_r F[y_1,\ldots,y_r; W+(t-1)y_{r+1}])\\
= y_1 y_2 \cdots y_{r+1} \spa \gamma \spa ( F[y_1,\ldots,y_r; W+(t-1)y_{r+1}])
=\Q_{r+1}^{-1} \spa d_+^{* CM} F.  \qedhere
\end{multline*}
\end{proof}

The third action is the main one we will use.
This version was studied by Ion-Wu \cite{IonWu} who connected it to a representation
of the $+$ stable limit DAHA  $\H^+$ (see \S\ref{ss IonWu DAHA}),
and it is the action we will use to connect to signed flagged LLT polynomials.

As in \S\ref{s intro}, let $\P(r) = \QQ(q,t)[x_1,\dots,x_r] \otimes \Lambda_{\QQ(q,t)}(x_{r+1}, \dots)$.
Define the isomorphism $\PP_r$ to be the $\QQ(q,t)$-linear map determined by
\begin{gather}
\label{e Pr def}
\begin{aligned}
\PP_r \colon \P(r) \xrightarrow{\ \cong \ } V_r,  \hspace{3.5cm}\\
x_1^{\eta_1}\cdots x_r^{\eta_r} g[x_{r+1}+ \cdots ] \mapsto y_1^{\eta_1}\cdots y_r^{\eta_r} g[W/(t-1)],
 \end{aligned}
\end{gather}
where $\eta_1,\dots, \eta_r \in \NN$, and $g$ is an arbitrary symmetric function.
Thus, we have an action of  $\AA_{t,q}$ on  $(\P(r))_{r \ge 0}$
induced by Mellit's action on $(V_{r})_{r \ge 0}$ via the
isomorphisms $\PP_r$.
We write $d_+^\PP, d^{* \PP}_+, d_-^\PP, T_i$ for the generators of  $\AA_{t,q}$ when using this action.
For example,  $d_-^\PP \colon \P(r) \to \P(r-1)$ and $d_+^\PP \colon \P(r) \to \P(r+1)$ are defined by
\begin{align}
\label{e d-PP def}
d_-^\PP = \PP_{r-1}^{-1} \, d_- \, \PP_r,  \quad \quad d_+^\PP = \PP_{r+1}^{-1} \, d_+ \, \PP_r.
\end{align}

Explicit formulas for these operators are readily calculated from Proposition \ref{p Mellit action},
and we record them in the proposition below (see \cite[\S7.5]{IonWu}).
Let  $\iota_r \colon \P(r) \to \P(r+1)$ denote the inclusion map.
Let  $\mathcal{B}_n$ denote Jing's Hall-Littlewood vertex operator \cite{Jing},
which acts on a
symmetric function  $f(Z) \in \Lambda(Z)$ by
\begin{align}
\label{e Jing}
\mathcal{B}_n \spa f(Z) = \langle u^0 \rangle u^n f[Z-u] \Omega[(1-t)u^{-1}Z].
\end{align}
\begin{prop}
\label{p Aqt action v3}
The following formulas define a quiver representation of
\(\AA_{t,q}\) on \( (\P(r))_{r \ge 0}\).
The action of
$d_-^\PP$ at quiver vertex  $r$ is the
unique $\QQ(q,t)$-linear map  $\P(r) \to \P(r-1)$ determined by
\begin{align}
\label{e d-PP def 2}
& d_-^\PP (x_1^{\eta_1} \cdots x_r^{\eta_r}
g(x_{r+1}, \dots) ) = x_1^{\eta_1} \cdots x_{r-1}^{\eta_{r-1}} \mathcal{B}_{\eta_r} (g[x_r + x_{r+1} + \cdots]),
\end{align}
where $g$ is any symmetric function and  $\mathcal{B}_{\eta_r}$ is as in \eqref{e Jing} with  $Z = x_r + x_{r+1} +\cdots$.
The action of the other arrows originating at quiver vertex  $r$ are given by
\begin{align}
\label{ep Ti Aqt action v3}
& T_i \spa f =  s_i f + (1-t) x_i \frac{f - s_i f}{x_i - x_{i+1}}, \\
& d_+^\PP \spa f =  -T_1 \cdots T_r \, (x_{r+1} \, \iota_r (f)), \\
& d_+^{* \PP} f =  \tilde{\gamma} \, \iota_r(f),
\end{align}
where $f= f(\xx) = f(x_1,x_2,\dots) \in \P(r)$,  $i \in [r-1]$, and
$\tilde{\gamma} \colon \P(r+1) \to \P(r+1)$ is the operator given by  $\tilde{\gamma} (g(\xx))
= g(x_2,\ldots,x_{r+1},q x_1, x_{r+2}, x_{r+3}, \dots)$.
\end{prop}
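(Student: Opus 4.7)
The plan is to observe that the existence of the representation is automatic: transporting Mellit's action on $V_*$ via the isomorphisms $\PP_r$ defines operators on $(\P(r))_{r \geq 0}$ satisfying the $\AA_{t,q}$ relations, so the content of the proposition is purely the identification of these conjugated operators with the explicit formulas stated. Each verification is a direct computation governed by a single plethystic dictionary.

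The dictionary is this: under $\PP_r$, the variables $y_1, \ldots, y_r$ correspond to $x_1, \ldots, x_r$, and the plethystic alphabet $W/(t-1)$ in $V_r$ corresponds to the tail $x_{r+1} + x_{r+2} + \cdots$ in $\P(r)$. The formula for $T_i$ is immediate: Mellit's $T_i$ is the Demazure-Lusztig operator in $y_1, \ldots, y_r$ with $W$ held fixed, and under the dictionary this transports to the same operator in $x_1, \ldots, x_r$ with the tail alphabet fixed. The key preparatory step for $d_+^\PP$ and $d_+^{*\PP}$ is to show that the substitution $F \mapsto F[y_1,\ldots,y_r; W+(t-1)y_{r+1}]$ appearing in Mellit's formulas corresponds to the inclusion $\iota_r \colon \P(r) \hookrightarrow \P(r+1)$. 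Indeed, on a generator $f = x_1^{\eta_1}\cdots x_r^{\eta_r}\, g[x_{r+1}+\cdots]$ one computes $\PP_r(f)[y_1,\ldots,y_r; W+(t-1)y_{r+1}] = y_1^{\eta_1}\cdots y_r^{\eta_r}\, g[W/(t-1) + y_{r+1}]$, and in $V_{r+1}$ the alphabet $W/(t-1) + y_{r+1}$ corresponds precisely to $x_{r+1}+x_{r+2}+\cdots$, so this equals $\PP_{r+1}(\iota_r f)$.

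Given this, the $d_+^\PP$ formula follows by chasing definitions: the factor $y_{r+1}$ transports to $x_{r+1}$ and the $T_i$'s are preserved, yielding $-T_1\cdots T_r(x_{r+1}\iota_r f)$. For $d_+^{*\PP}$, one checks that the shift $\gamma$ on $V_{r+1}$---cycling $y_1 \to y_2 \to \cdots \to y_{r+1} \to qy_1$ while fixing $W$---transports under $\PP_{r+1}$ to the cycle $x_1 \to x_2 \to \cdots \to x_{r+1} \to qx_1$ fixing $x_{r+2}+x_{r+3}+\cdots$, which is exactly $\tilde\gamma$; hence $d_+^{*\PP} f = \tilde\gamma\, \iota_r(f)$.

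The main work will be in the $d_-^\PP$ formula, which is the point at which Jing's vertex operator enters and which involves both a plethystic substitution and a constant-term extraction. Applying \eqref{e d- def} to $\PP_r(f) = y_1^{\eta_1}\cdots y_r^{\eta_r}\, g[W/(t-1)]$ and using $g[(W-(t-1)y_r)/(t-1)] = g[W/(t-1) - y_r]$ gives
$$
d_-\,\PP_r(f) = y_1^{\eta_1}\cdots y_{r-1}^{\eta_{r-1}}\,\langle y_r^0\rangle\, y_r^{\eta_r}\, g[W/(t-1) - y_r]\,\Omega[-y_r^{-1}W].
$$
Setting $u = y_r$ and rewriting $\Omega[-y_r^{-1}W] = \Omega[(1-t)y_r^{-1}\cdot W/(t-1)]$ identifies the $\langle y_r^0\rangle$-extraction as $\mathcal{B}_{\eta_r} g[Z]$ evaluated at $Z = W/(t-1)$, by the defining formula \eqref{e Jing}. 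Applying $\PP_{r-1}^{-1}$, under which $W/(t-1)$ now corresponds to $x_r + x_{r+1} + \cdots$ (the tail alphabet of $\P(r-1)$), yields the stated formula with $\mathcal{B}_{\eta_r}$ acting on $g[x_r + x_{r+1} + \cdots]$.
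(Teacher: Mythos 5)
Your proof is correct and takes the approach the paper intends: the paper does not write out the computations, stating only that the formulas are ``readily calculated from Proposition \ref{p Mellit action}'' and citing \cite[\S7.5]{IonWu}, and your write-up supplies exactly those calculations. The key observations you isolate — that the substitution $F \mapsto F[y_1,\ldots,y_r;\, W+(t-1)y_{r+1}]$ transports to $\iota_r$ under $\PP_r$, and that $\Omega[-y_r^{-1}W] = \Omega[(1-t)y_r^{-1}\cdot W/(t-1)]$ identifies the constant-term extraction with Jing's operator at the alphabet $Z = W/(t-1)$ — are precisely what makes the transport of \eqref{e d+ def}, \eqref{e d+* def}, and \eqref{e d- def} come out to the stated formulas.
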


\subsection{Ion-Wu's  \texorpdfstring{$+$}{+} stable limit DAHA}
\label{ss IonWu DAHA}
In \cite{IonWu}, Ion and Wu propose two ways to make precise the idea of a limiting object of the $\H_n$ as  $n \to \infty$.
The version needed for this paper is a kind of  $t$-adic limit of the positive subalgebras $\H_n^+
\defeq \langle \X_1,\dots, \X_n, \Y_1,\dots, \Y_n, T_1, \dots, T_{n-1} \rangle \subseteq \H_n$.
Its presentation is given as follows.

\begin{defn}[{\cite[Definition 4.1]{IonWu}}]
\label{d H+}
The \emph{$+$ stable limit DAHA}
\(\H^+\) is the \(\QQ(q,t)\)-algebra with generators \(T_i, \X_i\),
and \(\Y_i\) for all \(i \geq 1\), satisfying the following relations:
\begin{gather*}
  T_i T_j = T_j T_i\,,\ |i-j| > 1 \,, \\
  T_i T_{i+1} T_i = T_{i+1} T_i T_{i+1} \,,\ i \geq 1\,, \\
  (T_i-1)(T_i+t) = 0 \,, \ i \geq 1\,,\\[.6mm]
  t\, T_i^{-1} \X_i T_i^{-1} = \X_{i+1}\,, \ i \geq 1\,,\\
  T_i \X_j = \X_j T_i\,,\ j \neq i,i+1\,, \\
  \X_i \X_j = \X_j \X_i\,,\ i,j \geq 1\,,\\[.6mm]
  t^{-1} T_i \Y_i T_i = \Y_{i+1}\,,\ i \geq 1\,,\\
  T_i \Y_j = \Y_j T_i\,,\ j \neq i,i+1\,,\\
  \Y_i \Y_j = \Y_j \Y_i\,,\ i,j \geq 1\,,\\[.6mm]
  \Y_1 T_1 \X_1 = \X_2 \Y_1 T_1 \,.
\end{gather*}
\end{defn}

As in \S\ref{s intro}, the space of \emph{almost symmetric functions}  $\Pas$ is
\begin{align}
  \Pas = \bigcup_{r \ge 0}  \P(r) \ \subseteq \, \QQ(q,t)[[x_1,\dots]].
\end{align}

It is shown in \cite{IonWu} that
there is an action of  $\H^+$ on  $\Pas$
 in which
$\X_i$ acts by multiplication by $x_i$,
$T_i$ acts on  $f(\xx) \in \Pas$ by\footnote{The same formula as in \eqref{ep Ti Aqt action v3}, but there the meaning is slightly different since  $T_i \in \AA_{t,q}$ stands for a different loop at each quiver vertex  $r > i$.}
\begin{align}
\label{e Ti IonWu}
T_i f &=  s_i f + (1-t) \spa x_i \, \frac{f - s_i f}{x_i - x_{i+1}} \, ,
\end{align}
and the action of $\YIW_i$ is defined as follows: first define operators on  $\QQ(q,t)[\xx_N]$ by
\begin{align}
\Phi \spa g &= g(x_2, \dots, x_N, q \spa x_1), \\
\widehat{x}_1 \spa g &= g(0,x_2,\dots, x_n).
\end{align}
Then the action of $\YIW_i$ on  $f(\xx) \in \Pas$ is given by
\begin{align}
\label{e y IW}
\YIW_i \spa f(\xx) = \lim_{N \to \infty}  t^{N-i+1}  \spa T_{i-1} \cdots T_1 \spa (1- \widehat{x}_1) \spa \Phi  \spa T_{N-1}^{-1} \cdots T_i^{-1} f[\xx_N],
\end{align}
where the sequence converges $t$-adically to the limit.

For context, this action can be viewed as a limit of the action of  $\H_n^+$ on  $\QQ(q,t)[\xx_N]$,
which, in the conventions of \cite{IonWu}, is given as follows: $\X_i$ acts
by multiplication by $x_i$,
$T_i$ by \eqref{e Ti IonWu}, and $\Y_i \in \H_n^+$ by  $t^{-i+1} T_{i-1} \cdots T_1 \spa  \X_1  \spa \Phi  \spa T_{N-1}^{-1} \cdots T_i^{-1}$.

Recall from \eqref{e Pr def} the isomorphism
$\PP_r \colon \P(r) \xrightarrow{\ \cong \ } V_r$ given by
$x_1^{\eta_1}\cdots x_r^{\eta_r} g[x_{r+1}+ \cdots ] \mapsto y_1^{\eta_1}\cdots y_r^{\eta_r} g[W/(t-1)]$.

\begin{thm}[{\cite[Theorems 7.13 and 7.14]{IonWu}}]
\label{t IW two actions}
The action of  $\X_i, \YIW_i, T_j \in \H^+$ on  $\P(r)$ and $y_i, z_i, T_j \in \AA_{t,q}$ on  $V_r$, for  $1 \le i \le r$,  $1 \le j <r$,
are related via  $\PP_r$ as follows:
\begin{align}
\X_i = \PP^{-1}_r y_i \PP_r, \  \
\YIW_i = \PP^{-1}_r z_i \PP_r, \ \
T_j = \PP^{-1}_r T_j \PP_r.
\end{align}
In other words, the following diagrams commute:
\[
\begin{tikzcd}
  \P(r) \ar[r,"\X_i"] \ar[d,"\PP_r"] & \P(r) \ar[d,"\PP_r"]\\
  V_r \ar[r,"y_i"] & V_r
\end{tikzcd}
\quad \quad
\begin{tikzcd}
  \P(r) \ar[r,"\YIW_i"] \ar[d,"\PP_r"] & \P(r) \ar[d,"\PP_r"]\\
  V_r \ar[r,"z_i"] & V_r
\end{tikzcd}
\quad \quad
\begin{tikzcd}
  \P(r) \ar[r,"T_j"] \ar[d,"\PP_r"] & \P(r) \ar[d,"\PP_r"]\\
  V_r \ar[r,"T_j"] & V_r
\end{tikzcd}.
\]
\end{thm}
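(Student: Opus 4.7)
The theorem asserts three intertwining relations via $\PP_r$, which I would dispatch separately. For $T_j$ with $j < r$, I would directly compare the formula \eqref{e Ti IonWu} defining the $\H^+$-action with \eqref{ep Ti Aqt action v3}, the transport of Mellit's formula \eqref{e Ti def 1} through $\PP_r$. Both are Demazure--Lusztig operators acting only on the variables $x_j, x_{j+1}$ (resp.\ $y_j, y_{j+1}$). Since $\PP_r$ sends $x_j^{\eta_j} \mapsto y_j^{\eta_j}$ for $j \leq r$ and commutes with any operator that fixes the symmetric alphabet $x_{r+1}, x_{r+2}, \ldots$, the identity $T_j = \PP_r^{-1} T_j \PP_r$ is immediate. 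For the $\X_i$ identity, the last sentence of Proposition \ref{p Mellit action} gives that $y_i \in \AA_{t,q}$ acts on $V_r$ as multiplication by $y_i$; under $\PP_r$ this pulls back to multiplication by $x_i$ on $\P(r)$, matching the action of $\X_i$.

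The substantive identity is $\YIW_i = \PP_r^{-1} z_i \PP_r$. My first step is to reduce to the case $i = 1$: the defining relation $t^{-1} T_i \YIW_i T_i = \YIW_{i+1}$ in $\H^+$ parallels \eqref{e zi def 2}, $z_{i+1} = t^{-1} T_i z_i T_i$, so combined with the already-established intertwining of $T_i$, an induction reduces the task to showing $\YIW_1 = \PP_r^{-1} z_1 \PP_r$. For this, I would unravel $z_1 = \tfrac{t^r}{1-t}[d_+^*, d_-] T_{r-1}^{-1} \cdots T_1^{-1}$ by transporting through $\PP_r$ and using the Jing vertex operator formula \eqref{e d-PP def 2} for $d_-^\PP$ together with the cyclic-shift formula $d_+^{*\PP} f = \tilde{\gamma} \, \iota_r(f)$ from Proposition \ref{p Aqt action v3}. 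This yields a closed-form operator on $\P(r)$ built from the plethystic transforms of the commutator $[d_+^{*\PP}, d_-^\PP]$.

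The main obstacle is comparing this closed-form operator to the Ion--Wu definition \eqref{e y IW} of $\YIW_1$ as a $t$-adic limit of Cherednik-type operators on $\QQ(q,t)[\xx_N]$. The critical point is recognizing $d_-^\PP$ as a stable limit, in the sense of \S\ref{ss convergence}, of the operation $(1 - \widehat{x}_1) \, \Phi \, T_{N-1}^{-1} \cdots T_1^{-1}$ that appears inside \eqref{e y IW}: the long product of inverse Hecke operators twists asymptotically into Jing's Hall--Littlewood vertex operator $\mathcal{B}_n$, while the factor $1/(t-1)$ in the plethystic substitution $g[x_{r+1}+\cdots] \mapsto g[W/(t-1)]$ built into $\PP_r$ absorbs the $(1-t)$ normalization present in \eqref{e Jing}. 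Establishing this limit identification rigorously is the technical heart of the argument and requires careful $t$-adic expansion tracking orders and stability properties of long Hecke words. Once this is in place, the $d_+^*$ factor in $z_1$ pairs with the $\Phi$-rotation in \eqref{e y IW}, and the prefactor $t^r/(1-t)$ together with the trailing $T_{r-1}^{-1} \cdots T_1^{-1}$ matches the power of $t$ and the initial Hecke product in \eqref{e y IW}, completing the identification.
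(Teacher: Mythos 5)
The paper does not prove this statement: it is quoted verbatim from Ion--Wu (their Theorems 7.13--7.14), as the theorem header indicates, so there is no internal proof to compare against. Your sketch of the $T_j$ and $\X_i$ intertwinings is correct and essentially trivial as you say, and the reduction of the $\YIW_i$ case to $i=1$ via the parallel recursions $t^{-1}T_i\YIW_i T_i = \YIW_{i+1}$ and $z_{i+1}=t^{-1}T_i z_i T_i$ is the natural structural move once the $T_j$ intertwining (for $j<r$) is in hand.

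The genuine gap is the $i=1$ case. You explicitly defer it: "Establishing this limit identification rigorously is the technical heart of the argument and requires careful $t$-adic expansion tracking orders and stability properties of long Hecke words." That sentence describes what needs to be done without doing it, and the description of what should match what is also not quite right. You propose to recognize $d_-^\PP$ as a stable limit of $(1-\widehat{x}_1)\,\Phi\,T_{N-1}^{-1}\cdots T_1^{-1}$, but that entire expression (together with the $t^{N}$ prefactor) is all of $\YIW_1$, not $d_-^\PP$. What has to be shown is that $\PP_r^{-1}\bigl(\tfrac{t^r}{1-t}[d_+^*,d_-]T_{r-1}^{-1}\cdots T_1^{-1}\bigr)\PP_r$ equals $\YIW_1$; the finite Hecke product $T_{r-1}^{-1}\cdots T_1^{-1}$ cancels against the corresponding leftmost portion of the long product in \eqref{e y IW}, the cyclic shift $d_+^{*\PP}=\tilde\gamma\,\iota_r$ has to be lined up against $\Phi$, and it is the \emph{remaining} truncated Hecke product $T_{N-1}^{-1}\cdots T_r^{-1}$ together with $(1-\widehat{x}_1)$ and the power of $t$ that must be shown to produce the commutator with $d_-^\PP$ in the $t$-adic limit --- with the commutator structure $[d_+^*,d_-]$, rather than a simple product, being the delicate point. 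As written, your sketch conflates the pieces and does not isolate where the $(1-t)$ normalization, the $t^N$ prefactor, and the commutator bracket come from; carrying this out is precisely the content of Ion--Wu's Theorems 7.13--7.14 and would need to be reproduced in full for your proposal to count as a proof.
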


\begin{remark}
(i)
The chief advantage of the  $\H^+$ action over that of $\AA_{t,q}$ (in Proposition \ref{p Aqt action v3}) is that
$z_i \in \AA_{t,q}$ stands for many elements, one for each quiver
vertex  $r$ with  $r\ge i$, each of which acts separately on the different spaces $(V_r)_{r \ge i}$,
whereas their counterpart $\YIW_i$ in $\H^+$ is a single element which acts on all of  $\Pas$ uniformly (and similarly for other generators).
In particular, this means that
$\YIW_i \iota_r = \iota_r \YIW_i$ for the inclusion  $\iota_r \colon \P(r) \to \P(r+1)$, whereas the corresponding property
$\PP_{r+1}^{-1} z_i\PP_{r+1} \iota_r = \iota_r \PP_{r}^{-1} z_i\PP_{r}$ is not immediate
from the definition \eqref{e zi def 1}--\eqref{e zi def 2}.

(ii) Theorem \ref{t IW two actions} can be thought of informally as saying that
$\H^+$ is a subalgebra of  $\AA_{t,q}$, but this is not technically true because of the inherent difference between these actions just discussed in (i).
\end{remark}

\section{Nonsymmetric plethysm, Weyl and Hecke symmetrization, and \texorpdfstring{$d_-$}{d\_-}}
\label{s nspleth}

In this section we define stable versions,  $\hsym_r$ and  $\Weyl_r$, of Hecke and Weyl symmetrization,
recall a result of Bechtloff Weising \cite{BechtloffWeising23} that $\hsym_r = d_-^\PP$, and prove that  $d_-^\PP$ conjugated by nonsymmetric plethysm is equal to  $\Weyl_r$.
The latter was proved in \cite[\S 7.8]{BHMPS-nspleth}, but the proof goes through several
 results involving flagged LLT polynomials, and
it seems worthwhile to give a short self-contained proof.

\subsection{Weyl and Hecke symmetrization}
\label{ss Weyl and Hecke sym}

Following \cite[\S 7.8]{BHMPS-nspleth},
define a stable version of Weyl symmetrization $\Weyl_{r} : \P(r) \to \P(r-1)$ by
\begin{align}
\label{e Weylr strong converge} \Weyl_{r} f(\xx) = \lim_{N \to
\infty} \pi_{x_r, x_{r+1},\dots,x_N} f[\xx_N],
\end{align}
where $\pi_{x_{r}, \dots,x_N}$ is the operator  $\pi_w$ (see \S \ref{ss atom pos})
for  $w$ the longest permutation in
the Young subgroup $\SS_{(1^{r-1}; N-r+1)}$ of the symmetric group $\SS _{N}$ which permutes the last $N-r+1$ indices.
By \cite[\S 7.7]{BHMPS-nspleth}, the sequence on the right hand side converges strongly to a
well-defined limit.

Following \cite{BechtloffWeising23}, define the partial Hecke symmetrizer   $\hsym_{r+1}^N \colon \QQ(q,t)[\xx_N] \to \QQ(q,t)[\xx_N]$ by
\begin{equation}\label{e:Hecke-symmetrizer}
\hsym_{r+1}^N = \frac{1}{[N-r]_t !} \sum _{w\in \SS_{(1^r; N-r)}} t^{{\binom{N-r}{2}} - \ell(w)} T_{w},
\end{equation}
where $T_w = T_{i_1} T_{i_2}\cdots T_{i_k}$
for any reduced expression $w=s_{i_1}s_{i_2}\cdots s_{i_k}$ with $T_i$ acting as in \eqref{e Ti IonWu},
and  $[N-r]_t ! = \prod_{k = 1}^{N-r} [k]_t$ for $[k]_t = \frac{1-t^{k}}{1-t}$.
The operator  $\hsym_{r+1}^N$ is normalized so that $\hsym_{r+1}^N \, g(\xx_N) =g(\xx_N)$ for any polynomial $g(\xx_N)$ symmetric in  $x_{r+1}, \dots, x_N$.

Partial Hecke symmetrization gives rise to an operator $\hsym_{r} \colon \P(r) \to \P(r-1)$ defined on any  $f(\xx)\in \P(r)$ by
\begin{align}
\label{e H sym converge} \hsym_{r} f(\xx) = \lim_{N \to
\infty} \hsym_{r}^N f[\xx_N],
\end{align}
where, as shown in \cite[\S 4.3]{BechtloffWeising23}, the right
hand side converges $t$-adically to a well-defined limit.

\begin{prop}[{\cite[Proposition 4.25]{BechtloffWeising23}}]
\label{p: d- and hsym}
The operator $\hsym_r \colon \P(r) \to \P(r-1)$ is the same as $d_-^\PP \colon \P(r) \to \P(r-1)$ defined in \S\ref{ss two actions}.
\end{prop}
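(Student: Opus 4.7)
The plan is to check the identity on the spanning set of $\P(r)$ given by elements $f = x_1^{\eta_1}\cdots x_r^{\eta_r} \, g(x_{r+1},x_{r+2},\dots)$ with $g \in \Lambda$, since by \eqref{e d-PP def 2} the right-hand side has the explicit form
\[
d_-^{\PP}(f) \; = \; x_1^{\eta_1}\cdots x_{r-1}^{\eta_{r-1}} \, \mathcal{B}_{\eta_r}\bigl(g[x_r + x_{r+1}+\cdots]\bigr).
\]
The operator $\hsym_r^N$ involves only $T_r, T_{r+1}, \ldots, T_{N-1}$, each of which commutes with multiplication by $x_j$ for $j < r$. So $\hsym_r$ likewise factors out the prefix $x_1^{\eta_1}\cdots x_{r-1}^{\eta_{r-1}}$, reducing the proposition to the single identity
\[
\hsym_r\bigl(x_r^n \, g(x_{r+1},x_{r+2},\dots)\bigr) \; = \; \mathcal{B}_n\bigl(g[x_r + x_{r+1}+\cdots]\bigr), \quad n \in \NN, \ g \in \Lambda.
\]

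\textbf{Core computation.} I would fix $N \gg r$ and expand $\hsym_r^N(x_r^n g)$ using the definition \eqref{e:Hecke-symmetrizer}. Because each $T_i$ for $i > r$ fixes $g$ (which is symmetric in $x_{r+1},\dots,x_N$), the sum over $\SS_{\{r,\dots,N\}}$ collapses, upon grouping by minimal-length coset representatives of $\SS_{\{r+1,\dots,N\}}$, into a sum of only $N-r+1$ nontrivial terms, indexed by the position into which $x_r$ is permuted. After simplification using the Hecke relations, the resulting expression is the finite-alphabet residue formula that underlies Jing's vertex operator \eqref{e Jing}; taking the $t$-adic limit $N \to \infty$ produces $\mathcal{B}_n(g[x_r + x_{r+1} + \cdots])$ on the nose, which would give the desired identity.

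\textbf{Main obstacle.} The subtle part is verifying that the $t$-adic limit of the finite-$N$ Hecke-symmetrized expression really matches Jing's plethystic form $\mathcal{B}_n f(Z) = \langle u^0\rangle u^n f[Z-u]\,\Omega[(1-t)u^{-1}Z]$, with the correct normalization $t^{\binom{N-r+1}{2}-\ell(w)}/[N-r+1]_t!$ from \eqref{e:Hecke-symmetrizer}. A clean workaround is to reduce further by linearity and check the identity on the power-sum basis $g = p_\lambda$, for which $\mathcal{B}_n$ has a transparent action derivable from the exponential $\Omega[(1-t)u^{-1}Z] = \exp\sum_k (1-t^k) u^{-k} p_k/k$, while the Hecke symmetrization of $x_r^n p_\lambda$ can be unpacked directly. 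Alternatively, one may invoke Macdonald's classical development of Hall--Littlewood polynomials, which already provides the residue formula identifying $\hsym_r^N$ applied to $x_r^n$ times a symmetric function with a finite-variable Hall--Littlewood vertex whose stable limit is Jing's operator.
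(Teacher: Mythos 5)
The paper does not prove this proposition; it quotes it as Proposition 4.25 of Bechtloff Weising with no in-house argument, so there is no internal proof to compare against.

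On its own terms, your reduction step is sound and the strategy is plausible. The operator $\hsym_r^N$ is built only from $T_r, \dots, T_{N-1}$, all of which commute with multiplication by $x_1, \dots, x_{r-1}$, so reducing to $f = x_r^n g(x_{r+1}, \dots)$ is legitimate; and since \eqref{e d-PP def 2} is derived from Mellit's action on $V_r$ and the isomorphism $\PP_r$ alone, not from the present claim, there is no circularity in using it to express the target as $\hsym_r(x_r^n g) = \mathcal{B}_n(g[x_r + x_{r+1} + \cdots])$. Your coset decomposition is also the right structural move: since $T_i(x_r^n g) = x_r^n g$ for $i \geq r+1$, the inner sum over $\SS_{(1^r;N-r)}$ collapses and one is left with $N-r+1$ terms indexed by minimal-length coset representatives $c_j$.

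The gap is that this is exactly where the proof would have to begin, and you stop there. One must explicitly evaluate $T_{c_j}(x_r^n g)$ for each $c_j$, combine with the normalization $t^{\binom{N-r+1}{2}-\ell(w)}/[N-r+1]_t!$, and verify that the resulting finite-$N$ expression converges $t$-adically to the $\langle u^0 \rangle$-residue in the definition \eqref{e Jing} of $\mathcal{B}_n$. You flag this as the ``main obstacle,'' but neither proposed workaround is carried through: the power-sum check still requires the same coset computation applied to each $p_\lambda$, and the appeal to Macdonald's Hall--Littlewood development is a pointer rather than a calculation. The $t$-adic convergence is also not a formality — it requires tracking that the coset terms for large $j$ actually vanish modulo $(\QQ(q,t)[\xx_N])_e$, and that the surviving part stabilizes to the claimed limit, which is substantive bookkeeping. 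What you have is a viable outline, not a proof.
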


\begin{remark}
\label{rem: hsym and d-}
We will also make use of Proposition \ref{p: d- and hsym} as a notational tool:
since $d_-^\PP \in \AA_{t,q}$ actually stands for the different operators  $d_-^\PP : \P(r) \to \P(r-1)$ for each $r$,
it is sometimes useful to specify the  $r$ in the notation. We adopt the convention of using $\hsym_r$
when this is the case.
\end{remark}

\begin{remark}
(i) The version of the Demazure-Lusztig operators $T_{i}$ used here agrees
with \cite{BechtloffWeising23, CarMel, IonWu, Mellit16}.  In terms of the
notation here, the Demazure-Lusztig operators in \cite{BHMPS-nspleth, HagHaiLo08} are $t\, T_{i}^{-1}$.

(ii) The Hecke symmetrizer  $\hsym_{r+1}^N$ and its stable version $\hsym_{r}\colon \P(r) \to \P(r-1)$ agree with those in \cite[\S5.1 and \S7.8]{BHMPS-nspleth} and \cite[\S4.3]{BechtloffWeising23}, except that the latter reference defines a version of $\hsym_r$ on a larger domain, from
 $\Pas$ to  $\P(r-1)$.
\end{remark}

\subsection{The nonsymmetric plethysm conjugate of  \texorpdfstring{$d_-^\PP$}{dᴾ\_-}}
In this subsection, we
prove the operator identity $\Pisf_{r-1} \, d_-^\PP \, \Pisf_r^{-1} = \Weyl_{r}$.
The proof uses an alternative description---Theorem \ref{t nspleth on
flagh}, below---of the $r$-nonsymmetric plethysm map $\Pisf_r$ defined
in \eqref{e: nspleth def}, in terms of flagged versions of the complete
homogeneous symmetric functions $h_\lambda(\xx)$.

We continue to use the notation $\xx_r = x_1,\dots, x_r$ and $\xx = x_1,x_2, \cdots$;
recall that we interpret these as $\xx_r = x_1 + \cdots + x_r$ and  $\xx = x_1 + x_2 + \cdots$
when they are used in a plethystic formula.
 For \(\eta \in \NN^r\), set
\begin{align}
\flagh_\eta(\xx_r) = h_{\eta_1}(x_1) h_{\eta_2}(x_1,x_2) \cdots h_{\eta_r}(x_1,x_2,\ldots,x_r).
\end{align}
As \((\eta|\lambda)\) ranges over \(\NN^r \times \Par\), the elements
\begin{align}
  \label{e: stable-flag-h-def}
  \flagh_\eta(\xx_r)h_\lambda(\xx)
\end{align}
form a basis for  $\P(r)$.
Additionally, using plethystic notation, set
\begin{equation}
\flagh^\pm_\eta(\xx_r) = h_{\eta_1}(x_1)
h_{\eta_2}[x_2+(1-t)x_1]
h_{\eta_2}[x_3+(1-t)(x_1+x_2)]
\cdots
h_{\eta_r}[x_r+(1-t)(\xx_{r-1})]\,.
\end{equation}
Then, as  $(\eta | \lambda)$ ranges over  $\NN^r \times \Par$, the elements
\begin{align}
  \label{e: stable-sign-flag-h-def}
  \flagh^\pm_\eta(\xx_r)h_\lambda[(1-t)\xx]
\end{align}
 also form a basis for  $\P(r)$.

\begin{thm}[{\cite{BHMPS-nspleth}}]
\label{t nspleth on flagh}
The $r$-nonsymmetric plethysm map  $\Pisf_r \colon \P(r) \to \P(r)$
acts on the basis in \eqref{e: stable-sign-flag-h-def} as follows:
\begin{align}
\Pisf_r \big( \spa \flagh^\pm_\eta(\xx_r)h_\lambda[(1-t)\xx] \spa\big)
=\flagh_\eta(\xx_r)h_\lambda(\xx).
\end{align}
\end{thm}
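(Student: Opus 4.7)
The plan is to reduce Theorem \ref{t nspleth on flagh} to an identity about the non-symmetric factor alone and then verify the latter by induction. Applying the definition \eqref{e: nspleth def} of $\Pisf_r$ to the product $\flagh^\pm_\eta(\xx_r) \cdot h_\lambda[(1-t)\xx]$, where $\flagh^\pm_\eta(\xx_r) \in \QQ(q,t)[x_1,\dots,x_r]$ and the factor $h_\lambda[(1-t)\xx]$ is symmetric in $\xx$, the symmetric piece becomes $h_\lambda[(1-t)\xx/(1-t)] = h_\lambda(\xx)$. The claim therefore reduces to the master identity
\begin{equation*}
\pol\Big(\frac{\flagh^\pm_\eta(\xx_r)}{\prod_{1 \le i<j\le r}(1-t x_i/x_j)}\Big) = \flagh_\eta(\xx_r).
\end{equation*}

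To attack this, I would proceed by induction on $r$. The base case $r=1$ is trivial, since the product of $(1-tx_i/x_j)$ factors is empty and both sides equal $x_1^{\eta_1}$. For the inductive step, write $\flagh^\pm_\eta(\xx_r) = \flagh^\pm_{\eta'}(\xx_{r-1}) \cdot h_{\eta_r}[x_r + (1-t)\xx_{r-1}]$ with $\eta' = (\eta_1,\dots,\eta_{r-1})$, and $\prod_{i<j \le r}(1-tx_i/x_j) = \prod_{i<j \le r-1}(1-tx_i/x_j) \cdot \prod_{i<r}(1-tx_i/x_r)$. Combining the inductive hypothesis for the $\xx_{r-1}$-part with a one-variable identity controlling the effect of $h_{\eta_r}[x_r + (1-t)\xx_{r-1}]/\prod_{i<r}(1-tx_i/x_r)$ under $\pol$ should close the induction. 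Packaging all $\eta \in \NN^r$ into one generating function via
\begin{equation*}
\sum_{\eta} \flagh^\pm_\eta(\xx_r) z^\eta = \prod_{i=1}^r \frac{\prod_{j<i}(1-tz_i x_j)}{\prod_{j \le i}(1-z_i x_j)}, \qquad \sum_{\eta} \flagh_\eta(\xx_r) z^\eta = \prod_{i=1}^r \frac{1}{\prod_{j \le i}(1-z_i x_j)},
\end{equation*}
consolidates the infinitely many cases into a single master identity between rational functions and can streamline the bookkeeping.

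The main technical obstacle is the subtle behavior of $\pol$: it is defined via the Demazure character basis $\{\Dcal_\alpha\}_{\alpha \in \ZZ^r}$, not via direct monomial truncation, so a Laurent monomial $\xx^\alpha$ with some negative entries need not be killed. For example, for $r=2$, $\pol(x_2/x_1) = -1$, since $x_2/x_1 = \Dcal_{(-1,1)} - \Dcal_{(0,0)} - \Dcal_{(1,-1)}$ and only $\Dcal_{(0,0)}$ lies in $\NN^2$. Consequently $\pol$ does not commute with multiplication, so the inductive argument cannot simply pull the $\xx_{r-1}$-piece outside of $\pol$. Overcoming this requires carefully expanding each Laurent monomial arising from the geometric expansion of each $1/(1-tx_i/x_r)$ in the Demazure basis, then showing that the contributions from Demazure characters $\Dcal_\beta$ with $\beta \notin \NN^r$ cancel, leaving exactly $\flagh_\eta(\xx_r)$. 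A conceptually cleaner alternative is to employ a residue-type formula for $\pol$, or a characterization via Demazure-Lusztig operators, which organizes the combinatorics of these cancellations more systematically.
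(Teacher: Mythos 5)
Your reduction of the theorem to the ``master identity''
\begin{equation*}
\pol\Big(\frac{\flagh^\pm_\eta(\xx_r)}{\prod_{1 \le i<j\le r}(1-t x_i/x_j)}\Big) = \flagh_\eta(\xx_r)
\end{equation*}
is correct: the symmetric factor $h_\lambda[(1-t)\xx]$ is cleanly carried to $h_\lambda(\xx)$ by the $g\mapsto g[\xx/(1-t)]$ part of the definition, and your generating-function packaging of $\flagh_\eta$ and $\flagh^\pm_\eta$ also checks out. You also correctly identify the central obstruction: $\pol$ is defined on the Demazure character basis and is not multiplicative (your example $\pol(x_2/x_1)=-1$ is right), so peeling off the $x_r$-dependence and appealing to the inductive hypothesis does not go through directly.

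The problem is that you stop precisely where the work begins. The master identity is never proved; you say a one-variable identity ``should close the induction'' and then list unexecuted alternatives (expanding Laurent monomials in the $\Dcal_\beta$ basis and exhibiting cancellations, a residue formula for $\pol$, Demazure-Lusztig operators) without carrying out any of them. The inductive step is also less clean than your setup suggests: the factor left after splitting off the last row, $h_{\eta_r}[x_r+(1-t)\xx_{r-1}]/\prod_{i<r}(1-tx_i/x_r)$, involves all of $x_1,\dots,x_r$ in both numerator and denominator, so it is not a ``one-variable'' piece, and it is unclear what the inductive hypothesis for $\xx_{r-1}$ buys without first establishing a nontrivial compatibility of $\pol$ with this decomposition. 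For contrast, the paper itself does not reprove the identity: the theorem is deduced directly from \cite[Definition~4.1.1 and Proposition~4.3.5]{BHMPS-nspleth}, which is where the hard content of your master identity lives. As written, your proposal is an outline that identifies the right reduction and the right obstacle but leaves a genuine gap at exactly the step that carries the theorem.
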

\begin{proof}
This follows from the definition \eqref{e: nspleth def} of  $\Pisf_r$
and \cite[Definition~4.1.1 and Proposition~4.3.5]{BHMPS-nspleth}.
\end{proof}

\begin{lemma}\label{lem:d-minus-P-on-flagged-signed-h}
For any \(\eta \in \NN^r\) and partition \(\lambda\), we have
\begin{equation}
\label{e:d-minus-P-on-flagged-signed-h}
d_-^\PP \big( \spa \flagh_\eta^{\pm}(\xx_r) h_\lambda[(1-t)\xx] \spa \big) = \flagh_{(\eta_1,\ldots,\eta_{r-1})}^{\pm}(\xx_{r-1}) h_{(\eta_r,\lambda)_+}[(1-t)\xx],
\end{equation}
where $(\eta_r,\lambda)_+$ denotes the partition rearrangement of the concatenation of  $\eta_r$ and $\lambda$.
\end{lemma}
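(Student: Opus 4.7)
My plan is to transport the computation to $V_r$ via the isomorphism $\PP_r$ of \eqref{e Pr def}, where the explicit plethystic formula \eqref{e d- def} for $d_-$ makes the argument short. Since $d_-^\PP = \PP_{r-1}^{-1}\spa d_- \spa \PP_r$, it is enough to compute $\PP_r$ of the left-hand side, apply $d_-$, and finally $\PP_{r-1}^{-1}$.

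First I would establish the transfer identity
\[
\PP_r\bigl(\flagh^\pm_\eta(\xx_r)\, h_\lambda[(1-t)\xx]\bigr) = \flagh^\pm_\eta(\yy_r)\, h_\lambda[(1-t)\yy_r - W],
\]
where $\yy_r$ is interpreted as $y_1+\cdots+y_r$ inside the plethystic bracket. Split $h_\lambda[(1-t)\xx] = h_\lambda[(1-t)\xx_r + (1-t)(x_{r+1}+x_{r+2}+\cdots)]$ using $h_n[A+B] = \sum_k h_{n-k}[A]h_k[B]$ on each factor $h_{\lambda_i}$. Each summand is a polynomial in $\xx_r$ times a symmetric function in $x_{r+1},x_{r+2},\dots$, so \eqref{e Pr def} applies termwise. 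Using $h_k[(1-t)(x_{r+1}+\cdots)] \mapsto h_k[(1-t)W/(t-1)] = h_k[-W]$ and reassembling via the same Cauchy expansion with $B=-W$ yields the right-hand side.

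Next I would apply \eqref{e d- def} to $\flagh^\pm_\eta(\yy_r)\,h_\lambda[(1-t)\yy_r - W]$. The substitution $W \mapsto W-(t-1)y_r$ uses the cancellation $(1-t)\yy_r + (t-1)y_r = (1-t)\yy_{r-1}$ to transform $h_\lambda[(1-t)\yy_r - W]$ into $h_\lambda[(1-t)\yy_{r-1} - W]$, while $\flagh^\pm_\eta(\yy_r)$ is unchanged (no $W$-dependence). Writing
\[
\flagh^\pm_\eta(\yy_r) = \flagh^\pm_{(\eta_1,\dots,\eta_{r-1})}(\yy_{r-1})\, h_{\eta_r}[y_r+(1-t)\yy_{r-1}]
\]
and expanding $h_{\eta_r}[y_r + A] = \sum_j y_r^j h_{\eta_r-j}[A]$ with $A = (1-t)\yy_{r-1}$, I would pair the $y_r^j$ against $y_r^{-k}h_k[-W]$ from $\Omega[-y_r^{-1}W] = \sum_k y_r^{-k}h_k[-W]$. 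The residue $\langle y_r^0 \rangle$ selects $j=k$ and gives
\[
\sum_k h_{\eta_r-k}[(1-t)\yy_{r-1}]\,h_k[-W] = h_{\eta_r}[(1-t)\yy_{r-1} - W]
\]
by another Cauchy expansion. Combining with the $h_\lambda[(1-t)\yy_{r-1}-W]$ factor produces $h_{(\eta_r,\lambda)_+}[(1-t)\yy_{r-1}-W]$, since the $h_\mu$'s multiply by concatenating parts.

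Finally, $\PP_{r-1}^{-1}$ applied to $\flagh^\pm_{(\eta_1,\dots,\eta_{r-1})}(\yy_{r-1})\,h_{(\eta_r,\lambda)_+}[(1-t)\yy_{r-1}-W]$ recovers $\flagh^\pm_{(\eta_1,\dots,\eta_{r-1})}(\xx_{r-1})\,h_{(\eta_r,\lambda)_+}[(1-t)\xx]$ by the same splitting argument, run in reverse for $\PP_{r-1}$. The calculation is essentially formal once the splittings are in place; the main bookkeeping point is that $y_r$ must be treated as a scalar in $\Omega[-y_r^{-1}W]$ and in the $W$-substitution in \eqref{e d- def}, which acts only on the $W$-alphabet.
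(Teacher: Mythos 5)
Your proof is correct and is essentially the same residue computation as the paper's, just carried out in $V_r$ via $\PP_r$ (using the formula \eqref{e d- def} for $d_-$) rather than directly on $\P(r)$ (using the formula \eqref{e d-PP def 2} for $d_-^\PP$, which is the $\PP_r$-conjugate). In particular, the transfer identities you establish for $\PP_r$ and $\PP_{r-1}$ are exactly what Proposition~\ref{p Aqt action v3} packages, and your core vertex-operator step --- pairing $\sum_j y_r^j h_{\eta_r-j}[(1-t)\yy_{r-1}]$ against $\sum_k y_r^{-k}h_k[-W]$ under $\langle y_r^0\rangle$ --- matches the paper's plethystic identity $\langle u^0\rangle h_{\eta_r}[u+A]\Omega[u^{-1}B]=h_{\eta_r}[A+B]$.
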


\begin{proof}
We use the description of $d_-^\PP$ in Proposition \ref{p Aqt action v3}.
Since the factor $\flagh_{(\eta_1,\ldots,\eta_{r-1})}^{\pm}(\xx_{r-1})$
of  $\flagh_\eta^{\pm}(\xx_r)$ does not depend on  $x_r$,
\eqref{e:d-minus-P-on-flagged-signed-h} reduces to the case  $\eta_1=\dots = \eta_{r-1} = 0$.
For this case, we let
$Z = x_{r}+x_{r+1}+\cdots$ and compute
\begin{align*}
 & d_-^\PP \, h_{\eta_r}[x_r+(1-t)(\xx_{r-1})] \, h_\lambda[(1-t)\xx] \\
& =  d_-^\PP \, h_{\eta_r}[x_r+(1-t)(\xx_{r-1})] \, h_\lambda[(1-t)(\xx_{r-1}+x_r+ (x_{r+1} + \cdots))] \\
& =
\langle u^0 \rangle  h_{\eta_r}[u+(1-t)(\xx_{r-1})] \, h_\lambda[(1-t)(\xx_{r-1}+u+(Z-u))] \, \Omega[(1-t)u^{-1} Z] \\
& =
h_\lambda[(1-t)(\xx_{r-1}+Z)] \
\langle u^0 \rangle  h_{\eta_r}[u+(1-t)(\xx_{r-1})] \Omega[(1-t)u^{-1} Z] \\
& =
h_\lambda[(1-t)(\xx_{r-1}+Z)] \,
h_{\eta_r}[(1-t)Z+(1-t)(\xx_{r-1})]
\\
& =
h_{(\eta_r, \lambda)_+}[(1-t)\xx],
\end{align*}
where the second to last equality follows from a general plethystic
identity that we now verify:
\begin{multline*}
\langle u^0 \rangle  h_{\eta_r}[u+A] \Omega[u^{-1}B]
=\langle u^0 \rangle
\sum_{d \ge 0} u^d h_{\eta_r-d}[A] \Omega[u^{-1}B]
=\sum_{d \ge 0} h_d[B] h_{\eta_r-d}[A]
= h_{\eta_r}[B+A]. \qedhere
\end{multline*}
\end{proof}

\begin{prop}
\label{p dminus and sym}
We have the following identity of operators  $\P(r) \to \P(r-1)$,
\begin{align}
\label{ep dminus and sym}
\Pisf_{r-1} \, d_-^\PP \, \Pisf_r^{-1} = \Weyl_{r}\, .
\end{align}
\end{prop}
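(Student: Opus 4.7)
The plan is to verify the operator identity on the basis $\{\flagh_\eta(\xx_r) h_\lambda(\xx)\}_{(\eta,\lambda)\in \NN^r\times\Par}$ of $\P(r)$ from \eqref{e: stable-flag-h-def}. Both sides of \eqref{ep dminus and sym} are $\QQ(q,t)$-linear, so agreement on a basis suffices. The reason this basis is natural is that Theorem~\ref{t nspleth on flagh} and Lemma~\ref{lem:d-minus-P-on-flagged-signed-h} together tell us exactly what $\Pisf_{r-1}\, d_-^\PP\, \Pisf_r^{-1}$ does on it, so the remaining task is to compute $\Weyl_r$ on the same basis and confirm the output matches.

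First, I would chain together the two results just mentioned. Applying $\Pisf_r^{-1}$ to $\flagh_\eta(\xx_r) h_\lambda(\xx)$ yields $\flagh^\pm_\eta(\xx_r) h_\lambda[(1-t)\xx]$ by Theorem~\ref{t nspleth on flagh}; then Lemma~\ref{lem:d-minus-P-on-flagged-signed-h} sends this to $\flagh^\pm_{(\eta_1,\ldots,\eta_{r-1})}(\xx_{r-1}) h_{(\eta_r,\lambda)_+}[(1-t)\xx]$; finally a second application of Theorem~\ref{t nspleth on flagh} gives
\begin{equation*}
\Pisf_{r-1}\, d_-^\PP\, \Pisf_r^{-1}\bigl(\flagh_\eta(\xx_r) h_\lambda(\xx)\bigr) = \flagh_{(\eta_1,\ldots,\eta_{r-1})}(\xx_{r-1}) \, h_{(\eta_r,\lambda)_+}(\xx).
\end{equation*}

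Next, I would compute $\Weyl_r\bigl(\flagh_\eta(\xx_r) h_\lambda(\xx)\bigr)$ directly from the definition in \eqref{e Weylr strong converge}. Because $\flagh_{(\eta_1,\ldots,\eta_{r-1})}(\xx_{r-1})$ involves only $x_1,\dots,x_{r-1}$, it commutes with every Demazure operator $\pi_i$ for $i \ge r$, so it factors out of the partial symmetrization. Similarly, since $h_\lambda[\xx_N]$ is symmetric in $x_r,\dots,x_N$, the identity $\pi_i(fg)=g\,\pi_i(f)$ (valid whenever $s_i g = g$) lets it factor out as well. The remaining computation reduces to evaluating $\pi_{x_r,\ldots,x_N}\bigl(h_{\eta_r}(x_1,\dots,x_r)\bigr)$, which via the generating-function identity $h_{\eta_r}(x_1,\dots,x_r) = \sum_{a+b=\eta_r} h_a(x_1,\dots,x_{r-1})\, x_r^b$ and the basic fact $\pi_{x_r,\ldots,x_N}(x_r^b) = h_b(x_r,\dots,x_N)$ yields $h_{\eta_r}(x_1,\dots,x_N)$. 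Taking the strong limit $N\to\infty$ gives $h_{\eta_r}(\xx)$, and multiplying in the factors set aside earlier produces $\flagh_{(\eta_1,\ldots,\eta_{r-1})}(\xx_{r-1})\, h_{(\eta_r,\lambda)_+}(\xx)$, matching the LHS.

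The main obstacle I anticipate is largely bookkeeping rather than conceptual: one must be careful that the strong-convergence definition of $\Weyl_r$ in \eqref{e Weylr strong converge} is compatible with the factorization steps described above, and that the partial symmetrization identity $\pi_i(fg)=g\,\pi_i(f)$ for $s_i g = g$ is correctly invoked with the sign convention used in the paper (namely $(\pi_i-1)(\pi_i)=0$, so $\pi_i$ is idempotent on $s_i$-invariants). Once these checks are in place, the proof reduces to the short computation sketched above, and the identity \eqref{ep dminus and sym} follows by $\QQ(q,t)$-linearity.
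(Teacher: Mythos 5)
Your proposal is correct and matches the paper's proof in structure and substance: both compute on the basis $\flagh_\eta(\xx_r) h_\lambda(\xx)$, both use Theorem~\ref{t nspleth on flagh} and Lemma~\ref{lem:d-minus-P-on-flagged-signed-h} to evaluate the left-hand side, and both then verify the matching formula for $\Weyl_r$. The only (cosmetic) difference is in how the key step $\pi_{x_r,\ldots,x_N}\bigl(h_{\eta_r}(\xx_r)\bigr) = h_{\eta_r}(\xx_N)$ is justified: the paper notes $h_{\eta_r}(\xx_r) = \Dcal_{(0^{r-1},\eta_r)}$ and invokes Demazure character theory, whereas you expand $h_{\eta_r}(x_1,\ldots,x_r)$ via the convolution $h_n[A+B] = \sum_{a+b=n} h_a[A]\,h_b[B]$ and apply $\pi_{x_r,\ldots,x_N}(x_r^b)=h_b(x_r,\ldots,x_N)$ termwise — an equally valid, slightly more hands-on route to the same identity.
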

\begin{proof}
We compute both sides on the basis of elements $\flagh_\eta(\xx_r)  h_\lambda(\xx)$.
Since $h_{\eta_r}(\xx_r)$ is the Demazure character  $\Dcal_{(0^{r-1}, \eta_r)}$, we have
$\pi_{x_r,\dots,x_N} h_{\eta_r}(\xx_r) = h_{\eta_r}(\xx_N)$. Hence,
\begin{equation}
\label{e Weyl sym}
\Weyl_{r} \,
\big( \spa \flagh_\eta(\xx_r)  h_\lambda(\xx) \spa\big)
= \flagh_{(\eta_1,\ldots,\eta_{r-1})}(\xx_{r-1})
  h_{(\eta_r,\lambda)_+}(\xx) \,.
\end{equation}
On the other hand, Theorem \ref{t nspleth on flagh} and Lemma~\ref{lem:d-minus-P-on-flagged-signed-h} give
 the matching identity
\begin{equation}
\Pisf_{r-1} \, d_-^\PP \, \Pisf_r^{-1}
\big( \spa \flagh_\eta(\xx_r)  h_\lambda(\xx) \spa\big)
= \flagh_{(\eta_1,\ldots,\eta_{r-1})}(\xx_{r-1})
  h_{(\eta_r,\lambda)_+}(\xx) \,.   \qedhere
\end{equation}
\end{proof}

\section{Flagged LLT polynomials}
\label{s LLT}

In \cite{BHMPS-nspleth} we
defined flagged versions of signed and unsigned LLT polynomials
and showed that the nonsymmetric plethysm map
takes signed flagged LLT polynomials to unsigned ones.
We begin this section
by stating a version of this result in a special case, using notation for LLT polynomials
in terms of a Dyck path which encodes the attacking pairs,
in line with the treatment of LLT polynomials
by Carlsson-Mellit \cite{CarMel}.
We then show (\S\ref{s:dpa2sllt})
how the methods of \cite{CarMel} yield a formula for signed flagged LLT polynomials (in the above special case)
in terms of the Dyck path algebra.

\subsection{Dyck paths}
\label{ss Dyck path stats}

We recall some definitions related to Dyck paths, following the
conventions in \cite{Mellit16}.  We also use terminology for partial Dyck paths
close to that in \cite{CarMel}.

For Dyck paths and skew shapes, we work with Cartesian coordinates
 and refer to the lattice square with northeast corner  $(c,r)$ as the \emph{box} $(c,r)$,
and we say it is in row  $r$ and column  $c$.

Let  $a,b \in \ZZ_+$.
An \emph{$(a,b)$-Dyck path} is a sequence of north and east lattice steps going from
$(0,0)$ to $(a,b)$ which stays weakly above the \emph{diagonal}, the
line segment from  $(0,0)$ to  $(a,b)$.
For an  $(a,b)$-Dyck path  $\pi$,
let  $\Area(\pi)$
denote the set of full boxes below  $\pi$ and above the diagonal.
Set  $\area(\pi) = |\Area(\pi)|$.

An \emph{$r$-partial  $d$-Dyck path}  $\pi'$ is a sequence of north and east lattice steps going from
$(0,r)$ to $(d,d)$ which stays weakly above the diagonal from  $(0,0)$ to  $(d,d)$.
Let  $\DD_r(d)$ denote the set of $r$-partial  $d$-Dyck paths.
Let  $\pi' \in \DD_r(d)$.
We set  $\Area(\pi') = \Area( \mathsf{N}^{r} \pi')$, where  $\mathsf{N}^{r} \pi' $ is the  $(d,d)$-Dyck path
obtained by adding  $r$ north steps to the southwest end of $\pi'$.
A \emph{corner} of $\pi'$ is a box above  $\pi'$ such that the boxes immediately south and immediately east of it are below  $\pi'$;
a \emph{marking}  $\Sigma$ of $\pi'$ is a subset of the corners of  $\pi'$ and its elements are \emph{marked corners}.
\begin{example}
\label{ex:partial Dyck path}
Shown below is a partial Dyck path $\pi' \in \DD_2(6)$,
with  $|\Area(\pi')| =7$, corners $\{(1,4),(3,5),(4,6)\}$, and a marking  $\Sigma = \{(1,4),(4,6)\}$ indicated by $\star$'s.
\smallskip
\begin{align*}
\raisebox{-1.7cm}{
\begin{tikzpicture}[scale=.4]
\draw[help lines] (0,0) grid (6,6);
\draw[very thin, gray] (0,0) -- (6,6);
\draw[ultra thick] (0,2) -- (0,3) -- (1,3) -- (1,4) -- (2,4) -- (3,4) -- (3,5) -- (4,5) -- (4,6) -- (5,6) --  (6,6);
\node at (0.5, 3.5) {\scriptsize $\star$};
\node at (3.5, 5.5) {\scriptsize $\star$};
\end{tikzpicture}}
\end{align*}
\end{example}

Returning to the case of any $a,b \in \ZZ_+$,
let  $s_- = \frac{b}{a}-\epsilon$ for small  $\epsilon > 0$.
We order north steps by the distance between their southern ends and the line  $y=s_- x$.
Let  $\pi$ be an  $(a, b)$-Dyck path.
The \emph{sweeping reading order} is this order restricted to the north steps of  $\pi$.
We say a north step  $z$ of  $\pi$
\emph{attacks} all the north steps of  $\pi$ which are greater than  $z$
in the sweeping reading order,
but less than the north step just above  $z$ (which need not be a north step of  $\pi$).
Numbering the north steps of $\pi$ in sweeping reading order,
the \emph{attacking Dyck path} of $\pi$ is the unique $(b,b)$-Dyck
path $\pi'$ such that a box $(i,j)$ with $i<j$ belongs to
$\Area(\pi')$ if and only if the north steps numbered $i$ and $j$ form
an attacking pair in $\pi$.  This is illustrated in
\cite[Figure 1 (pg. 22)]{Mellit16} and
Figure \ref{fig:LLT versions}(a, b) below.

\subsection{(Signed) flagged row LLT polynomials}
\label{ss flagged LLT polynomials}

Given  $\pi \in \DD_r(d)$ and a marking $\Sigma$ of  $\pi$,
a {\em $(\pi, \Sigma)$-row semistandard word} is a map
$T\colon [d] \rightarrow \ZZ _{+}$ such that  $T(j) \le T(i)$ for every  $(i,j) \in \Sigma$.
We say $T$ is \emph{flagged} if  $T(1) \le 1, \spa \dots, \, T(r) \le r$. Denote the set of
flagged $(\pi, \Sigma)$-row semistandard words by
$\FSSYT (\pi, \Sigma)$.
An {\em attacking inversion} in $T$ is
an ordered pair $(i,j) \in \Area(\pi)$ such that $T(j)\le T(i)$.
The number of attacking inversions in $T$ is denoted $\inv(T)$.
(Note: this notion of attacking inversion will actually line up with
attacking non-inversions via a bijection with tableaux in \S\ref{ss
Dpath vs shapes llt}.)

\begin{defn}
\label{d flag LLT}
For $\pi \in \DD_r(d)$ and a marking $\Sigma$ of $\pi$,
the associated \emph{flagged row LLT polynomial} is given by
\begin{align}
\Grow_r(\pi, \Sigma)(\xx;t) = \sum_{T \in \FSSYT(\pi, \Sigma)} t^{\inv(T)} \xx^T,
\end{align}
where  $\xx^T = \prod_{i \in [d]} x_{T(i)}$.
\end{defn}

\begin{example}
\label{ex row flag llt}
We illustrate \(\FSSYT(\pi,\Sigma)\) for the partial
Dyck path \(\pi \in \DD_2(4)\) shown below and marking
\(\Sigma=\{(2,4)\}\), indicated with the \(\star\).
The words  $T \in \FSSYT(\pi,\Sigma)$ are drawn with letters
\(T(1)T(2)T(3)T(4)\) written along the diagonal from southwest to
northeast and \(t\)'s labeling the boxes of \(\Area(\pi)\) which are
attacking inversions.
\newcommand{\thisllt}[4]{
\tikz[scale=.32,baseline=.3cm]{
\draw[help lines] (0,0) grid (4,4);
\draw[thick] (0,2)--(1,2)--(1,3)--(2,3)--(2,4)--(4,4);
\node at (1.5, 3.5) {{\tiny $\star$}};
\node at (.5, .5) {\tiny $#1$};
\node at (1.5, 1.5) {\tiny $#2$};
\node at (2.5, 2.5) {\tiny $#3$};
\node at (3.5, 3.5) {\tiny $#4$};
\pgfmathparse{#2<=#1}
\ifnum\pgfmathresult=1
  \node at (.5,1.5) {\tiny \(t\)};
\fi
\pgfmathparse{#3<=#2}
\ifnum\pgfmathresult=1
  \node at (1.5,2.5) {\tiny \(t\)};
\fi
\pgfmathparse{#4<=#3}
\ifnum\pgfmathresult=1
  \node at (2.5,3.5) {\tiny \(t\)};
\fi }
}
\begin{center}
\noindent\begin{tikzpicture}[ampersand replacement=\&]
  \matrix[matrix of math nodes, nodes={anchor=center}] (m) {
    T \in \FSSYT(\pi, \Sigma) \&
    \thisllt{1}{1}{1}{1} \&
    \thisllt{1}{1}{2}{1} \&
    \thisllt{1}{1}{3}{1} \&
    \thisllt{1}{1}{4}{1} \&
    \thisllt{1}{1}{5}{1} \&
    \cdots \\
    t^{\inv(T)} \xx^T
    \& t^3 x_1^4
    \& t^2 x_1^3 x_2
    \& t^2 x_1^3 x_3
    \& t^2 x_1^3 x_4
    \& t^2 x_1^3 x_5
    \\
    \&
    \thisllt{1}{2}{1}{1} \&
    \thisllt{1}{2}{2}{1} \&
    \thisllt{1}{2}{3}{1} \&
    \cdots \&
    \thisllt{1}{2}{1}{2} \&
    \thisllt{1}{2}{2}{2} \&
    \thisllt{1}{2}{3}{2} \&
    \cdots
    \\
    \&
    t^2 x_1^3 x_2 \&
    t^2 x_1^2 x_2^2 \&
    t \spa x_1^2 x_2 x_3 \&
    \&
    t \spa x_1^2 x_2^2 \&
    t^2 x_1 x_2^3 \&
    t \spa x_1 x_2^2 x_3 \\
  };
\end{tikzpicture}
\end{center}
Thus,
\(\Grow_2(\pi;\Sigma)(\xx;t) = t^3 x_1^4 + 2 t^2 x_1^3 x_2 + (t^2+t)
x_1^2 x_2^2 + t^2 x_1 x_2^3 + (t^2 x_1^3 + t x_1^2 x_2 + t x_1 x_2^2) e_1(x_3,x_4,\ldots)\).
\end{example}

We now let  $\Acal$ denote the totally ordered alphabet  $1 < \bar{1} < 2 < \bar{2} \cdots $
consisting of a \emph{positive letter} $v$ for each $v\in \ZZ_+$
and a \emph{negative letter} $\bar{v}$
for each $v \in \ZZ_+$.
Let $|\bar{v}| = v$ for a negative letter  $\bar{v}$ and  $|v| = v$ for a positive letter  $v$.
For letters  $a,b \in \Acal$, we say that  $ab$ is \emph{row-increasing}
if \(a < b\) or  $a$ and  $b$ are equal positive letters;
we say that  $ab$ is \emph{column-decreasing}
if \(a > b\) or  $a$ and  $b$ are equal negative letters.

A {\em $(\pi, \Sigma)$-row super word} is a map
$T\colon [d] \rightarrow \Acal$ such that for every  $(i,j) \in \Sigma$,
 $T(j)T(i)$ is row-increasing.
We say $T$ is \emph{flagged} if  $T(1) \le 1,\spa \dots,\, T(r) \le r$.  Denote the set of
flagged $(\pi, \Sigma)$-row super words by
$\FSSYT^{\pm}(\pi, \Sigma)$.
An {\em attacking inversion} in $T$ is an
ordered pair $(i,j) \in \Area(\pi)$ such that
$T(j)T(i)$ is row-increasing.
The number of attacking inversions in $T$ is denoted $\inv (T)$.

Note that a flagged $(\pi, \Sigma)$-row semistandard word is the same as
a flagged $(\pi, \Sigma)$-row super word with all its letters positive,
and the two notions of attacking inversions are compatible.

\begin{defn}
\label{def:LLT signed rows}
For $\pi \in \DD_r(d)$ and a marking $\Sigma$ of $\pi$,
the associated \emph{signed flagged row LLT polynomial} is given by
\begin{align}
\label{e LLT signed rows}
\Growpm_r(\pi, \Sigma)(\xx;t) = \sum_{T \in \FSSYT^{\pm}(\pi, \Sigma)} t^{\inv(T)} (-t)^{m(T)} \xx^{|T|},
\end{align}
where $m(T)$ denotes the number of negative letters in  $T$,
and $\xx ^{|T|} = \prod_{i \in [d]} x_{|T(i)|}$.
\end{defn}

\subsection{Comparison to the flagged LLT polynomials of \texorpdfstring{\cite{BHMPS-nspleth}}{[7]}}
\label{ss Dpath vs shapes llt}

In \cite{BHMPS-nspleth}, we defined (signed) flagged LLT polynomials
in a general setting, where they are indexed by a permutation  $\sigma$ and a tuple
 $\nubold = (\nu^{(1)}, \dots, \nu^{(p)})$ in which each  $\nu^{(i)}$ is a generalization of skew shape called a ragged-right skew shape.
The (signed) flagged row LLT polynomials defined above are essentially a special case of these, as we now make precise.
To avoid some subtleties arising in the fully general setup of \cite{BHMPS-nspleth},
we will only review this version in a special case, but one which is more than broad enough to include all the flagged LLT polynomials arising in this paper (see Remark \ref{rem:empty rows}).

For \(\alpha, \beta \in \ZZ^l\) with \(\beta_1 \geq \cdots \geq \beta_l\), \(\alpha_1 \geq \cdots \geq \alpha_l\), and \(\alpha_j \leq \beta_j\) for all \(j\), the (French style) \emph{skew shape} \(\beta/\alpha\) is the array of
 boxes whose northeast corners have coordinates \((i,j)\) for \(j = 1,\ldots,l\), and \(\alpha_j < i \leq \beta_j\) (if \(\alpha_j = \beta_j\), then the \(j\)-th row of \(\beta/\alpha\) is empty).

Let $\nubold = (\nu^{(1)}, \dots, \nu^{(p)})$ be a tuple of skew shapes.
We consider the set of boxes of $\nubold$ to be the disjoint union of the boxes 
of each $\nu^{(k)}$, and define 
the {\em adjusted content} of a box $b$
in row $j$, column $i$ of the skew shape $\nu^{(k)}$
to be $\ctild (b) = i-j+k\, \epsilon $, 
where $\epsilon$ is a fixed positive number such that $p\, \epsilon <1$.
The {\em reading order} on $\nubold $ is the total order $<$ on the
boxes of $\nubold $ such that $a<b \Rightarrow \ctild (a)\leq \ctild(b)$ and
it increases from southwest to northeast on boxes of fixed adjusted content.
Let
$\bx{\nubold}{1},\dots,\bx{\nubold}{d}$
denote the boxes of
$\nubold$ in increasing reading order.

Let  $b_1, \dots, b_k$ be the boxes of  $\nubold$ which are on the eastern end of their row and ordered so that
 $b_1 > \dots > b_k$ in reading order.  Let  $r \in \{0,1, \dots, k\}$.
A super tableau on $\nubold $ is a map
$T' \colon \nubold \rightarrow \Acal$,
weakly increasing along rows and
columns, with positive letters increasing strictly on columns and
negative letters increasing strictly on rows.
We say  $T'$ is \emph{flagged} if
$T'(b_1) \le 1$, $T'(b_2) \le 2$, $\ldots,$ $T'(b_r) \le r$ (when  $r=0$, this is no extra condition).

An ordered pair
of boxes $(a,b)$ in $\nubold $ is an {\em attacking pair} if  $0<\ctild (b)-\ctild(a)<1$.
An {\em attacking inversion} in $T'$ is
an attacking pair $(a,b)$ such that $T'(a)T'(b)$ is column-decreasing
(as defined before Definition~\ref{def:LLT signed rows}).
We define $\inv (T')$ to be the number of
attacking inversions in $T'$.

Define
\begin{align}
\label{e nspleth paper Gs0}
\flagGcal_{r, \nubold}(\xx;t) & = \sum_{T'} t^{\inv(T')} \xx^{T'}, \\
\label{e nspleth paper Gs}
\flagGcal^\pm_{r, \nubold}(\xx;t) & = \sum_{T'} t^{\inv(T')} (-t)^{-m(T')} \xx^{|T'|},
\end{align}
where the sum in \eqref{e nspleth paper Gs} is over flagged super tableaux $T'$ on $\nubold$, the sum in \eqref{e nspleth paper Gs0} is over the subset of these
with all positive entries, and  $\xx^{T'}$,  $\xx^{|T'|}$, and  $m(T')$ are as in \S \ref{ss flagged LLT polynomials}.

\begin{remark}
\label{rem:empty rows}
(i) In the language of \cite[\S \S 6.1--6.4 and eq.\ (302)]{BHMPS-nspleth}, assuming that $\nubold$ has no empty rows,
 $\flagGcal_{r, \nubold}(\xx;t)$ and  $\flagGcal^\pm_{r, \nubold}(\xx;t)$ are equal to
$\Gcal_{\nubold, \sigma}[x_1,\dots, x_r,Y, 0, \dots, 0; t]$ and $\Gcal^-_{\nubold, \sigma}(x_1,\dots, x_r,Y, 0, \dots, 0)$, respectively,
where  $Y = x_{r+1}+x_{r+2}+\cdots$, and $\sigma$ is the standard compatible permutation.
To see that these definitions match, one can use
 \cite[Theorem 6.4.10 and Proposition 6.7.2]{BHMPS-nspleth} as well as \cite[Remark 6.4.9(i)]{BHMPS-nspleth},
which shows that certain extra attacking pairs present in the definition
in \cite{BHMPS-nspleth}, but not here, do not actually contribute to
$\inv$ in this case.

(ii) The no empty rows assumption in (i) is necessary because empty rows play a significant bookkeeping role in \cite{BHMPS-nspleth}, which we have not used here.
This is harmless, however, since any  $\nubold$ can be replaced by a tuple $\hat{\nubold}$ without empty rows for which the super tableaux on $\nubold$ and $\hat{\nubold}$
are in a natural bijection that lines up attacking pairs.
\end{remark}

To realize the flagged LLT polynomials of \S\ref{ss flagged LLT polynomials} as a special case of those just defined, we need the following lemma.

\begin{lemma}
\label{l Dpath LLT}
Let  $\pi'$ be a  $(d,d)$-Dyck path and $\Sigma$ be a marking of  $\pi'$.  Then
there exists a tuple of single-row skew shapes  $\mubold = (\mu^{(1)}, \dots,\mu^{(p)})$ such that, for all \(i < j\),
\begin{equation}
\label{e attack0}
\begin{aligned}
&\text{$(i,j)\in \Area(\pi')$ }  \ \ \iff \ \
  \text{\(\mubold[i],\mubold[j]\) forms an attacking pair of \(\mubold\), and } \\
 &\text{$(i,j) \in \Sigma$ } \ \ \iff \ \
 \mubold[i], \mubold[j] \text{ forms a horizontal domino in some }\mu^{(k)}.
 \end{aligned}
\end{equation}
\end{lemma}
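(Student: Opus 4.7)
The plan is to construct $\mubold$ explicitly from $(\pi', \Sigma)$ and then verify the two equivalences pair by pair.

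First, I observe that each vertical run of $\pi'$ gives rise to at most one corner (since the path visits each $x$-coordinate exactly once) and each row of $\pi'$ gives rise to at most one corner (since it contains exactly one north step), so distinct corners of $\pi'$ have distinct column and distinct row coordinates.  Viewing $\Sigma$ as a graph on $[d]$ with an edge for each $(i,j) \in \Sigma$, every vertex thus has total degree at most two, and the connected components of this graph are paths (possibly of length zero, i.e., isolated vertices).  For each component $C = \{i_1 < i_2 < \cdots < i_l\}$ I form a single-row skew shape $\mu$ with $l$ boxes in a row and assign $\mubold[i_j]$ to its $j$-th box.  By this setup, $\mubold[i_a], \mubold[i_b]$ forms a horizontal domino iff $i_a$ and $i_b$ are consecutive in the chain, which is exactly the condition $(i_a, i_b) \in \Sigma$, so the horizontal-domino equivalence is automatic.

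Next, I choose a starting column and shape index for each component so that attacking pairs in $\mubold$ match $\Area(\pi')$.  Processing the components in order of increasing minimum index, at each step I constrain the starting column of the new component $C_k$ using the set of previously-placed boxes $\mubold[j]$ with $j < i_1 = \min C_k$: the attacking equivalences demand that $\mubold[i_1]$ be attacking with exactly those $\mubold[j]$ for which $(j, i_1) \in \Area(\pi')$.  Using the single-row attacking rule (same column, or adjacent column with strictly smaller shape index), this pins down the starting column up to consistency checks, and the shape index is then chosen large enough to distinguish attacking, non-attacking, and domino configurations across adjacent columns and against previously-used shape indices.

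The main obstacle will be the verification that the resulting $\mubold$ has attacking structure matching $\Area(\pi')$ on \emph{every} pair, not only those involving chain minima considered during the iterative construction.  The nontrivial case is two indices within the same chain but not consecutive in the chain: their adjusted-content difference is at least two, so the pair is not attacking in $\mubold$, and I must verify that the corresponding pair also lies outside $\Area(\pi')$.  This will follow from the structural observation that a chain of corners $(i_1, i_2), (i_2, i_3), \ldots$ in $\pi'$ forces the path to traverse a specific staircase of north and east steps that places each ``chord'' box $(i_a, i_b)$ with $b > a+1$ strictly above $\pi'$, hence outside $\Area$.  For pairs in different chains, the verification reduces to a direct comparison of assigned columns and shape indices against $\Area(\pi')$, which matches by construction together with the row-contiguity of $\Area(\pi')$ (the area boxes in each row of a Dyck path form a contiguous range of columns).
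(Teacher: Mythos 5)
Your approach is genuinely different from the paper's. The paper first treats the case $\Sigma = C$ (all corners marked) by invoking the classical bijection between a $(d,d)$-Dyck path $\pi$ and its attacking Dyck path $\pi'$: the tuple $\mubold$ is simply the vertical runs of $\pi$. It then handles arbitrary $\Sigma$ by, for each deleted marking $(i,j) \in C \setminus \Sigma$, splitting the row containing the domino $\mubold[i],\mubold[j]$ into two single-row shapes $\theta,\tau$ (choosing the order so attacking pairs are unchanged). You instead build $\mubold$ directly from the chain decomposition of $\Sigma$. Your within-chain observations are correct and nicely argued — that corners have distinct row and column coordinates, so the components of $\Sigma$ are paths, and that a chord $(i_a, i_b)$ with $b > a+1$ in a chain lies in the same column as the corner $(i_a, i_{a+1})$ but a higher row, hence above $\pi'$ and outside $\Area(\pi')$.

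However, your plan has a genuine gap in the cross-chain verification, and this is where the real work lies. You iteratively constrain only the placement of the chain minimum $\mubold[i_1]$ against previously placed boxes; the attacking sets of $\mubold[i_2], \ldots, \mubold[i_l]$ against earlier chains, and the simultaneous solvability of all these constraints (your ``consistency checks''), are asserted rather than established. Appealing to row-contiguity of $\Area(\pi')$ is a necessary observation but not a proof that a valid choice of columns and shape indices always exists: the attacking structure depends on both the column differences and the relative order of shape indices, and the placements of later chains must simultaneously satisfy constraints against all earlier chains and all their boxes, not just chain minima. Relatedly, you never verify that the reading order induced by your placements is actually $1, 2, \ldots, d$; since $\mubold[k]$ means the $k$-th box in reading order, this is not a labeling you get to choose freely, and your argument would additionally need to show $\tilde{c}(\mubold[i]) < \tilde{c}(\mubold[j])$ whenever $i < j$, in particular that non-attacking non-domino pairs with $i < j$ always satisfy $\tilde{c}(\mubold[j]) - \tilde{c}(\mubold[i]) \geq 1$ rather than $\leq 0$. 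These are precisely the structural facts that the paper inherits for free from the known $\pi \leftrightarrow \pi'$ bijection; reproving them from scratch would require substantially more than your sketch provides.
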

\begin{proof}
Let  $C$ denote the set of all corners of  $\pi'$.

The case  $\Sigma = C$ is implicit in \cite{HaHaLoReUl05}, and further explained in \cite[\S2.4]{CarMel}.
Note that there are typically many $\mubold$ satisfying \eqref{e attack0} for a given  $\pi', \Sigma$, but for $\Sigma =C$ there is a
convenient choice arising from the combinatorics of the shuffle theorem.
In the notation of \S\ref{ss Dyck path stats}, the correspondence  $\pi \mapsto \pi'$ is bijective for $a=b=d$ by \cite{Haglund08};
then let $\pi$ be the unique $(d,d)$-Dyck path such that its attacking Dyck path is  $\pi'$,
so that $\Area(\pi')$ coincides with the attacking pairs of north steps of  $\pi$.
Then the tuple $\mubold = (\mu^{(1)}, \dots, \mu^{(p)})$ of single-row skew shapes given by the runs of consecutive north steps in  $\pi$
satisfies \eqref{e attack0}.
More precisely, if  $R_1, \dots, R_p$ are the runs of north steps from south to north, with their southern ends at  $(x_1, y_1), \dots, (x_p, y_p)$ and with lengths  $l_1,\dots, l_p$, then  $\mu^{(i)} = (l_i+ y_i-x_i)/(y_i-x_i)$.  See Figure~\ref{fig:LLT versions} (a)--(c).

In the general case, we obtain a desired tuple of single-row skew shapes by modifying the  $\mubold$
from the previous paragraph.
Let $(i, j) \in C$; this is also a pair such that
$\mubold[i], \mubold[j]$ forms a horizontal domino.
Let  $\tilde{\mubold} = (\mu^{(1)}, \dots, \mu^{(k-1)}, \theta, \tau, \mu^{(k+1)}, \dots, \mu^{(p)})$ be the tuple of single-row skew shapes obtained by
replacing $\mu^{(k)}$ with the two single-row skew shapes $\theta$ and  $\tau$ formed from breaking  $\mu^{(k)}$
between $\mubold[i] $ and $\mubold[j]$; there are two ways to do this, and the one we want is where  $\tilde{\mubold}[i] \in \theta$ and
$\tilde{\mubold}[j] \in \tau$ so that
the attacking pairs of  $\tilde{\mubold}$ and  $\mubold$ are the same.  Then the tuple $\tilde{\mubold}$ and Dyck path $\pi'$ with marking $C \setminus \{(i,j)\}$ satisfy \eqref{e attack0}.
This row breaking can be done independently for each  $(i,j)\in C$,
so we can handle the case of an arbitrary marking by breaking rows for each pair deleted from  $C$.
\end{proof}

Now let $\hat{\pi}' \in \DD_r(d)$ and  $\Sigma$ be a marking of  $\hat{\pi}'$.
Let  $\pi' = \mathsf{N}^{r} \hat{\pi}'$ be the corresponding $(d,d)$-Dyck path
and choose a tuple $\mubold = ((\beta_1)/(\alpha_1), \dots, (\beta_p)/(\alpha_p))$ so that \eqref{e attack0} holds.
Form
\begin{align}
\label{e nubold def}
\nubold &= ((-\alpha_p)/(-\beta_p), \dots, (-\alpha_1)/(-\beta_1)),
\end{align}
which satisfies
\begin{equation}
\label{e attack}
\begin{aligned}
&\text{$(d-j,d-i)\in \Area(\pi')$ }  \ \ \iff \ \
  \text{\(\nubold[i],\nubold[j]\) forms an attacking pair of \(\nubold\), and } \\
 &\text{$(d-j,d-i) \in \Sigma$ } \ \ \iff \ \
 \nubold[i], \nubold[j] \text{ forms a horizontal domino in some }\nu^{(k)}.
 \end{aligned}
 \end{equation}
See Figure \ref{fig:LLT versions} (d).

There is a bijection taking a flagged $(\hat{\pi}', \Sigma)$-row super word  $T$ to a flagged super tableau $T'$ on $\nubold$ given by
setting $T'(\bx{\nubold}{i}) = T(d-i)$.  To see that the flagging conditions match, note that every $(i,j)$ with  $1 \le i < j \le r$ lies in  $\Area(\hat{\pi}')$
and correspondingly the last  $r$ boxes of  $\nubold$ in reading order are mutually attacking; moreover, these boxes are in distinct rows and all lie on the eastern end of their row.  Hence, the flagging condition on $T'$ is $T'(\bx{\nubold}{d}) \le 1, \dots, T'(\bx{\nubold}{d-r}) \le r$, which matches the flagging condition
$T(1) \le 1, \dots, T(r) \le r$ on $T$.
The attacking inversions in  $T$ exactly line up with attacking noninversions in  $T'$.
So the number of attacking inversions in $T'$ is equal to $|\Area(\hat{\pi}')| -\inv(T)$, and thus
\begin{align}
\label{e two signed flag LLT}
\Growpm_r(\hat{\pi}', \Sigma)(\xx;t) = t^{|\Area(\hat{\pi}')|}  \flagGcal^\pm_{r,\nubold}(\xx;t^{-1}).
\end{align}
By considering only the super words with positive letters and super tableaux with positive entries, we also have
\begin{align}
\label{e two flag LLT}
\Grow_r(\hat{\pi}', \Sigma)(\xx;t) =  t^{|\Area(\hat{\pi}')|}  \flagGcal_{r,\nubold}(\xx;t^{-1}).
\end{align}

\begin{figure}
\[
\begin{array}{c@{\hskip 0.5in}c}
\begin{tikzpicture}[scale=.5,baseline=.6cm]
\draw[help lines] (0,0) grid (8,8);
\draw[ultra thin, gray] (0,0) -- (8,8);
\draw[ultra thick]
(0,0) -- (0,1) -- (1,1) -- (1,2) -- (1,3) -- (1,4) -- (2,4) -- (2,5) -- (2,6)
-- (3,6)-- (4,6)-- (5,6)-- (6,6)-- (6,7)--(6,8) -- (7,8)-- (8,8);
\node at (-0.12+.5, .5) {{\tiny $1$}};
\node at (-0.12+1.5, 1.5) {{\tiny $2$}};
\node at (-0.12+1.5, 2.5) {{\tiny $4$}};
\node at (-0.12+1.5, 3.5) {{\tiny $6$}};
\node at (-0.12+2.5, 4.5) {{\tiny $7$}};
\node at (-0.12+2.5, 5.5) {{\tiny $8$}};
\node at (-0.12+6.5, 6.5) {{\tiny $3$}};
\node at (-0.12+6.5, 7.5) {{\tiny $5$}};
\node at (4, -1.2) {\parbox{4.5cm}{\small $\pi$ with north steps
    labeled \\ by sweeping reading order}};
\end{tikzpicture}
&
\begin{tikzpicture}[scale=.5,baseline=.6cm]
\draw[help lines] (0,0) grid (8,8);
\draw[very thin, gray] (0,0) -- (8,8);
\draw[ultra thick] (0,0) -- (0,1) -- (0,2) -- (0,3) -- (1,3) -- (2,3) -- (2,4) -- (3,4) -- (3,5) -- (4,5) -- (4,6) -- (4,7) -- (5,7)-- (6,7)-- (7,7) -- (7,8)-- (8,8);
\node at (4,-1) {{\small $\pi'$ and a marking $\Sigma$}};
\node at (1.5, 3.5) {\footnotesize $\star$};
\node at (3.5, 5.5) {\footnotesize $\star$};
\node at (2.5, 4.5) {\footnotesize $\star$};
\node at (6.5, 7.5) {\footnotesize $\star$};
\end{tikzpicture}
\\
\text{\small(a)} & \text{\small(b)}
\\[12pt]
\begin{tikzpicture}[scale=.37]
\draw [shift={(0,4.5)}] (0,0) grid (2,1);
\node at (0.5,5) {\tiny  $3$};
\node at (1.5,5) {\tiny  $5$};
\node [right] at (5,5) {\tiny $(\beta_{4})/(\alpha _{4}) = (2)/(0)$};

\draw [shift={(0,3)}] (2,0) grid (4,1);
\node at (3.5,3.5) {\tiny  $8$};
\node at (2.5,3.5) {\tiny  $7$};
\node [right] at (5,3.5) {\tiny $(\beta _{3})/(\alpha _{3}) = (4)/(2)$};

\draw [shift={(0,1.5)}] (0,0) grid (3,1);
\node at (.5,2) {\tiny  $2$};
\node at (1.5,2) {\tiny  $4$};
\node at (2.5,2) {\tiny  $6$};
\node [right] at (5,2) {\tiny $(\beta _{2})/(\alpha _{2}) = (3)/(0)$};

\draw [shift={(0,0)}] (0,0) grid (1,1);
\node at (0.5,0.5) {\tiny  $1$};
\node [right] at (5,0.5) {\tiny $(\beta _{1})/(\alpha _{1}) = (1)/(0)$};
\node [right] at (0, -1.2)  {\small $\mubold$, with reading order labeled};
\end{tikzpicture}
&
\begin{tikzpicture}[scale=.37]
\pgfmathsetmacro{\alphabetaleftx}{1}
\draw  [shift={(0,4.5)}] (-1,0) grid (0,1);
\node at (-.5,5) {\tiny  $8$};
\node [right] at (\alphabetaleftx,5) {\tiny $(-\alpha_{1})/(-\beta_{1}) = (0)/(-1)$};

\draw [shift={(0,3)}] (-3,0) grid (0,1);
\node at (-.5,3.5) {\tiny  $7$};
\node at (-1.5,3.5) {\tiny  $5$};
\node at (-2.5,3.5) {\tiny  $3$};
\node [right] at (\alphabetaleftx,3.5) {\tiny $(-\alpha_{2})/(-\beta_{2}) = (0)/(-3)$};

\draw [shift={(0,1.5)}] (-4,0) grid (-2,1);
\node at (-3.5,2) {\tiny  $1$};
\node at (-2.5,2) {\tiny  $2$};
\node [right] at (\alphabetaleftx,2) {\tiny $(-\alpha_{3})/(-\beta_{3}) = (-2)/(-4)$};

\draw [shift={(0,0)}] (-2,0) grid (0,1);
\node at (-0.5,0.5) {\tiny  $6$};
\node at (-1.5,0.5) {\tiny  $4$};
\node [right] at (\alphabetaleftx,0.5) {\tiny $(-\alpha_{4})/(-\beta_{4}) = (0)/(-2)$};

\node [right] at (-3, -1.2)  {\small $\nubold$, with reading order labeled};
\end{tikzpicture}\\
\text{\small(c)} & \text{\small(d)}
\end{array}\]
\caption{\label{fig:LLT versions}
In (a) and (b), $\pi '$ is the attacking Dyck path of $\pi $.
For the given
$\pi '$ and marking $\Sigma $,
(c) gives the tuple $\mubold = ((\beta_1)/(\alpha_1), \dots, (\beta_p)/(\alpha_p))$ from the proof of Lemma \ref{l Dpath LLT}
such that \eqref{e attack0} holds.  Reversing and negating $\beta$ and  $\alpha$ gives the
 tuple $\nubold$ in (d), for which the flagged row LLT polynomial  $\Grow_r(\hat{\pi}', \Sigma)$ equals
$t^{|\Area(\hat{\pi}')|}  \flagGcal_{r,\nubold}(\xx;t^{-1})$.
See \S\ref{ss Dpath vs shapes llt}.}
\end{figure}
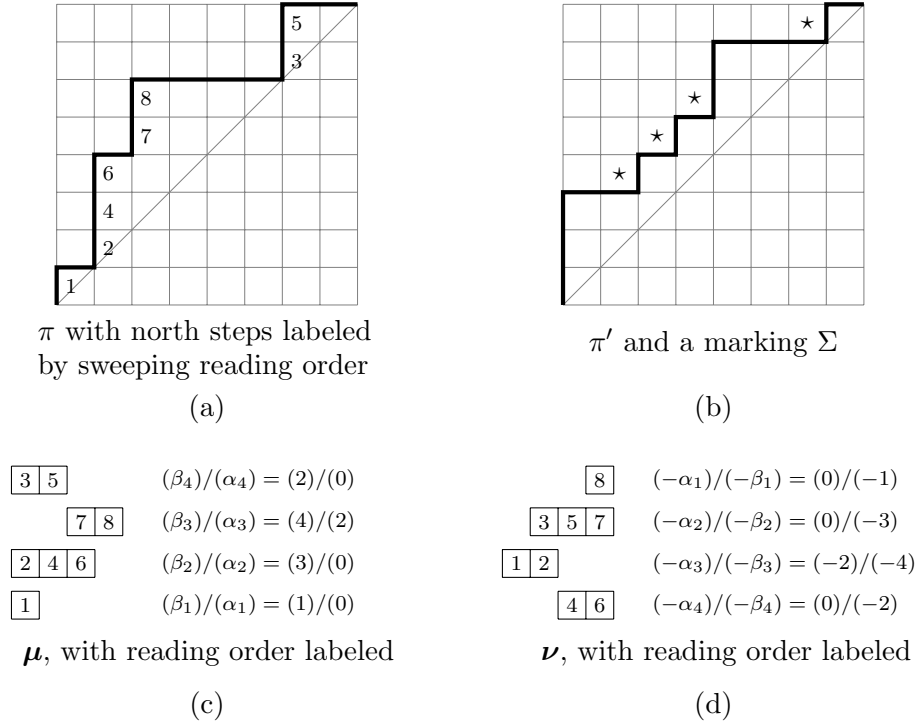

\subsection{Nonsymmetric plethysm and flagged row LLT polynomials}

\begin{thm}[{\cite{BHMPS-nspleth}}]
\label{t signed to unsigned}
The $r$-nonsymmetric plethysm map takes signed flagged LLT polynomials to flagged LLT polynomials:
for $r \in \NN$ and any tuple of skew shapes $\nubold$ with at least $r$ boxes,
\begin{align}
\label{e t signed to unsigned}
\Pisf_r( \flagGcal^\pm_{r,\nubold}(\xx;t^{-1}) ) = \flagGcal_{r,\nubold}(\xx;t^{-1}).
\end{align}
In particular, it takes signed flagged row LLT polynomials to flagged row LLT polynomials:
for $\pi \in \DD_r(d)$ and a marking $\Sigma$ of $\pi$,
\begin{align}
\label{e t signed to unsigned2}
\Pisf_r(\Growpm_r(\pi, \Sigma) ) = \Grow_r(\pi, \Sigma).
\end{align}
\end{thm}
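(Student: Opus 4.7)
The plan is to reduce the theorem to the main result of \cite{BHMPS-nspleth} via the translations already developed in \S\ref{ss Dpath vs shapes llt} and Remark \ref{rem:empty rows}. The theorem has two claims: the general identity \eqref{e t signed to unsigned} for skew-shape tuples, and the specialization \eqref{e t signed to unsigned2} to Dyck paths. The second is easier, so the strategy is to prove the first and then deduce the second.

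For \eqref{e t signed to unsigned}, the first step is to reduce to the case that $\nubold$ has no empty rows. As indicated in Remark \ref{rem:empty rows}(ii), any $\nubold$ admits a version $\hat{\nubold}$ without empty rows and a natural bijection of (flagged) super tableaux that preserves attacking pairs, content, and signs. Consequently, $\flagGcal_{r,\nubold} = \flagGcal_{r,\hat{\nubold}}$ and $\flagGcal^\pm_{r,\nubold} = \flagGcal^\pm_{r,\hat{\nubold}}$. Having made this reduction, Remark \ref{rem:empty rows}(i) identifies these polynomials (after restricting to the alphabet $x_1,\dots,x_r,Y$) with the corresponding $\Gcal_{\nubold,\sigma}$ and $\Gcal^-_{\nubold,\sigma}$ from \cite{BHMPS-nspleth}, where $\sigma$ is the standard compatible permutation. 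The identity \eqref{e t signed to unsigned} then follows directly from the main result of \cite{BHMPS-nspleth} on the action of the $r$-nonsymmetric plethysm map on these objects.

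For \eqref{e t signed to unsigned2}, I invoke the dictionary constructed in \S\ref{ss Dpath vs shapes llt}: given $\pi \in \DD_r(d)$ and a marking $\Sigma$, Lemma \ref{l Dpath LLT} supplies a tuple $\mubold$ of single-row skew shapes satisfying \eqref{e attack0}, and then \eqref{e nubold def} produces a tuple $\nubold$ satisfying \eqref{e attack}. The bijection $T \mapsto T'$ (given by $T'(\bx{\nubold}{i}) = T(d-i)$) matches flagged row super words on $(\pi,\Sigma)$ with flagged super tableaux on $\nubold$, lines up the flagging conditions, and sends attacking inversions to attacking noninversions. This produces the identities \eqref{e two signed flag LLT} and \eqref{e two flag LLT}. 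Applying $\Pisf_r$ to \eqref{e two signed flag LLT}, using $\QQ(q,t)$-linearity to pull out the scalar $t^{|\Area(\pi)|}$, and then substituting \eqref{e t signed to unsigned} and \eqref{e two flag LLT} yields \eqref{e t signed to unsigned2}. Note the hypothesis that $\nubold$ has at least $r$ boxes is automatic since $\nubold$ has $d \ge r$ boxes.

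The main obstacle is not the reduction argument itself, which is essentially bookkeeping, but rather the careful verification of the identifications in Remark \ref{rem:empty rows}(i). The definition of attacking pairs in \cite{BHMPS-nspleth} includes some pairs not present in our Dyck-path formulation, and one must check via \cite[Remark 6.4.9(i)]{BHMPS-nspleth} that these extra pairs never contribute to $\inv$ under the flagging conventions used here. Once that verification is in place, the theorem is an essentially formal consequence of the \cite{BHMPS-nspleth} machinery combined with the combinatorial translation of \S\ref{ss Dpath vs shapes llt}.
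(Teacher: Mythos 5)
Your proof is correct and follows essentially the same route as the paper's: the paper proves \eqref{e t signed to unsigned} by citing \cite[Lemma 7.6.4]{BHMPS-nspleth} together with Remark \ref{rem:empty rows}, and then obtains \eqref{e t signed to unsigned2} directly from \eqref{e two signed flag LLT} and \eqref{e two flag LLT}, exactly as you do. The only minor quibble is that what you call ``the main result of \cite{BHMPS-nspleth}'' should be pinpointed as Lemma 7.6.4, and the verification you flag as the ``main obstacle'' is already discharged in Remark \ref{rem:empty rows}(i) via \cite[Remark 6.4.9(i)]{BHMPS-nspleth}, so it need not be re-established here.
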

\begin{proof}
The first identity follows from \cite[Lemma 7.6.4]{BHMPS-nspleth}
and Remark \ref{rem:empty rows}.
The second identity is then obtained from \eqref{e two signed flag LLT} and \eqref{e two flag LLT}.
\end{proof}

\begin{example}
\label{e pi on row LLT}
We illustrate \eqref{e t signed to unsigned2} for the partial Dyck path  $\pi \in \DD_2(3)$ shown below and marking  $\Sigma = \{(1,3)\}$ (indicated with the  $\star$).
The flagged $(\pi, \Sigma)$-row super words are shown below with the same conventions as Example \ref{ex row flag llt}.
\newcommand{\thisllt}[4]{
\begin{tikzpicture}[scale=.4,baseline=.3cm]
\draw[help lines] (0,0) grid (3,3);
\draw[thick] (0,2)--(1,2)--(1,3)--(2,3)--(3,3);
\node at (.5, 2.5) {{\tiny $\star$}};
\node at (.5, .5) {\tiny $#1$};
\node at (1.5, 1.5) {\tiny $#2$};
\node at (2.5, 2.5) {\tiny $#3$};
#4
\end{tikzpicture}
}
\[\begin{array}{c|cccccccc}
T \in \FSSYT^{\pm}(\pi, \Sigma)
&
\thisllt{1}{2}{1}{
\node at (1.5,2.5) {\tiny $t$};
}
&
\thisllt{1}{1}{1}{
\node at (.5,1.5) {\tiny $t$};
\node at (1.5,2.5) {\tiny $t$};
}
&
\thisllt{1}{\bar{1}}{1}{
\node at (1.5,2.5) {\tiny $t$};
}
\\[2.8mm]
t^{\inv(T)} &   t & t^2& t  \\[.6mm]
(-t)^{m(T)} &  1 & 1 & -t
\end{array}\]
By Definition \ref{def:LLT signed rows}, we have
\begin{align}
\label{e ex sign LLT}
\Growpm_2(\pi, \Sigma)(\xx;t) = t \spa x_1^2x_2 + t^2 x_1^3 + t (-t) x_1^3  =  t \spa x_1^2 x_2,
\end{align}
while $\Grow_2(\pi, \Sigma)$ is the sum of  $t^{\inv(T)} \xx^T$ over the first two words, giving
\begin{align}
\Grow_2(\pi, \Sigma)(\xx;t) = t^2 x_1^3 + t \spa x_1^2 x_2.
\end{align}
By the definition \eqref{e: nspleth def} of  $\Pisf_r$, we have
\begin{align}
\Pisf_2(t \, x_1^2 x_2) = t \pol( x_1^2x_2 + t \spa x_1^3 + t^2 x_1^4 x_2^{-1} + t^3 x_1^5 x_2^{-2} + \cdots) = t \spa ( x_1^2x_2 + t \, x_1^3),
\end{align}
where to compute the polynomial truncation we have used that all monomials appearing here are Demazure characters.
This verifies \eqref{e t signed to unsigned2} in this case.
\end{example}

We will also need the following lemma which gives another combinatorial description of the signed flagged row LLT polynomials.
We say that a word $T \in \FSSYT^{\pm}(\pi, \Sigma)$
is \emph{non-attacking} if
$|T(i)| \ne |T(j)|$ for every $(i,j) \in \Area(\pi)$.
\begin{lemma}[\cite{BHMPS-nspleth}]
\label{l nonattack}
For any $\pi \in \DD_r(d)$ and marking $\Sigma$ of  $\pi$,
\begin{align}
\label{e l nonattack}
\Growpm_r(\pi, \Sigma)(\xx;t) = \sum_{\substack{ T \in \FSSYT^{\pm}(\pi, \Sigma)\\ T \text{ non-attacking}}} t^{\inv(T)} (-t)^{m(T)} \xx^{|T|}.
\end{align}
\end{lemma}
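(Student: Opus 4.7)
The plan is to construct a sign-reversing, weight-preserving involution $\phi$ on the set of attacking flagged super words, i.e., those $T \in \FSSYT^{\pm}(\pi, \Sigma)$ admitting some $(i,j) \in \Area(\pi)$ with $|T(i)| = |T(j)|$. Since non-attacking words are fixed points of $\phi$, the contributions from attacking $T$ to \eqref{e LLT signed rows} cancel in pairs, leaving the desired identity \eqref{e l nonattack}.

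Given an attacking $T$, the involution selects a canonical ``bad pair'' $(i_T, j_T) \in \Area(\pi)$ with $|T(i_T)| = |T(j_T)|$ using a fixed total order on $\Area(\pi)$, and defines $\phi(T)$ by flipping the sign of $T(j_T)$ (so $v \leftrightarrow \bar v$ where $v = |T(j_T)|$). The key calculation, which I would carry out by case analysis on sign combinations, is that for any $(k, j_T) \in \Area(\pi)$ with $|T(k)| \neq |T(j_T)|$ and any $(j_T, l) \in \Area(\pi)$ with $|T(l)| \neq |T(j_T)|$, the row-increasing status of the pair $T(j_T)T(k)$, respectively $T(l)T(j_T)$, is unchanged by the sign flip at $j_T$. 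Consequently, the only attacking inversion whose status toggles is $(i_T, j_T)$ itself, yielding $t^{\inv(\phi(T))} = t^{\inv(T) \pm 1}$; simultaneously $m(\phi(T)) = m(T) \pm 1$ with the opposite sign, and the two factors combine to give $t^{\inv(\phi(T))}(-t)^{m(\phi(T))} = -t^{\inv(T)}(-t)^{m(T)}$. The monomial $\xx^{|T|}$ is preserved since $|\phi(T)| = |T|$, so $\phi$ is weight-preserving and sign-reversing; moreover, since $|\phi(T)| = |T|$, the full set of attacking pairs with equal absolute values is the same for $T$ and $\phi(T)$, so $\phi$ is manifestly involutive once it is well-defined.

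The main obstacle is choosing the order on $\Area(\pi)$ so that $\phi(T)$ lies in $\FSSYT^{\pm}(\pi, \Sigma)$. Flagging is automatic: if $j_T \le r$, then $(i_T, j_T) \in \Area(\pi)$ gives $i_T < j_T$, and flagging on position $i_T$ forces $|T(i_T)| \le i_T < j_T$, so $|T(j_T)| < j_T$ and both sign choices at position $j_T$ satisfy the flag. The delicate point is preserving the $\Sigma$-constraints: if $(i', j_T) \in \Sigma$ and $|T(i')| = |T(j_T)|$, then $T(j_T)T(i')$ being row-increasing forces $T(j_T)$ to be positive, and flipping to $\bar v$ would break the condition. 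The reading order must therefore be arranged so that the $\preceq$-minimal bad pair at $j_T$ never admits such a conflicting $\Sigma$-mate.

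The cleanest route to overcome this obstacle is to pass through the bijection of \S\ref{ss Dpath vs shapes llt} and use \eqref{e two signed flag LLT} to rewrite $\Growpm_r(\pi, \Sigma)(\xx;t)$ as $t^{|\Area(\hat\pi')|} \flagGcal^\pm_{r,\nubold}(\xx;t^{-1})$ for the skew-shape tuple $\nubold$ constructed from $(\pi,\Sigma)$. Under this bijection, attacking pairs in $\Area(\pi)$ with equal absolute values correspond to attacking box pairs in $\nubold$ with equal absolute values, while $\Sigma$-pairs correspond to horizontal dominoes in a single row of some $\nu^{(k)}$. In the skew-shape setting the reading order on boxes of $\nubold$ is linear across each row, so the minimal bad pair never lies in the same row of $\nubold$; equivalently, the corresponding $(i_T, j_T)$ never has a conflicting $\Sigma$-mate. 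The lemma then follows from the analogous identity for $\flagGcal^\pm_{r,\nubold}$ proved in \cite{BHMPS-nspleth} by exactly this sign-reversing involution on super tableaux.
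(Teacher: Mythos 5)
Your final argument — pass to the skew-shape tuple $\nubold$ via \eqref{e two signed flag LLT} and cite the analogous identity from \cite{BHMPS-nspleth} (Proposition~6.7.9) — is exactly the paper's proof, which is a two-line reduction along these lines. So the proposal is correct and takes essentially the same approach.

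One caveat about the exploratory direct-involution sketch that precedes the reduction: the inference beginning ``Consequently, the only attacking inversion whose status toggles is $(i_T,j_T)$ itself'' does not follow from the case analysis you state, which only covers attacking mates $k$, $l$ of $j_T$ with $|T(k)|\ne|T(j_T)|$. If there is another position $k'$ attacking or attacked by $j_T$ with $|T(k')| = |T(j_T)|$, flipping the sign at $j_T$ also toggles that pair, and $\inv$ can change by more than $\pm 1$. This is one of the genuine subtleties that a direct involution in the Dyck-path setting must handle (typically by choosing the flipped position so that the total toggle count is odd, or by flipping all members of a run simultaneously), and it is a larger obstacle than the $\Sigma$-constraint issue you highlight. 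Since you abandon that route anyway in favor of the reduction, the proof as committed is sound.
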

\begin{proof}
The analogous result for the   $\flagGcal^\pm_{r,\nubold}(\xx;t^{-1})$ follows from \cite[Proposition 6.7.9]{BHMPS-nspleth},
and then \eqref{e two signed flag LLT} gives \eqref{e l nonattack}.
 \end{proof}

\begin{example}
In Example \ref{e pi on row LLT}, only the first word is non-attacking.  As we saw in \eqref{e ex sign LLT}, the terms for the other two words cancel, illustrating Lemma \ref{l nonattack}.
\end{example}

\subsection{Signed flagged row LLT polynomials and the Dyck path algebra}
\label{s:dpa2sllt}

\begin{defn}
\label{d chi dplusminus}
For $\pi \in \DD_r(d)$ and a marking $\Sigma$ of $\pi$,
let $\chi^\PP_r(\pi, \Sigma)$ be the result of applying the following
sequence of operators to $1 \in \P(0)$, determined
by traveling along  $\pi$ and  $\Sigma$
 starting from the northeast endpoint of $\pi $:
\begin{itemize}
\item[(i)]  $\frac{1}{t-1}[d_-^\PP, -d_+^\PP]$ for each marked corner in  $\Sigma$,
\item[(ii)]  $d_-^\PP$ for each vertical step not part of a marked corner,
\item[(iii)]  $-d_+^\PP$ for each horizontal step not part of a marked corner.
\end{itemize}
\end{defn}

\begin{example}
For the partial Dyck path  $\pi'$ and marking  $\Sigma $ in Example \ref{ex:partial Dyck path},
\begin{align}
\chi^\PP_2(\pi', \Sigma) = \frac{(-1)^6}{(t-1)^2}\, d_-^\PP \, [d_-^\PP, d_+^\PP] \, d_+^\PP \, d_+^\PP \, d_-^\PP \, [d_-^\PP, d_+^\PP] \, d_+^\PP \, d_+^\PP \cdot 1
\end{align}
\end{example}

Our next result provides a combinatorial expression for $\chi_r^P(\pi,\Sigma)$.  Our proof builds on a fundamental theorem from
\cite{CarMel}, which we now recall.

The objects  $\chi_r(\pi)$ and  $\chi'_r(\pi)$ defined in \cite[eq.~(4.5)]{CarMel} and after \cite[[eq.~(4.3)]{CarMel} are readily written in our notation as
\begin{align}
\label{e chiprime}
\chi_r(\pi) = (t - 1)^{d - r} \, \Q_r \PP_r(\chi'_r(\pi))\qquad\text{for}\qquad
\chi'_r(\pi) =
\sum_{\substack{T \in \FSSYT(\pi, \varnothing) \\ T \text{ non-attacking} }}
t^{\inv(T)} \xx^T\,.
\end{align}
Note that
our definition of  $\inv$ differs from that of \cite[\S2.4]{CarMel} for equal letters but this does not arise here because of the non-attacking condition.
The proof of \cite[Theorem 4.4]{CarMel} gives the following result.

\begin{thm}[{\cite{CarMel}}]
\label{t CM LLT}
For  $\pi \in \DD_r(d)$, $\chi_r(\pi)$ is
the result of applying the same sequence of operators to $1$ as in
Definition~\ref{d chi dplusminus} with $\Sigma = \varnothing$, except with $d_-^{CM}, d_+^{CM}$ in place
of  $d_-^\PP, -d_+^\PP$ in (ii) and (iii).
\end{thm}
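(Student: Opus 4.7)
The plan is to deduce the theorem from the corresponding result in Carlsson-Mellit by translating conventions, and to outline the inductive structure of their proof.

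\textbf{Notational translation.} By Proposition~\ref{CarMel vs Mel}, the operators $d_\pm^{CM}$ on $V_*$ are conjugates of $d_\pm^\PP$ on $\P(*)$ via $\Q_r\PP_r$, with sign flips on $d_-^{CM}$ and $d_+^{CM}$. The theorem's convention of replacing $d_-^\PP \to d_-^{CM}$ and $-d_+^\PP \to d_+^{CM}$ is precisely what is needed so that the overall sign factor $(-1)^{(\text{\# horizontal steps})}$ present in Definition~\ref{d chi dplusminus} (with $\Sigma = \varnothing$) cancels against the conjugation signs when transporting to $V_*$, leaving a clean product of $d_\pm^{CM}$'s applied to $1 \in V_0$.

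\textbf{Inductive structure.} I proceed by induction on the number of steps $2d - r$ of $\pi$. The base case $d = r = 0$ is immediate: $\chi_0(\text{point}) = (t-1)^0\, \Q_0\PP_0(1) = 1$. For the inductive step, peel off the first step of $\pi$ from the southwest end; this step corresponds to the leftmost (last-applied) operator in the composition.
\begin{itemize}
\item If the first step is east, then $\pi = E\cdot \pi_2$, where (after shifting coordinates) $\pi_2 \in \DD_{r-1}(d-1)$, and the leftmost operator is $d_+^{CM}$. Since $(d-1)-(r-1) = d-r$, the factors of $(t-1)$ match, and using Proposition~\ref{CarMel vs Mel} the claim reduces to the combinatorial identity
\begin{equation*}
\chi'_r(\pi) = -d_+^\PP \chi'_{r-1}(\pi_2) = T_1T_2\cdots T_r\bigl(x_{r+1}\,\iota_{r-1}\chi'_{r-1}(\pi_2)\bigr).
\end{equation*}
\item If the first step is north, then $\pi = N\cdot \pi_2$ with $\pi_2 \in \DD_{r+1}(d)$, and the leftmost operator is $d_-^{CM}$; the claim reduces, via Proposition~\ref{p Aqt action v3}, to
\begin{equation*}
(t-1)\,\chi'_r(\pi) = -d_-^\PP \chi'_{r+1}(\pi_2),
\end{equation*}
where the right-hand side is expressed through Jing's Hall-Littlewood vertex operator $\mathcal{B}_n$.
\end{itemize}

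\textbf{Main obstacle.} The hard part is the combinatorial verification of the two reduced identities. In the east case, one must show that the Demazure-Lusztig symmetrization $T_1\cdots T_r$ applied to $x_{r+1}$ times the generating function over non-attacking flagged row semistandard words on $\pi_2$ reproduces the generating function on $\pi$ with the correct $t^{\inv}$ weights. This amounts to a bijection: each $T_i$ either swaps two adjacent letters (contributing one attacking inversion, matched by the $(1-t)$ correction in the Demazure-Lusztig formula \eqref{ep Ti Aqt action v3}) or leaves them fixed, and tracking these operations step by step produces exactly the non-attacking flagged words on $\pi$. In the north case, one expands $\mathcal{B}_n$ as a power series in $u$ via \eqref{e Jing} and identifies the $u^0$ coefficient with the sum over allowed "sweeping" insertions across the new north step. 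Both identities are the technical core of Carlsson-Mellit's proof of their Theorem~4.4, and they transfer to our setting essentially verbatim because the explicit formulas for $d_+^\PP$ and $d_-^\PP$ in Proposition~\ref{p Aqt action v3} coincide (up to the tracked signs) with the operators used in their argument.
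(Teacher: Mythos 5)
Your proposal follows the paper's approach, which is simply to cite the proof of \cite[Theorem 4.4]{CarMel} after the notational identification in \eqref{e chiprime}; the inductive outline you give is a retelling of Carlsson--Mellit's own argument rather than new content. Two minor imprecisions are worth flagging: conjugation via $\Q_r\PP_r$ flips the sign of \emph{both} $d_-^{CM}$ and $d_+^{CM}$ (Proposition~\ref{CarMel vs Mel}), so the $\Q_r\PP_r$ transport of $\chi^\PP_r(\pi,\varnothing)$ equals $(-1)^{d-r}\chi_r(\pi)$ rather than a literally ``clean'' product --- the residual $(-1)^{d-r}$ is exactly what the $(t-1)^{d-r}$ prefactor in \eqref{e chiprime} accounts for --- and in your east-case expansion, $-d_+^\PP$ acting on $\chi'_{r-1}(\pi_2)\in\P(r-1)$ should give $T_1\cdots T_{r-1}\bigl(x_r\,\iota_{r-1}\chi'_{r-1}(\pi_2)\bigr)$, not $T_1\cdots T_r(x_{r+1}\,\iota_{r-1}\chi'_{r-1}(\pi_2))$, since $d_+^\PP$ originates at quiver vertex $r-1$ there. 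Neither slip invalidates the approach, and your reduced identities $(t-1)\chi'_r(\pi) = -d_-^\PP\chi'_{r+1}(\pi_2)$ and $\chi'_r(\pi) = -d_+^\PP\chi'_{r-1}(\pi_2)$ are the correct translations.
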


We now prove an identity for $\chi_r^P(\pi,\Sigma)$, with any marking $\Sigma$, framed in terms of signed rather than non-attacking fillings.
This formulation will allow us to apply the powerful tool of nonsymmetric plethysm
to transform these objects to ones on the modified side (i.e. the right side of Figure \ref{fig:summary diagram}),
where there is visible potential for positivity and connections to representation theory.

\begin{thm}
\label{t Gcal vs chiPP}
For $\pi \in \DD_r(d)$ and a marking $\Sigma$ of  $\pi$,
\begin{align}
\Growpm_r(\pi, \Sigma) = \chi^\PP_r(\pi, \Sigma).
\end{align}
\end{thm}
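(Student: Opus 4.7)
The plan is to first establish the case $\Sigma = \varnothing$ by converting the Carlsson--Mellit identity \eqref{e chiprime} into the $\PP$-action picture, and then to extend to general $\Sigma$ by induction on $|\Sigma|$. For the base case, I would combine Theorem \ref{t CM LLT} with the intertwining relations $d_\pm^{CM} = \Q \cdot (-d_\pm) \cdot \Q^{-1}$ from Proposition \ref{CarMel vs Mel}. When the CM operator product is expanded, the $\Q$'s telescope and each operator contributes a factor of $-1$, giving $(-1)^{2d-r}$ overall; comparing this with the $(-1)^d$ coming from the $-d_+^\PP$'s in $\chi^\PP_r(\pi,\varnothing)$ and using $\chi_r(\pi)=(t-1)^{d-r}\Q_r\PP_r(\chi'_r(\pi))$, I would obtain
\[
\chi^\PP_r(\pi,\varnothing) \;=\; (1-t)^{d-r}\,\chi'_r(\pi).
\]

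To match this with $\Growpm_r(\pi,\varnothing)$, I invoke Lemma \ref{l nonattack}, which restricts attention to non-attacking flagged super words. Any such word is obtained from a non-attacking flagged positive word $U$ by barring a subset of positions, subject to the constraint that barring position $i\le r$ is only allowed when $U(i)<i$. The crucial combinatorial observation is that, for $\pi \in \DD_r(d)$, the $r$ initial north steps added to form $\mathsf{N}^r\pi$ force every box $(i,j)$ with $1\le i<j\le r$ to lie in $\Area(\pi)$; this makes positions $1,\dots,r$ pairwise attacking, and combined with the flagging bound $U(i)\le i$, an induction on $i$ forces $U(i)=i$ for all $i\le r$. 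Consequently, barring is freely available on precisely the $d-r$ positions $r+1,\dots,d$, contributing $(1-t)^{d-r}$. Since attacking inversions in a non-attacking word are unchanged by barring (any non-attacking pair retains its relative order in the alphabet $\Acal$), the $t^{\inv}$-weighted count is preserved, yielding $\Growpm_r(\pi,\varnothing)=(1-t)^{d-r}\chi'_r(\pi)=\chi^\PP_r(\pi,\varnothing)$.

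For the inductive step, take $\Sigma = \Sigma_0 \sqcup \{c\}$ for a single corner $c=(a,b)$. Algebraically, $\chi^\PP_r(\pi,\Sigma)$ differs from $\chi^\PP_r(\pi,\Sigma_0)$ only locally at $c$, where the pair of adjacent operators $(-d_+^\PP)\,d_-^\PP$ is replaced by the bracket $\tfrac{1}{t-1}[d_-^\PP, -d_+^\PP]$. Combinatorially, passing from $\FSSYT^\pm(\pi,\Sigma_0)$ to $\FSSYT^\pm(\pi,\Sigma)$ imposes the additional row-increasing condition on $T(b)T(a)$. The plan is to match these modifications by using the correspondence of Lemma \ref{l Dpath LLT} and \S\ref{ss Dpath vs shapes llt}: adding a marked corner corresponds to joining two single-row skew shapes in the associated tuple $\nubold$ via a horizontal domino, and the bracket should realize precisely this row-merging at the level of the signed flagged row LLT basis of $\P(r)$.

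The main obstacle will be this inductive step, specifically the identification of the effect of $\tfrac{1}{t-1}[d_-^\PP, -d_+^\PP]$ on signed flagged row LLT polynomials with the combinatorial merging of rows. A direct calculation using the formulas in Proposition \ref{p Aqt action v3} seems cumbersome; I expect the cleaner route is to exploit the commutation relations \eqref{eq:Forward-back commutation ending} and \eqref{eq:Forward-back commutation beginning} in $\AA_{t,q}$, together with the combinatorial formula \eqref{e nspleth paper Gs} for $\flagGcal^\pm_{r,\nubold}$ and the $\Growpm_r = t^{|\Area|}\flagGcal^\pm_{r,\nubold}(\xx;t^{-1})$ identification from \eqref{e two signed flag LLT}. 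This should reduce the inductive step to verifying that merging two adjacent single-row shapes modifies the attacking-inversion structure in exactly the way predicted by the bracket, a check that can proceed by the same non-attacking argument used in the base case.
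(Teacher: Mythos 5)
Your base case for $\Sigma = \varnothing$ is essentially the paper's argument: convert Theorem~\ref{t CM LLT} through Proposition~\ref{CarMel vs Mel}, use Lemma~\ref{l nonattack}, observe that non-attacking plus flagging forces $T(i)=i$ for $i\le r$, and count the free sign choices on positions $r+1,\dots,d$ to get the $(1-t)^{d-r}$ factor. That part is correct and tracks the paper closely.

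The inductive step is where you have a genuine gap, and you flag it yourself. The difficulty is that the replacement of the unmarked corner's $(-d_+^\PP)\,d_-^\PP$ by the bracket $\tfrac{1}{t-1}[d_-^\PP,-d_+^\PP]$ is not a ``local modification'' expressible purely by comparing $\chi^\PP_r(\pi,\Sigma)$ with $\chi^\PP_r(\pi,\Sigma_0)$ on the \emph{same} path: the bracket also contains the term $d_-^\PP\,(-d_+^\PP)$, which corresponds to the opposite order of steps and hence to a \emph{different} Dyck path. The paper's move is precisely to introduce $\pi^*$, the path with the corner $(i,j)$ flipped inside out, and to read off from Definition~\ref{d chi dplusminus} the three-term recursion
\begin{equation*}
  (t-1)\chi^\PP_r(\pi,\Sigma) = \chi^\PP_r(\pi^*,\Sigma^*) - \chi^\PP_r(\pi,\Sigma^*),\qquad \Sigma^* = \Sigma\setminus\{(i,j)\},
\end{equation*}
which strictly decreases $|\Sigma|$ on the right and hence reduces to the base case by iteration. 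On the combinatorial side, one then shows that $\Growpm_r$ satisfies the same recursion by splitting the common set $\FSSYT^\pm(\pi,\Sigma^*)=\FSSYT^\pm(\pi^*,\Sigma^*)$ into ``row-like'' and ``column-like'' words at the corner $(i,j)$: the column-like words cancel in the difference, and the row-like words pick up an extra $t$ in $\pi^*$ because $(i,j)\in\Area(\pi^*)$, giving the factor $(t-1)$. Your proposed route through Lemma~\ref{l Dpath LLT} and a ``row-merging'' interpretation of the bracket, or through the commutation relations \eqref{eq:Forward-back commutation ending}--\eqref{eq:Forward-back commutation beginning}, does not by itself produce a recursion that shrinks $|\Sigma|$; to complete the proof you would effectively need to rediscover the corner-flipping identity above. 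Until that is done, the inductive step remains incomplete.
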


\begin{proof}
We first establish the case  $\Sigma = \varnothing$.
By Lemma \ref{l nonattack},
\begin{align}
\label{e chiprime2}
\Growpm_r(\pi, \varnothing)
= \sum_{\substack{T \in \FSSYT^{\pm}(\pi, \varnothing) \\ T \text{ non-attacking} }}
t^{\inv(T)}
(-t)^{m(T)} \xx^{|T|}.
\end{align}
For  $T$ in this sum, the non-attacking and flagging conditions force  $T(1)=1,\, \dots,\, T(r)=r$.  Moreover, because of the non-attacking condition,
the set of words in \eqref{e chiprime2} can be built from the set in \eqref{e chiprime} by freely choosing the signs on $T(r+1), \dots, T(d)$.  It follows that
\begin{align}
\label{e Gcal chiprime}
\Growpm_r(\pi, \varnothing) = (1-t)^{d-r}\chi'_r(\pi) \,.
\end{align}
Combining this with \eqref{e chiprime} gives
\begin{align}
\label{e Gcal chiprime2}
\Growpm_r(\pi, \varnothing) = (-1)^{d-r} \spa \Q_r^{-1} \PP_r^{-1}(\chi_r(\pi)).
\end{align}
On the other hand, Theorem \ref{t CM LLT} and \eqref{e CarMel vs Mel 2}--\eqref{e CarMel vs Mel 3} give
\begin{align}
\label{e chi chiPP}
\chi^\PP_r(\pi, \varnothing) = (-1)^{d-r} \spa \Q_r^{-1}\PP_r^{-1} (\chi_r(\pi)).
\end{align}
Combining the last two equations proves the desired identity $\Growpm_r(\pi, \varnothing) = \chi^\PP_r(\pi, \varnothing)$.

We now address the case $\Sigma \ne \varnothing$, using an argument similar to the one going from Theorem 4.4 to 4.6 in \cite{CarMel}.
Consider a fixed $(i,j)\in\Sigma$.
Let \(\pi^* \in \DD_r(d)\) be the path obtained by flipping the corner  $(i,j)$ inside out, i.e., by replacing the  $\mathsf{E}\mathsf{N}$ steps bordering the box  $(i,j)$ to  $\mathsf{N}\mathsf{E}$,
so that box  $(i,j)$ now lies below  $\pi^*$.
Set \(\Sigma^* = \Sigma \setminus \{(i,j)\}\).
Directly from Definition \ref{d chi dplusminus}, we have
\begin{equation}
\label{ep chiPP}
  (t-1)\chi^\PP_r(\pi,\Sigma) =  \chi^\PP_r(\pi^*, \Sigma^*)-\chi^\PP_r(\pi, \Sigma^*) \,.
\end{equation}
Repeatedly applying identities of this form over the different elements of  $\Sigma$ allows  $\chi^\PP_r(\pi,\Sigma)$ to be
written as an alternating sum of  $2^{|\Sigma|}$ terms from the $\Sigma = \varnothing$ case.
Thus, it suffices to show that \(\Growpm_r(\pi,\Sigma)\) satisfies the same recursion.

First note that \(\FSSYT^{\pm}(\pi,\Sigma^*)\) and \(\FSSYT^{\pm}(\pi^*,\Sigma^*)\) are equal as sets, though inversions are counted differently; let  $S$ denote either of these sets.

Say
a word
$T \in S$
is \emph{row-like} (resp. \emph{column-like}) if  $T(j)T(i)$ is row-increasing (resp. column-decreasing).
For column-like $T \in S$, \((i,j)\) is never an attacking inversion,
hence the contributions from the column-like words in
\(\FSSYT^{\pm}(\pi,\Sigma^*)\) and \(\FSSYT^{\pm}(\pi^*,\Sigma^*)\) cancel in
\begin{equation}\label{eq:llt difference}
\Growpm_r(\pi^*,\Sigma^*)-\Growpm_r(\pi,\Sigma^*) \,.
\end{equation}
On the other hand, since \((i,j) \notin
\Area(\pi)\), for row-like  $T\in S$,
 $(i,j)$ is never an attacking inversion of $T$ regarded as an element of
\(\FSSYT^{\pm}(\pi,\Sigma^*)\) but always one when  $T$ is regarded as an element of
\(\FSSYT^{\pm}(\pi^*,\Sigma^*)\).
Thus, the contribution of the row-like words of
\(\FSSYT^{\pm}(\pi^*,\Sigma^*)\)
to $\Growpm_r(\pi^*,\Sigma^*)$ is  $t$ times
the contribution of the row-like words of
\(\FSSYT^{\pm}(\pi,\Sigma^*)\)
to $\Growpm_r(\pi,\Sigma^*)$, and
it follows that \eqref{eq:llt difference} is equal to
\((t-1)\Growpm_r(\pi,\Sigma)\). Hence, \(\Growpm_r(\pi,\Sigma)\) satisfies the same recursion as  $\chi^\PP_r(\pi, \Sigma)$, completing the proof.
\end{proof}

\begin{remark}
The non-attacking description of $\Growpm_r(\pi, \Sigma)(\xx;t)$ is particularly nice in the unicellular case ($\Sigma = \varnothing$) because it reduces to a sum over positive fillings times a power of  $(1-t)$.
Outside this case, we have not found a similar simplification for the non-attacking description, and we instead believe that the signed filling description is more natural.
\end{remark}

\subsection{(Signed) flagged column LLT polynomials}
\label{ss flagged LLT polynomials for columns}

We will also need a variant of the results above which correspond to the flagged LLT polynomials of
\cite{BHMPS-nspleth} for a tuple of single-column skew shapes.

A {\em  $(\pi, \Sigma)$-column semistandard word} is a map
$T\colon [d] \rightarrow \ZZ_+$ such that for every  $(i,j) \in \Sigma$,
$T(i) < T(j)$.
More generally, a {\em  $(\pi, \Sigma)$-column super word} is a map
$T\colon [d] \rightarrow \Acal$ such that for every  $(i,j) \in \Sigma$,
 $T(j)T(i)$ is column-decreasing.
We say $T$ is \emph{flagged} if  $T(1) \le 1, \dots, \, T(r) \le r$.
Denote the set of
flagged $(\pi, \Sigma)$-column semistandard words
(resp. flagged $(\pi, \Sigma)$-column super words) by
$\FCSSYT(\pi, \Sigma)$ (resp. $\FCSSYTpm(\pi, \Sigma)$).
Define
\begin{align}
\widetilde{\inv}(T) = \# \big\{(i,j) \in \Area(\pi) : \text{$T(j)T(i)$ is column-decreasing}\big\}.
\end{align}

\begin{defn}
\label{d col LLT}
For $\pi \in \DD_r(d)$ and a marking $\Sigma$ of $\pi$,
the associated \emph{flagged column LLT polynomial} is given by
\begin{align}
\Gcol_r(\pi, \Sigma)(\xx;t) =
\sum_{T \in \FCSSYT(\pi, \Sigma) } t^{\widetilde{\inv}(T)} \xx^T,
\end{align}
and the \emph{signed flagged column LLT polynomial} by
\begin{align}
\Gcolpm_r(\pi, \Sigma)(\xx;t) = \sum_{T \in \FCSSYTpm(\pi, \Sigma)} t^{\widetilde{\inv}(T)} (-t)^{-m(T)} \xx^{|T|}.
\end{align}
\end{defn}

\begin{remark}
\label{r col two LLT match}
Similar to \S\ref{ss Dpath vs shapes llt}, the (signed) flagged column LLT polynomials are special cases of those in \cite{BHMPS-nspleth}.
Let $\hat{\pi}' \in \DD_r(d)$ and $\Sigma$ be a marking of  $\hat{\pi}'$.
As discussed in \S\ref{ss Dpath vs shapes llt},
there exists a tuple of single-row skew shapes $\nubold = (\nu^{(1)}, \dots, \nu^{(d)})$ so that \eqref{e attack} holds.
Now let 
$\etabold = (\eta^{(1)}, \dots, \eta^{(d)})$
be the tuple of single-column skew shapes
such that the contents occupied by the boxes of $\eta^{(i)}$ are the same as those of  $\nu^{(i)}$.
We have
\begin{align}
\label{e col two LLT match}
\Gcol_r(\hat{\pi}', \Sigma)(\xx;t) & = \flagGcal_{r, \etabold}(\xx;t), \\
\label{e col two signed LLT match}
\Gcolpm_r(\hat{\pi}', \Sigma)(\xx;t) & = \flagGcal^\pm_{r, \etabold}(\xx;t).
\end{align}
Then \eqref{e t signed to unsigned} implies
\begin{align}
\label{e signed to unsigned cols}
\Pisf_r \big(\Gcolpm_r(\hat{\pi}', \Sigma)(\xx;t^{-1}) \big) = \Gcol_r(\hat{\pi}', \Sigma)(\xx;t^{-1}).
\end{align}
\end{remark}

We have the following variant of  $\chi^\PP_r(\pi, \Sigma)$ for the signed flagged column LLT polynomials.
\begin{defn}
\label{d chi dplusminus col}
For $\pi \in \DD_r(d)$ and a marking $\Sigma$ of $\pi$,
let $\tilde{\chi}^\PP_r(\pi, \Sigma)$ be the result of applying the following
sequence of operators to $1 \in \P(0)$, determined
by traveling along  $\pi$ and  $\Sigma$
 starting from the northeast endpoint of $\pi $:
\begin{itemize}
\item[(i)]  $\frac{1}{t^{-1}-1}\big(d_-^\PP (-t^{-r} d_+^\PP) - (-t^{-r+1} d_+^\PP) d_-^\PP \big) $ for each marked corner in  $\Sigma$,
\item[(ii)]  $d_-^\PP$ for each vertical step not part of a marked corner,
\item[(iii)]  $-t^{-r} d_+^\PP$ for each horizontal step not part of a marked corner,
\end{itemize}
where  $r = h-v$ when we have so far visited $h$ horizontal steps and  $v$ vertical steps.
\end{defn}

\begin{thm}
\label{t Gcal vs chiPP v2}
For $\pi \in \DD_r(d)$ and a marking $\Sigma$ of $\pi$,
\begin{align}
\label{e Gcal vs chiPP v2}
\Gcolpm_r(\pi, \Sigma)(x;t^{-1}) = \tilde{\chi}^\PP_r(\pi, \Sigma)(x;t).
\end{align}
\end{thm}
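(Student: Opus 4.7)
My plan is to follow the strategy of Theorem~\ref{t Gcal vs chiPP}, handling the $\Sigma = \varnothing$ base case first and then extending to general $\Sigma$ via a recursion on marked corners.

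For the base case $\Sigma = \varnothing$, I first observe that the column and row operators in Definitions~\ref{d chi dplusminus col} and~\ref{d chi dplusminus} differ only by scalar factors $t^{-r}$ attached to each $d_+^\PP$. Commuting these scalars out of the operator product, I obtain
\[
\tilde{\chi}^\PP_r(\pi, \varnothing)(x;t) = t^{-M}\chi^\PP_r(\pi, \varnothing)(x;t),
\]
where $M = \sum_{E \text{ steps}}(y_E - c_E - 1)$, summed over forward E steps of $\pi$ from $(c_E, y_E)$ to $(c_E+1, y_E)$ (this exponent being the quiver vertex $y - c - 1$ just before the corresponding backward W step is applied). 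The key identification is $M = |\Area(\pi)|$, which I plan to verify by a column-by-column count: in column $c+1$ of $\pi$, the boxes in $\Area(\pi)$ are exactly those $(c+1,k)$ with $c+1 < k \le y_E$, where $y_E$ is the height of the E step into column $c+1$, contributing $y_E - c - 1$ to each column. Meanwhile, the column analog of Lemma~\ref{l nonattack} (which follows from Remark~\ref{r col two LLT match} and \cite[Proposition 6.7.9]{BHMPS-nspleth}), together with the identity $\widetilde{\inv}(T) = |\Area(\pi)| - \inv(T)$ for non-attacking $T$ and the substitution $t \to t^{-1}$, will give
\[
\Gcolpm_r(\pi, \varnothing)(x;t^{-1}) = t^{-|\Area(\pi)|}\Growpm_r(\pi, \varnothing)(x;t).
\]
Combining with the $\Sigma = \varnothing$ case of Theorem~\ref{t Gcal vs chiPP} closes the base.

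For general $\Sigma$, I will show both sides satisfy the recursion $(t^{-1}-1)F(\pi, \Sigma) = F(\pi^*, \Sigma^*) - F(\pi, \Sigma^*)$, where $\pi^*$ is obtained from $\pi$ by flipping a chosen corner $(i,j) \in \Sigma$ inside-out and $\Sigma^* = \Sigma \setminus \{(i,j)\}$. For $\tilde{\chi}^\PP_r$, the marked-corner operator $\frac{1}{t^{-1}-1}(d_-^\PP(-t^{-r}d_+^\PP) - (-t^{-r+1}d_+^\PP)d_-^\PP)$ is precisely $\frac{1}{t^{-1}-1}$ times the difference of the two unmarked operator products for $\pi^*$ (backward W then S, with W at vertex $r$) and $\pi$ (backward S then W, with W at vertex $r-1$), once one tracks the quiver-vertex-dependent scalar on each W step. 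For the combinatorial side, I will use that $\FCSSYTpm(\pi, \Sigma^*) = \FCSSYTpm(\pi^*, \Sigma^*)$ and that $\widetilde{\inv}_{\pi^*}(T) - \widetilde{\inv}_\pi(T) = 1$ if $T(j)T(i)$ is column-decreasing and $0$ otherwise (since flipping the corner adds exactly box $(i,j)$ to $\Area$); this yields $\Gcolpm_r(\pi^*, \Sigma^*)(x;t) - \Gcolpm_r(\pi, \Sigma^*)(x;t) = (t-1)\Gcolpm_r(\pi, \Sigma)(x;t)$ because the marked-corner condition is exactly column-decreasingness. Substituting $t \to t^{-1}$ matches the $\tilde{\chi}^\PP_r$ recursion, and iterating over $\Sigma$ expresses both sides as the same $2^{|\Sigma|}$-term alternating sum of $\Sigma = \varnothing$ values, completing the proof.

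The main obstacle is the identification $M = |\Area(\pi)|$ in the base case, requiring careful bookkeeping of quiver vertices along the backward traversal and an area count by columns. Everything else runs parallel to the row case treated in Theorem~\ref{t Gcal vs chiPP}.
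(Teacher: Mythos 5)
Your proof is correct and follows essentially the same route as the paper: you first reduce $\tilde{\chi}^\PP_r(\pi,\varnothing)$ to $t^{-|\Area(\pi)|}\chi^\PP_r(\pi,\varnothing)$ by tracking the quiver-vertex exponents (the paper asserts this identity without the column-by-column count that you correctly supply), then convert $\Growpm_r$ to $\Gcolpm_r$ via the non-attacking filling formula (the paper instead cites the pre-established relations \eqref{e two signed flag LLT} and \eqref{e col two signed LLT match}, which encode the same cancellation), and finally run the marked-corner recursion in parallel with Theorem~\ref{t Gcal vs chiPP}, with the row-like/column-like roles correctly swapped. The only substantive difference is that you make explicit the bookkeeping the paper leaves implicit.
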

\begin{proof}
First we address the case  $\Sigma = \varnothing$.
Then (i) never arises in Definition \ref{d chi dplusminus col} and we have
 $t^{|\Area(\pi)|}\tilde{\chi}^\PP_r(\pi, \varnothing) = \chi^\PP_r(\pi, \varnothing)$.
Hence, by Theorem \ref{t Gcal vs chiPP}, \eqref{e two signed flag LLT}, and \eqref{e col two signed LLT match}, we have
\begin{align}
t^{|\Area(\pi)|}\tilde{\chi}^\PP_r(\pi, \varnothing) = \chi^\PP_r(\pi, \varnothing) =
\Growpm_r(\pi, \varnothing)(x;t) = t^{|\Area(\pi)|}\Gcolpm_r(\pi, \varnothing)(x;t^{-1}),
\end{align}
thus establishing \eqref{e Gcal vs chiPP v2} for $\Sigma = \varnothing$.
The case  $\Sigma \ne \varnothing$ now follows by an argument similar
to the analogous step in the proof of Theorem \ref{t Gcal vs chiPP}. 
\end{proof}

\subsection{Symmetrization}
\label{ss Symmetrization}
Here we relate flagged row and column LLT polynomials to ordinary symmetric LLT polynomials.

Let $\Gcal_{\nubold}(\xx;t)$ denote the ordinary symmetric LLT polynomial in the sense of
\cite{HaHaLoReUl05} indexed by the tuple of skew shapes $\nubold$ and defined in terms attacking inversions.
The definition of these is exactly the  $r=0$ special case of the $\flagGcal_{r,\nubold}(\xx;t)$ defined in \eqref{e nspleth paper Gs0}, i.e.,
$\flagGcal_{0,\nubold}(\xx;t) = \Gcal_{\nubold}(\xx;t)$.

\begin{remark}
\label{r flagLLT r0}
Let $\hat{\pi}'$, $\Sigma$, $\nubold$, and  $\etabold$ be as in
Remark \ref{r col two LLT match}, now with  $r = 0$ (so  $\hat{\pi}' \in \DD_0(d)$ and we set  $\pi' = \hat{\pi}'$ as this is consistent with the notation in \S\ref{ss Dpath vs shapes llt}).

(i) The flagged row LLT polynomials are symmetric LLT polynomials when  $r=0$.
Specifically,
\eqref{e two flag LLT} followed by \cite[Proposition 4.1.6]{BHMPS-paths} yields
\begin{align}
\Grow_0(\pi', \Sigma)(\xx;t) = t^{|\Area(\pi')|} \Gcal_{\nubold}(\xx;t^{-1}) = \omega \, \Gcal_{\etabold}(\xx;t).
\end{align}

(ii) Similarly, flagged column LLT polynomials are symmetric LLT polynomials: by \eqref{e col two LLT match},
\begin{align}
\label{e col LLTs sym}
\Gcol_0(\pi', \Sigma)(\xx;t) = \Gcal_{\etabold}(\xx;t).
\end{align}

(iii)
The symmetric LLT polynomials in \eqref{e col LLTs sym} also match
the version of symmetric LLT polynomials $\chi(\pi', \Sigma)$ defined in \cite[\S3.5]{Mellit16};
the definition of $\chi(\pi', \Sigma)$ is the same as  $\Gcol_0(\pi', \Sigma)(\xx;t)$
except that
in $\chi(\pi', \Sigma)$
the condition
$T(i) < T(j)$ for $(i,j) \in \Sigma$ is replaced with $T(i) > T(j)$, and the
attacking inversion condition
$T(i) < T(j)$ for $(i,j) \in \Area(\pi')$ is replaced with  $T(i) > T(j)$.
But since  $\chi(\pi', \Sigma)$ is symmetric, reversing the order of the variables does not change the function, so
we indeed have
\begin{align}
\label{e two llts agree}
\Gcol_0(\pi', \Sigma)(\xx;t) = \Gcal_{\etabold}(\xx;t) = \chi(\pi', \Sigma).
\end{align}
\end{remark}

\begin{prop}
\label{p sym flag LLT}
(i) (Signed) flagged row LLT polynomials match up as follows under Hecke and Weyl symmetrization:
for $\theta \in \DD_r(d)$ and a marking $\Sigma$ of  $\theta$,
\begin{equation}
\label{ep sym LLT1}
\begin{tikzcd}
\Growpm_r(\theta, \Sigma)
\ar[mapsto, d,"\Pisf_r"] \ar[mapsto,r,"\hsym_r"]
 &\Growpm_{r-1}(\mathsf{N} \theta, \Sigma)
 \ar[mapsto,d,"\Pisf_{r-1}"]
  \\
\Grow_r(\theta, \Sigma)
\ar[mapsto,r,"\Weyl_r"]&
 \Grow_{r-1}(\mathsf{N} \theta, \Sigma)
\end{tikzcd}
\end{equation}
where  $\mathsf{N} \theta \in \DD_{r-1}(d)$ is the result of adding a north step to the southwest end of $\theta$.
In particular, in the notation of Remark \ref{r flagLLT r0} with $\pi' = \mathsf{N}^r \theta$,
\begin{align}
\label{ep sym flag LLT}
\Weyl_1 \cdots \Weyl_r (\Grow_r(\theta, \Sigma)) = \Grow_0(\mathsf{N}^{r} \theta, \Sigma)
= \omega \, \Gcal_{\etabold}(\xx;t).
\end{align}

(ii) Similarly, (signed) flagged column LLT polynomials match up as follows under Hecke and Weyl symmetrization:
\begin{equation}
\label{ep sym LLT2}
\begin{tikzcd}
\Gcolpm_r(\theta, \Sigma)(\xx;t^{-1})
\ar[maps to, d,"\Pisf_r"] \ar[mapsto,r,"\hsym_r"]
 &\Gcolpm_{r-1}(\mathsf{N} \theta, \Sigma)(\xx;t^{-1})
 \ar[mapsto,d,"\Pisf_{r-1}"]
  \\
\Gcol_r(\theta, \Sigma)(\xx;t^{-1})
\ar[mapsto,r,"\Weyl_r"]&
 \Gcol_{r-1}(\mathsf{N} \theta, \Sigma)(\xx;t^{-1})
\end{tikzcd}
\end{equation}
In particular,
\begin{align}
\label{ep sym LLT col 2}
\Weyl_1 \cdots \Weyl_r (\Gcol_r(\theta, \Sigma)) =
\Gcol_0(\mathsf{N}^r \theta, \Sigma) =
\Gcal_{\etabold}(\xx;t).
\end{align}
\end{prop}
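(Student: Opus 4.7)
My plan is to verify the diagram \eqref{ep sym LLT1} by proving the top arrow directly and then deducing the bottom arrow from the commutativity identity $\Pisf_{r-1}\, \hsym_r = \Weyl_r \, \Pisf_r$, which is Proposition \ref{p dminus and sym} combined with Proposition \ref{p: d- and hsym} ($d_-^\PP = \hsym_r$). The vertical arrows are instances of Theorem \ref{t signed to unsigned}.

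For the top arrow $\hsym_r \Growpm_r(\theta, \Sigma) = \Growpm_{r-1}(\mathsf{N}\theta, \Sigma)$, I would invoke Theorem \ref{t Gcal vs chiPP} to identify both sides with $\chi^\PP_r(\theta, \Sigma)$ and $\chi^\PP_{r-1}(\mathsf{N}\theta, \Sigma)$ respectively, and then inspect Definition \ref{d chi dplusminus}. Appending a north step at the southwest end of $\theta$ to form $\mathsf{N}\theta$ adds exactly one vertical step to the southwest-most end of the traversal; since this step is not part of any marked corner (the marking is unchanged and its corners are interior), it contributes a single additional $d_-^\PP$ composed outermost. Hence $\chi^\PP_{r-1}(\mathsf{N}\theta, \Sigma) = d_-^\PP \, \chi^\PP_r(\theta, \Sigma) = \hsym_r \chi^\PP_r(\theta, \Sigma)$, as required. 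The iterated identity \eqref{ep sym flag LLT} then follows by applying $\Weyl_r, \Weyl_{r-1}, \ldots, \Weyl_1$ in turn, yielding $\Grow_0(\mathsf{N}^r \theta, \Sigma)$, and invoking Remark \ref{r flagLLT r0}(i) for the final equality $\Grow_0(\mathsf{N}^r\theta, \Sigma) = \omega\, \Gcal_{\etabold}(\xx;t)$.

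Part (ii) proceeds identically with $\chi^\PP$ replaced by $\tilde{\chi}^\PP$ (Definition \ref{d chi dplusminus col}) and Theorem \ref{t Gcal vs chiPP} replaced by Theorem \ref{t Gcal vs chiPP v2}. The crucial observation is that although the operator coefficients in Definition \ref{d chi dplusminus col} depend on the running parameter $r = h - v$, adding a single vertical step at the very end of the traversal does not change these counts for any earlier step, and the new vertical step itself contributes just $d_-^\PP$ (with no $r$-dependent coefficient). This yields the top arrow $\hsym_r \Gcolpm_r(\theta, \Sigma)(\xx;t^{-1}) = \Gcolpm_{r-1}(\mathsf{N}\theta, \Sigma)(\xx;t^{-1})$; the vertical arrows are \eqref{e signed to unsigned cols}, and the bottom arrow follows again from $\Pisf_{r-1}\, \hsym_r = \Weyl_r \, \Pisf_r$. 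The iterated identity \eqref{ep sym LLT col 2} is then obtained by iterating $\Weyl_r, \dots, \Weyl_1$, noting that $\Weyl_r$ acts only on $\xx$ and so commutes with the substitution $t \mapsto t^{-1}$, and applying Remark \ref{r flagLLT r0}(ii).

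I do not anticipate any real obstacle; the only substantive content is the observation that $\chi^\PP$ and $\tilde{\chi}^\PP$ each acquire exactly a factor of $d_-^\PP$ composed outermost under the operation $\theta \mapsto \mathsf{N}\theta$, which is immediate from the definitions.
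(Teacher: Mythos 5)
Your proposal is correct and follows essentially the same route as the paper: identify the signed flagged LLT polynomials with $\chi^\PP_r$ (resp. $\tilde\chi^\PP_r$) via Theorem \ref{t Gcal vs chiPP} (resp. Theorem \ref{t Gcal vs chiPP v2}), read off the top arrow from Definition \ref{d chi dplusminus} (resp. \ref{d chi dplusminus col}) by noting the added southwest north step contributes one outermost $d_-^\PP = \hsym_r$, obtain the bottom arrow from $\Pisf_{r-1} d_-^\PP \Pisf_r^{-1} = \Weyl_r$ (Proposition \ref{p dminus and sym}), and supply the vertical arrows from Theorem \ref{t signed to unsigned}. Your observation that the $r = h - v$ coefficients in Definition \ref{d chi dplusminus col} are unaffected by appending a vertical step at the end of the traversal, so it contributes a plain $d_-^\PP$, is precisely the detail the paper leaves implicit when it says part (ii) "follows similarly."
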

\begin{proof}
The  $\hsym_r$ arrow of \eqref{ep sym LLT1} is immediate from Theorem \ref{t Gcal vs chiPP} and Definition \ref{d chi dplusminus}
(keeping in mind Remark \ref{rem: hsym and d-}).  The  $\Weyl_r$ arrow then follows from Proposition \ref{p dminus and sym}.
Statement (ii) follows similarly using Theorem \ref{t Gcal vs chiPP v2} in place of Theorem \ref{t Gcal vs chiPP}.
\end{proof}

\subsection{Nonsymmetric chromatic polynomials and flagged unicellular LLT polynomials}

Here we briefly mention a connection with related work, which will not be used elsewhere in the paper.
By \cite[Propositions 3.4--3.5]{CarMel}, chromatic symmetric functions of incomparability graphs of natural unit interval orders
and unicellular LLT polynomials are related by the plethystic
transformation \(f[\xx] \mapsto f[\xx/(1-t)]\).
 We will now see that a similar relation holds between our
flagged row LLT polynomials  $\Grow_r(\pi, \Sigma)(\xx ;t)$ in the unicellular case (i.e. $\Sigma = \varnothing$)
and nonsymmetric versions of
chromatic symmetric functions introduced by Haglund-Wilson \cite{HaglundWilson} and further studied by
Tewari-Wilson-Zhang \cite{TWZchromatic}.

The functions  $\boldsymbol{\chi}_{\mathbf{F}, \pi}$ from \cite{TWZchromatic}, which depend on  $d, r, N \in \ZZ_+$, flag bounds $\mathbf{F}= (0 \le F_1 \le \cdots \le F_r \le N)$, and $\pi \in \DD_r(d)$, are readily written in the notation of this section:
\begin{align}
\boldsymbol{\chi}_{\mathbf{F}, \pi} =
\sum_{T \text{ non-attacking}}
t^{\inv(T)} \xx^T,
\end{align}
where the sum is just as in \eqref{e chiprime} except that now the letters of  $T$ must be  $\le N$ and
the flagging condition  $T(i) \le i$ for  $i\in [r]$ is replaced by the more general $T(i) \le F_i$ for $i \in [r]$;  $\inv(T)$ is also just as in \eqref{e chiprime},
as defined in \S \ref{ss flagged LLT polynomials}.
Hence, in particular, by \eqref{e Gcal chiprime}, these functions for flag bounds  $(1,\dots,r)$ are essentially the same as our signed flagged row LLT polynomials up to a power of  $(1-t)$:
\begin{align}
\boldsymbol{\chi}_{(1,\dots, r), \pi} = \chi'_r(\pi)[\xx_N ;t] = (1-t)^{r-d}\Growpm_r(\pi, \varnothing)[\xx_N ;t],
\end{align}
where we have used the notation $f[\xx_N;t] = f(x_1,\dots, x_N, 0,\dots;t)$.

It was initially conjectured the  $\boldsymbol{\chi}_{\mathbf{F}, \pi}$ were Demazure character positive but
this turned out to be false.  Our computations support the conjecture that they are Demazure character positive in the case
$\mathbf{F} = (1,\dots, r)$.
However, even supposing this is true, 
this situation differs in an
important way from our atom positivity conjectures on unsigned flagged row LLT polynomials and related polynomials
(see \S\ref{ss atom pos 2}):
we showed in \S\ref{ss Symmetrization} that $\Grow_r(\pi, \Sigma)$ Weyl symmetrizes to the corresponding symmetric LLT polynomial
$\Grow_0(\mathsf{N}^r \pi, \Sigma)$
and therefore atom positivity of $\Grow_r(\pi, \Sigma)[\xx_N;t]$ implies the Schur positivity of $\Grow_0(\mathsf{N}^r \pi, \Sigma)[\xx_N;t]$ via \eqref{e atom key pos}.
In contrast, the $\boldsymbol{\chi}_{\mathbf{F}, \pi}$ \emph{do
not have the property that they Weyl symmetrize to the corresponding
chromatic symmetric function}, i.e., $\pi_{w_0} (\boldsymbol{\chi}_{\mathbf{F}, \pi})$ is typically not equal to $\boldsymbol{\chi}_{(N,\dots, N), \pi}$.
Therefore, Demazure character or atom positivity of $\boldsymbol{\chi}_{\mathbf{F}, \pi}$ does
not imply the Schur positivity of $\boldsymbol{\chi}_{(N,\dots, N), \pi}$
via \eqref{e atom key pos}.

\section{The nonsymmetric compositional  \texorpdfstring{$(km,kn)$}{(km,kn)}-shuffle theorem}
\label{s ns comp kmkn thm}
Here we prove a nonsymmetric compositional $(km,kn)$-shuffle theorem.
Our approach is to first establish a signed version of it and to then apply the $r$-nonsymmetric plethsym map $\Pisf_r$.
The former is proven by interpreting the last step in Mellit's proof of the (symmetric) compositional $(km,kn)$-shuffle theorem
in terms of the signed flagged row LLT polynomials developed in \S\ref{s LLT}.

\subsection{Double Dyck path algebra endomorphisms}
The algebraic side of the nonsymmetric compositional $(km,kn)$-shuffle theorem involves
ingredients from Mellit's version.
In particular, it involves endomorphisms  $N$ and  $S$ of  $\AA_{t,q}$,
which are variants of well-known automorphisms of the DAHA.

\begin{prop}[{\cite[Proposition 3.16]{Mellit16}}]
There is a unique  $\QQ(q,t)$-algebra endomorphism
$N \colon \AA_{t,q} \to \AA_{t, q}$ determined by
\begin{align}
\label{e N map}
e_r \mapsto e_r, \ \ T_i \mapsto T_i, \ \ d_-  \mapsto d_- , \ \ d_+^* \mapsto d_+^*,
 \ \ d_+ \mapsto -(qt)^{-1} z_1 d_+,
\end{align}
and a unique  $\QQ(q,t)$-algebra endomorphism
$S \colon \AA_{t,q} \to \AA_{t, q}$ determined by
\begin{align}
e_r \mapsto e_r, \ \ T_i \mapsto T_i, \ \ d_-  \mapsto d_- , \ \ d_+ \mapsto d_+,
 \ \ d_+^* \mapsto - y_1 d_+^*.
\end{align}
\end{prop}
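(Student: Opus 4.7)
The plan is to handle uniqueness and existence separately. Uniqueness is immediate: the elements $e_r, T_i, d_-, d_+, d_+^*$ generate $\AA_{t,q}$ as a $\QQ(q,t)$-algebra by Definition \ref{def:double Dyck path algebra}, so any $\QQ(q,t)$-algebra endomorphism is determined by its values on these generators.

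For existence of $N$, one extends the proposed assignment to the free $\QQ(q,t)$-algebra on the arrows of $\Qddot$ and verifies that each defining relation of $\AA_{t,q}$ is preserved. Since $N$ fixes every generator except $d_+$, and hence also fixes each $z_i$ (as $z_i$ is built from $d_+^*, d_-, T_j$ via \eqref{e zi def 1}--\eqref{e zi def 2}), only the relations that mention $d_+$ or $y_i$ require checking: namely \eqref{eq:Forward loop-path commutation}, \eqref{eq:Forward-back commutation ending}, \eqref{eq:Forward-back commutation beginning}, the $d_+ z_i = z_{i+1} d_+$ and $d_+^* y_i = y_{i+1} d_+^*$ halves of \eqref{eq:Forward twisted loop-path commutation}, and the cross-star relation \eqref{eq:Star to unstar}. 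For each of these, substitute $N(d_+) = -(qt)^{-1} z_1 d_+$, or equivalently $t^r y_1 d_+^*$ (using \eqref{eq:Star to unstar} itself), and reduce to identities already holding in $\AA_{t,q}$; the key simplification is commuting $z_1$ past $d_+$, $T_i$, and $d_-$ using the existing relations.

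The case of $S$ is dual: since $S$ fixes every generator except $d_+^*$ (and hence also fixes each $y_i$), the relations requiring verification are the star-analogs of those above. One substitutes $S(d_+^*) = -y_1 d_+^*$ and argues symmetrically, now commuting $y_1$ past $d_+$, $T_i$, $d_-$ using \eqref{eq:Forward-back commutation ending}--\eqref{eq:Forward-back commutation beginning}, \eqref{eq:Forward twisted loop-path commutation}, and \eqref{eq:Star to unstar}.

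The main obstacle lies in the forward-back commutation relations \eqref{eq:Forward-back commutation ending}--\eqref{eq:Forward-back commutation beginning} under $N$ (and their star-analogs under $S$), because expanding $[-(qt)^{-1} z_1 d_+, d_-]$ requires controlled commutation between $z_1$ and $d_-$. This is obtained by unfolding $z_1 = \frac{t^r}{1-t}[d_+^*, d_-] T_{r-1}^{-1} \cdots T_1^{-1}$ and applying the starred forward-back relations \eqref{eq:Forward-back commutation ending star}--\eqref{eq:Forward-back commutation beginning star} together with the Hecke relations. These manipulations are mechanical but intricate, and mirror those in the proof of \cite[Proposition 3.16]{Mellit16}, to which the reader may be directed for the full computation.
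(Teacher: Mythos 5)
The paper does not actually prove this proposition; it is cited directly from Mellit \cite[Proposition 3.16]{Mellit16} with no argument given. Your sketch is sound as an outline: uniqueness by generation is correct, and your enumeration of the relations that need to be verified for $N$ (exactly those mentioning $d_+$ directly, or $y_i$, which is built from $d_+$) is complete and accurate — the starred relations \eqref{eq:Forward loop-path commutation star}--\eqref{eq:Forward-back commutation beginning star} and the pure $d_-, T_i$ relations are indeed untouched by $N$, since $z_i$ is built only from $d_+^*, d_-, T_j$. The substitution $N(d_+) = -(qt)^{-1} z_1 d_+ = t^r y_1 d_+^*$ via \eqref{eq:Star to unstar} is a useful simplification, and the duality with $S$ (which fixes $y_i$ and acts nontrivially on $z_i$) is correctly noted. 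That said, like the paper, you leave the decisive step — actually checking that the forward-back commutation relations survive the substitution — to Mellit; your outline records the right strategy but does not constitute an independent proof.
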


Define the following elements of  $\SL_2(\ZZ)$
\begin{align}
N' =
\begin{pmatrix}
1 & 1 \\
0 & 1
\end{pmatrix}, \quad
S' =
\begin{pmatrix}
1 & 0 \\
1 & 1
\end{pmatrix}.
\end{align}

\begin{propdef}[{\cite[Theorem 3.19]{Mellit16}}]
\label{pd rho}
Let  $m \in \ZZ_+$,  $n \in \ZZ_{\ge 0}$ with  $\gcd(m,n)=1$.
Let $A'$ be a word in the alphabet $\{N',S'\}$ such that the product of these matrices has the form
$\begin{pmatrix}
m & * \\
n & *
\end{pmatrix}$ and let
$A$ be the corresponding composition of the endomorphisms  $N, S$.
Such a word exists and can be constructed using Farey sequences (see \cite[Remark 3.20]{Mellit16}).
Let  $\rho^*_{m,n} \colon \AA_t \to \AA_{t,q}$ be the composition  $A \circ \rho^*$, where
 $\rho^* \colon \AA_t \to \AA_{t,q}$ is as in Remark \ref{rem:why ddpa}.
This 
is independent of the choice of word  $A'$.
\end{propdef}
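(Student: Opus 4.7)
The plan is to decompose the statement into two subtasks: \emph{existence} of a word $A'$ whose matrix product has first column $(m,n)^T$, and \emph{well-definedness} of $A \circ \rho^*$ independent of the choice of such a word. For existence, I would invoke the standard fact that the monoid $\langle N', S' \rangle \subseteq \SL_2(\ZZ)$ acts transitively on the set of coprime columns with nonnegative entries and positive first entry; this follows from a Euclidean-algorithm style recursion (strip a trailing $N'$ when $m > n$, a trailing $S'$ when $m < n$, until reaching the identity column $(1,0)^T$), and it aligns with the Farey/Stern--Brocot construction cited in \cite[Remark 3.20]{Mellit16}.

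For well-definedness, the preliminary step is to classify all words $A'$ whose product has a fixed first column $(m,n)^T$. First, the monoid $\langle N', S' \rangle$ is free, a consequence of the Stern--Brocot combinatorics that is provable by a direct induction on word length. Second, for any two matrices $M_1, M_2 \in \SL_2(\ZZ)$ with common first column $(m,n)^T$ and $\gcd(m,n)=1$, the determinant conditions force their second columns to differ by an integer multiple of $(m,n)^T$, which translates to $M_2 = M_1 (N')^k$ for a unique $k \in \ZZ$. Third, appending an $S'$ to any word $w$ strictly alters the first column of its product (it becomes the sum of the two columns of the product of $w$, which differs from the first column since the second is nonzero), while appending an $N'$ preserves it. Combining these, every word whose product has first column $(m,n)^T$ has the form $w_0 (N')^k$ for some $k \ge 0$ and a uniquely determined shortest $w_0$.

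The heart of the argument is then the identity $N \circ \rho^* = \rho^*$, which implies that adjoining trailing $N'$'s to $A'$ (hence trailing $N$'s to the composition $A$) does not change $A \circ \rho^*$. This is a direct check on the generators of $\AA_t$: by Remark~\ref{rem:why ddpa}, $\rho^*$ sends $e_r \mapsto e_r$, $T_i \mapsto T_i^{-1}$, $d_- \mapsto d_-$, and $d_+ \mapsto d_+^*$, while by the formulas \eqref{e N map}, $N$ fixes each of $e_r$, $T_i$, $d_-$, and $d_+^*$. The base-field swap $q \mapsto q^{-1}$, $t \mapsto t^{-1}$ is part of $\rho^*$ and is unaffected by the $\QQ(q,t)$-linear endomorphism $N$. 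Hence $N \circ \rho^*$ agrees with $\rho^*$ on all generators of $\AA_t$, and so $N \circ \rho^* = \rho^*$.

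The main obstacle lies in the classification step: one must pin down that $\langle N', S' \rangle$ is free and that the first column of a product is preserved only under right multiplication by a power of $N'$. These are standard features of the Farey/Stern--Brocot combinatorics but require careful handling to rule out unexpected coincidences between words. Once the classification is established, the final identity $N \circ \rho^* = \rho^*$ is immediate from the defining formulas for $N$ and $\rho^*$, since $N$ fixes each of the four elements $e_r$, $T_i$, $d_-$, $d_+^*$ that appear in the image of $\rho^*$.
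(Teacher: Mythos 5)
The paper does not supply its own proof of this Proposition-Definition; it is imported verbatim from \cite[Theorem 3.19]{Mellit16}. Your reconstruction is correct and is almost certainly the same argument Mellit runs: the entire well-definedness rests on the identity $N\circ \rho^* = \rho^*$, which you verify correctly on generators ($N$ fixes $e_r$, $T_i$, $d_-$ and $d_+^*$, which together with $T_i^{-1}$ generate the image of $\rho^*$; and $N$ is $\QQ(q,t)$-linear so the base-field inversion built into $\rho^*$ is untouched). Combined with the classification of words whose product has first column $(m,n)^T$, this gives independence from the choice of $A'$.

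One remark on the classification step, which you rightly flag as the delicate part: your point three (appending $N'$ preserves the first column, appending $S'$ changes it because the second column of a matrix in $\SL_2(\NN)$ is nonzero) shows that \emph{stripping} trailing $N'$'s from a given word is well-defined. To conclude that \emph{all} words for a given $(m,n)^T$ lie in a single coset $w_0 (N')^{\ge 0}$, you still need to combine your point two ($M_2 = M_1 (N')^k$) with the freeness of $\langle N',S'\rangle$ in order to pass from the matrix identity to a word identity: if $k\ge 0$ then $w_1(N')^k$ and $w_2$ are two words for the same matrix, so freeness forces $w_2 = w_1(N')^k$. You state all three ingredients, but the argument wiring them together is compressed; making that one deduction explicit would close the gap. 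After that, $A\circ\rho^* = A_0 \circ N^k\circ\rho^* = A_0\circ\rho^*$ and the statement follows, consistent with the examples in the paper (e.g.\ $\rho^*_{1,1}=S\circ\rho^*$ and $\rho^*_{ab+1,b}=N^aS^b\circ\rho^*$).
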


\begin{example}
For  $(m,n)= (1,1)$, we could choose  $A' = S' N'^k$ for any  $k\ge 0$ and
\begin{align}
  \rho^*_{1,1} = S \circ \rho^*.
\end{align}
As another example, for any  $a,b \ge 0$,
\begin{align}
\rho^*_{ab+1,b} = N^a S^b \circ \rho^*.
\end{align}
\end{example}

\subsection{Signed, nonsymmetric compositional \texorpdfstring{$(km,kn)$}{(km,kn)}-shuffle theorem}
\label{ss Nonsymmetric compositional kmkn shuffle theorem}
Let  $m,n$ be relatively prime positive integers and
$\alpha = (\alpha_1, \dots, \alpha_r) \in \ZZ_+^r$ be a strict composition of size $k$.
Let  $s = \frac{n}{m}$ and  $s_- = s - \epsilon$ for small  $\epsilon > 0$.
Set
\begin{align}
\label{e def alg side}
\alg_{m,n}^\alpha \defeq (-1)^{k(m+n+1)} t^{r-k}
\PP_r^{-1} \rho^*_{m,n}\big( y_1^{\alpha_1-1}\cdots y_r^{\alpha_r-1} d_+^r  \big) \cdot 1 \ \in \P(r).
\end{align}
For comparison, the expression $(-1)^{kn} d_-^r \PP_r \alg_{m,n}^\alpha \in V_0 = \Lambda(W)$
is the form of the algebraic side of the compositional $(km,kn)$-shuffle theorem that Mellit uses in his proof (equivalent to the original by \cite[\S3.8]{Mellit16}).

The signed, nonsymmetric compositional $(km,kn)$-shuffle theorem we prove below
expresses \eqref{e def alg side} as a sum of signed flagged row LLT polynomials.
To describe the combinatorial side, we need to expand on the notation on Dyck paths from \S\ref{ss Dyck path stats}.

Let $\DD^\alpha_{km,kn}$ denote the set of  $(km,kn)$-Dyck paths which touch the diagonal
exactly at the points
$(k_{j} m, k_{j} n)$, where $k_{j} = \sum _{i < j}\alpha_{i}$ for $j=1,\ldots, r+1$.

For any $\pi \in \DD^\alpha_{km,kn}$, the $r$ north steps of  $\pi$ touching the diagonal are the first  $r$ north steps in sweeping reading order and are mutually attacking.  This means that the associated attacking Dyck path $\pi'$
begins with  $r$ north steps, and we let $\hat{\pi}'\in \DD_r(kn)$ denote the partial Dyck path obtained by removing these steps.
Define a distinguished marking of $\pi'$, denoted
$\Sigma_\pi$, to be the set of pairs of consecutive north steps along vertical runs of  $\pi$.
Since the corners of  $\hat{\pi}'$ are the same as
those of  $\pi'$, the marking
$\Sigma_\pi$ of  $\pi'$ may also be regarded
as a marking of  $\hat{\pi}'$.

Following \cite{Mellit16},
for $\pi \in \DD^\alpha_{km,kn}$,
define  $\dinv(\pi)$ to be the number of pairs  $(e,f)$ where  $e$ is an east step of  $\pi$
and $f$ is a north step of  $\pi$, such that  $e$ is to the left of  $f$ and there is a
line of slope  $s_-$ which intersects  $e$ and  $f$.
Also define
\begin{align}
\label{e d maxtdinv}
\maxtdinv(\pi) \ = \ \big(\text{\# of attacking pairs of north steps of $\pi$}\big) \  = \ \area(\pi').
\end{align}

\begin{thm}
\label{t ns shuffle unmod}
For relatively prime positive integers $m,n$ and
a strict composition  $\alpha = (\alpha_1, \dots, \alpha_r)$ of size $k$,
\begin{align}
\label{e ns shuffle unmod}
\alg_{m,n}^\alpha
=
\sum_{\pi \in \DD^\alpha_{km,kn} }  q^{\area(\pi)} t^{\dinv(\pi) - \maxtdinv(\pi) }
\Growpm_r(\hat{\pi}', \Sigma_\pi)\,,
\end{align}
where $\area(\pi)$ and $\Growpm_r(\hat{\pi}', \Sigma_\pi)$ are defined in \S\ref{ss Dyck path stats} and \S\ref{ss flagged LLT polynomials}.
\end{thm}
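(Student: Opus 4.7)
The plan is to lift Mellit's proof of the symmetric compositional $(km,kn)$-shuffle theorem to its pre-symmetrized nonsymmetric form, and then to translate the resulting $\chi$-expressions into signed flagged row LLT polynomials via Theorem~\ref{t Gcal vs chiPP}.

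First, I would appeal to the combinatorial identity at the core of \cite{Mellit16}: using the Farey-path decomposition of $\rho^*_{m,n}$ together with the defining relations for the endomorphisms $N$ and $S$, Mellit expands $\rho^*_{m,n}(y_1^{\alpha_1-1}\cdots y_r^{\alpha_r-1} d_+^r)\cdot 1$ in the Carlsson--Mellit action of $\AA_{t,q}$ as a sum over $\pi\in\DD^\alpha_{km,kn}$ of $q^{\area(\pi)} t^{\dinv(\pi)}$ (up to a uniform sign and power of $t$) times a product of $d_+^{CM}$ and $d_-^{CM}$ operators interspersed with commutators at each corner of the attacking Dyck path $\pi'$ where $\pi$ has two consecutive north steps. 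Mellit then applies $d_-^r$ to symmetrize and obtain the $(km,kn)$-shuffle theorem, but we stop just short of that step.

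Second, I would convert from the Carlsson--Mellit action to Mellit's action by Proposition~\ref{CarMel vs Mel}, then from $V_r$ to $\P(r)$ using $\PP_r^{-1}$. The sign swaps $d_\pm^{CM}= -\Q_{r\pm1}\,d_\pm\,\Q_r^{-1}$ and the compensating powers of $t$ arising from the commutators combine with the initial constant to produce exactly the prefactor $(-1)^{k(m+n+1)} t^{r-k}$ on the left-hand side of \eqref{e def alg side} and the per-path shift $t^{-\maxtdinv(\pi)}$, using $\maxtdinv(\pi) = \area(\pi')$. Unwinding Definition~\ref{d chi dplusminus}, the per-path operator product is then precisely $\chi^\PP_r(\hat{\pi}', \Sigma_\pi)$: the initial $d_+^r\cdot 1$ realizes the first $r$ (mutually attacking) north steps of $\pi'$, so the subsequent traversal starts at $\hat{\pi}'$, and the commutators of Mellit's construction line up with the marked corners $\Sigma_\pi$ (pairs of consecutive north steps in $\pi$).

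Finally, invoking Theorem~\ref{t Gcal vs chiPP} to substitute $\chi^\PP_r(\hat{\pi}', \Sigma_\pi) = \Growpm_r(\hat{\pi}', \Sigma_\pi)$ yields the claimed identity \eqref{e ns shuffle unmod}. The main obstacle is the bookkeeping of signs and $t$-powers across the three conventions ($d_\pm^{CM}$ versus $d_\pm$ versus $d_\pm^\PP$), and in particular verifying that (i) the factor $d_+^r\cdot 1$ absorbs the attacking top portion of $\pi'$ so that the traversal is on $\hat{\pi}'$, (ii) the commutator weight $\frac{1}{t-1}$ per marked corner together with $-\maxtdinv(\pi)$ combines with $\dinv(\pi)$ to give exactly $t^{\dinv(\pi)-\maxtdinv(\pi)}$, and (iii) each Dyck path in $\DD^\alpha_{km,kn}$ is produced exactly once by the Farey-word construction of $\rho^*_{m,n}$. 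All of these are already at the heart of Mellit's argument; the new content here is interpreting the assembled $\chi^\PP_r$-terms as signed flagged row LLT polynomials.
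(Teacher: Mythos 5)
Your proposal follows essentially the same route as the paper: reinterpret Mellit's pre-symmetrized expansion of $\rho^*_{m,n}(y_1^{\alpha_1-1}\cdots y_r^{\alpha_r-1}d_+^r)\cdot 1$ as a sum over $\pi\in\DD^\alpha_{km,kn}$ of weighted per-path operator products (Mellit's second-to-last displayed equation, here~\eqref{e mellit ns shuffle}, together with the $D_\pi^h$-sweep identity~\eqref{e Mellit LLT term}/\eqref{e Mellit LLT term2}), transport to $\P(r)$ via $\PP_r^{-1}$, and invoke Theorem~\ref{t Gcal vs chiPP} to recognize each term as $\Growpm_r(\hat\pi',\Sigma_\pi)$. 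One small inaccuracy worth flagging: Mellit's expansion is carried out in \emph{his} action of $\AA_{t,q}$ on $V_*$ (the $d_\pm$, not $d_\pm^{CM}$), so the proposed detour through Proposition~\ref{CarMel vs Mel} is unnecessary --- the paper defines $\chi_r(\hat\pi',\Sigma_\pi)$ directly in terms of Mellit's $d_\pm$ and passes to $\P(r)$ with a single application of $\PP_r^{-1}$, picking up the sign $(-1)^{kn}$. Your checkpoints (i)--(iii) are exactly what~\eqref{e Mellit LLT term2} and the Farey/coloring machinery settle, though the $t^{\dinv(\pi)-\maxtdinv(\pi)}$ exponent and the $q^{r-k}$ correction are read off directly from Mellit's formulas rather than re-derived from the commutator weights as you suggest.
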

\begin{proof}
For $\pi \in \DD^\alpha_{km,kn}$, define
$\chi_r(\hat{\pi}', \Sigma_\pi) \in V_r$ just as $\chi^\PP_r(\hat{\pi}', \Sigma_\pi)$ in
Definition \ref{d chi dplusminus} but with $d_+, d_-$ in place of  $-d_+^\PP, d_-^\PP$.
Thus, by Theorem \ref{t Gcal vs chiPP},
\begin{equation}
\Growpm_r(\hat{\pi}', \Sigma_\pi)
= \chi^\PP_r(\hat{\pi}', \Sigma_\pi)
= (-1)^{kn} \PP_r^{-1} \chi_r(\hat{\pi}', \Sigma_\pi).
\end{equation}
Hence \eqref{e ns shuffle unmod}
is equivalent to the following identity of~\cite{Mellit16}:
\begin{multline}
\label{e ns shuffle unmod 2}
(-1)^{k(m+1)}\, t^{r-k}\,
\rho^*_{m,n}\big( y_1^{\alpha_1-1}\cdots y_r^{\alpha_r-1} d_+^r  \big) \cdot 1  \\
= \sum_{\pi \in \DD^\alpha_{km,kn} }  q^{\area(\pi)} t^{\dinv(\pi) - \maxtdinv(\pi) } \chi_r(\hat{\pi}', \Sigma_\pi).
\end{multline}
To explain how this follows from \cite{Mellit16}, we first give an overview of Mellit's proof of the $(km,kn)$-compositional shuffle conjecture, and then
focus on the specific details needed for \eqref{e ns shuffle unmod 2}.

For  $h\in \RR$, let $\ell_h$ be the line
$\{(x,y) : y = s_- x + h\}$ of slope $s_-$.
Following~\cite[\S4.1]{Mellit16}, for \(\pi \in \DD_{km,kn}^\alpha\) and \(h\in \RR\),
set \(D_\pi^h = O_{p_j} \cdots O_{p_1} \cdot 1\) for certain operators \(O_{p_i} \in \AA_{t,q}\) indexed by the lattice points $p_1,\dots, p_j$ weakly below
\(\pi\) and above \(\ell_h\),
ordered by their distance to $\ell_{-\epsilon}$.
We do not need the full description of the $O_{p_i}$, but to get a
flavor, a few examples of \(D_\pi^h\) are given below with \(\ell_h\)
depicted as a dashed line.
   \newcommand{\thissweep}[4]{
     \tikz[scale=.384, baseline=.3cm]{
       \pgfmathsetmacro{\yint}{#1};
       \pgfmathsetmacro{\xstart}{#2};
       \pgfmathsetmacro{\xend}{#3};
       \pgfmathsetmacro{\epsl}{#4}
       \pgfmathsetmacro{\smin}{6/10-\epsl};
       \draw[help lines] (0,0) grid (10,6);
       \draw[thick] (0,0) -- (0,2) -- (3,2) -- (3,4) -- (5,4) -- (5,5)
       -- (6,5) -- (6,6) -- (10,6);
       \draw (0,0) -- (10,6);
       \draw[dash pattern=on 5pt off 1pt] (\xstart,\smin*\xstart+\yint) -- (\xend,{\smin*\xend+\yint});
       \node at (10-\xstart,\smin*\xstart+\yint) {\phantom{$\cdot$}};
     }
   }
\vspace{-0.5cm}
\begin{center}
 \begin{tikzpicture}[ampersand replacement=\&]
    \matrix[matrix of math nodes, column sep={-0.2cm,0cm}, nodes={anchor=base}] (m)  { 
     \thissweep{3}{-2}{8}{0.0021} \&
     \thissweep{1.3}{-2}{10}{0.0021} \&
     \thissweep{-0.021}{-1}{11}{0.02} \\
     D_\pi^{3} = 1 \&
     D_\pi^{1.3} = t^4 d_- d_+^4(1)\&
     D_\pi^{-\epsilon} = q^6 t^3  d_-^2 \frac{[d_-,d_+]}{q-1} d_-
     \frac{[d_-,d_+]}{q-1} d_- d_+^4(1)
     \\
     };
  \end{tikzpicture}
\end{center}
In general, $O_{p_i}$ is a scalar multiple of  $\idelm$, $d_+$, $d_-$, or $[d_-,d_+]$, determined by the relative position of $\pi$ and $p_i$.

An \emph{admissible coloring of  $\ell_h$} is a subset of line segments of  $\ell_h$, each of which has its left endpoint on a vertical lattice line and its right endpoint on a horizontal lattice line
in the rectangle  $[0,km] \times [0,kn]$.
As in \cite[pg. 27]{Mellit16}, define, for an admissible coloring  $c$ of  $\ell_h$,
\begin{align}
D_{s,c} = \sum_{\substack{\text{$(km,kn)$-Dyck paths $\pi$} \\  (\text{region below }\pi) \cap \ell_h = c.}} D^h_\pi.
\end{align}
The core of Mellit's proof is to inductively establish an
identity equating the sum $D_{s,c}$ with a single product of the operators \(T_i, y_i, z_i, d_+\) applied to 1, determined from the lattice lines intersecting~$c$.

Now to specifically address \eqref{e ns shuffle unmod 2},
consider \(h_1 = kn-kms_-+\epsilon\) so that \(\ell_{h_1}\)
intersects \((km,kn+\epsilon)\) and let \(c_\alpha\) be the admissible coloring of  $\ell_{h_1}$ given by the union of  $r$ line segments such that the  $j$-th segment
has left endpoint with  $x$-coordinate  $m \sum_{i < j} \alpha_i$ and
right endpoint with  $y$-coordinate  $n \sum_{i \le j} \alpha_i$.
For example, for  $n=m=1$ and  $\alpha = (2,2,1)$,  $c_\alpha$ consists of the three line segments below.
\begin{align*}
\raisebox{-4mm}{
\begin{tikzpicture}[scale=.4] 
\draw [very thin, black!70] (0,0) grid (5,5);
\draw [line width=0.22mm] (0,6*.06) -- (2/.94-6*.06/.94,2);
\draw [line width=0.22mm] (2,2*.94+6*.06) -- (4/.94-6*.06/.94,4);
\draw [line width=0.22mm] (4,4*.94+6*.06) -- (5/.94-6*.06/.94,5);
\end{tikzpicture}}
\end{align*}

For the admissible coloring $c= c_\alpha$, the abovementioned identity for  $D_{s,c_\alpha}$ is the second to last displayed equation in~\cite{Mellit16}, which states
\begin{equation}
\label{e mellit ns shuffle}
 D_{s,c_\alpha} = (-1)^{k(m+1)}\, (qt)^{r-k}\,
\rho^*_{m,n}\big( y_1^{\alpha_1-1}\cdots y_r^{\alpha_r-1} d_+^r  \big) \cdot 1.
\end{equation}
Note that for a  $(km,kn)$-Dyck path  $\pi$, we have $(\text{region below }\pi) \cap \ell_{h_1} = c_\alpha$ if and only if  $\pi \in \DD_{km,kn}^\alpha$,
so  $D_{s, c_\alpha} = \sum_{\pi\in \DD_{km,kn}^\alpha} D^{h_1}_\pi$.
To connect this to the right hand side of \eqref{e ns shuffle unmod 2}, it is shown in \cite[\S4.1]{Mellit16} that
\begin{align}
\label{e Mellit LLT term}
D^{-\epsilon}_\pi = q^{\area(\pi)} t^{\dinv(\pi) - \maxtdinv(\pi) } d_-^r \, \chi_r(\hat{\pi}',\Sigma_\pi),
\end{align}
which is close to what we want. What we actually want is
\begin{equation}
\label{e Mellit LLT term2}
D^{h_1}_\pi = q^{\area(\pi) + r-k} t^{\dinv(\pi) - \maxtdinv(\pi) } \chi_r(\hat\pi', \Sigma_\pi),
\end{equation}
which can be deduced from the proof of \eqref{e Mellit LLT term}.
To see this, observe that the operator sequence to go from  $D^{h_1}_\pi$ to  $D^{-\epsilon}_\pi$ corresponds to
the lattice points between the
lines  $\ell_{h_1}$ and \(\ell_{-\epsilon}\), which are the  $k+1$
lattice points on the diagonal.
According to \cite[\S4.1]{Mellit16} (see also \cite[Remark~4.6]{Mellit16}),
each non-touchpoint
lattice point registers a  $q$ factor, while each of the $r+1$
touchpoint lattice points registers a $d_-$, except the top-right one, which registers a factor of  $t^0$.

We conclude from \eqref{e Mellit LLT term2} that the right hand side of \eqref{e ns shuffle unmod 2} is equal to
$q^{k-r} D_{s,c_\alpha}$. Combining this with \eqref{e mellit ns shuffle} proves \eqref{e ns shuffle unmod 2}.
\end{proof}

\begin{remark}
\label{rem: match Mellit chi}
Our notation \(d_-^r \chi_r(\hat{\pi}',\Sigma_\pi)\) in the proof above is equal to
 \(\chi(\pi',\Sigma_\pi)\) from~\cite[\S\S3.5, 4.1]{Mellit16},
which was also previously mentioned in Remark \ref{r flagLLT r0} (iii).
\end{remark}

\subsection{The nonsymmetric compositional shuffle theorem} 
The virtue of a signed, compositional shuffle theorem, expressed as a sum of signed flagged LLT polynomials, is that its image under the
$r$-nonsymmetric plethysm map $\Pisf_r$ yields a positive, combinatorial sum of flagged LLT polynomials.
Precisely, applying
  $\Pisf_r$
to Theorem \ref{t ns shuffle unmod} and using Theorem \ref{t signed to unsigned} yields the following combinatorial expression for $\Pisf_r \, \alg_{m,n}^\alpha $
which is manifestly monomial positive.

\begin{thm}
\label{t ns shuffle kmkn mod}
With the notation from the beginning of
\S\ref{ss Nonsymmetric compositional kmkn shuffle theorem} and in \S \S \ref{ss Dyck path stats}--\ref{ss flagged LLT polynomials},
\begin{align}
\label{e ns shuffle kmkn mod}
 \Pisf_r \, \alg_{m,n}^\alpha =
\sum_{\pi \in \DD^\alpha_{km,kn} }  q^{\area(\pi)} t^{\dinv(\pi) - \maxtdinv(\pi) } \,
\Grow_r(\hat{\pi}', \Sigma_\pi).
\end{align}
\end{thm}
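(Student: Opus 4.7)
The plan is to derive this theorem as an immediate consequence of the signed version, Theorem~\ref{t ns shuffle unmod}, together with the fact (Theorem~\ref{t signed to unsigned}) that the $r$-nonsymmetric plethysm map $\Pisf_r$ converts signed flagged row LLT polynomials to unsigned ones. Concretely, I would start by applying the $\QQ(q,t)$-linear operator $\Pisf_r$ to both sides of the identity
\begin{equation*}
\alg_{m,n}^\alpha = \sum_{\pi \in \DD^\alpha_{km,kn}} q^{\area(\pi)} t^{\dinv(\pi) - \maxtdinv(\pi)} \Growpm_r(\hat{\pi}', \Sigma_\pi)
\end{equation*}
established in Theorem~\ref{t ns shuffle unmod}. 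Since $\area(\pi)$, $\dinv(\pi)$, and $\maxtdinv(\pi)$ are integer statistics not involving the $\xx$ variables, they pass through $\Pisf_r$ as scalars in $\QQ(q,t)$.

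Next, I would invoke the identity $\Pisf_r\bigl(\Growpm_r(\hat{\pi}', \Sigma_\pi)\bigr) = \Grow_r(\hat{\pi}', \Sigma_\pi)$ term-by-term on the right hand side. This is the content of \eqref{e t signed to unsigned2} in Theorem~\ref{t signed to unsigned}, provided each $\hat{\pi}'$ lies in $\DD_r(kn)$ (with a marking on $\hat\pi'$), which is guaranteed by the construction of $\hat\pi'$ and $\Sigma_\pi$ in \S\ref{ss Nonsymmetric compositional kmkn shuffle theorem}: after stripping the initial $r$ mutually attacking north steps of the attacking Dyck path $\pi'$, the resulting partial Dyck path $\hat\pi' \in \DD_r(kn)$ retains a well-defined marking $\Sigma_\pi$ consisting of the consecutive north-step pairs of $\pi$.

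Assembling these two observations yields the desired identity
\begin{equation*}
\Pisf_r \, \alg_{m,n}^\alpha = \sum_{\pi \in \DD^\alpha_{km,kn}} q^{\area(\pi)} t^{\dinv(\pi) - \maxtdinv(\pi)} \Grow_r(\hat{\pi}', \Sigma_\pi).
\end{equation*}

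There is essentially no obstacle remaining at this stage of the paper, since all the substantive work has been done: Theorem~\ref{t ns shuffle unmod} was obtained via Mellit's sweeping machinery combined with the translation of the Dyck path algebra output into signed flagged LLT polynomials (Theorem~\ref{t Gcal vs chiPP}), and Theorem~\ref{t signed to unsigned} packages the main result of \cite{BHMPS-nspleth}. The only point meriting a brief comment is the hypothesis in Theorem~\ref{t signed to unsigned} that the underlying tuple of skew shapes contain at least $r$ boxes; this is automatic here because $\hat\pi'$ ends at $(kn,kn)$ and we may assume $kn \ge r$, which holds whenever $\alpha$ is a strict composition of length $r$ with $k \ge 1$ (the case $k = 0$ being trivial).
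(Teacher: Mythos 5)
Your proposal matches the paper's own (very short) derivation: apply the $\QQ(q,t)$-linear map $\Pisf_r$ to both sides of Theorem~\ref{t ns shuffle unmod} and use $\Pisf_r\bigl(\Growpm_r(\hat\pi',\Sigma_\pi)\bigr)=\Grow_r(\hat\pi',\Sigma_\pi)$ from Theorem~\ref{t signed to unsigned}. The extra check you supply (that $kn\ge r$ so the box-count hypothesis of Theorem~\ref{t signed to unsigned} holds) is a correct and sensible addition, but the overall route is the same.
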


Theorem \ref{t ns shuffle kmkn mod} can also be written in a form closer to the classical statement of the shuffle theorem.
Define a \emph{flagged weak word parking function} $w$ on $\pi \in \DD^\alpha_{km,kn}$
to be a map from the north steps of  $\pi$ to  $\ZZ_+$ that is weakly decreasing going north along vertical runs and such that the entry on the north step from the point
$(m\sum_{i < j}\alpha_i, n \sum_{i < j}\alpha_i)$ is  $\le j$, for  $j = 1,\dots, r$.
Denote the set of these by  $\FWPF'_\alpha(\pi)$.

For  $w \in \FWPF'_\alpha(\pi)$, define the \emph{weak temporary diagonal inversion statistic} by
\begin{align}
\label{e wtdinv def}
\rm{tdinv}'(\pi, w) = \big|\big\{ \text{pairs of north steps  $i$, $j$ of  $\pi$} : \text{ $i$ attacks   $j$ and $w(i) \ge w(j)$}  \big\}  \big|.
\end{align}

\begin{cor}
\label{c ns shuffle kmkn mod}
With the notation above,
\begin{align}
\label{ec ns shuffle kmkn mod}
\Pisf_r \, \alg_{m,n}^\alpha =
\sum_{\pi \in \DD^\alpha_{km,kn} } \sum_{w \in \FWPF'_\alpha(\pi)} q^{\area(\pi)} t^{\dinv(\pi) - \maxtdinv(\pi) +\wtdinv(\pi, w)} \,
\xx^{\content(w)},
\end{align}
where $\xx^{\content(w)} = \prod_{i \in \ZZ_+} x_i^{\# \text{ of  $i$'s in $w$}} $.
\end{cor}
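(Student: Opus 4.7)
The plan is to deduce Corollary~\ref{c ns shuffle kmkn mod} from Theorem~\ref{t ns shuffle kmkn mod} by directly expanding each flagged row LLT polynomial $\Grow_r(\hat{\pi}', \Sigma_\pi)$ as a positive sum of monomials according to Definition~\ref{d flag LLT}, and then recognizing the indexing set as $\FWPF'_\alpha(\pi)$.

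First, I would fix $\pi \in \DD_{km,kn}^\alpha$ and analyze the combinatorial data attached to $\hat{\pi}'$ and the marking $\Sigma_\pi$. Number the north steps of $\pi$ by sweeping reading order $1, 2, \dots, kn$. Since, as noted just before \eqref{e d maxtdinv}, the first $r$ north steps in this order are precisely those touching the diagonal at the points $(m \sum_{i<j}\alpha_i, n \sum_{i<j}\alpha_i)$ for $j=1,\dots,r$, a map $T\colon [kn]\to \ZZ_+$ satisfies the flagging condition $T(1)\le 1,\dots,T(r)\le r$ of Definition~\ref{d flag LLT} if and only if the corresponding function $w$ on the north steps of $\pi$ satisfies the flagging condition defining $\FWPF'_\alpha(\pi)$. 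Moreover, by the definition of $\Sigma_\pi$ as the set of pairs $(i,j)$ of consecutive north steps along a vertical run of $\pi$ (with $i$ below $j$, hence $i<j$ in reading order), the relation $T(j)\le T(i)$ required by being a $(\hat{\pi}',\Sigma_\pi)$-row semistandard word is exactly the condition that $w$ be weakly decreasing going north along vertical runs. Thus the map $T \mapsto w$, $w(i) = T(i)$, is a bijection $\FSSYT(\hat{\pi}',\Sigma_\pi) \cong \FWPF'_\alpha(\pi)$.

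Next, I would match the inversion statistics. Since $\mathsf{N}^r \hat{\pi}' = \pi'$, we have $\Area(\hat{\pi}') = \Area(\pi')$, and by definition of the attacking Dyck path $\pi'$, a pair $(i,j)$ with $i<j$ lies in $\Area(\hat{\pi}')$ if and only if the $i$-th and $j$-th north steps of $\pi$ form an attacking pair. An attacking inversion of $T$ is then a pair $(i,j) \in \Area(\hat{\pi}')$ with $T(j)\le T(i)$, which translates under the bijection above into a pair of attacking north steps $i,j$ of $\pi$ with $w(i)\ge w(j)$. Hence $\inv(T) = \wtdinv(\pi,w)$ in the notation of \eqref{e wtdinv def}. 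Also $\xx^T = \xx^{\content(w)}$. Therefore
\begin{equation*}
\Grow_r(\hat{\pi}', \Sigma_\pi)(\xx;t)
= \sum_{w \in \FWPF'_\alpha(\pi)} t^{\wtdinv(\pi,w)}\, \xx^{\content(w)}.
\end{equation*}

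Finally, substituting this expansion into the right-hand side of \eqref{e ns shuffle kmkn mod} and collecting the exponent of $t$ gives exactly \eqref{ec ns shuffle kmkn mod}. The argument is therefore essentially a bookkeeping verification; the only step requiring care is the triple identification of (a) the sweeping reading order on north steps with the linear order on $[kn]$ used to index $\FSSYT$, (b) the marking $\Sigma_\pi$ with the vertical-run condition, and (c) the attacking pairs in $\pi$ with $\Area(\hat{\pi}')$. I expect no real obstacle beyond this compatibility check, since all three correspondences have already been set up in \S\ref{ss Dyck path stats}, \S\ref{ss flagged LLT polynomials}, and \S\ref{ss Nonsymmetric compositional kmkn shuffle theorem}.
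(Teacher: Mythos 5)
Your proof is correct and is exactly the bookkeeping unfolding of definitions that the paper leaves implicit after Theorem~\ref{t ns shuffle kmkn mod}: expand each $\Grow_r(\hat{\pi}',\Sigma_\pi)$ via Definition~\ref{d flag LLT}, then identify $\FSSYT(\hat{\pi}',\Sigma_\pi)$ with $\FWPF'_\alpha(\pi)$ and $\inv(T)$ with $\wtdinv(\pi,w)$ through the sweeping reading order / attacking pair dictionary set up in \S\ref{ss Dyck path stats} and \S\ref{ss Nonsymmetric compositional kmkn shuffle theorem}. No gaps.
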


\begin{remark}
Two other variants of the  $\wtdinv$ statistic appear in the literature and elsewhere in this paper.
Replacing  $w(i) \ge w(j)$ with  $w(i) > w(j)$ in \eqref{e wtdinv def} yields the temporary dinv statistic  $\tdinv(\pi,w)$ used in \cite{BeGaSeXi16, Mellit16}.
Replacing $w(i) \ge w(j)$ with  $w(i) < w(j)$ yields the dinv statistic  $\dinv(\pi, w)$ used in
\cite{HagMoZa12} and in Theorem \ref{t mod nsshuffle intro} and \eqref{e dinv HMZ def} of this paper.
\end{remark}

As touched on in the introduction and as will be further discussed in \S\ref{ss atom pos 2}, two advantages to working on the modified side are that
the nonsymmetric shuffle theorems have a conjectural positive combinatorial interpretation,
and that the ordinary shuffle theorem is recovered simply by Weyl symmetrizing.

To see this symmetrization property,
first note that
\begin{align}
\label{e two plethysms}
\Pisf_0 \PP_0^{-1} (f[W]) = f[-W] = (-1)^{d} \omega f[W],
\end{align}
for any symmetric function  $f$ of degree  $d$.

By \eqref{ep sym flag LLT} and \eqref{e two llts agree},
$\Weyl_1 \cdots \Weyl_r \Grow_r(\hat{\pi}', \Sigma_\pi) =
 \omega \spa \chi(\pi', \Sigma_\pi)$, where
 $\chi(\pi', \Sigma_\pi)$ is the LLT polynomial with the notation of \cite[\S3.5]{Mellit16},
 discussed in Remark \ref{r flagLLT r0} (iii).
Hence, applying  $\Weyl_1 \cdots \Weyl_r$ to both sides of
\eqref{e ns shuffle kmkn mod} yields the first equality below
\begin{multline}
\label{e sym kmkn}
\sum_{\pi \in \DD^\alpha_{km,kn} }  q^{\area(\pi)} t^{\dinv(\pi) - \maxtdinv(\pi) } \,
\omega \, \chi(\pi', \Sigma_\pi)=
\Weyl_1 \cdots \Weyl_r\, \Pisf_r \, \alg_{m,n}^\alpha  \\
= \Pisf_0 \, \hsym_1 \cdots \hsym_r \, \alg_{m,n}^\alpha
 =
\Pisf_0 \PP_0^{-1} d_-^r \PP_r \alg_{m,n}^\alpha
= \omega (-1)^{k(m+1)} t^{r-k}\,
d_-^r \, \rho^*_{m,n}\big( y_1^{\alpha_1-1}\cdots y_r^{\alpha_r-1} d_+^r  \big) \cdot 1;
 \end{multline}
the second equality is by Proposition \ref{p dminus and sym}, the third is by
Proposition \ref{p: d- and hsym}, and the last is by \eqref{e two plethysms}.
This is exactly  $\omega$ of Mellit's statement of the compositional  $(km,kn)$-shuffle theorem.

\section{Modified nonsymmetric Macdonald polynomials}
\label{s mod nsmac}

The (compositional) shuffle theorems involve the
$\nabla$ operator, which acts diagonally on modified Macdonald polynomials.
To prove our nonsymmetric version of this result (Theorem~\ref{t mod
  nsshuffle intro}), we need some
background on stable and modified nonsymmetric Macdonald polynomials \cite{BechtloffWeising23, BHMPS-nspleth}.

\subsection{Stable nonsymmetric Macdonald polynomials}

We follow the conventions of \cite{BHMPS-nspleth} for nonsymmetric Macdonald polynomials  $E_\beta$ and their integral forms  $\Ecal_\beta$.
For $\beta \in \NN^N$, let  $E_\beta(\xx_N;q,t) = E_\beta(x_1,\dots, x_N;q,t)$
denote the monically normalized nonsymmetric Macdonald polynomial; it satisfies the
triangularity condition
$E_{\beta }(\xx_N;q,t) = \xx^{\beta } + \sum _{\alpha <\beta } c_{\alpha }(q,t) \xx^{\alpha },$
for a certain order $<$ on  $\ZZ^N$ defined via Bruhat order on the extended affine Weyl group
(see, e.g., \cite[\S 5.4]{BHMPS-nspleth} or \cite[\S 2.1]{HagHaiLo08}).
Knop's integral form nonsymmetric Macdonald polynomials \cite{Knop97} are then given by
\begin{equation}\label{e:integral-form}
\Ecal _{\beta }(\xx_N;q,t) = \bigl(\prod \nolimits_{\, u\in \dg (\beta )}
(1-q^{a(u)+1}\, t^{l(u)+1}) \bigr)\, E_{\beta }(\xx_N;q,t),
\end{equation}
where  $\dg(\beta) = \big\{(i,j) \in \ZZ_{+}^2 :
1 \le j \le N, \, -\beta_{N+1-j}+1 \le i \le 0  \big\}$
and the arm $a(u)$ and leg $l(u)$ are nonnegative integer statistics on the boxes of  $\dg(\beta)$ (see, e.g., \cite[\S 7.1]{BHMPS-nspleth}).

\begin{remark}
The  $E_\beta$ here agree with \cite{BHMPS-nspleth, HagHaiLo08, HalversonRam}, and in terms of the notation here, the nonsymmetric Macdonald polynomials
in \cite{BechtloffWeising23} are $E_\beta(\xx_N;q^{-1},t)$ and those in \cite{Knop97} are $E_{\beta_N, \dots, \beta_1}(x_N, \dots, x_1;q,t)$.
The  $\Ecal_\beta$
here agree with \cite{BHMPS-nspleth, HagHaiLo08}, and in terms of the notation here, those in \cite{Knop97} are $\Ecal_{\beta_N, \dots, \beta_1}(x_N, \dots, x_1;q,t)$.
\end{remark}

Denote a variant of  $\Ecal_\beta$ by
\begin{align}
\label{e qflip nsmac}
\tEcal_\beta(\xx_N; q,t) =
q^{\mathsf{n}((\beta_+)^*)} \Ecal_\beta(\xx_N; q^{-1},t),
\end{align}
where $\beta_+$ is the partition rearrangement of  $\beta$,  $(\beta_+)^*$ denotes its conjugate partition,
and $\mathsf{n}(\mu ) = \sum_{i} (i-1)\mu_{i}.$

Below we will freely use notation on convergence and Weyl and Hecke symmetrization from
\S\ref{ss convergence} and \S\ref{ss Weyl and Hecke sym}.

\begin{defn}
\label{d stable nsmac}
For $(\eta|\lambda) \in \pairsr$,
the \emph{integral form stable $r$-nonsymmetric Macdonald polynomial}
$\stE_{\eta|\lambda}(\xx;q,t) \in \P(r)$ is given by
\begin{align}
\label{e Tsym fact 2}
 \stE_{\eta| \lambda}(\xx;q,t)
= \lim_{N \to \infty} \hsym_{r+1}^N  \, \tEcal_{(\eta; \lambda; 0^{N-r-\ell(\lambda)})}(\xx_N;q,t),
\end{align}
where the sequence converges $t$-adically to a well-defined limit by
\cite[Proposition 7.4.5]{BHMPS-nspleth}.
\end{defn}

\begin{remark}
(i) Bechtloff Weising introduced stable versions $\widetilde{E}_{(\eta|\lambda)}$ of $E_\beta(\xx_N;q^{-1},t)$ and showed that
they have a description using partial Hecke symmetrization \cite[Corollary 4.26]{BechtloffWeising23},
and we have closely followed this description for the definition above;
our versions are related by
\begin{align}
\label{e two versions of stable J}
 \stE_{\eta| \lambda} = q^{\mathsf{n}(((\eta;\lambda)_+)^*)} \bigl(\prod \nolimits_{\, u} (1-q^{-a(u)-1}\, t^{l(u)+1})\bigr)\, \widetilde{E}_{(\eta|\lambda)}
\end{align}
where the product is over  $u = (i,j) \in \dg ((\eta;\lambda))$ with  $i < 0$;
see \cite[Corollary 7.4.6 and Remark 7.4.9(ii)]{BHMPS-nspleth}.
Below we will recall several results of \cite{BechtloffWeising23} and will
state them in terms of the  $\stE_{\eta|\lambda}$'s rather than $\widetilde{E}_{(\eta|\lambda)}$'s;
no adjustment is required as the results only depend on the functions in \eqref{e two versions of stable J} up to a scalar factor.

(ii) Up to a  $q$-flip, the integral form version defined above was introduced in \cite{BHMPS-nspleth};
more precisely,
$J_{\eta| \lambda}(x_1,\dots, x_r, Y ;q,t)$ with $Y = x_{r+1}, x_{r+2}, \ldots $ from \cite[\S 7.4]{BHMPS-nspleth}
is equal to $q^{\mathsf{n}(((\eta;\lambda)_+)^*)} \stE_{\eta| \lambda}(\xx;q^{-1},t)$ here.
It was established in \cite{BHMPS-nspleth} that $J_{\eta| \lambda}(x_1,\dots, x_r, Y ;q,t)$ has coefficients in $\ZZ[q,t]$;
this implies that the same holds for the $\stE_{\eta| \lambda}(\xx;q,t)$ because the highest  $q$ power of  $J_{\eta| \lambda}$ is
$q^{\mathsf{n}(((\eta;\lambda)_+)^*)}$,
as follows from our formula for  $J_{\eta|\lambda}$ in terms of signed flagged LLT polynomials \cite[Proposition~7.5.2]{BHMPS-nspleth}.

(iii) Although $\Ecal_\beta(\xx_N;q,t) \in  \ZZ[q,t][\xx_N]$ by \cite{Knop97}, this is typically not true for the  $\tEcal_\beta(\xx;q,t)$
since the highest  $q$ power of  $\Ecal_\beta(\xx_N;q,t)$ is typically larger than  $q^{\mathsf{n}((\beta_+)^*)}$.
\end{remark}

While  $\pairsr$ is the natural index set for a basis of  $\P(r)$, we also want an index set for a basis of  $\Pas$.
Accordingly, we define
$$
\pairs  =
\bigsqcup_{r \ge 0 } \, \big\{(\eta|\lambda) \in \pairsr \spa : \spa \eta_r>0 \big\}\,,
$$
where, by convention, when $r=0$, there is the single possible $\eta=\varnothing$.

The  $\Ecal_\alpha$ ($\alpha \in \NN^n$)
are joint eigenfunctions of the Cherednik operators $\Y_1,\dots, \Y_n \in \H_n$.
The following result of Bechtloff Weising establishes the analogous result
in the stable limit.

\begin{thm}[{\cite[Corollary 4.15]{BechtloffWeising23}}]
\label{t Weising 38}
For $(\eta|\lambda) \in \pairsr$, and any  $j \in \ZZ_+$,
\begin{align}
\label{e t Weising 38}
\YIW_j \stE_{\eta|\lambda} =
\begin{cases}
q^{\eta_j} \spa t^{r+ \ell(\lambda) +1 -\st((\eta;\lambda))_j}  \, \stE_{\eta|\lambda}
& \text{$j \le r$ and  $\eta_j > 0$}, \\
0  & \text{otherwise},
\end{cases}
\end{align}
where
$\st((\eta;\lambda))$ is the word obtained from $(\eta;\lambda)$ by relabeling its smallest letter with  $1,\dots, k$
from left to right, the next smallest by  $k+1,\dots$, and so on.
\end{thm}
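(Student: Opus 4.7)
My plan is to lift the well-known Cherednik-eigenvalue property of integral-form nonsymmetric Macdonald polynomials in finite rank $N$ to the stable limit, using that the Cherednik operator $\Y_j$ commutes with the partial Hecke symmetrizer $\hsym_{r+1}^N$ when $j \le r$, and then translate the resulting statement for $\Y_j \in \H_N$ into one for $\YIW_j$ via its $t$-adic limit definition \eqref{e y IW}.

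Concretely, the $\tEcal_\beta$ are $\Y_j$-eigenfunctions with an eigenvalue of the form $q^{\beta_j} t^{c_N(\beta,j)}$, where $c_N(\beta,j)$ depends linearly on $N$ and on $\st(\beta)_j$ (this is the standard Cherednik-eigenvalue formula, adjusted for the normalizations in \eqref{e:integral-form}--\eqref{e qflip nsmac} that define $\tEcal_\beta$). Specializing to $\beta = (\eta;\lambda;0^{N-r-\ell(\lambda)})$, the elementary observation that the trailing zeros acquire the smallest standardization labels gives $\st(\beta)_j = \st((\eta;\lambda))_j + (N-r-\ell(\lambda))$ whenever $\eta_j > 0$; the $N$-dependence in $c_N(\beta,j)$ then cancels, and the eigenvalue reduces exactly to $q^{\eta_j} t^{r+\ell(\lambda)+1-\st((\eta;\lambda))_j}$. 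Since $\Y_j$ commutes with each $T_i$ for $i \ne j-1,j$---and hence with every $T_i$ appearing in $\hsym_{r+1}^N$ when $j \le r$---the symmetrized polynomial $\hsym_{r+1}^N \tEcal_\beta$ remains a $\Y_j$-eigenfunction with the same $N$-independent eigenvalue.

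The crux of the argument is to pass from this finite-$N$ $\Y_j$-eigenvalue statement to the stable $\YIW_j$-eigenvalue statement defined by the $t$-adic limit \eqref{e y IW}. Comparing the two operator formulas, $\YIW_j$ differs from $t^{N-j+1}$ times the Ion-Wu-normalized $\Y_j$ by the replacement of $\X_1$ with $(1-\widehat{x}_1)$; against a Hecke-symmetrized polynomial of the above form, this replacement contributes only corrections that vanish $t$-adically as $N \to \infty$, while the $t^{N-j+1}$ prefactor exactly absorbs the normalization discrepancy needed to match the claimed eigenvalue (these $t$-adic estimates form the technical content of \cite{IonWu, BechtloffWeising23}). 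For $j > r$, or $j \le r$ with $\eta_j = 0$, the commutation argument breaks down; instead, one checks directly from \eqref{e y IW} that the projection $(1-\widehat{x}_1)$ combined with the Hecke symmetry and the $t^{N-j+1}$ factor forces the output to vanish in the $t$-adic limit. The main obstacle throughout is controlling these $t$-adic limits uniformly on the whole family of $\hsym_{r+1}^N \tEcal_\beta$, which is the heart of the work in \cite{BechtloffWeising23}, whose strategy I would follow.
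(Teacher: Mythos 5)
The paper does not prove this result; it is stated as a citation to \cite[Corollary 4.15]{BechtloffWeising23}, so there is no in-paper argument to compare your proposal against, only the external reference. Several of your observations are correct and are exactly the right preliminary reductions: for $j\le r$, the operator $\Y_j$ commutes with every $T_i$ appearing in $\hsym_{r+1}^N$ (since that symmetrizer only involves $T_i$ for $i\ge r+1$, and $\Y_j$ fails to commute only with $T_{j-1}, T_j$), and your standardization computation $\st(\beta)_j = \st((\eta;\lambda))_j + (N-r-\ell(\lambda))$ for $\beta=(\eta;\lambda;0^{N-r-\ell(\lambda)})$ with $\eta_j>0$ is the correct elementary reason the exponent of $t$ stabilizes.

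The genuine gap is in your description of the passage from the finite-rank operator to $\YIW_j$. You treat the substitution $\X_1 \mapsto (1-\widehat{x}_1)$ in \eqref{e y IW} as a small $t$-adic correction to $\Y_j = t^{-j+1}T_{j-1}\cdots T_1\X_1\Phi T_{N-1}^{-1}\cdots T_j^{-1}$. But that finite-rank $\Y_j$ raises degree by one (multiplication by $x_1$ raises degree, while $\Phi$ and the $T_i^{\pm1}$ are degree-preserving), so the homogeneous polynomial $\hsym_{r+1}^N\tEcal_\beta$ cannot be an eigenfunction of it, and the first sentence of your second paragraph --- that the $\tEcal_\beta$ are $\Y_j$-eigenfunctions for this operator --- does not hold as stated. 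The projection $(1-\widehat{x}_1)$ is not a perturbation of $\X_1$; it is the structural change that makes $\YIW_j$ degree-preserving and the $t$-adic limit convergent, and one must instead start from the eigenvalue formula for the degree-preserving Cherednik operator on Laurent polynomials and then show carefully that the $(1-\widehat{x}_1)$-modified, $t^N$-rescaled operator reproduces that eigenvalue in the limit. Controlling this on the whole family $\hsym_{r+1}^N\tEcal_\beta$, and deriving the vanishing for $j>r$ or $\eta_j=0$, is precisely the nontrivial content of \cite{BechtloffWeising23} that you defer to; that deferral is consistent with what the paper does, but your outline understates what the deferred step must accomplish.
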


The  $\stE_{\eta|\lambda}$ are compatible for different  $r$ in the sense that
\begin{align}
\label{e different r for nsmac0}
\stE_{\eta|\lambda} =\stE_{(\eta,0)|\lambda}.
\end{align}
This will be proved in the next subsection.

\begin{thm}[{\cite[Theorem 4.30]{BechtloffWeising23}}]
\label{t BW eigenbasis}
The set $\big\{ \stE_{\eta|\lambda} : (\eta|\lambda) \in \pairs \big\}$
forms an eigenbasis of $\Pas$ for the commuting operators  $\YIW_i$.
For each  $r\ge 0$, the set $\big\{ \stE_{\eta|\lambda} : (\eta|\lambda) \in \pairsr \big\}$ forms an eigenbasis of the subspace  $\P(r)$.
\end{thm}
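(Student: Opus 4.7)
The plan is to split the statement into the $\P(r)$ case and then bootstrap to $\Pas$. Within each $\P(r)$, I will establish (i) that $\stE_{\eta|\lambda}$ actually lies in $\P(r)$ and is a joint eigenvector of all $\YIW_j\in\H^+$, (ii) that the family $\{\stE_{\eta|\lambda}\}_{(\eta|\lambda)\in\pairsr}$ is linearly independent via a triangularity argument, and (iii) that it spans $\P(r)$ by a graded dimension count.

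Step (i) is essentially setup. The inclusion $\stE_{\eta|\lambda} \in \P(r)$ follows from Definition~\ref{d stable nsmac}: each $\hsym_{r+1}^N\tEcal_{(\eta;\lambda;0^{N-r-\ell(\lambda)})}$ is symmetric in $x_{r+1},\ldots,x_N$, and this symmetry survives the $t$-adic stable limit, landing the result in $\QQ(q,t)[x_1,\dots,x_r]\otimes\Lambda_{\QQ(q,t)}(x_{r+1},\ldots)=\P(r)$. The joint-eigenvector property is exactly Theorem~\ref{t Weising 38}, invoked together with the commutativity of the $\YIW_j$'s coming from Definition~\ref{d H+}.

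Step (ii) is the core. Note that the $\YIW_j$-eigenvalues \emph{by themselves} do not separate all pairs---e.g., within $\P(0)=\Lambda$ all $\YIW_j$ act by zero on every $\stE_{\varnothing|\lambda}$---so linear independence must rest on more than eigenvalue distinctness. The natural approach is triangularity: the integral-form $\tEcal_\beta(\xx_N;q,t)$ is upper-triangular in the monomials $\xx^\alpha$ with respect to the ordering coming from Bruhat order on the extended affine Weyl group, the partial Hecke symmetrization $\hsym_{r+1}^N$ preserves a weaker triangularity with respect to a natural basis of $\hsym_{r+1}^N$-fixed polynomials (such as monomials $\xx^\eta m_\mu(x_{r+1},\ldots,x_N)$), and the $t$-adic stable limit preserves that leading-term structure. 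Consequently, $\stE_{\eta|\lambda}$ has a well-defined leading term---essentially of the form $\xx^\eta m_\lambda(x_{r+1}+x_{r+2}+\cdots) $ up to lower-order terms in a natural ordering of the flagged-$h$ basis $\{\flagh_\eta(\xx_r) h_\lambda(\xx)\}$ of \eqref{e: stable-flag-h-def}---and distinct $(\eta|\lambda)\in\pairsr$ produce distinct leading terms, yielding linear independence.

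Step (iii) is a graded dimension match: each $\stE_{\eta|\lambda}$ is homogeneous of degree $|\eta|+|\lambda|$ (since $\tEcal_\beta$ is and $\hsym_{r+1}^N$ preserves degree), and the flagged-$h$ basis $\{\flagh_\eta(\xx_r)h_\lambda(\xx)\}$ of \eqref{e: stable-flag-h-def} shows that $\dim\P(r)_d=\#\{(\eta,\lambda)\in\NN^r\times\Par:|\eta|+|\lambda|=d\}$. Linear independence plus matching cardinality in every graded piece yields a basis of $\P(r)$. Finally, the $\Pas$ statement follows immediately from \eqref{e different r for nsmac0}: since $\stE_{(\eta,0)|\lambda}=\stE_{\eta|\lambda}$, enlarging $r$ introduces a genuinely new basis element only when $\eta_r>0$, which is precisely the condition built into the definition of $\pairs$. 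Hence $\{\stE_{\eta|\lambda}\}_{(\eta|\lambda)\in\pairs}$ is a non-redundant enumeration giving a basis of $\Pas=\bigcup_{r\ge 0}\P(r)$. The main obstacle is step (ii): tracking the triangularity of $\tEcal_\beta$ through the non-trivial combinatorics of Hecke symmetrization and then through the $t$-adic stable limit, ensuring that the leading-term structure is neither destroyed by the mixing inherent in $\hsym_{r+1}^N$ nor lost in the convergence process.
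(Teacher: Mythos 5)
This theorem is cited from Bechtloff Weising~\cite{BechtloffWeising23} rather than proved in the paper, so there is no in-paper argument to compare against. Your overall structure is sound: Theorem~\ref{t Weising 38} gives the joint-eigenvector property, linear independence plus a graded dimension count gives the basis property in each $\P(r)$, and~\eqref{e different r for nsmac0} passes this to $\Pas$. You are also right to flag that eigenvalue data alone does not force linear independence: for instance $\stE_{(1)|(3,1)}$ and $\stE_{(1)|(2,2)}$ have the same degree and the same $\YIW_j$-eigenvalue for every $j$ (both $q\, t^3$ for $j=1$, both $0$ for $j>1$).

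The genuine gap is your step (ii). The assertion that ``the $t$-adic stable limit preserves that leading-term structure'' is exactly what is not secured: $t$-adic convergence controls only the $t$-order of the error, and coefficients in $\QQ(q,t)$ can have negative $t$-order, so you would need to show that the putative leading coefficient of $\hsym_{r+1}^N \tEcal_\beta$ has $t$-order $0$ uniformly in $N$ before concluding it survives the limit (this is essentially the content of Lemma~\ref{l Hecke sym nsmac}(ii), which you would have to invoke). You also mislocate the order: the triangularity of $\tEcal_\beta$ is with respect to monomials $\xx^\alpha$, so the natural target is the basis $\{\xx^\eta m_\lambda(x_{r+1},x_{r+2},\dots)\}$ of $\P(r)$, not the flagged-$h$ basis of \eqref{e: stable-flag-h-def}. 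As you acknowledge, these are ``the main obstacle,'' which makes your argument a sketch with its core claim assumed. A cleaner route that sidesteps triangularity entirely and uses only tools already assembled in this paper: given $\sum c_{\eta\lambda}\stE_{\eta|\lambda}=0$, group by joint eigenvalue. Within a fixed eigenvalue class, the set $\{j:\YIW_j\neq 0\}=\{j\le r:\eta_j>0\}$ determines $r$ as its maximum, and the $q$-exponents in Theorem~\ref{t Weising 38} determine $\eta$; so each sub-sum has the form $\sum_\lambda c_\lambda\stE_{\eta|\lambda}=0$ with $\eta$ fixed. Applying $\hsym_1\cdots\hsym_r$ and Proposition~\ref{p: hsym J} converts this to a vanishing combination of the integral form symmetric Macdonald polynomials $J_{(\eta;\lambda)_+}$ with pairwise distinct indices, forcing all $c_\lambda=0$; combined with your dimension count this gives the basis property without any leading-term analysis through the $t$-adic limit.
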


\begin{prop}[{\cite[Proposition 7.8.6]{BHMPS-nspleth}}]\label{p:Hsym J}
\label{p: hsym J}
Partial Hecke symmetrization acts in the following simple way on integral form stable $r$-nonsymmetric Macdonald polynomials.
For any $(\eta|\lambda) \in \pairsr$,
\begin{align}
\label{e Tsym fact}
\hsym_r \, \stE_{\eta | \lambda}   = \stE_{ (\eta_1, \dots, \eta_{r-1})| (\eta_r, \lambda)_+}.
\end{align}
Full Hecke symmetrization yields
\begin{align}
\label{e Tsym fact2}
\hsym_1 \cdots \hsym_r \, \stE_{\eta | \lambda}(\xx;q,t)   = \stE_{ \varnothing | (\eta; \lambda)_+}(\xx;q,t) = q^{\mathsf{n}(((\eta;\lambda)_+)^*)} J_{(\eta;\lambda)_+}(\xx;q^{-1},t),
\end{align}
where  $J_\mu(\xx;q,t)$ is the integral form symmetric Macdonald polynomial.
\end{prop}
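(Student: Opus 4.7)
The first identity \eqref{e Tsym fact} is essentially the content of \cite[Proposition 7.8.6]{BHMPS-nspleth}, translated through the scaling \eqref{e two versions of stable J} between the $\stE_{\eta|\lambda}$ used here and the $J_{\eta|\lambda}$ of \cite{BHMPS-nspleth}. A self-contained argument runs as follows. Starting from Definition \ref{d stable nsmac},
\[
\hsym_r \, \stE_{\eta|\lambda} \ = \ \lim_{N\to\infty} \hsym_r^N \hsym_{r+1}^N \, \tEcal_{(\eta;\lambda;0^{N-r-\ell(\lambda)})}(\xx_N;q,t).
\]
A standard coset decomposition of $\SS_{(1^{r-1};N-r+1)}$ over $\SS_{(1^r;N-r)}$ (lengths add, and $T_{uv}=T_uT_v$ for such factorizations) yields the idempotent nesting $\hsym_r^N \hsym_{r+1}^N = \hsym_r^N$. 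Hence
\[
\hsym_r \, \stE_{\eta|\lambda} \ = \ \lim_{N\to\infty} \hsym_r^N \, \tEcal_\beta(\xx_N;q,t),
\]
where $\beta = (\eta;\lambda;0^{N-r-\ell(\lambda)})$. The right-hand side of \eqref{e Tsym fact} admits the analogous description with $\beta' = (\eta_1,\dots,\eta_{r-1};(\eta_r,\lambda)_+;0^{N-r-\ell(\lambda)})$ in place of $\beta$. The compositions $\beta$ and $\beta'$ agree on positions $1,\dots,r-1$ and have identical sorted rearrangements; they differ only by a permutation of the tail from position $r$ onward.

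The crux is then to verify
\[
\hsym_r^N \, \tEcal_\gamma(\xx_N;q,t) \ = \ \hsym_r^N \, \tEcal_{\gamma'}(\xx_N;q,t)
\]
whenever $\gamma,\gamma'$ agree on the first $r-1$ positions and have identical sorted rearrangements. This follows from the Knop-Sahi intertwiner relations for integral form nonsymmetric Macdonald polynomials: for each $i\ge r$, the action of $T_i$ on $\tEcal_\gamma$ produces a prescribed combination of $\tEcal_\gamma$ and $\tEcal_{s_i\gamma}$, and inductively the Hecke operators permute the orbit of $\gamma$ under $\SS_{(1^{r-1};N-r+1)}$ in a manner compatible with the projector $\hsym_r^N$. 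The step that makes the common image acquire scalar $1$ (rather than some other normalization) relies on the scaling $\tEcal_\beta = q^{\mathsf{n}((\beta_+)^*)} \Ecal_\beta(\xx_N;q^{-1},t)$ combined with the normalization factors $\prod_u(1-q^{a(u)+1}t^{l(u)+1})$ in \eqref{e:integral-form}: these balance exactly because $\gamma$ and $\gamma'$ share the same multiset of arm and leg statistics on $\dg$. This intertwiner step is the main obstacle and is handled in the proof of \cite[Proposition 7.8.6]{BHMPS-nspleth}.

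For the second identity \eqref{e Tsym fact2}, iterate \eqref{e Tsym fact} $r$ times to collapse the composition part step by step, giving $\hsym_1\cdots\hsym_r \, \stE_{\eta|\lambda} = \stE_{\varnothing|(\eta;\lambda)_+}$. Then $\stE_{\varnothing|\mu}$ for $\mu=(\eta;\lambda)_+$ is, by Definition \ref{d stable nsmac} with $r=0$, the stable limit of the full Hecke symmetrization $\hsym_1^N \, \tEcal_{(\mu;0^{N-\ell(\mu)})}(\xx_N;q,t)$. The classical fact that full Hecke symmetrization of Knop's $\Ecal_\beta$ returns the integral form symmetric Macdonald polynomial $J_{\beta_+}$, combined with the $q$-flip scaling \eqref{e qflip nsmac}, gives $\stE_{\varnothing|\mu}(\xx;q,t) = q^{\mathsf{n}(\mu^*)} J_\mu(\xx;q^{-1},t)$, completing \eqref{e Tsym fact2}.
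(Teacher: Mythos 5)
The paper does not prove this proposition; it is imported wholesale from \cite[Proposition 7.8.6]{BHMPS-nspleth}, so strictly speaking there is no ``paper's own proof'' to compare against. Your outline correctly captures the structure of the argument: the idempotent nesting $\hsym_r^N\hsym_{r+1}^N=\hsym_r^N$ (a standard coset computation), the reduction of \eqref{e Tsym fact} to showing that $\hsym_r^N\tEcal_\gamma=\hsym_r^N\tEcal_{\gamma'}$ when $\gamma,\gamma'$ differ by a permutation of positions $\geq r$, and the deduction of \eqref{e Tsym fact2} by iterating \eqref{e Tsym fact} and then invoking the classical fact that full Hecke symmetrization of Knop's $\Ecal_\beta$ produces $J_{\beta_+}$ (with the $q$-flip \eqref{e qflip nsmac} supplying the normalization). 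These steps are all sound.

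However, the proposal is not actually self-contained, and the one place where you attempt to justify the critical step on your own is wrong. You claim the Hecke symmetrizer identifies $\tEcal_\gamma$ and $\tEcal_{\gamma'}$ ``because $\gamma$ and $\gamma'$ share the same multiset of arm and leg statistics on $\dg$.'' That is false in general: $\dg(\gamma)$ and the arm/leg statistics of its boxes depend on the composition $\gamma$ itself, not merely on $\gamma_+$, so two compositions in the same orbit typically have different arm-leg multisets and hence different scalar factors $\prod_u(1-q^{a(u)+1}t^{l(u)+1})$. (For instance $\gamma=(2,1,0)$ and $\gamma'=(2,0,1)$ give $\dg$ diagrams with rows of lengths $0,1,2$ versus $1,0,2$, which are not geometrically congruent.) The actual reason the identity holds is a careful analysis of the Knop--Sahi intertwiner relations and how the integral-form normalization interacts with $T_i$; this is precisely the content of \cite[Proposition 7.1.6]{BHMPS-nspleth}, which the paper itself already invokes elsewhere (in the proof of Lemma~\ref{l Hecke sym nsmac}). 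Since you explicitly defer the crux to \cite[Proposition 7.8.6]{BHMPS-nspleth} anyway, it would be cleaner to drop the arm-leg heuristic entirely and simply cite that step, as the paper does.
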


Bechtloff Weising
uses Theorem \ref{t Weising 38} to deduce an
explicit formula for $T_i \stE_{\eta|\lambda}$, for $i \in [r-1]$, in terms of $\stE_{\eta|\lambda}$ and  $\stE_{s_i(\eta) | \lambda}$,
an analog of the Knop-Sahi intertwiner formula for the  $E_\beta$'s. For later use, we record the following result, which is mostly a consequence of this formula.

\begin{prop}
\label{p: prop 5.2}
For any  $(\eta | \lambda) \in \pairsr$ and  $i  \in [r-1]$,
\begin{align}
\label{e Ti on stE}
T_i \spa \stE_{\eta | \lambda} \in \spn_{\QQ(q,t)} \{\stE_{\eta | \lambda}, \stE_{s_i(\eta) | \lambda} \}.
\end{align}
\end{prop}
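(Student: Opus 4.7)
Plan: The argument combines the DAHA commutation relations with the Knop--Sahi-style intertwiner formula of Bechtloff Weising. The first observation is that by Theorem~\ref{t IW two actions}, the action of $T_i$ on $\Pas$ comes from the $+$ stable limit DAHA $\H^+$, and the defining relations of $\H^+$ in Definition~\ref{d H+} give $T_i \YIW_j = \YIW_j T_i$ for every $j \ne i, i+1$. Since $\stE_{\eta|\lambda}$ is a simultaneous eigenvector of all $\YIW_j$ by Theorem~\ref{t Weising 38}, it follows that $T_i \stE_{\eta|\lambda}$ is a simultaneous eigenvector of the operators $\YIW_j$ with $j \ne i, i+1$, with the same eigenvalues as $\stE_{\eta|\lambda}$.

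Next, I would use the eigenbasis provided by Theorem~\ref{t BW eigenbasis} to expand $T_i \stE_{\eta|\lambda}$ in the basis $\{\stE_{\eta'|\lambda'} : (\eta'|\lambda') \in \pairsr\}$. Only those basis elements whose $\YIW_j$-eigenvalues match those of $\stE_{\eta|\lambda}$ for all $j \ne i, i+1$ can contribute. The main combinatorial claim to verify is that the set of $(\eta'|\lambda') \in \pairsr$ whose eigenvalues
\[
\zeta_j(\eta'|\lambda') = q^{\eta'_j}\, t^{\,r+\ell(\lambda')+1-\st((\eta';\lambda'))_j}
\]
agree with $\zeta_j(\eta|\lambda)$ for every $j \ne i, i+1$ is precisely $\{(\eta|\lambda),\, (s_i\eta|\lambda)\}$. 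The vanishing pattern and $q$-exponent of $\zeta_j$ pin down $\eta'_j$ for $j \ne i, i+1$, while the $t$-exponents together determine $\ell(\lambda')$, $\lambda'$, and the remaining standardization values at positions outside $\{i,i+1\}$, leaving only the swap of $\eta_i$ and $\eta_{i+1}$ free.

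The main obstacle is this pigeonhole-style combinatorial analysis of the standardization map $\st$, together with the degenerate case $\eta_i = \eta_{i+1}$ (where $s_i \eta = \eta$, and $T_i$ acts as a scalar on $\stE_{\eta|\lambda}$ by the quadratic relation $(T_i-1)(T_i+t)=0$, consistent with the alleged span being one-dimensional). This step can alternatively be avoided by directly invoking Bechtloff Weising's explicit Knop--Sahi-type intertwiner formula for $T_i \stE_{\eta|\lambda}$, which provides an explicit expression
\[
T_i \stE_{\eta|\lambda} \;=\; A\, \stE_{\eta|\lambda} + B\, \stE_{s_i \eta|\lambda},
\]
with coefficients $A, B \in \QQ(q,t)$ depending on the ratio $\zeta_i/\zeta_{i+1}$, from which the inclusion into the two-dimensional span is immediate.
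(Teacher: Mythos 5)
Your first route (eigenvalue-matching) is not how the paper proceeds, and as stated it has a gap you acknowledge but underestimate: the joint $\YIW_j$-eigenspace for $j\notin\{i,i+1\}$ need not be two-dimensional, and in extreme cases the constraint is empty. For instance, when $r=2$ and $i=1$, there are no $j\le r$ with $j\ne i,i+1$, so the only surviving constraints come from $j>r$, which merely say $(\eta'|\lambda')\in\pairsr$; adding a degree constraint still leaves many candidates. So this route would require substantial extra input (e.g.\ exploiting how $T_i$ intertwines $\YIW_i$ and $\YIW_{i+1}$), which you don't supply.

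Your fallback — invoking Bechtloff Weising's explicit Knop--Sahi-type intertwiner formula — is what the paper does, but \emph{only} for $\eta_i\ne\eta_{i+1}$. The paper then treats $\eta_i=\eta_{i+1}$ by a separate, finite-variable argument: when $\eta_i=\eta_{i+1}$, the polynomial $\tEcal_{(\eta;\lambda;0^{N-r-\ell(\lambda)})}$ is symmetric in $x_i,x_{i+1}$ (Macdonald), hence $T_i$ fixes it; one then passes to the stable limit using that $T_i$ commutes with $\hsym_{r+1}^N$ and preserves the $t$-adic filtration. Your parenthetical claim that in this case ``$T_i$ acts as a scalar on $\stE_{\eta|\lambda}$ by the quadratic relation $(T_i-1)(T_i+t)=0$'' is not a valid deduction: the quadratic relation only constrains the minimal polynomial of $T_i$; it does not make any particular vector an eigenvector. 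So the degenerate case remains unproven in your write-up, and this is precisely the case the paper handles with a dedicated argument.
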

\begin{proof}
For  $\eta_i \ne \eta_{i+1}$, the result follows from the formula of Bechtloff Weising mentioned just above.
Now suppose $\eta_i = \eta_{i+1}$.
Then $\tEcal_{(\eta; \lambda; 0^{N-r-\ell(\lambda)})}(\xx_N;q,t)$ is symmetric in  $x_i, x_{i+1}$ (see \cite[eq. (5.4.4)]{Macdonald03}),
and so
$T_i \spa \tEcal_{(\eta; \lambda; 0^{N-r-\ell(\lambda)})}= \tEcal_{(\eta; \lambda; 0^{N-r-\ell(\lambda)})}$.
We deduce $T_i \spa \stE_{\eta | \lambda} = \stE_{\eta|\lambda}$,
as follows from Definition \ref{d stable nsmac}, the commutation relation $T_i \hsym_{r+1}^N = \hsym_{r+1}^N T_i$ for  $i < r$, and the fact that $T_i \spa (\QQ(q,t)[\xx_N])_e \subseteq (\QQ(q,t)[\xx_N])_e$.
\end{proof}

\subsection{Modified nonsymmetric Macdonald polynomials}
\begin{defn}[{\cite[\S 7.5]{BHMPS-nspleth}}]
\label{d modnsmac}
For  $(\eta | \lambda) \in \pairsr$, define the \emph{modified $r$-nonsymmetric Macdonald polynomial},
in terms of the  $r$-nonsymmetric plethysm map $\Pisf_r$ from \eqref{e: nspleth def}, by
\begin{align}
\tE_{\eta|\lambda} = \Pisf_r \, \stE_{\eta|\lambda}.
\end{align}
\end{defn}

\begin{prop}
The map  $\Pisf_r$ and the elements  $\stE_{\eta|\lambda} $ and $\tE_{\eta|\lambda}$
are compatible for different  $r$ in the following sense:
\begin{align}
\label{e different r for Pisf}
\text{the restriction of $\Pisf_r$ to $\P(r-1) \subseteq \P(r)$ agrees with  $\Pisf_{r-1}$,}
\end{align}
\begin{align}
\label{e different r for nsmac}
\stE_{\eta|\lambda} =\stE_{(\eta,0)|\lambda},
\end{align}
\begin{align}
\label{e different r for modnsmac}
\tE_{\eta|\lambda} =\tE_{(\eta,0)|\lambda}.
\end{align}
\end{prop}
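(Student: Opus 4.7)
My plan is to prove \eqref{e different r for Pisf}--\eqref{e different r for modnsmac} in order, with the last being a clean corollary of the first two.

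For \eqref{e different r for Pisf}, I would work with the basis \eqref{e: stable-sign-flag-h-def}. The typical basis element $\flagh^\pm_\eta(\xx_{r-1}) h_\lambda[(1-t)\xx]$ of $\P(r-1)$ (with $\eta \in \NN^{r-1}$) is literally identical to the $\P(r)$-basis element $\flagh^\pm_{(\eta,0)}(\xx_r) h_\lambda[(1-t)\xx]$ because $h_0 = 1$. Theorem \ref{t nspleth on flagh} then shows that both $\Pisf_{r-1}$ and $\Pisf_r$ send this element to $\flagh_\eta(\xx_{r-1}) h_\lambda(\xx) = \flagh_{(\eta,0)}(\xx_r) h_\lambda(\xx)$, so they agree by linearity.

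The heart of the proof is \eqref{e different r for nsmac}, which I would establish by showing that $\stE_{(\eta,0)|\lambda}$ actually lies in the subspace $\P(r) \subseteq \P(r+1)$. The key tool is the characterization $\P(r) = \bigcap_{j \ge r+1} \ker(\YIW_j|_{\Pas})$, which can be extracted from Theorems \ref{t Weising 38} and \ref{t BW eigenbasis}: the forward containment is immediate from Theorem \ref{t Weising 38}, and the reverse follows by expanding $f \in \Pas$ in the $\stE$-eigenbasis and observing that the requirement $\eta'_{r'} > 0$ for $(\eta'|\lambda') \in \pairs$ makes the eigenvalue of $\YIW_{r'}$ on $\stE_{\eta'|\lambda'}$ nonzero, so annihilation of $f$ by every $\YIW_j$ with $j \ge r+1$ kills all coefficients with $r' > r$. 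By Theorem \ref{t Weising 38} applied to $\stE_{(\eta,0)|\lambda}$, we have $\YIW_j \stE_{(\eta,0)|\lambda} = 0$ for every $j \ge r+1$ (for $j = r+1$ since $(\eta,0)_{r+1} = 0$, and for larger $j$ by the ``otherwise'' clause), placing $\stE_{(\eta,0)|\lambda}$ in $\P(r)$.

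Since $\stE_{(\eta,0)|\lambda} \in \P(r)$ is $\SS_{[r+1,\infty)}$-symmetric, the Demazure-Lusztig operator $T_i$ fixes it for each $i \ge r+1$ (the $(1-t)$-term in \eqref{e Ti IonWu} vanishes on $s_i$-invariants), so $\hsym^N_{r+1}$ is the identity on $\stE_{(\eta,0)|\lambda}[\xx_N]$ for every $N$ (using $\sum_{w \in \SS_{N-r}} t^{\binom{N-r}{2}-\ell(w)} = [N-r]_t!$), and the stable limit yields $\hsym_{r+1} \stE_{(\eta,0)|\lambda} = \stE_{(\eta,0)|\lambda}$. Combining this with Proposition \ref{p: hsym J}, which gives $\hsym_{r+1} \stE_{(\eta,0)|\lambda} = \stE_{\eta|(0,\lambda)_+} = \stE_{\eta|\lambda}$, proves \eqref{e different r for nsmac}. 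Finally, \eqref{e different r for modnsmac} follows by chaining Definition \ref{d modnsmac} with the previous two identities:
\[ \tE_{(\eta,0)|\lambda} = \Pisf_{r+1}\, \stE_{(\eta,0)|\lambda} = \Pisf_{r+1}\, \stE_{\eta|\lambda} = \Pisf_r\, \stE_{\eta|\lambda} = \tE_{\eta|\lambda}, \]
where the second equality uses \eqref{e different r for nsmac} and the third uses \eqref{e different r for Pisf} (since $\stE_{\eta|\lambda} \in \P(r)$). The main obstacle is the subspace characterization of $\P(r)$ by vanishing under $\YIW_j$ for $j \ge r+1$, which is not stated explicitly in the excerpt but packages cleanly the interplay between the two parts of Theorem \ref{t BW eigenbasis}.
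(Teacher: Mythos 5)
Your proof is correct, but it takes a genuinely different route from the paper's. The two treatments agree on \eqref{e different r for Pisf} in spirit --- both are unwindings of the definition of $\Pisf_r$ on a convenient basis --- though you use the signed flagged $h$ basis together with Theorem~\ref{t nspleth on flagh}, whereas the paper computes $\Pisf_r$ directly on the basis $\{x_1^{\eta_1}\cdots x_r^{\eta_r} s_\lambda(\xx)\}$ using the fact (quoted from \cite{BHMPS-nspleth}) that the $\pol$ operator applied to $g/\prod_{i<j\le r}(1-tx_i/x_j)$ is insensitive to whether it is taken with respect to $r$ or $r-1$ variables when $g$ is independent of $x_r$. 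The real divergence is in which of the remaining two identities is proved directly: the paper establishes \eqref{e different r for modnsmac} first, by invoking the combinatorial formula of \cite{BHMPS-nspleth} expressing $\tE_{\eta|\lambda}$ as a sum of flagged LLT polynomials and checking that appending a zero to $\eta$ inserts an empty row that changes neither the flagging nor $\inv$; it then obtains \eqref{e different r for nsmac} by conjugating through $\Pisf_r$. You go the other way: you prove \eqref{e different r for nsmac} directly and then deduce \eqref{e different r for modnsmac}. Your key idea --- extracting the characterization $\P(r) = \bigcap_{j\ge r+1}\ker(\YIW_j|_{\Pas})$ from Theorems~\ref{t Weising 38} and~\ref{t BW eigenbasis}, using it to place $\stE_{(\eta,0)|\lambda}$ in $\P(r)$, and then observing that $\hsym_{r+1}$ acts as the identity there while Proposition~\ref{p: hsym J} computes it --- is a nice structural argument that stays entirely inside the eigenvalue/Hecke-symmetrization formalism set up in this paper, and in particular avoids appealing to the flagged-LLT expansion of $\tE_{\eta|\lambda}$ whose details are only cited, not reproduced. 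In that sense your proof is more self-contained for a reader of this paper, while the paper's is shorter given the external machinery. One small point worth making explicit when you write this up: in the reverse containment of your kernel characterization, after concluding that the coefficients of all $\stE_{\eta'|\lambda'}$ with $r'>r$ vanish, you should note that each remaining $\stE_{\eta'|\lambda'}$ with $r'\le r$ already lies in $\P(r')\subseteq\P(r)$ by the second clause of Theorem~\ref{t BW eigenbasis}, so the expansion lands in $\P(r)$ --- this is what makes the argument non-circular despite \eqref{e different r for nsmac} not yet being available.
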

\begin{proof}
By \cite[Remark 4.1.2 (i) and Proposition 4.3.5]{BHMPS-nspleth},
if $g(\xx_r) \in \QQ(q,t)[\xx_r]$ is independent of  $x_r$,
then
$\pol_{\xx_r}\!\!\big(\frac{g(\xx_r)}{\prod_{1 \le i < j \le r}(1-t \spa x_i/x_j)}  \big)=
\pol_{\xx_{r-1}}\!\!\big(\frac{g(\xx_r)}{\prod_{1 \le i < j \le r-1}(1-t \spa x_i/x_j)}  \big) $,
where $\pol_{\xx_N}$ is the polynomial part operator from \eqref{e:nsCalpha}
taken with respect to the basis  $\{\Dcal_\alpha : \alpha \in \ZZ^N\}$ of  $\QQ(q,t)[x_1^{\pm 1}, \cdots, x_N^{\pm 1}]$.
The identity in \eqref{e different r for Pisf} then follows by computing  $\Pisf_r$ on the basis
$\big\{x_1^{\eta_1} \cdots x_r^{\eta_r} s_\lambda(\xx) : (\eta| \lambda) \in \pairsr \big\}$ of  $\P(r)$, which has the convenient property that
the subset with  $\eta_r = 0$ is a basis for  $\P(r-1)$.

The identity \eqref{e different r for modnsmac}
follows from the formula for  $\tE_{\eta|\lambda}$ as a sum of flagged LLT polynomials \cite[Theorem 7.6.6 (b)]{BHMPS-nspleth} and
\cite[Remark 7.6.8]{BHMPS-nspleth} which gives an explicit combinatorial formula for these flagged LLT terms.  Adding the extra 0 to go from  $\eta | \lambda$ to
$(\eta, 0) | \lambda$ adds an extra empty row to the tableaux in this formula, but the remark shows that this does not change the flagging conditions
or the $\inv$ statistic.
Identity \eqref{e different r for nsmac} then follows from \eqref{e different r for Pisf} and \eqref{e different r for modnsmac}.
\end{proof}

The $\tE_{\eta|\lambda}$, as  $(\eta|\lambda)$ ranges over  $\pairs$, form a basis for  $\Pas$.
For each  $r \ge 0$, the subset indexed by $\pairsr$ forms a basis of $\P(r)$.

\begin{prop}[{\cite[Proposition 7.8.3]{BHMPS-nspleth}}]
\label{p sym modnsmac}
Weyl symmetrization acts in the following simple way on modified $r$-nonsymmetric Macdonald polynomials.
For any $(\eta|\lambda) \in \pairsr$,
\begin{align}
\Weyl_r \, \tE_{\eta | \lambda}   = \tE_{ (\eta_1, \dots, \eta_{r-1})| (\eta_r, \lambda)_+}.
\end{align}
Full Weyl symmetrization yields
\begin{align}
\Weyl_1 \cdots \Weyl_r \spa \tE_{\eta|\lambda}(\xx;q,t)
=\tE_{\varnothing |(\eta;\lambda)_+}(\xx;q,t)
=\omega\Htild_{(\eta; \lambda)_+}( \xx;q,t).
\end{align}
\end{prop}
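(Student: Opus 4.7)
The plan is to assemble the proposition from three ingredients already established earlier in the paper: the definition $\tE_{\eta|\lambda} = \Pisf_r \, \stE_{\eta|\lambda}$ (Definition~\ref{d modnsmac}), the conjugation formula $\Pisf_{r-1} \, d_-^\PP \, \Pisf_r^{-1} = \Weyl_r$ from Proposition~\ref{p dminus and sym}, and the composite evaluation $d_-^\PP \, \stE_{\eta|\lambda} = \hsym_r \, \stE_{\eta|\lambda} = \stE_{(\eta_1,\dots,\eta_{r-1})\,|\,(\eta_r,\lambda)_+}$ that follows by combining Proposition~\ref{p: d- and hsym} with Proposition~\ref{p:Hsym J}. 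Everything needed has therefore been set up, and the proof reduces to chaining these identities.

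Concretely, I will establish the first identity via
\begin{align*}
\Weyl_r \, \tE_{\eta|\lambda}
&= \Weyl_r \, \Pisf_r \, \stE_{\eta|\lambda}
= \Pisf_{r-1} \, d_-^\PP \, \Pisf_r^{-1}\, \Pisf_r \, \stE_{\eta|\lambda}
= \Pisf_{r-1} \, \hsym_r \, \stE_{\eta|\lambda} \\
&= \Pisf_{r-1} \, \stE_{(\eta_1,\dots,\eta_{r-1})\,|\,(\eta_r,\lambda)_+}
= \tE_{(\eta_1,\dots,\eta_{r-1})\,|\,(\eta_r,\lambda)_+},
\end{align*}
where the last equality uses the compatibility \eqref{e different r for Pisf} so that applying $\Pisf_{r-1}$ to $\stE_{(\eta_1,\dots,\eta_{r-1})\,|\,(\eta_r,\lambda)_+} \in \P(r-1)$ yields the modified polynomial indexed by the same pair.

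For the full Weyl symmetrization, iterating the first identity $r$ times collapses the left hand side to $\tE_{\varnothing\,|\,(\eta;\lambda)_+}$. To identify this with $\omega \Htild_{(\eta;\lambda)_+}$, I will use the $r=0$ specialization of Proposition~\ref{p:Hsym J}, namely $\stE_{\varnothing\,|\,\mu}(\xx;q,t) = q^{\mathsf{n}(\mu^*)} J_\mu(\xx;q^{-1},t)$, together with the observation (immediate from the definition \eqref{e: nspleth def}, in which the $\pol$-argument reduces to the constant $1$) that $\Pisf_0$ acts on any symmetric function $g(\xx)$ as the ordinary plethystic substitution $g \mapsto g[\xx/(1-t)]$. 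This gives
\[
\tE_{\varnothing\,|\,\mu}(\xx;q,t) = q^{\mathsf{n}(\mu^*)} J_\mu\!\left[\tfrac{\xx}{1-t};\, q^{-1},\, t\right],
\]
which is $\omega \Htild_\mu(\xx;q,t)$ by the classical plethystic relation between the Macdonald integral forms and the modified Macdonald polynomials.

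The main obstacle, insofar as there is one, is not conceptual but bookkeeping: the final identification requires carefully matching the $q \mapsto q^{-1}$ flip built into the definition of $\tE_{\eta|\lambda}$ with the standard Garsia--Haiman conventions for $\Htild_\mu$ and the behavior of $\omega$ on the plethystic transform of $J_\mu$. Once these conventions are reconciled, no further work is needed.
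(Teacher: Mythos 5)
Your proof is correct and follows exactly the route the paper sets up: the paper cites this proposition from \cite{BHMPS-nspleta} without reproving it, but the commutative diagram in \eqref{e mac comm diagram} and the surrounding machinery (Propositions~\ref{p: d- and hsym}, \ref{p dminus and sym}, and \ref{p:Hsym J}) make clear that the intended derivation is precisely the chain $\Weyl_r \Pisf_r = \Pisf_{r-1}\, d_-^\PP = \Pisf_{r-1}\, \hsym_r$ combined with $\hsym_r \, \stE_{\eta|\lambda} = \stE_{(\eta_1,\dots,\eta_{r-1})|(\eta_r,\lambda)_+}$, which is what you do. One small remark: you do not actually need the compatibility \eqref{e different r for Pisf} in the last step of the first chain, since $\tE_{(\eta_1,\dots,\eta_{r-1})|(\eta_r,\lambda)_+}$ is \emph{defined} to be $\Pisf_{r-1}$ applied to $\stE_{(\eta_1,\dots,\eta_{r-1})|(\eta_r,\lambda)_+}$; the compatibility result only matters when one applies $\Pisf_{r}$ rather than $\Pisf_{r-1}$ to an element of $\P(r-1)$, which does not occur here. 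The remaining step — identifying $\Pisf_0\bigl(q^{\mathsf{n}(\mu^*)} J_\mu(\xx;q^{-1},t)\bigr)$ with $\omega\Htild_\mu(\xx;q,t)$ — is indeed just a matter of convention-matching with the classical $J$-to-$\Htild$ plethystic relation, as you say and as the top row of \eqref{e mac comm diagram} records.
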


Propositions \ref{p: hsym J} and \ref{p sym modnsmac} are nicely summarized by the following commutative diagram,
which holds for any $(\eta|\lambda) \in \pairsr$ and $\mu = (\eta;\lambda)_+$.
\vspace{-2mm}

 \newcommand{\toplabel}{f[\xx] \mapsto f[\xx/(1-t)]}
\begin{equation}
  \label{e mac comm diagram}
\begin{tikzcd}[column sep = 2.8cm, row sep = 1.3cm, nodes={inner sep = 1.2mm}]
 q^{\mathsf{n}(\mu^*)} J_{\mu}(\xx;q^{-1}, t)
\ar[r, mapsto, swap, "\toplabel"]
&
 \omega\Htild_{\mu}(\xx;q,t)
\\
\ar[mapsto,u, "\hsym_1 \cdots \hsym_{r}"]
\stE_{\eta|\lambda}(\xx;q,t)    \ar[mapsto,r, "\Pisf_r"] &
\tE_{\eta|\lambda}(\xx;q,t)
 \ar[u, mapsto, swap,"\, \raisebox{-2mm}{$\Weyl_1 \cdots \Weyl_r$}"]
\end{tikzcd}
\end{equation}

\section{Nonsymmetric \texorpdfstring{$\nabla$}{∇}
formulation of the nonsymmetric shuffle theorem}
\label{s ns nab ns shuffle}

The operator $\nabla$, introduced in \cite{BeGaHaTe99}, is the linear
operator on symmetric functions which acts diagonally on the basis of
modified Macdonald polynomials $\Htild_{\mu }(\xx;q,t)$ by
$\nabla \Htild _{\mu } = q^{\mathsf{n}(\mu^{*})} t^{\mathsf{n}(\mu )}\Htild_{\mu }$.
We now introduce two nonsymmetric versions of  $\nabla$: an operator $\umnab$ which acts diagonally on the $\stE_{\eta|\lambda}$'s and $\modnab$ which acts diagonally on the $\tE_{\eta| \lambda}$'s. Then, in Theorem~\ref{t two nablas}, we show that the  $\PP_r$ conjugate of $\umnab$ is equal, up to a sign, to the operator on $V_*$ induced by the endomorphism $N$ of $\mathbb A_{t,q}$. From there, we deduce the nonsymmetric compositional shuffle theorem stated in terms of $\modnab$ (Theorem~\ref{t mod nsshuffle intro}).

\subsection{Nonsymmetric \texorpdfstring{$\nabla$}{∇} defined via nonsymmetric Macdonald polynomials}
Define the \emph{signed nonsymmetric nabla operator} $\umnab \colon \Pas \to \Pas$ to be the $\QQ(q,t)$-linear map
determined by
\begin{align}
\label{e umnab def}
\umnab \stE_{\eta| \lambda}(\xx ;q,t) =
q^{\mathsf{n}(\mu^*)}t^{\mathsf{n}(\mu)}\stE_{\eta| \lambda}(\xx ;q,t),
\qquad \text{where $\mu = (\eta; \lambda)_+$},
\end{align}
as  $(\eta | \lambda)$ ranges over  $\pairs$.
Note that by \eqref{e different r for nsmac}, \eqref{e umnab def} then holds for any  $(\eta|\lambda) \in \pairsr$ for any  $r \ge 0$.

Recall that in \eqref{e mod nabla intro},
we defined the \emph{nonsymmetric nabla operator} $\modnab \colon \Pas \to \Pas$ to be the $\QQ(q,t)$-linear map
determined by
\begin{align}
\label{e modnab def}
\modnab \tE_{\eta| \lambda}(\xx ;q,t) =
q^{\mathsf{n}(\mu^*)}t^{\mathsf{n}(\mu)}\tE_{\eta| \lambda}(\xx ;q,t),
\qquad \text{where $\mu= (\eta; \lambda)_+$},
\end{align}
as  $(\eta | \lambda)$ ranges over  $\pairs$.
Note that by \eqref{e different r for modnsmac}, \eqref{e modnab def} then holds for any  $(\eta|\lambda) \in \pairsr$ for any  $r \ge 0$.

Note that since the set $\{\stE_{\eta|\lambda} : (\eta | \lambda) \in \pairsr \}$
spans  $\P(r)$,  $\umnab$ fixes the subspace  $\P(r) \subseteq \Pas$.
The same goes for  $\modnab$.

The operator  $\modnab$ is a nonsymmetric version of  $\omega\nabla \omega$ in the sense that
\begin{align}
\label{e modnab and sym}
\Weyl_1 \cdots \Weyl_r \spa \modnab \spa f
= \omega \spa \nabla \spa \omega \spa \Weyl_1 \cdots \Weyl_r \spa f
\end{align}
for any  $f \in \P(r)$.
This holds by Proposition \ref{p sym modnsmac}.
This implies that $\modnab$ restricted to
the space of symmetric functions agrees with $\omega\nabla \omega$, but is also a significantly stronger notion.

Also note that by Definition \ref{d modnsmac},
\begin{align}
\label{e umnab and modnab}
\Pisf_r \umnab \Pisf_r^{-1}  = \modnab \quad  \text{as operators on  $\P(r)$}.
\end{align}

\subsection{An operator identity for \texorpdfstring{$\umnab $}{∇̅}}
The goal of this subsection is to prove the following operator identity, which will allow us to relate $\umnab$ to the endomorphism $N$ of $\AA_{t,q}$.
\begin{lemma}
\label{l x1 to y1x1}
We have the following identity of operators on  $\Pas$:
\begin{align}
\label{el x1 to y1x1}
\umnab \X_1 = (qt)^{-1} \YIW_1 \X_1 \umnab.
\end{align}
\end{lemma}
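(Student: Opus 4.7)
The strategy is to verify the identity by evaluating both sides on each basis element $\stE_{\eta|\lambda}$, where $\umnab$ and $\YIW_1$ are diagonal (by \eqref{e umnab def} and Theorem~\ref{t Weising 38}) while $\X_1$ is not. The main work is therefore to expand $\X_1 \stE_{\eta|\lambda}$ in the $\stE$-basis and then check that the eigenvalue of $\umnab$ on each term matches the eigenvalue of $(qt)^{-1}\YIW_1$ times the eigenvalue of $\umnab$ on $\stE_{\eta|\lambda}$.

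The expansion of $\X_1 \stE_{\eta|\lambda}$ is obtained in two steps. First, the Halverson-Ram Monk rule provides an expansion $x_1 \Ecal_\beta(\xx_N;q,t) = \sum_{\beta'} c_{\beta,\beta'}(q,t)\, \Ecal_{\beta'}(\xx_N;q,t)$, where each $\beta'$ has $|\beta'|=|\beta|+1$; inverting $q$ and applying the scaling in~\eqref{e qflip nsmac} yields the corresponding expansion for $x_1 \tEcal_\beta(\xx_N;q,t)$. Second, since $\hsym_{r+1}^N$ involves only the Hecke generators $T_i$ with $i \geq r+1$, it commutes with multiplication by $x_1$; applying $\hsym_{r+1}^N$ to the Monk expansion for $\beta = (\eta;\lambda;0^{N-r-\ell(\lambda)})$ and passing to the $t$-adic limit as in Definition~\ref{d stable nsmac} gives
\[
\X_1 \stE_{\eta|\lambda} \;=\; \sum_{(\eta'|\lambda')} C_{(\eta|\lambda),(\eta'|\lambda')}(q,t) \, \stE_{\eta'|\lambda'},
\]
where convergence is governed by the $t$-adic estimates already used in the proof of \cite[Proposition~7.4.5]{BHMPS-nspleth}. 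The combinatorics of the Monk rule, together with the stabilization \eqref{e different r for nsmac} (and Proposition~\ref{p: prop 5.2} for equal-part symmetrization), ensures that each $\mu' = (\eta';\lambda')_+$ is obtained from $\mu = (\eta;\lambda)_+$ by adding one box, and furthermore that nonvanishing terms have $\eta'_1 > 0$ (so $\YIW_1 \stE_{\eta'|\lambda'} \neq 0$) because multiplication by $x_1$ preserves positivity of $x_1$-degree.

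With the expansion in hand, the identity reduces to checking, for each nonvanishing term, the scalar equality
\[
q^{\mathsf{n}((\mu')^*) - \mathsf{n}(\mu^*)}\, t^{\mathsf{n}(\mu') - \mathsf{n}(\mu)} \;=\; q^{\eta'_1 - 1}\, t^{\,r + \ell(\lambda') - \st((\eta';\lambda'))_1}.
\]
If the new box sits in row $i$ of $\mu'$ (so $\mu'_i = \mu_i + 1$), then $\mathsf{n}(\mu')-\mathsf{n}(\mu) = i-1$ and $\mathsf{n}((\mu')^*)-\mathsf{n}(\mu^*) = \mu_i$. The Monk rule identifies the new box added to $\mu$ with the entry $\eta'_1$ of $\eta'$: one has $\eta'_1 = \mu_i + 1$, and the standardization position $\st((\eta';\lambda'))_1$ corresponds to the row index $i$ in $\mu'$ via the convention that ties are broken left-to-right in $(\eta';\lambda')$. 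Both exponents then match, completing the verification.

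\textbf{Main obstacle.} The technical heart of the proof is bridging the Halverson-Ram Monk rule, which is naturally stated for finite $N$, with the stable limit: one must show that $t$-adic convergence of the $\hsym_{r+1}^N \tEcal_\beta$ is preserved under multiplication by $x_1$, and that terms of the finite Monk expansion whose indices $\beta'$ fail to stabilize into $\pairsr$ contribute negligibly in the limit. The second subtlety is the bookkeeping of the standardization $\st$ under the addition of a box: one must track carefully which position in $\eta'$ records the new box, as this is what makes the $t$-exponent produced by $\YIW_1$ agree with the change in $\mathsf{n}(\mu)$, particularly when $\mu$ has repeated parts where the standardization convention becomes essential.
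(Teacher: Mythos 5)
Your proposal follows essentially the same route as the paper: the paper also uses the Halverson--Ram Monk rule together with a $t$-adic limiting argument (isolated as Proposition~\ref{p Monk stable}, itself supported by Lemma~\ref{l Hecke sym nsmac} and Corollary~\ref{c E orbit sums}) to show that $\X_1\stE_\kapbold$ expands over $\stE_\thbold$ with $(\thbold-\varepsilon_1)_+=\kapbold_+$, and then verifies the same scalar identity by computing the added box's column $b=\thbold_1$ and row $a=\ell(\thbold)+1-\st(\thbold)_1$ exactly as you do. Two small corrections: your appeal to Proposition~\ref{p: prop 5.2} is unnecessary here (it is used later, for the $T_i$-intertwining in the proof of Theorem~\ref{t two nablas}), and the fact that $\eta'_1>0$ follows directly from $(\thbold-\varepsilon_1)_+$ being a partition, rather than from a heuristic about preserving $x_1$-degree.
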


To prove this, we need to analyze which  $\stE_{\gamma|\mu}$'s appear in the expansion of
$x_1 \stE_{\eta|\lambda}$.
For this, we use a `Monk rule' for nonsymmetric Macdonald polynomials,
which is an explicit formula for $x_j \spa E_\alpha$ in the basis $\{E_\beta : \beta \in \NN^N\}$.
We use the version given by Halverson-Ram \cite{HalversonRam} (see also \cite{Baratta, LascouxSchubertandMacdonald}).
Note that
their rule is expressed in terms of the $E_\beta(\xx_N;q,t)$'s rather than the  $\tEcal_\beta(\xx_N; q,t)$'s,
but we do not need to make an adjustment for this since we only care about which coefficients are nonzero.

\begin{prop}
\label{p Monk}
For any  $\alpha \in \NN^N$,
\begin{align}
x_1 \spa \tEcal_\alpha \in \spn_{\QQ(q,t)} \big\{ \tEcal_\beta :  (\beta_1-1, \beta_2, \dots, \beta_N)_+ = \alpha_+ \big\}.
\end{align}
\end{prop}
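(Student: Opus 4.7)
My plan is to derive the statement from the Monk-type Pieri rule of Halverson--Ram for the standard nonsymmetric Macdonald polynomials $E_\alpha$. First I would unwind \eqref{e qflip nsmac} and \eqref{e:integral-form} to observe that
\[
\tEcal_\alpha(\xx_N;q,t) \;=\; c_\alpha(q,t)\, E_\alpha(\xx_N; q^{-1}, t),
\]
where $c_\alpha(q,t) = q^{\mathsf{n}((\alpha_+)^*)}\prod_{u \in \dg(\alpha)} \bigl(1-q^{-a(u)-1}t^{l(u)+1}\bigr)$ is a nonzero element of $\QQ(q,t)$. Since the transition scalar relating the two bases is nonzero for every $\alpha$, the \emph{support} of $x_1 \tEcal_\alpha$ in the $\tEcal$-basis coincides with the support of $x_1 E_\alpha(\xx_N;q^{-1},t)$ in the basis $\{E_\beta(\xx_N;q^{-1},t)\}$, and then (after the harmless substitution $q \leftrightarrow q^{-1}$) with the support of $x_1 E_\alpha(\xx_N;q,t)$ in $\{E_\beta(\xx_N;q,t)\}$. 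Thus it suffices to show that every $\beta$ occurring with nonzero coefficient in the latter expansion satisfies $(\beta_1-1,\beta_2,\ldots,\beta_N)_+ = \alpha_+$.

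Next I would invoke Halverson--Ram's explicit formula for $x_1 E_\alpha$ as a $\QQ(q,t)$-linear combination of $E_\beta$'s. Inspection of that rule shows that every $\beta$ appearing with nonzero coefficient arises by choosing an index $k \in [N]$, setting $\beta_1 = \alpha_k + 1$, and taking $(\beta_2,\ldots,\beta_N)$ to be a specific rearrangement of $(\alpha_1,\ldots,\widehat{\alpha_k},\ldots,\alpha_N)$. In particular $(\beta_1-1,\beta_2,\ldots,\beta_N)$ is a permutation of $(\alpha_1,\ldots,\alpha_N)$, so $(\beta_1-1,\beta_2,\ldots,\beta_N)_+ = \alpha_+$, which is exactly the required support condition.

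The main (but essentially bookkeeping) obstacle is reconciling the differing conventions for nonsymmetric Macdonald polynomials used by Halverson--Ram versus those fixed in this paper. As already flagged in the remark following \eqref{e qflip nsmac}, various authors reverse the order of the variables, or replace $q$ by $q^{-1}$; one must therefore verify that the Halverson--Ram rule, taken in their conventions, really describes multiplication by $x_1$ (and not $x_N$) after translating into the normalization used here. Because only the support, and not the explicit coefficients, enters into the statement of Proposition~\ref{p Monk}, this verification reduces to tracking which position of $\alpha$ is incremented; the triangularity $E_{\beta}(\xx_N;q,t) = \xx^\beta + \text{lower order}$ together with the fact that $x_1 \xx^\alpha = \xx^{(\alpha_1+1,\alpha_2,\ldots,\alpha_N)}$ makes the identification unambiguous.
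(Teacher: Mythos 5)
Your proposal is correct and follows essentially the same approach as the paper: both pass from $\tEcal_\alpha$ to $E_\alpha$ by noting that only the support (not the exact scalars) matters, apply the Halverson--Ram Monk rule, and observe that every $\beta$ occurring has $(\beta_1-1,\beta_2,\ldots,\beta_N)$ a rearrangement of $\alpha$. The only minor difference is that your final paragraph worries about a variable-reversal discrepancy with Halverson--Ram that does not actually arise, since the paper's conventions for $E_\beta$ are stated to agree with theirs; the genuine discrepancy is only the $q\leftrightarrow q^{-1}$ flip and the scalar factor, which you handle correctly.
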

\begin{proof}
By \cite[Theorem 3.1 (a)]{HalversonRam},
the $\beta$ for which  $\tEcal_\beta$ appears with nonzero coefficient in  $x_j \spa \tEcal_\alpha$ are of the form  $\rot_C(\alpha)$ for  $j\in C$, where  $\rot_C(\alpha)$ is defined as follows:
let  $C = \{i_1 < \cdots < i_k\} \subseteq \{1,\dots, N\}$.  Define
$\rot_C(\alpha)$ to be the result of replacing the subsequence  $\alpha_{i_1}, \dots, \alpha_{i_k}$ of $\alpha$ with $\alpha_{i_k}+1,\alpha_{i_1}, \dots, \alpha_{i_{k-1}}$, and leaving the rest of  $\alpha$ as is.
For example, for
\begin{align}
\alpha = \mathbf{2}\spa 1\spa 1\spa \mathbf{7}\spa \mathbf{4} \spa 3,  \, C = \{1,4,5\},
\quad \quad \text{we have } \rot_C(\alpha) = \mathbf{5}\spa 1\spa 1\spa \mathbf{2}\spa \mathbf{7} \spa 3,
\end{align}
where the subsequence with indices  $C$ is shown in bold.

Thus, when  $j=1$,  $\beta = \rot_C(\alpha)$ for some  $C \subseteq \{1,\dots, N\}$ containing 1,
and so  $(\beta_1-1, \beta_2, \dots, \beta_N)$ is a rearrangement of  $\alpha$.
\end{proof}

We will use this to prove a similar result for the integral form stable  $r$-nonsymmetric Macdonald polynomials  $\stE_{\eta|\lambda}$ after two preparatory results.
Recall from \S\ref{ss convergence} that   $(\QQ(q,t)[\xx_N])_e = \{\sum_{\beta \in \NN^N} c_\beta \xx^\beta : \ord(c_\beta) \ge e\}$.

\begin{lemma}
\label{l Hecke sym nsmac}
(i) For  $\beta \in \NN^N$,
\begin{align}
\label{el Hecke sym nsmac}
\hsym_{r+1}^N \tEcal_\beta  \in \spn_{\QQ(q,t)} \{\tEcal_\alpha  : \alpha \in \SS_{(1^r; N-r)}\beta \},
\end{align}
where  $\SS_{(1^r; N-r)}\beta$ denotes the orbit of  $\beta$ under the symmetric group action permuting the last  $N-r$ indices.

(ii) For  $\beta \in \NN^N$, $\hsym_{r+1}^N \tEcal_\beta \notin (\QQ(q,t)[\xx_N])_1$.

(iii)  $(\QQ(q,t)[\xx_N])_e = \{\sum_\beta c_\beta \tEcal_\beta : \ord(c_\beta) \ge e\}$.
\end{lemma}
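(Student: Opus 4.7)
The plan is to establish (iii), (i), (ii) in that order, since (iii) provides the change-of-basis framework needed in the analysis of (ii). For (iii), the approach is direct triangularity. The integral form $\tEcal_\beta$ is triangular in the monomial basis with respect to the Bruhat-type order on $\NN^N$, with leading coefficient
\[
c_{\beta,\beta}(q,t) = q^{\mathsf{n}((\beta_+)^*)} \prod_{u \in \dg(\beta)}(1 - q^{-a(u)-1} t^{l(u)+1}).
\]
At $t=0$ this specializes to the unit $q^{\mathsf{n}((\beta_+)^*)} \in \QQ(q)^*$. Combined with the fact that $\tEcal_\beta \in \ZZ[q,q^{-1},t][\xx_N]$, the transition matrix between the $\tEcal$-basis and the monomial basis, together with its inverse, have entries in $\QQ(q)[[t]]$ and are triangular with invertible diagonal at $t=0$. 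Consequently the $t$-orders of coefficients agree under change of basis, yielding both inclusions of (iii).

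For (i), I invoke the finite-$N$ Knop-Sahi intertwiner relation $T_i \tEcal_\beta \in \spn_{\QQ(q,t)}\{\tEcal_\beta,\tEcal_{s_i\beta}\}$, which reduces to $T_i \tEcal_\beta = \tEcal_\beta$ when $\beta_i = \beta_{i+1}$ (the finite analog of Proposition~\ref{p: prop 5.2}). Since $\SS_{(1^r;N-r)}$ is generated by $s_{r+1},\ldots,s_{N-1}$, iterating over a reduced word keeps $T_w\tEcal_\beta$ inside the orbit span $\spn\{\tEcal_\alpha : \alpha \in \SS_{(1^r;N-r)}\beta\}$. As $\hsym_{r+1}^N$ is a $\QQ(q,t)$-combination of such $T_w$, (i) follows. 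For (ii), parts (i) and (iii) reduce the claim to showing $\hsym_{r+1}^N \tEcal_\beta|_{t=0} \ne 0$. The symmetrizer simplifies dramatically at $t=0$: since $[N-r]_t!|_{t=0} = 1$ and the coefficient $t^{\binom{N-r}{2}-\ell(w)}$ of $T_w$ vanishes unless $w$ is the longest element $w_0^L$ of the Levi, and since $T_i|_{t=0}$ equals the Demazure operator $\pi_i$ (which satisfies the braid relations), one gets $\hsym_{r+1}^N|_{t=0} = \pi_{w_0^L}$. So (ii) reduces to $\pi_{w_0^L}(\tEcal_\beta|_{t=0}) \ne 0$.

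For this nonvanishing, I analyze the Knop-Sahi formula at $t=0$. Writing $T_i \tEcal_\beta = A(q,t) \tEcal_\beta + B(q,t) \tEcal_{s_i\beta}$ when $\beta_i \ne \beta_{i+1}$, one obtains $A(q,0) = 0$ and $B(q,0) \in \QQ(q)^*$ when $\beta_i > \beta_{i+1}$, with the roles reversed for $\beta_i < \beta_{i+1}$. Hence $\pi_i\tEcal_\beta|_{t=0}$ equals a nonzero scalar multiple of $\tEcal_\gamma|_{t=0}$, where $\gamma$ is whichever of $\beta$ and $s_i\beta$ has $\gamma_i \le \gamma_{i+1}$. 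Iterating over a reduced word for $w_0^L$ sorts the last $N-r$ coordinates nondecreasingly, so $\pi_{w_0^L}(\tEcal_\beta|_{t=0})$ equals a nonzero scalar in $\QQ(q)^*$ times $\tEcal_{\beta^\circ}|_{t=0}$, where $\beta^\circ \in \SS_{(1^r;N-r)}\beta$ has $(\beta^\circ_{r+1},\ldots,\beta^\circ_N)$ nondecreasing. The latter is nonzero by (iii), completing (ii). The main obstacle is verifying the Knop-Sahi specialization uniformly in $\beta$: the coefficient $A$ has the form $\tfrac{(1-t)\kappa}{\kappa - 1}$ with $\kappa = \chi_\beta^{(i)}/\chi_\beta^{(i+1)}$ a ratio of $\Y$-eigenvalues, and one needs $\kappa|_{t=0} = 0$ when $\beta_i > \beta_{i+1}$, which follows from the positive $t$-exponent appearing in the finite analog of the eigenvalue formula in Theorem~\ref{t Weising 38}.
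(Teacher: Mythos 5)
Your proofs of (i) and (iii) match the paper's: (i) is the Knop--Sahi recurrence, and (iii) is triangularity of the transition matrix with a diagonal that is a unit at $t=0$. Your phrasing of (iii) via $\QQ(q)[[t]]$ is marginally cleaner than the paper's adjugate-formula argument, but it is the same idea.

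For (ii) you take a genuinely different route. The paper cites an explicit formula from~\cite{BHMPS-nspleth} for the coefficient of $\xx^\beta$ in $\hsym_{r+1}^N E_\beta$ when $\beta_{r+1}\ge\cdots\ge\beta_N$, namely $\tfrac{\prod_i[m_i]_t!}{[N-r]_t!}$, which is visibly nonzero at $t=0$, and then moves around the orbit. You instead reduce via (iii) to $(\hsym_{r+1}^N\tEcal_\beta)|_{t=0}\neq 0$, observe correctly that $\hsym_{r+1}^N|_{t=0}=\pi_{w_0^L}$, and use the $t=0$ limit of Knop--Sahi plus a bubble-sort argument. This is a nice self-contained alternative. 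However, there are two places where your argument, as written, is incomplete. First, the sorting step needs \emph{both} $A(q,0)=0$ and $B(q,0)\neq 0$ when $\beta_i>\beta_{i+1}$; you name both but only verify the $A$ part. The $B$ part does hold (for the monic $E_\gamma$ with $\gamma_i>\gamma_{i+1}$ the coefficient of $E_{s_i\gamma}$ in $T_iE_\gamma$ is exactly $1$, and passing to $\tEcal$ only multiplies by normalization ratios that are units at $t=0$), but it needs its own justification --- the Hecke relation alone does not force it, and if $B(q,0)=0$ the argument would collapse to $\pi_{w_0^L}(\tEcal_\beta|_{t=0})=0$. Second, the appeal to "the positive $t$-exponent in the finite analog of Theorem~\ref{t Weising 38}" is hazardous: the stable formula $t^{r+\ell(\lambda)+1-\st_j}$ gives the ratio $\chi^{(i)}/\chi^{(i+1)}$ a $t$-exponent $\st_{i+1}-\st_i$, which is \emph{negative} when $\beta_i>\beta_{i+1}$. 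The finite $\Y_j$-eigenvalue carries a $t$-exponent of the opposite sign (roughly $\st_j-N$), because $\YIW_j$ is renormalized by an extra overall factor of $t^N$ relative to the finite $\Y_j$; this flip is exactly what produces the positive exponent you need, but citing the stable theorem without accounting for the flip would read as giving the wrong sign. With these two points filled in, your approach to (ii) is correct.
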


\begin{proof}
Part (i) follows from the Knop-Sahi recurrence,
which expresses  $T_i E_\beta$ as a  $\QQ(q,t)$-linear combination of  $E_\beta$ and  $E_{s_i \beta}$ (see, e.g., \cite[\S5.4]{BHMPS-nspleth}).

For part (ii), first, for the monically normalized  $E_\beta$ when  $\beta_{r+1} \ge \cdots \ge \beta_N$, from eq. (178) 
and the paragraph surrounding
eq. (316) in \cite{BHMPS-nspleth}, we have that the coefficient of
$\xx^{\beta}$ in  $\hsym_{r+1}^N E_\beta$ is equal to $\frac{\prod_{i} [m_i]_t!}{[N-r]_t!}$, where  $m_i$ is the number of  $i$'s in  $(\beta_{r+1}, \dots, \beta_N)$.
This constant has order of vanishing 0, and so $\hsym_{r+1}^N E_\beta \notin (\QQ(q,t)[\xx_N])_1$.
Since  $E_\beta$ and  $\Ecal_\beta$ differ by a scalar factor which has order of vanishing 0 (see \eqref{e:integral-form}),
(ii) follows for  $\beta$ with $\beta_{r+1} \ge \cdots \ge \beta_N$.  The general case then follows from the fact that  $\hsym_{r+1}^N \Ecal_\beta = \hsym_{r+1}^N \Ecal_{\alpha}$ for any $\alpha \in \SS_{(1^r;N-r)}\beta$, by \cite[Proposition 7.1.6]{BHMPS-nspleth}.

For (iii), the inclusion  $\supseteq$ follows from $\tEcal_\beta \in \ZZ[q,q^{-1},t][\xx_N]$.
For the reverse inclusion, first note that the matrix  $A$ expressing the $\tEcal_\beta$'s in terms of monomials has coefficients in  $\ZZ[q,q^{-1},t]$
and is triangular with diagonal entries $q^{\mathsf{n}((\beta_+)^*)} \bigl(\prod \nolimits_{\, u\in \dg (\beta )}(1-q^{-a(u)-1}\, t^{l(u)+1}) \bigr)$.
Hence, restricting to degree  $d$ and using the adjugate formula for  $A^{-1}$,
we see that $A^{-1}$ has entries in  $\frac{1}{\det(A)} \ZZ[q,q^{-1},t]$. Since  $\ord(\det(A)) =0$, the inclusion  $\subseteq$ follows.
\end{proof}

\begin{cor}
\label{c E orbit sums}
Suppose that
\begin{align}
\label{ec E orbit sums}
\sum_{\alpha} d_\alpha \, \hsym_{r+1}^N (\tEcal_\alpha)  \equiv \sum_{\beta \in \NN^N} c_\beta \, \hsym_{r+1}^N (\tEcal_\beta)  \quad \bmod (\QQ(q,t)[\xx_N])_e,
\end{align}
for coefficients  $d_\alpha, c_\beta \in \QQ(q,t)$,
with the sum on the left containing at most one $\alpha$ from each  $\SS_{(1^r; N-r)}$ orbit of  $\NN^N$.
Then $\ord(d_\alpha) < e$ implies that for at least one $\beta$ in the orbit $\SS_{(1^r; N-r)} \alpha$, $c_\beta$ is nonzero.
\end{cor}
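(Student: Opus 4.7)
The plan is to reduce both sides of \eqref{ec E orbit sums} to sums indexed by $\SS_{(1^r;N-r)}$-orbits on $\NN^N$, and then extract a contradiction from the coefficient of a single well-chosen $\tEcal_\gamma$. First I would observe that $F_O := \hsym_{r+1}^N(\tEcal_\beta)$ depends only on the orbit $O = \SS_{(1^r;N-r)}\beta$ and not on the choice of $\beta \in O$. The proof of Lemma~\ref{l Hecke sym nsmac}(ii) cites \cite[Proposition~7.1.6]{BHMPS-nspleth} for the analogous statement with $\Ecal$ in place of $\tEcal$, and the normalizing scalar $q^{\mathsf{n}((\beta_+)^*)}$ relating $\tEcal_\beta$ to $\Ecal_\beta(\xx_N;q^{-1},t)$ depends only on $\beta_+$, hence only on $O$; since $\hsym_{r+1}^N$ is $q$-independent, the claim follows. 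Granted this, grouping terms by orbit rewrites \eqref{ec E orbit sums} as
\begin{equation}
\sum_O d_{\alpha_O}\, F_O \equiv \sum_O C_O\, F_O \pmod{(\QQ(q,t)[\xx_N])_e},
\end{equation}
where $\alpha_O$ denotes the at most one representative of $O$ appearing on the left (with $d_{\alpha_O} := 0$ if none does) and $C_O := \sum_{\beta \in O} c_\beta$.

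Next I would expand each $F_O$ in the $\tEcal$-basis. By Lemma~\ref{l Hecke sym nsmac}(i), $F_O = \sum_{\gamma \in O} a_{O,\gamma}\, \tEcal_\gamma$ for some $a_{O,\gamma} \in \QQ(q,t)$, and by Lemma~\ref{l Hecke sym nsmac}(ii) combined with (iii), at least one $\gamma_O \in O$ must satisfy $\ord(a_{O,\gamma_O}) = 0$. This distinguished $\gamma_O$ is the lever for the contradiction. Now fix $\alpha$ on the left with $\ord(d_\alpha) < e$, set $O = O(\alpha)$, and read off the coefficient of $\tEcal_{\gamma_O}$ in the difference of the two sides of the displayed congruence. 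Since the orbits are disjoint and each $F_{O'}$ contributes only to $\tEcal_\gamma$ with $\gamma \in O'$, no other orbit interferes, so that coefficient equals $(d_\alpha - C_O)\, a_{O,\gamma_O}$. Lemma~\ref{l Hecke sym nsmac}(iii) then forces $\ord((d_\alpha - C_O)\, a_{O,\gamma_O}) \ge e$; with $\ord(a_{O,\gamma_O}) = 0$ this gives $\ord(d_\alpha - C_O) \ge e$, and then $\ord(d_\alpha) < e$ forces $C_O \ne 0$, so $c_\beta \ne 0$ for at least one $\beta \in O$.

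The only subtle ingredient is the orbit-invariance of $F_O$ in the first step, but as noted this reduces cleanly to the $\Ecal$-version cited inside the proof of Lemma~\ref{l Hecke sym nsmac}(ii), multiplied by an orbit-invariant scalar. Everything else is a standard coefficient-matching argument, made available by Lemma~\ref{l Hecke sym nsmac}(iii), which lets one track the order of vanishing faithfully through the $\tEcal$-basis rather than the monomial basis.
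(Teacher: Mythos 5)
Your proposal is correct and takes essentially the same approach as the paper's proof: both expand in the $\tEcal$-basis and use parts (i)--(iii) of Lemma~\ref{l Hecke sym nsmac} to track the order of vanishing of the coefficient of a well-chosen $\tEcal_\gamma$ in the orbit of $\alpha$, deducing the conclusion from the congruence. The orbit-invariance of $F_O := \hsym_{r+1}^N(\tEcal_\beta)$ that you observe is a valid and clean extra step (the scalar $q^{\mathsf{n}((\beta_+)^*)}$ relating $\tEcal_\beta$ to $\Ecal_\beta(\xx_N;q^{-1},t)$ is an orbit invariant and $\hsym_{r+1}^N$ is $q$-independent), but the paper bypasses it by analyzing only the left-hand side's $\tEcal$-expansion and then appealing directly to Lemma~\ref{l Hecke sym nsmac}(i) to see that the matching coefficient on the right is a combination of $c_\beta$ with $\beta \in \SS_{(1^r;N-r)}\alpha$.
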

\begin{proof}
Expand both sides of \eqref{ec E orbit sums} into the  $\tEcal_\alpha$ basis of  $\QQ(q,t)[\xx_N]$, and denote
the expansion on the left by $\sum_{\tau \in \NN^N} a_\tau \, \tEcal_\tau$.
Given $\ord(d_\alpha) < e$, Lemma \ref{l Hecke sym nsmac} (ii) implies  $d_\alpha \, \hsym_{r+1}^N(\tEcal_\alpha) \notin (\QQ(q,t)[\xx_N])_e$.
Then Lemma \ref{l Hecke sym nsmac} (i) and the assumption on the left sum imply
that for some $\tau \in \SS_{(1^r; N-r)} \alpha$,
$a_\tau \, \tEcal_\tau \notin (\QQ(q,t)[\xx_N])_e$.
From Lemma \ref{l Hecke sym nsmac} (i) and (iii), it follows that at least one  $c_\beta$ among $\beta \in \SS_{(1^r; N-r)} \alpha$ is nonzero.
\end{proof}

For the next two results, it is convenient to denote elements of $\pairsr$ by a single symbol,
and we define the following accompanying notation: for  $\thbold = (\eta | \lambda) \in \pairsr$, we let $\thbold$ also stand for the concatenation  $(\eta;\lambda)$,
so that $\thbold_+ = (\eta;\lambda)_+$,  $\thbold_1 = \eta_1$ when  $r \ge 1$,  $\thbold-\varepsilon_1= (\eta_1-1,\dots, \eta_r; \lambda)$, etc.
Also set  $\ell(\thbold) = r + \ell(\lambda)$.

\begin{prop}
\label{p Monk stable}
For any  $\kapbold \in \pairs$,
\begin{align}
x_1 \spa \stE_\kapbold \in
\spn_{\QQ(q,t)} \big\{ \stE_\thbold : \thbold \in \pairsr \text{ with $r \ge 1$ and }
 (\cc{\thbold}-\varepsilon_1)_+ = \kapbold_+ \big\}.
\end{align}
\end{prop}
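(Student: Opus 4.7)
The plan is to bootstrap Proposition~\ref{p Monk stable} from the finite-$N$ Halverson--Ram Monk rule of Proposition~\ref{p Monk}, via the $t$-adic limit definition of $\stE_\kapbold$, with Corollary~\ref{c E orbit sums} providing the mechanism to pass from orbit-sum identities to assertions about individual coefficients.

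First, using the compatibility \eqref{e different r for nsmac} ($\stE_{\eta|\lambda} = \stE_{(\eta,0)|\lambda}$), I may assume that $\kapbold \in \pairsr$ with $r \geq 1$. Since $\P(r)$ is stable under multiplication by $x_1$ and $\{\stE_\thbold : \thbold \in \pairsr\}$ is a basis of $\P(r)$ by Theorem~\ref{t BW eigenbasis}, I can write $x_1 \stE_\kapbold = \sum_\thbold a_\thbold \stE_\thbold$ as a finite sum (degree-bounded), and the task becomes showing $a_\thbold = 0$ unless $(\thbold - \varepsilon_1)_+ = \kapbold_+$. Fix $e>0$ and choose $N$ large enough that, for every $\thbold$ in the expansion (and for $\kapbold$), the approximation $\stE_\thbold \equiv \hsym_{r+1}^N \tEcal_{(\thbold;0^{N-\ell(\thbold)})} \pmod{(\QQ(q,t)[\xx_N])_e}$ holds (cf.\ Definition~\ref{d stable nsmac}).

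Since $r \geq 1$, every $T_i$ appearing in $\hsym_{r+1}^N$ has $i \geq r+1 \geq 2$, so $x_1$ commutes with $\hsym_{r+1}^N$. Observing that $(\QQ(q,t)[\xx_N])_e$ is stable under both $x_1 \cdot$ and $\hsym_{r+1}^N$ (the latter because $1/[N-r]_t!$ and the factors $t^{\binom{N-r}{2}-\ell(w)}$ have nonnegative order), I get
\[
x_1 \stE_\kapbold \;\equiv\; \hsym_{r+1}^N \bigl(x_1 \tEcal_{(\kapbold; 0^{N-\ell(\kapbold)})}\bigr) \pmod{(\QQ(q,t)[\xx_N])_e}.
\]
Expanding the inside by the Monk rule (Proposition~\ref{p Monk}) as $\sum_\beta c_\beta \tEcal_\beta$ over $\beta \in \NN^N$ with $(\beta_1-1,\beta_2,\ldots,\beta_N)_+ = \kapbold_+$ (forcing $\beta_1 \geq 1$) and combining with the expansion of $x_1\stE_\kapbold$ yields
\[
\sum_\thbold a_\thbold\, \hsym_{r+1}^N \tEcal_{(\thbold;0^{N-\ell(\thbold)})} \;\equiv\; \sum_\beta c_\beta\, \hsym_{r+1}^N \tEcal_\beta \pmod{(\QQ(q,t)[\xx_N])_e}.
\]

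Now I invoke Corollary~\ref{c E orbit sums}. The representatives $(\thbold;0^{N-\ell(\thbold)})$ on the left are in distinct $\SS_{(1^r;N-r)}$-orbits: the first $r$ entries $\gamma = (\thbold_1,\ldots,\thbold_r)$ are orbit-fixed, and the tail is the unique weakly decreasing representative $(\mu_1,\ldots,\mu_{\ell(\mu)},0,\ldots,0)$ of its multiset, so distinct $\thbold$ produce distinct orbits. Hence $\ord(a_\thbold) < e$ forces the existence of some $\beta$ in the orbit of $(\thbold;0^{N-\ell(\thbold)})$ with $c_\beta \neq 0$. Such a $\beta$ has $(\beta_1,\ldots,\beta_r) = \gamma$ and $(\beta_{r+1},\ldots,\beta_N)$ a permutation of $(\mu_1,\ldots,\mu_{\ell(\mu)},0,\ldots,0)$, so $(\beta_1-1,\beta_2,\ldots,\beta_N)_+ = (\thbold - \varepsilon_1)_+$. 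The Monk condition then gives $(\thbold - \varepsilon_1)_+ = \kapbold_+$. Letting $e \to \infty$ yields $a_\thbold = 0$ whenever this condition fails, completing the proof. The main obstacle—really the only delicate point—is verifying the orbit-representative bookkeeping needed to apply Corollary~\ref{c E orbit sums}; the remaining steps are a routine assembly of tools already in place.
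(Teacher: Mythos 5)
Your proof takes essentially the same route as the paper's: reduce to a fixed $r$ with $r\ge 1$, expand $x_1 \stE_\kapbold = \sum_\thbold a_\thbold \stE_\thbold$ in the stable basis, descend to finite $N$ via Definition~\ref{d stable nsmac}, use that $x_1$ commutes with $\hsym_{r+1}^N$, apply the Halverson--Ram Monk rule (Proposition~\ref{p Monk}), and conclude via Corollary~\ref{c E orbit sums}. The overall strategy is sound and correctly identifies all the tools.

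There is, however, a real gap in your treatment of the error terms. You choose $N$ large enough so that each approximation error $D_\thbold^N := \stE_\thbold[\xx_N] - \hsym_{r+1}^N \tEcal_{(\thbold;0^{N-\ell(\thbold)})}$ lies in $(\QQ(q,t)[\xx_N])_e$, and then pass to the congruence $\sum_\thbold a_\thbold \hsym_{r+1}^N \tEcal_{(\thbold;0^{N-\ell(\thbold)})} \equiv \sum_\beta c_\beta \hsym_{r+1}^N \tEcal_\beta \pmod{(\QQ(q,t)[\xx_N])_e}$. To get that congruence you need $\sum_\thbold a_\thbold D_\thbold^N \in (\QQ(q,t)[\xx_N])_e$, which does not follow from $D_\thbold^N \in (\QQ(q,t)[\xx_N])_e$ alone: the coefficients $a_\thbold \in \QQ(q,t)$ can have negative order of vanishing at $t=0$, and multiplication by $a_\thbold$ can drop the order of $D_\thbold^N$ below $e$. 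The paper explicitly addresses this by first noting the $a_\thbold$ are finitely many and independent of $N$, setting $e' = \min\{\ord(a_\thbold) : a_\thbold \ne 0\}$ (allowing $e'<0$), and then choosing $N$ large enough that $D_\thbold^N \in (\QQ(q,t)[\xx_N])_{e-e'}$ rather than merely $(\QQ(q,t)[\xx_N])_e$, so that each term $a_\thbold D_\thbold^N$ has order at least $e$. Your remark that ``the only delicate point'' is the orbit-representative bookkeeping misses this issue; the rest of your argument (commuting $x_1$ past $\hsym_{r+1}^N$, the stability of $(\QQ(q,t)[\xx_N])_e$ under $\hsym_{r+1}^N$, the distinctness of the representatives $(\thbold;0^{N-\ell(\thbold)})$ in their $\SS_{(1^r;N-r)}$-orbits, and the application of the corollary) is correct and matches the paper.
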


\begin{proof}
By \eqref{e different r for nsmac},
 $\stE_\kapbold = \stE_{\eta|\lambda}$ for some
$(\eta|\lambda) \in \pairsr$ where  $r \ge 1$. Hence,
$x_1 \stE_\kapbold\in\P(r)$ and we can write
\begin{align}
\label{e x1 prod}
x_1 \stE_{\kapbold} = \sum_{\thbold \in \pairsr} d_\thbold \, \stE_\thbold,
\end{align}
with coefficients $d_\thbold \in \QQ(q,t)$.
We need to show that  $d_\thbold \ne 0$ only when $(\cc{\thbold}-\varepsilon_1)_+ = \kapbold_+$.

Define  $D_{\kapbold'}^{N} = \stE_{\kapbold'}[\xx_N;q,t] - \hsym_{r+1}^N (\tEcal_{(\kapbold';0^{N-\ell(\kapbold')})})$ for  $\kapbold'  \in \pairsr$ and  $N \ge r$,
 so
that by Definition \ref{d stable nsmac}, for every  $e > 0 $, $D_{\kapbold'}^{N} \in (\QQ(q,t)[\xx_N])_e$ for sufficiently large  $N$.
For any  $N \ge r$, setting the variables  $x_{N+1}, x_{N+2}, \dots$ to 0 in \eqref{e x1 prod}, we obtain
\begin{multline}
\label{e x1 prod2}
x_1 \hsym_{r+1}^N (\tEcal_{(\cc{\kapbold};0^{N-\ell(\kapbold)})}) + x_1 D_{\kapbold}^{N} = x_1 \stE_{\kapbold}[\xx_N;q,t] \\
= \sum_{\thbold \in \pairsr} d_\thbold \, \stE_\thbold[\xx_N;q,t] =
\sum_{\thbold \in \pairsr} d_\thbold \big(\hsym_{r+1}^N(\tEcal_{(\cc{\thbold};0^{N-\ell(\thbold)})}) + D_{\thbold}^{N}\big),
\end{multline}
By Proposition \ref{p Monk} and the fact that multiplication by $x_1$ commutes with  $\hsym_{r+1}^N$, we have
\begin{align}
\label{e x1 prod3}
x_1 \spa \hsym_{r+1}^N (\tEcal_{(\cc{\kapbold};0^{N-\ell(\kapbold)} )}) =
 \hsym_{r+1}^N \bigg( \sum_{\beta \in \NN^N \spa : \spa (\beta-\varepsilon_1)_+ = \kapbold_+} c_\beta \, \tEcal_{\beta} \bigg),
\end{align}
for coefficients  $c_\beta \in \QQ(q,t)$.
Putting \eqref{e x1 prod2} and \eqref{e x1 prod3} together we have
\begin{align}
  \sum_{\beta \in \NN^N \spa :\spa (\beta-\varepsilon_1)_+ = \kapbold_+} \!\!\! c_\beta \, \hsym_{r+1}^N(\tEcal_{\beta} )
 = \sum_{\thbold \in \pairsr} \!\! d_\thbold \, \hsym_{r+1}^N(\tEcal_{(\cc{\thbold};0^{N-\ell(\thbold)}  )}) + \sum_{\thbold \in \pairsr} \!\! d_\thbold \spa D_{\thbold}^{N} -  x_1 \spa D_{\kapbold}^{N}.
\end{align}
Note that there are finitely many nonzero $d_\thbold$ and they are independent of  $N$, while the $c_\beta$ depend on $N$.
Choose any $e  \ge 0$ which is greater than $\max \{\ord(d_\thbold) : d_\thbold \ne 0\}$.
Set  $e' = \min \{\ord(d_\thbold) : d_\thbold \ne 0\}$
(we need to allow for the possibility some \(\ord(d_\thbold)\) are negative).
Then for sufficiently large  $N$, we have  $D_{\thbold}^{N} \in (\QQ(q,t)[\xx_N])_{e-e'}$ for all nonzero $d_\thbold$
and $D_\kapbold^{N} \in (\QQ(q,t)[\xx_N])_{e}$.
Hence, for sufficiently large $N$,
\begin{align}
 \sum_{\beta \in \NN^N \spa : \spa (\beta-\varepsilon_1)_+ = \kapbold_+} c_\beta \, \hsym_{r+1}^N (\tEcal_{\beta})
 \equiv \sum_{\thbold \in \pairsr} d_\thbold \, \hsym_{r+1}^N(\tEcal_{(\cc{\thbold};0^{N-\ell(\thbold)} ) })  \quad \bmod (\QQ(q,t)[\xx_N])_{e}
\end{align}
and the assumptions of Corollary \ref{c E orbit sums} are satisfied. Applying the corollary, we have that for each nonzero  $d_\thbold$, some
$\beta $ in the orbit $\SS_{(1^r;N-r)} (\cc{\thbold};0^{N-\ell(\thbold) } )$ has   $c_\beta \ne 0$, and therefore  $(\cc{\thbold}-\varepsilon_1)_+ = (\beta-\varepsilon_1)_+ =  \kapbold_+$.
\end{proof}

\begin{proof}[Proof of Lemma \ref{l x1 to y1x1}]
We will compare the actions of $\YIW_1 \X_1$ and  $\umnab \X_1 \umnab^{-1}$
on the basis of integral form stable $r$-nonsymmetric Macdonald polynomials.
Let  $\kapbold \in \pairs$.  According to Proposition \ref{p Monk stable}, we can write $x_1 \stE_\kapbold = \sum_\thbold d_\thbold \stE_\thbold$ for
some coefficients $d_\thbold \in \QQ(q,t)$, over $\thbold \in \pairsr$ with  $r \ge 1$ and such that $(\cc{\thbold}-\varepsilon_1)_+ = \kapbold_+$.
Hence,
\begin{align}
\label{e nab x1 nab}
\umnab \X_1 \umnab^{-1} \stE_\kapbold = \sum_\thbold d_\thbold \, q^{\mathsf{n}((\thbold_+)^*)-\mathsf{n}((\kapbold_+)^*)} \,
t^{\mathsf{n}(\thbold_+)-\mathsf{n}(\kapbold_+)} \, \stE_\thbold.
\end{align}

On the other hand, applying Theorem \ref{t Weising 38} and noting that since
$(\cc{\thbold}-\varepsilon_1)_+ = \kapbold_+$, the top case of \eqref{e t Weising 38} always applies, we have
\begin{align}
\label{e y1 x1}
\YIW_1 \X_1 \, \stE_\kapbold = \YIW_1 \sum_\thbold d_\thbold \, \stE_\thbold
=\sum_\thbold d_\thbold \, q^{\cc{\thbold}_1}  t^{\ell(\thbold)+1-\st(\cc{\thbold})_1} \, \stE_\thbold.
\end{align}
For $\thbold$ appearing in the sum,
the skew shape $\thbold_+/\kapbold_+$ is a single box
lying in some row  $a$ and column  $b$.
Still using $(\cc{\thbold}-\varepsilon_1)_+ = \kapbold_+$, and given that there are $\big(\ell(\thbold)+1-\st(\cc{\thbold})_1\big)$ parts of $\thbold_+$ which
are  $\ge \cc{\thbold}_1$,
it follows that $b = \cc{\thbold}_1$ and  $a =\ell(\thbold)+1-\st(\cc{\thbold})_1$.
Moreover, $b-1 = \mathsf{n}((\thbold_+)^*)-\mathsf{n}((\kapbold_+)^*)$ and  $a-1 = \mathsf{n}(\thbold_+)-\mathsf{n}(\kapbold_+)$ directly from definition of the statistic $\mathsf{n}$.
Thus, the right hand side of \eqref{e y1 x1} is  $qt$ times the right hand side of \eqref{e nab x1 nab}.
\end{proof}

\subsection{Two versions of nonsymmetric nabla agree}

By \cite[Theorem 3.13]{Mellit16}, the left $\AA_{t,q}$-module homomorphism $\AA_{t,q}e_0 \to V_*$,  $e_0 \mapsto 1 \in V_0$ is surjective; let  $I$ denote the kernel.
On \cite[pg. 19]{Mellit16} it is noted that
the map $\AA_{t,q}e_0 \to \AA_{t,q}e_0$ induced by the endomorphism $N$ defined in \eqref{e N map} preserves  $I$ and therefore yields
endomorphisms  $\Mnab \colon V_r \to V_r$
for all  $r \ge  0$. This collection of endomorphisms satisfies
$\Mnab (1) = 1$ for  $1 \in V_0$, and
\begin{align}
\label{e nablaprime and N}
\Mnab \circ  L = N(L) \circ \Mnab \quad \text{for all } L\in \AA_{t,q}.
\end{align}
In fact,  $\Mnab$ is determined by these two properties.

Mellit shows \cite[pg. 19]{Mellit16} that  $\Mnab$ restricted to  $V_0 = \Lambda(W)$ is the same as
the operator $\nabla$ up to a sign.
The following result generalizes this to the nonsymmetric setting, in the sense that it relates
$\Mnab$, acting now on any $V_r$, to a nonsymmetric version of $\nabla$ defined to act diagonally on a Macdonald basis.
To see that it is indeed a generalization of Mellit's result, observe that when $r=0$, by \eqref{e two plethysms} and \eqref{e modnab and sym}, both sides of \eqref{et two nablas 2} become $\omega \nabla \omega f$.

\begin{thm}
\label{t two nablas}
For any $f \in \P(r)$ of degree  $d$,
\begin{align}
\label{et two nablas}
\umnab f & = (-1)^d \Mnab^{\PP} f, \\
\label{et two nablas 2}
\modnab f  &= (-1)^d \Pisf_r \Mnab^{\PP}  \Pisf_r^{-1} f,
\end{align}
where  $\Mnab^{\PP}$ is the endomorphism of  $\P(r)$ given by $\PP_r^{-1} \Mnab \PP_r$.
\end{thm}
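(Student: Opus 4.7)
The second identity follows immediately from the first by conjugating with $\Pisf_r$, using the operator identity $\Pisf_r \umnab \Pisf_r^{-1} = \modnab$ from \eqref{e umnab and modnab}. For the first identity, I exploit the uniqueness of $\Mnab$ noted just before the theorem: $\Mnab$ is the unique endomorphism of $V_*$ with $\Mnab(1)=1$ and $\Mnab \circ L = N(L) \circ \Mnab$ for all $L \in \AA_{t,q}$. Via Theorem \ref{t IW two actions}, this transports to a uniqueness characterization of $\Mnab^\PP$ as an endomorphism of $\Pas$. Setting $\hat{\umnab} f = (-1)^d \umnab f$ for $f$ of degree $d$, I have $\hat{\umnab}(1)=1$, so it suffices to verify $\hat{\umnab} \circ L^\PP = N(L)^\PP \circ \hat{\umnab}$ for each generator $L=T_i, d_-, d_+, d_+^*$ of $\AA_{t,q}$, where $L^\PP := \PP_r^{-1} L \PP_r$.

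For $T_i$ and $d_-$, both $N(L)=L$ and $L^\PP$ preserves degree, so I need $\umnab$ itself to commute with the operator. Proposition \ref{p: prop 5.2} gives $T_i \stE_{\eta|\lambda} \in \spn\{\stE_{\eta|\lambda}, \stE_{s_i\eta|\lambda}\}$, and both basis elements share $\mu=(\eta;\lambda)_+$ and hence the same $\umnab$-eigenvalue $q^{\mathsf{n}(\mu^*)}t^{\mathsf{n}(\mu)}$. Similarly, Propositions \ref{p: d- and hsym} and \ref{p: hsym J} give $d_-^\PP \stE_{\eta|\lambda} = \stE_{(\eta_1,\ldots,\eta_{r-1})|(\eta_r,\lambda)_+}$, whose index also rearranges to $\mu$. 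In both cases $\umnab$ commutes with $L^\PP$.

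For $d_+$, since $d_+^\PP$ raises degree by $1$ and $N(d_+)=-(qt)^{-1}z_1 d_+$, the required identity reduces (via Theorem \ref{t IW two actions} identifying $z_1$ with $\YIW_1$) to $\umnab d_+^\PP = (qt)^{-1} \YIW_1 d_+^\PP \umnab$. Writing $d_+^\PP = -T_1\cdots T_r \X_{r+1}$ from Proposition \ref{p Aqt action v3}, and iterating the $\H^+$ relation $\X_{j+1}=tT_j^{-1}\X_j T_j^{-1}$ to get $\X_{r+1} = t^r T_r^{-1}\cdots T_1^{-1}\X_1 T_1^{-1}\cdots T_r^{-1}$, I shuttle $\umnab$ past the $T_i$'s (by the commutation just established) and apply Lemma \ref{l x1 to y1x1} at the innermost position; a short rearrangement using the dual identity $t^r\X_1 T_1^{-1}\cdots T_r^{-1} = T_1\cdots T_r\X_{r+1}$ (from $\X_j T_j^{-1}=t^{-1}T_j\X_{j+1}$) yields the claim.

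The most delicate step is $d_+^*$, where I need $\umnab$ to commute with $d_+^{*\PP}$. The relation \eqref{e:Star to unstar} becomes $\X_1 d_+^{*\PP} = -(qt^{r+1})^{-1}\YIW_1 d_+^\PP$ under $\PP$. Computing $\umnab \X_1 d_+^{*\PP}$ two ways---via Lemma \ref{l x1 to y1x1} on the left, versus via \eqref{e:Star to unstar} together with the commutativity of $\umnab$ and $\YIW_1$ (both diagonal on $\{\stE_{\eta|\lambda}\}$), the just-established $d_+$ intertwining, and \eqref{e:Star to unstar} in reverse---one obtains
\[
\YIW_1 \X_1 \big(\umnab \, d_+^{*\PP} - d_+^{*\PP}\, \umnab\big) = 0.
\]
To conclude the commutation, I show $\YIW_1\X_1$ is injective on $\P(r+1)$: $\X_1$ (multiplication by $x_1$) is injective, and by Proposition \ref{p Monk stable} its image lies in $\spn\{\stE_\thbold : \thbold_1\ge 1\}$, on which $\YIW_1$ acts diagonally by nonzero scalars $q^{\thbold_1}t^{\cdots}$ (Theorem \ref{t Weising 38}). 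Having verified all four intertwinings, uniqueness forces $\hat{\umnab}=\Mnab^\PP$, completing the proof.
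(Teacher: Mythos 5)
Your proof is correct, but it takes a genuinely different route than the paper's.

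Both arguments reduce \eqref{et two nablas 2} to \eqref{et two nablas} via \eqref{e umnab and modnab}, and both ultimately rest on Lemma~\ref{l x1 to y1x1}, Proposition~\ref{p: prop 5.2}, and Proposition~\ref{p: hsym J}. The divergence is in how \eqref{et two nablas} is organized. The paper shows that $\umnab$ and $(-1)^d \Mnab^\PP$ each intertwine with $T_i$, $d_-^\PP$, and $\X_1$ (with the sign in the $\X_1$ step absorbed by $(-1)^d$), and then observes via the basis \eqref{e basis Pr} that these three operators generate all of $\P(r)$ starting from the elements $1_{r'}$, both of which operators fix. You instead invoke the uniqueness of $\Mnab$ as an $\AA_{t,q}$-intertwiner through $N$ fixing $1_0$, and check the intertwining on the actual generating set $T_i$, $d_-$, $d_+$, $d_+^*$ of $\AA_{t,q}$. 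Your $T_i$ and $d_-$ steps match the paper's. Your $d_+$ step --- rewriting $d_+^\PP = -t^r \X_1 T_1^{-1}\cdots T_r^{-1} \iota_r$, commuting $\umnab$ past the $T_j^{-1}$'s and $\iota_r$, and then applying Lemma~\ref{l x1 to y1x1} --- is correct and mirrors (in reverse) the paper's use of $\X_j = t^{j-1}T_{j-1}^{-1}\cdots T_1^{-1}\X_1 T_1^{-1}\cdots T_{j-1}^{-1}$ to reduce generation to multiplication by $x_1$. Your $d_+^*$ step is genuinely extra: it requires the $\AA_{t,q}$ relation $z_1 d_+ = -qt^{r+1} y_1 d_+^*$ and the injectivity of $\YIW_1\X_1$, which you justify correctly using Proposition~\ref{p Monk stable} and Theorem~\ref{t Weising 38}. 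The paper avoids this whole step by choosing the generating set $\{T_i, d_-^\PP, \X_1\}$, which does not include $d_+$ or $d_+^*$ (it only uses $N(d_+^*) = d_+^*$ trivially to check $\Mnab(1_{r'}) = 1_{r'}$). What your approach buys is a clean conceptual statement --- you verify the full $\AA_{t,q}$-intertwining, so $\umnab$ is exhibited as exactly the operator induced by the automorphism $N$ --- at the cost of the $d_+^*$ injectivity lemma, which is new content beyond the paper's argument.
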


\begin{remark}
The operator  $\umnab$ is clearly an invertible operator on  $\Pas$.
Thus, Theorem \ref{t two nablas} implies that  $\Mnab$ is invertible, which is not clear from the definition.
\end{remark}

We will prove Theorem \ref{t two nablas} with the help of the following lemma.

\begin{lemma}
\label{l N on y}
The endomorphism  $N$ satisfies
\begin{align}
N(T_i) = T_i, \ \ N(z_i) = z_i, \ \ N(y_1) = -(qt)^{-1}\spa z_1 y_1.
\end{align}
\end{lemma}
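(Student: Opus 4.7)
I plan to establish the three identities in sequence, handling the first two as essentially formal consequences of how $N$ and $z_i$ are defined, and reducing the third to a single path identity in $\AA_{t,q}$ that expresses a compatibility between $d_-$ and the loop $z_1$ at consecutive vertices.

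The identity $N(T_i) = T_i$ is immediate from the definition of $N$. For $N(z_i) = z_i$, I will argue by induction on $i$: the base case follows because the formula \eqref{e zi def 1} expresses $z_1$ as a product of $d_+^*$, $d_-$, and $T_j^{-1}$, all fixed by $N$; the inductive step uses \eqref{e zi def 2}, in which only $T_i$ (fixed by $N$) and $z_i$ (fixed by induction) appear.

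For the third identity, I will apply $N$ to the defining formula $y_1 = \frac{1}{t^{r-1}(t-1)}[d_+, d_-] T_{r-1} \cdots T_1$ at vertex $r \ge 1$. Since $N$ fixes $d_-$ and the $T_i$'s, this gives
\[
N(y_1) = \frac{-(qt)^{-1}}{t^{r-1}(t-1)}\bigl(z_1 d_+ d_- - d_- z_1 d_+\bigr) T_{r-1}\cdots T_1,
\]
where in the first summand $z_1$ is the loop at vertex $r$ while in the second it is the loop at vertex $r+1$. Comparing with $-(qt)^{-1} z_1 y_1 = \frac{-(qt)^{-1}}{t^{r-1}(t-1)}\bigl(z_1 d_+ d_- - z_1 d_- d_+\bigr) T_{r-1}\cdots T_1$, the problem reduces to showing $d_- z_1 d_+ = z_1 d_- d_+$ as paths at vertex $r$, for which it suffices to prove the stronger identity $d_- z_1 = z_1 d_-$ as paths $r+1 \to r$ (with the left-hand $z_1$ the loop at $r+1$ and the right-hand $z_1$ the loop at $r$).

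To prove this key path identity, I will expand $z_1$ at vertex $r+1$ via \eqref{e zi def 1} as $\frac{t^{r+1}}{1-t}[d_+^*, d_-] T_r^{-1} T_{r-1}^{-1}\cdots T_1^{-1}$ and apply the relation \eqref{eq:Forward-back commutation ending star} at vertex $r+1$, rewritten as $d_-[d_+^*, d_-] = t^{-1}[d_+^*, d_-] d_- T_r$. The resulting $T_r$ cancels the leading $T_r^{-1}$, leaving $\frac{t^r}{1-t}[d_+^*, d_-] d_- T_{r-1}^{-1}\cdots T_1^{-1}$. I will then commute $d_-$ past each $T_i^{-1}$ for $1 \le i \le r-1$ using $T_i d_- = d_- T_i$ (which holds in precisely this range at vertex $r+1$), obtaining $\frac{t^r}{1-t}[d_+^*, d_-]T_{r-1}^{-1}\cdots T_1^{-1} d_- = z_1|_r \, d_-$. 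The main point requiring care is the bookkeeping of which vertex each loop is based at, so that the defining relations of $\AA_{t,q}$ are invoked in the correct range; once this is done, the identity falls out of the structural relations.
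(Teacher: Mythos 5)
Your argument is correct, and the third identity is handled by a genuinely different route than the paper's. Both proofs begin identically: apply $N$ to the defining formula for $y_1$, use that $N$ fixes $d_-$ and the $T_i$'s, and reduce the problem to the path identity $d_- z_1 d_+ = z_1 d_- d_+$ at vertex $r$. The paper then simply cites the relation $d_- z_i = z_i d_-$ from Ion--Wu \cite[eq.\ 7.3(d)]{IonWu} as a black box (and notes why precomposing with $d_+$ puts $i=1$ in the valid range), whereas you derive the stronger identity $d_- z_1 = z_1 d_-$ (as paths $r+1 \to r$) directly from the defining relations of $\AA_{t,q}$: expanding $z_1$ at vertex $r+1$ via \eqref{e zi def 1}, applying \eqref{eq:Forward-back commutation ending star} at vertex $r+1$ to absorb the leading $T_r^{-1}$, and then commuting $d_-$ past $T_{r-1}^{-1}\cdots T_1^{-1}$ using $T_i d_- = d_- T_i$ for $1\le i\le r-1$. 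Your vertex bookkeeping is correct throughout — in particular, at vertex $r+1$ the commutation relation holds for $1\le i\le (r+1)-2 = r-1$, which is exactly the range you need. Your approach is more self-contained, since it proves the $i=1$ case of the cited Ion--Wu relation from scratch; the paper's approach is shorter but relies on that external reference. Your induction for $N(z_i)=z_i$ is also fine (the paper compresses it to ``immediate from \eqref{e N map},'' which is the same observation: the $z_i$ are built from $d_+^*$, $d_-$, $T_j$, all fixed by $N$).
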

\begin{proof}
The first two identities are immediate from \eqref{e N map}.
For the last, we compute, for the loop $y_1 \in \AA_{t,q}$ at quiver vertex  $r \ge 1$,
\begin{align*}
  N(y_1 )
  & = N \left( \frac{1}{t^{r-1}(t-1)} [d_+,d_-] T_{r-1} \cdots T_1
    \right) \\
   & = \frac{-1}{t^{r-1}(t-1)} (qt)^{-1} (z_1 d_+d_- - d_- z_1 d_+)  T_{r-1} \cdots
    T_1 \\
    & = \frac{-1}{t^{r-1}(t-1)} (qt)^{-1} (z_1 d_+d_-  -  z_1 d_- d_+)  T_{r-1} \cdots
    T_1 \\
  & = -(qt)^{-1} z_1 y_1 \,,
\end{align*}
where in the third equality, we have used the relation  $d_- z_i= z_i d_-$ of  $\AA_{t,q}$
\cite[eq. 7.3(d)]{IonWu}, which holds for paths starting from quiver vertex $r$ when $1 \le i <r$;
this applies since precomposing with  $d_+$ yields the relation $d_- z_i d_+= z_i d_- d_+$ for  $1 \le i \le r$, as an identity of paths starting at quiver vertex  $r$.
\end{proof}

\begin{proof}[Proof of Theorem \ref{t two nablas}]
Note that second identity \eqref{et two nablas 2} is immediate from \eqref{et two nablas}
and \eqref{e umnab and modnab}.

We now prove \eqref{et two nablas}.
First, by Theorem \ref{t IW two actions}, the definition of  $N$ in
\eqref{e N map}, the relationship between \(N\) and \(\Mnab\)
  in~\eqref{e nablaprime and N}, and Lemma \ref{l N on y},
\begin{align}
\label{e nablaprime a}
\Mnab^\PP \, T_i &= T_i \, \Mnab^\PP,   \\
\label{e nablaprime b}
\Mnab^\PP \, d_-^\PP &= d_-^\PP \, \Mnab^\PP,   \\
\label{e nablaprime c}
\Mnab^\PP \, \X_1 &= -(qt)^{-1} \YIW_1 \X_1 \, \Mnab^\PP,
\end{align}
where \eqref{e nablaprime a} is an identity of operators from  $\P(r) \to \P(r)$ and  $i < r$,
\eqref{e nablaprime b} is an identity of operators from $\P(r) \to \P(r-1)$,
and \eqref{e nablaprime c} is an identity of operators from $\P(r) \to \P(r)$ for  $r \ge 1$.

Next, we see that the corresponding identities hold for $\umnab$:
by Proposition \ref{p: prop 5.2},
\begin{align}
\label{e umnab a}
\umnab \, T_i &= T_i \, \umnab,
\end{align}
and \eqref{e Tsym fact} (keeping in mind Proposition~\ref{p: d- and hsym}) gives
\begin{align}
\label{e umnab b}
\umnab \, d_-^\PP &= d_-^\PP \, \umnab.
\end{align}
Finally, Lemma \ref{l x1 to y1x1} is the identity corresponding to
\eqref{e nablaprime c}, up to a sign.

Let $1_{k}$ denote the element 1 in  $\P(k)$ or $V_{k}$.
To complete the proof it suffices to show that
any element of $\P(r)$ can be generated using the operators $\X_1$, $T_i$, and $d_-^\PP$, starting from $1_{r'}$ for  $r' \ge r$, and that both  $\umnab$ and  $\Mnab^\PP$ act as the identity on  $1_{r'}$.
Indeed, by \cite[eq. (16)]{Mellit16}, the following elements form a basis for  $V_r$:
\begin{align}
d_-^\ell \,  y_1^{\eta_1}\cdots y_r^{\eta_r} y_{r+1}^{\lambda_1} \cdots y_{r+\ell}^{\lambda_\ell} \cdot 1_{r+\ell},   \quad \text{ for } (\eta| \lambda) \in \pairsr,
\end{align}
where   $\ell = \ell(\lambda)$.
Hence, we have the corresponding basis of  $\P(r)$
\begin{align}
\label{e basis Pr}
(d_-^\PP)^\ell \,  \X_1^{\eta_1}\cdots \X_r^{\eta_r} \X_{r+1}^{\lambda_1} \cdots \X_{r+\ell}^{\lambda_\ell} \cdot 1_{r+\ell},   \quad \text{ for } (\eta| \lambda) \in \pairsr.
\end{align}
The operators in \eqref{e basis Pr} are generated by $\X_1$, $T_i$, and $d_-^\PP$ since  $\X_j = t^{j-1} \, T_{j-1}^{-1}\cdots T_1^{-1} \X_1 T_{1}^{-1}\cdots T_{j-1}^{-1}$.

Finally, we have $\umnab (1_{k}) = 1_{k}$ as  $1_{k} = \stE_{\eta|\lambda}$ for  $(\eta|\lambda)$ the empty pair $(\varnothing  | \varnothing)$, and
\begin{equation}
  \Mnab  (1_{k}) = \Mnab (d_+^*)^{k}  \cdot 1_{0} = N( (d_+^*)^k ) \Mnab (1_{0}) = (d_+^*)^k \Mnab (1_{0}) = 1_{k}.   \qedhere
\end{equation}
\end{proof}

\subsection{The \texorpdfstring{$\modnab$}{nabla} formulation of the nonsymmetric compositional shuffle theorem}
\label{ss nablashuffle}
Say that a map $\varphi \colon M \to N$ of $\QQ(q,t)$-vector spaces is \emph{$\QQ(q,t)$-antilinear} if it is $\QQ$-linear and  $\varphi(c(q,t) g) = c(q^{-1}, t^{-1}) \varphi(g)$
for any  $c(q,t) \in \QQ(q,t)$ and  $g \in M$.

\begin{prop}[{\cite[Proposition 3.15]{Mellit16}}]
There is a  $\QQ(q,t)$-antilinear algebra involution
on the Dyck path algebra $\barw \colon \AA_t \to \AA_t$ determined by
\begin{align}
\label{e barw map}
e_r \mapsto e_r, \ \ T_i \mapsto T_i^{-1}, \ \ d_-  \mapsto d_- , \ \ d_+ \mapsto -t^{-r} d_+,
\end{align}
where the $d_+$'s are arrows starting at quiver vertex  $r$.
\end{prop}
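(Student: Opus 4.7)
The plan is to construct $\barw$ first on the free $\QQ$-algebra generated by $\QQ(q^{\pm 1}, t^{\pm 1})$ together with the generators of $\AA_t$ (including the $T_i^{-1}$), using $c(q,t) \mapsto c(q^{-1}, t^{-1})$ on scalars and the specified action on generators, extended multiplicatively. Since each generator is sent to a nonzero scalar multiple of itself (or its inverse, in the case of $T_i$) between the same pair of quiver vertices, head-to-tail composition is preserved and $\barw$ is well-defined at the free level. To descend to $\AA_t$, we must check that the image under $\barw$ of each defining relation is a consequence of those relations.

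Most relations admit a routine verification. The Hecke quadratic $(T_i-1)(T_i+t)=0$ transforms into $(T_i^{-1}-1)(T_i^{-1}+t^{-1})=0$, equivalent since the eigenvalues $\{1,-t\}$ of $T_i$ correspond to $\{1,-t^{-1}\}$ of $T_i^{-1}$. The braid and far-commutation relations for $T_i$ simply invert. The relations $d_-^2 T_{r-1}=d_-^2$ and $T_i d_-=d_- T_i$ (for $i\le r-2$) are each equivalent to their images by left- or right-multiplication by suitable $T_j^{-1}$. For $T_1 d_+^2 = d_+^2$ and $d_+ T_i = T_{i+1} d_+$ one tracks the factor $-t^{-r'}$ picked up by each $d_+$ at its own starting vertex $r'$; once these scalars cancel, the remaining identities are immediate from the originals.

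The main obstacle lies with the two mixed relations involving the commutator $[d_+,d_-]$. For $d_-[d_+,d_-]T_{r-1} = t[d_+,d_-]d_-$ at vertex $r$, one first computes
\[
\barw([d_+,d_-])_r \,=\, -t^{-(r-1)}\, d_+ d_- \,+\, t^{-r}\, d_- d_+,
\]
the two exponents arising because in $d_+ d_-$ (at $r$) the internal $d_+$ starts at $r-1$, whereas in $d_- d_+$ (at $r$) it starts at $r$. Applying $\barw$ to both sides, clearing the common scalar, and using the original relation to eliminate the $d_- d_+ d_-$ terms, the verification reduces to the auxiliary identity
\[
(d_-^2 d_+ - t\, d_+ d_-^2)(T_{r-1}^{-1} - 1) \,=\, 0 \quad \text{at vertex } r.
\]
Both summands vanish: $d_+ d_-^2(T_{r-1}^{-1}-1) = 0$ by left-multiplying $d_-^2 T_{r-1}^{-1} = d_-^2$ by $d_+$, and $d_-^2 d_+(T_{r-1}^{-1}-1) = 0$ by combining $d_+ T_{r-1}^{-1} = T_r^{-1} d_+$ (from $d_+ T_i = T_{i+1} d_+$) with $d_-^2 T_r^{-1} = d_-^2$ at vertex $r+1$. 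The relation $T_1[d_+,d_-] d_+ = t d_+[d_+,d_-]$ is handled by a parallel computation: after analogous scalar bookkeeping, the verification reduces to $(T_1-1)(d_- d_+^2 - t\, d_+^2 d_-) = 0$ at vertex $r\ge 1$, and both factors individually vanish using $T_1 d_+^2 = d_+^2$ together with the commutation $T_1 d_- = d_- T_1$ at vertex $r+2$.

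Once all relations are verified, $\barw$ descends to a $\QQ(q,t)$-antilinear algebra endomorphism of $\AA_t$. Involutivity is then a check on generators: $\barw^2$ fixes $e_r$, $T_i$, and $d_-$ trivially, and for $d_+$ starting at vertex $r$ we have $\barw^2(d_+) = \barw(-t^{-r} d_+) = -t^r \cdot (-t^{-r} d_+) = d_+$ by antilinearity. Hence $\barw^2 = \idelm$ by multiplicativity, so $\barw$ is an involution.
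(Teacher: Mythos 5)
The paper does not give its own proof of this proposition; it cites \cite[Proposition 3.15]{Mellit16}, so your proof provides a self-contained verification. Your approach --- defining $\barw$ on generators and checking that the defining relations are preserved --- is the natural one and is surely what Mellit's proof amounts to as well. The verification is correct throughout. In particular, your computation of the twisted commutator
\[
\barw\bigl([d_+,d_-]\bigr)\big|_{\text{vertex }r}
\;=\; -t^{-(r-1)}\,d_+d_- \;+\; t^{-r}\,d_-d_+
\]
is right (the two $d_+$'s inside $[d_+,d_-]$ at vertex $r$ indeed start at vertices $r-1$ and $r$, respectively), and the two auxiliary identities you reduce to,
$(d_-^2 d_+ - t\, d_+ d_-^2)(T_{r-1}^{-1}-1) = 0$
and
$(T_1-1)(d_- d_+^2 - t\, d_+^2 d_-) = 0$,
are exactly what appears after comparing the $\barw$-image of each side with a suitable right (resp.\ left) $T^{\pm 1}$-twist of the original relation; both are then killed term-by-term as you explain, using $d_-^2 T_{r-1} = d_-^2$, $T_1 d_+^2 = d_+^2$, and the two commutations $d_+ T_i = T_{i+1} d_+$ and $T_i d_- = d_- T_i$. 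The involutivity check at the end, using antilinearity to cancel the scalar $-t^{-r}$ against its inverse, is also correct. One cosmetic remark: in the argument for the second mixed relation, the summand $(T_1-1)d_+^2 d_-$ already vanishes from $T_1 d_+^2 = d_+^2$ at vertex $r-1$ alone; the commutation $T_1 d_- = d_- T_1$ is needed only for the other summand $(T_1-1)d_-d_+^2$, whereas your phrasing suggests both ingredients are needed for both summands.
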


\begin{thm}[{\cite[Theorem 3.12]{Mellit16}}]
\label{t At iso V}
We have the following isomorphism of left  $\AA_t$-modules:
\begin{align}
\AA_t e_0 \xrightarrow{\cong}  \bigoplus_{r=0}^{\infty} y_1 \cdots y_r V_r,
\quad \text{determined by }  e_0 \mapsto 1 \in V_0.
\end{align}
\end{thm}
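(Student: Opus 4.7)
\medskip

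\noindent\textbf{Proof plan for Theorem~\ref{t At iso V}.}
The plan is to define the map
\begin{equation*}
  \varphi \colon \AA_t e_0 \longrightarrow \bigoplus_{r \ge 0} y_1 \cdots y_r V_r,
  \qquad L e_0 \longmapsto L \cdot 1,
\end{equation*}
where $L \in \AA_t$ acts on $1 \in V_0$ through Mellit's representation (Proposition~\ref{p Mellit action}), pulled back along the embedding $\rho \colon \AA_t \to \AA_{t,q}$ from Remark~\ref{rem:why ddpa}. Everything then reduces to three claims: (a) the image of $\varphi$ lies in the stated submodule, (b) $\varphi$ is surjective, and (c) $\varphi$ is injective.

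For (a), I would argue by induction on path length that the subspaces $y_1 \cdots y_s V_s$ are preserved by the generators $T_i, d_+, d_-$ of $\AA_t$. The base case $e_0 \cdot 1 = 1$ is immediate. For the inductive step, Proposition~\ref{p Mellit action} gives $d_+ F = -T_1 \cdots T_s(y_{s+1} F[y_1,\ldots,y_s; W+(t-1)y_{s+1}])$; since $y_1 \cdots y_{s+1}$ is $s_i$-invariant for $i \le s$, each $T_i$ commutes with multiplication by $y_1 \cdots y_{s+1}$, so the result lies in $y_1 \cdots y_{s+1} V_{s+1}$. A similar direct check using the explicit formulas for $d_-$ (with the $\Omega[-y_r^{-1} W]$ factor) and $T_i$ shows that these operators also preserve the graded submodule structure. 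This already establishes the observation underlying the first sentence of Proposition~\ref{CarMel vs Mel}.

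For (b), I would invoke Mellit's basis of $V_r$ from \cite[eq.~(16)]{Mellit16} (recalled in the proof of Theorem~\ref{t two nablas}): the elements $d_-^\ell y_1^{\eta_1} \cdots y_r^{\eta_r} y_{r+1}^{\lambda_1} \cdots y_{r+\ell}^{\lambda_\ell} \cdot 1_{r+\ell}$ indexed by $(\eta|\lambda) \in \pairsr$ with $\ell = \ell(\lambda)$ form a basis of $V_r$. The elements of $y_1 \cdots y_r V_r$ correspond to the subset with $\eta_i \ge 1$ for $1 \le i \le r$. Crucially, the loops $y_j \in \AA_{t,q}$ at each vertex are \emph{defined} in terms of $T_i, d_+, d_-$, so they lie in the image of $\rho$. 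Consequently, each basis vector can be realized as $L \cdot 1$ for some $L \in \AA_t$: apply $d_+^{r+\ell}$ to $1 \in V_0$ (which produces an element of $y_1 \cdots y_{r+\ell} V_{r+\ell}$ of lowest $y$-degree), then modify by elements of $\AA_t$ expressing the required $y^\eta$-monomial (via the Cherednik-type formulas), and finally apply $d_-^\ell$. This gives surjectivity onto each graded summand.

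The hardest step is (c), injectivity, and this will be the main obstacle. The natural approach is to establish a PBW-type normal form for $\AA_t e_0$: using the relations~\eqref{eq:Hecke relations}--\eqref{eq:Forward-back commutation beginning}, show that every path from vertex $0$ to vertex $r$ can be rewritten as a $\QQ(q,t)$-linear combination of expressions of the form $d_-^\ell \cdot (\text{normal-form monomial in } y_1,\ldots,y_{r+\ell},\, T_1,\ldots,T_{r+\ell-1}) \cdot d_+^{r+\ell}$, matching Mellit's basis above. Granting this normal form, each such expression maps under $\varphi$ to the corresponding basis element of $y_1 \cdots y_r V_r$, which is nonzero; combining this with surjectivity forces the normal forms to be linearly independent and $\varphi$ to be a bijection onto $y_1 \cdots y_r V_r$ in each vertex component. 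Verifying the relations of $\AA_t$ suffice to produce this normal form is the delicate point—one expects that the commutator relations~\eqref{eq:Forward-back commutation ending}--\eqref{eq:Forward-back commutation beginning} together with the Hecke relations provide exactly the rewriting rules needed to push all $d_-$'s to the left and all $d_+$'s to the right, with the Cherednik-type identities in $\AA_{t,q}$ controlling the intermediate $y_j$ factors—but carrying this out carefully is where the real work lies.
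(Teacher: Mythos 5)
The paper does not prove this statement; it is cited as \cite[Theorem~3.12]{Mellit16} and used as a black box, so there is no internal proof to compare your attempt against. Your plan for well-definedness and surjectivity is essentially sound: each generator of $\AA_t$ preserves the subspaces $y_1\cdots y_r V_r$ (this is also the observation underlying the first sentence of Proposition~\ref{CarMel vs Mel}), and every Mellit basis element $d_-^\ell\, y_1^{\eta_1}\cdots y_{r+\ell}^{\lambda_\ell}\cdot 1$ with $\eta_i\ge 1$ for $i\le r$ is explicitly $L\cdot 1$ for $L\in\AA_t$ built from $d_+^{r+\ell}$ (which produces $\pm y_1\cdots y_{r+\ell}$), the $y$-loops (which lie in $\rho(\AA_t)$ since they are rational expressions in $d_\pm$ and $T_i$ only), and $d_-^\ell$.

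The genuine gap is the one you flag yourself: injectivity rests entirely on a PBW-type normal form for $\AA_t e_0$---the claim that the defining relations suffice to rewrite every path from vertex $0$ to vertex $r$ into the span of expressions $d_-^\ell\cdot(\text{normal monomial in }y_j, T_i)\cdot d_+^{r+\ell}$---and you never carry this out. This is not a peripheral detail to be filled in: it is the algebraic heart of the theorem, namely the assertion that the abstract path algebra $\AA_t e_0$ is no larger than its image in the polynomial representation. Until that rewriting is actually deduced from the relations~\eqref{eq:Hecke relations}--\eqref{eq:Forward-back commutation beginning} alone (and not, circularly, from the representation), the proposal remains a plan rather than a proof. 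One also needs, as a companion to the rewriting step, to verify that $\varphi$ sends distinct normal forms to the distinct Mellit basis vectors; that is mechanical once the normal form is exhibited, but it is part of the same unfinished step.
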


The involution $\barw$ preserves the left ideal  $\AA_t e_0$, i.e., it
restricts to a \(\QQ\)-linear involution  $\AA_t e_0 \to \AA_t e_0$ (not an  $\AA_t$-module homomorphism).
We let  $\barwb \colon \bigoplus_{r=0}^{\infty} y_1 \cdots y_r V_r
\to \bigoplus_{r=0}^{\infty} y_1 \cdots y_r V_r$ denote the corresponding involution obtained via Theorem \ref{t At iso V}.  It takes  $1 \in V_0$ to itself and
\begin{align}
\label{e barwprime}
\barwb \circ  L = \barw(L) \circ \barwb \quad \text{for all } L\in \AA_{t}\,.
\end{align}
It is determined by these two properties.

The next two results require the notation  $d^{CM}_-$, etc.
from \S\ref{ss two actions} for the Carlsson-Mellit \cite{CarMel} action of  $\AA_{t,q}$ on $V_*$,
and the isomorphism  $\Q_r \colon y_1 \cdots y_r V_r \to V_r,  \, y_1 \cdots y_r v \mapsto  v.$

\begin{thm}[{\cite[Theorem 7.4]{CarMel}}]
\label{t CarMel N}
There are $\QQ(q,t)$-antilinear maps $\N \colon V_r \to V_r$ for  $r\ge 0$
such that  $\N(1) = 1$ for $1 \in V_0$, and which satisfy
\begin{align}
\label{e CarMel N def}
\ \  \N \spa T^{CM}_i  = (T^{CM}_i)^{-1} \spa \N, \ \
 \N \spa d^{CM}_-  = d^{CM}_- \spa \N,
\ \ \N \spa d^{CM}_+ = d^{* CM}_+ \spa \N.
\end{align}
Moreover,  $\N \colon V_r \to V_r$ is an involution, i.e., $\N^2 = \idelm$.
\end{thm}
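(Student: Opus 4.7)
I would construct $\N$ by transporting a $\QQ(q,t)$-antilinear algebra involution of $\AA_{t,q}$ through a cyclic module structure on $V_*$, mirroring the construction of $\barwb$ from $\barw$ in the $\AA_t$-case. The natural candidate is an involution $\widetilde{\barw}\colon \AA_{t,q}\to \AA_{t,q}$ that swaps the two embedded copies $\rho(\AA_t)$ and $\rho^*(\AA_t)$: it is $\QQ(q,t)$-antilinear with $q\mapsto q^{-1}$ and $t\mapsto t^{-1}$, fixes every idempotent $e_r$, and acts on generators by $T_i\mapsto T_i^{-1}$, $d_-\mapsto d_-$, $d_+\leftrightarrow d_+^*$.

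\textbf{Step 1 (the involution on $\AA_{t,q}$).} Verify that $\widetilde{\barw}$ preserves all the defining relations. The Hecke relations $(T_i-1)(T_i+t)=0$ are self-dual under $T_i\mapsto T_i^{-1}$ combined with $t\mapsto t^{-1}$, and the braid relations are preserved under $T_i\mapsto T_i^{-1}$. The pairs \eqref{eq:Forward loop-path commutation}/\eqref{eq:Forward loop-path commutation star}, \eqref{eq:Forward-back commutation ending}/\eqref{eq:Forward-back commutation ending star}, and \eqref{eq:Forward-back commutation beginning}/\eqref{eq:Forward-back commutation beginning star} are interchanged, with $t\mapsto t^{-1}$ accounting for the $t/t^{-1}$ mismatch between the starred and unstarred versions. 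Using \eqref{e zi def 1}--\eqref{e zi def 2}, a direct calculation yields $\widetilde{\barw}(y_1)=-z_1$ and $\widetilde{\barw}(z_1)=-y_1$; substituting these into \eqref{eq:Forward twisted loop-path commutation} and \eqref{e:Star to unstar} shows that those relations are also preserved.

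\textbf{Step 2 (cyclicity and definition of $\N$).} Show that $V_*$ is cyclic as a left $\AA_{t,q}$-module under the Carlsson--Mellit action, generated by $1\in V_0$. The $\AA_t$-subaction alone produces only $\bigoplus_r y_1\cdots y_r V_r$ by Theorem~\ref{t At iso V}, but adjoining $d_+^{*CM}$ (which, via Proposition~\ref{CarMel vs Mel}, corresponds to $y_1 d_+^*$ in Mellit's action) supplies exactly the extra multiplications by $y_1$ needed to reach all of $V_r$. Having shown cyclicity, define
\[
\N(L\cdot 1) \;=\; \widetilde{\barw}(L)\cdot 1
\]
for $L$ any word in the Carlsson--Mellit generators. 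Then $\N(1)=1$ is immediate, the three commutation relations in \eqref{e CarMel N def} follow from the intertwining $\N\circ L = \widetilde{\barw}(L)\circ \N$, and $\N^2=\idelm$ follows from $\widetilde{\barw}^2=\idelm$ on generators together with cyclicity.

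\textbf{Main obstacle.} The crux is \emph{well-definedness} of $\N$ in Step 2, namely showing that $\widetilde{\barw}\bigl(\operatorname{Ann}_{\AA_{t,q}}(1)\bigr)\subseteq \operatorname{Ann}_{\AA_{t,q}}(1)$ inside $\AA_{t,q}\,e_0$. A tractable route is to combine Theorem~\ref{t At iso V} (which pinpoints $\operatorname{Ann}_{\AA_t}(1)$ on the unstarred side) with the additional annihilator relations coming from $d_+^{*CM}$ and \eqref{e:Star to unstar}, checking preservation on each class of generator. An alternative is to construct $\N$ directly on an explicit basis of each $V_r$ and verify the commutation identities on generators, reducing well-definedness to finitely many checks at each $r$; the computation of $\widetilde{\barw}(y_1)$ and $\widetilde{\barw}(z_1)$ above suggests this inductive bookkeeping is manageable.
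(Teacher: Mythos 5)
The paper cites this theorem to \cite{CarMel} without reproving it, so there is no in-house argument to compare against. Your plan does mirror the template the paper uses for the two analogous operators it constructs: $\barwb$ (built from $\barw$ via Theorem~\ref{t At iso V}) and $\Mnab$ (built from the endomorphism $N$ via the observation that $N$ preserves the kernel $I$ of $\AA_{t,q}e_0 \twoheadrightarrow V_*$), which is reassuring. Two issues, though. First, a sign slip: for the antilinear algebra homomorphism $\widetilde{\barw}$ with $q\mapsto q^{-1}$, $t\mapsto t^{-1}$, $T_i\mapsto T_i^{-1}$, $d_-\mapsto d_-$, $d_+\leftrightarrow d_+^*$, one computes directly from~\eqref{e zi def 1} that
\begin{equation*}
\widetilde{\barw}(y_1)\;=\;\frac{t^{r}}{1-t}\,[d_+^*,d_-]\,T_{r-1}^{-1}\cdots T_1^{-1}\;=\;z_1,
\end{equation*}
and likewise $\widetilde{\barw}(z_1)=y_1$, without the minus signs you assert. (Either sign convention happens to be consistent with \eqref{eq:Forward twisted loop-path commutation} and \eqref{e:Star to unstar}, so the slip does not invalidate that particular check, but the stated values are wrong and would propagate into other computations.) Second, and more substantially, you never actually establish well-definedness of $\N$, i.e.\ that $\widetilde{\barw}$ preserves the annihilator of $1\in V_0$ in $\AA_{t,q}e_0$ under the Carlsson--Mellit action. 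You correctly flag this as the crux, but both routes you sketch are left as proposals rather than carried out. For the paper's $\barwb$ this issue is vacuous, since $\AA_t e_0 \cong \bigoplus_r y_1\cdots y_r V_r$ is an isomorphism with trivial annihilator; for $\Mnab$ the paper explicitly defers the preservation-of-$I$ step to Mellit. For $\N$ you would genuinely need to supply this argument, and until you do, what you have is a plausible outline rather than a proof.
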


Let  $\N^\Q$ denote the collection of maps $\Q_r^{-1} \N \Q_r \colon y_1 \cdots y_r V_r \to y_1 \cdots y_r V_r$ for  $r \ge 0$.

\begin{lemma}
\label{l nablaprime vs CarMel N}
We have the following identities of operators on  $(y_1 \cdots y_r V_r)_{r\ge 0}$:
\begin{align}
\label{el nablaprime vs CarMel N 1}
\Mnab\barwb  &= \N^\Q,   \\
\label{el nablaprime vs CarMel N 2}
\barwb \Mnab^{-1}  &= \N^\Q.
\end{align}
\end{lemma}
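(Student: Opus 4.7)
The plan is to show that both $\Mnab\barwb$ and $\N^\Q$ are $\QQ(q,t)$-antilinear maps on $\bigoplus_{r\ge 0} y_1\cdots y_r V_r$ which agree on $1 \in V_0$ and satisfy the same intertwining relation with respect to the left action of $\AA_t$; uniqueness then follows from the $\AA_t$-module isomorphism $\AA_t e_0 \cong \bigoplus_{r\ge 0} y_1\cdots y_r V_r$ of Theorem~\ref{t At iso V}, since every element of the codomain can be written as $L\cdot 1$ for some $L\in\AA_t$. The second identity then drops out formally, because both $\barwb$ and $\N^\Q$ are involutions: $\barwb^2 = \idelm$ since $\barw$ is, and $(\N^\Q)^2 = \Q_r^{-1}\N^2\Q_r = \idelm$ by Theorem~\ref{t CarMel N}.

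For the first identity, I would introduce the antilinear algebra map $\Psi = N \circ \barw \colon \AA_t \to \AA_{t,q}$ (well defined since $N$ is a $\QQ(q,t)$-algebra endomorphism and $\barw$ is a $\QQ(q,t)$-antilinear algebra involution). Combining the intertwining relations \eqref{e nablaprime and N} and \eqref{e barwprime} immediately gives
\begin{equation*}
\Mnab\barwb \circ L \;=\; \Mnab \circ \barw(L) \circ \barwb \;=\; \Psi(L) \circ \Mnab\barwb \qquad \text{for every } L\in\AA_t,
\end{equation*}
together with $\Mnab\barwb(1) = 1$. The task is thus reduced to verifying the matching relation $\N^\Q \circ L = \Psi(L) \circ \N^\Q$ on the algebra generators $T_i, d_-, d_+$ of $\AA_t$, along with $\N^\Q(1)=1$ (which is immediate from $\N(1)=1$).

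The $\N^\Q$ side is obtained by conjugating the relations of Theorem~\ref{t CarMel N} by $\Q_r$, using Proposition~\ref{CarMel vs Mel} to trade the Carlsson-Mellit generators $T_i^{CM}, d_-^{CM}, d_+^{CM}, d_+^{* CM}$ for $T_i, -d_-, -d_+, y_1 d_+^*$ respectively; the signs cancel on the $d_-$ row and combine on the $d_+$ row to yield
\begin{equation*}
\N^\Q T_i = T_i^{-1}\N^\Q, \qquad \N^\Q d_- = d_-\N^\Q, \qquad \N^\Q d_+ = -y_1 d_+^*\,\N^\Q.
\end{equation*}
For $\Psi$, the values $\Psi(T_i) = N(T_i^{-1}) = T_i^{-1}$ and $\Psi(d_-) = N(d_-) = d_-$ are immediate from \eqref{e N map}. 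The key calculation is $\Psi(d_+)$: from $\barw(d_+) = -t^{-r}d_+$ and the $\QQ(q,t)$-linearity of $N$,
\begin{equation*}
\Psi(d_+) = N(-t^{-r}d_+) = -t^{-r}\bigl(-(qt)^{-1}z_1 d_+\bigr) = t^{-r-1}q^{-1}\,z_1 d_+,
\end{equation*}
and the relation $z_1 d_+ = -qt^{r+1}y_1 d_+^*$ from \eqref{e:Star to unstar} collapses this to $-y_1 d_+^*$, matching $\N^\Q d_+$.

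With both maps shown to satisfy the same defining data, Theorem~\ref{t At iso V} forces $\Mnab\barwb = \N^\Q$, establishing \eqref{el nablaprime vs CarMel N 1}; inverting both sides and using that $\barwb$ and $\N^\Q$ are involutions immediately yields \eqref{el nablaprime vs CarMel N 2}. The one subtle point to track throughout is the interplay between the $\QQ(q,t)$-linearity of $\Mnab$ and the antilinearity of $\barwb$, together with the passage between the Carlsson-Mellit and Mellit actions under the isomorphisms $\Q_r$; once these are kept straight, the relation \eqref{e:Star to unstar} is precisely the bridge that converts the $z_1 d_+$ arising from $N(d_+)$ into the $y_1 d_+^*$ arising from $\N$'s swap of $d_+^{CM}$ with $d_+^{* CM}$.
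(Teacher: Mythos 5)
Your proposal is correct and follows essentially the same route as the paper: both arguments verify that $\Mnab\barwb$ and $\N^{\Q}$ send $1\in V_0$ to itself and intertwine the $\AA_t$-generators $T_i, d_-, d_+$ identically (with the relation \eqref{eq:Star to unstar} doing the key work in converting $z_1 d_+$ to $y_1 d_+^*$), then invoke Theorem~\ref{t At iso V} to conclude, and obtain \eqref{el nablaprime vs CarMel N 2} by inverting using the involutivity of $\barwb$ and $\N^{\Q}$. Packaging the intertwining via the named map $\Psi = N\circ\barw$ is a harmless cosmetic variation; the only thing the paper flags that you gloss over is that $\Mnab\barwb$ is not \emph{a priori} known to preserve $(y_1\cdots y_r V_r)_{r\ge 0}$ — this is established only once equality with $\N^{\Q}$ (which does preserve it) is proved — but your argument, read as an identity of maps into $V_*$, handles this implicitly.
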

\begin{proof}
Note that it is not yet clear that $\Mnab \barwb$ preserves the space $(y_1 \cdots y_r V_r)_{r\ge 0}$,
but since  $\N^\Q$ does, this will follow once we prove \eqref{el nablaprime vs CarMel N 1}
as an identity of maps from  $y_1 \cdots y_r V_r \to V_r$.
To prove this, we will compare how  $\N^\Q$ and $\Mnab \barwb $ intertwine the generators of  $\AA_t$.

By `conjugating' \eqref{e CarMel N def} by the maps  $\Q_r$, we have
$\N^\Q \Q_r^{-1} T^{CM}_i \Q_r = \Q_r^{-1} (T^{CM}_i)^{-1} \Q_r \spa \N^\Q$,
$\N^\Q \spa \Q_{r-1}^{-1} d^{CM}_-  \Q_r = \Q_{r-1}^{-1} d^{CM}_- \Q_r \spa \N^\Q$, and
$\N^\Q \spa \Q_{r+1}^{-1}d^{CM}_+ \Q_r = \Q_{r+1}^{-1}d^{* CM}_+ \Q_r^{-1} \spa \N^\Q.$
Proposition~\ref{CarMel vs Mel} then gives
\begin{align}
    \label{e N vs gens 1}
  \N^\Q \,  T_i = T_i^{-1} \N^\Q, \\
  \label{e N vs gens 2}
  \N^\Q \,  d_- = d_-  \, \N^\Q, \\
  \label{e N vs gens 3}
  \N^\Q \,  (-d_+) = y_1 d_+^* \, \N^\Q.
\end{align}

Now to address  $\Mnab \barwb$, \eqref{e N map} and \eqref{e barw map} directly give
\begin{align}
\label{e N vs gens 1b}
& \Mnab \barwb \,  T_i = T_i^{-1} \Mnab \barwb, \\
\label{e N vs gens 2b}
& \Mnab \barwb\,  d_- = d_-  \, \Mnab \barwb.
\end{align}
Further, computing with \eqref{e barwprime} and \eqref{e nablaprime and N}, we have
\begin{align}
  \Mnab \barwb \, d_+= -t^{-r} \Mnab d_+ \barwb=
  -t^{-r} N(d_+) \Mnab \barwb =
  t^{-r} (qt)^{-1} z_1 d_+ \Mnab \barwb,
\end{align}
where these are identities of operators on  $V_r$.
Using the relation \eqref{eq:Star to unstar} of  $\AA_{t,q}$,
which says that $z_1 d_+ = -qt^{r+1} y_1 d_+^*$, we obtain
\begin{align}
\label{e N vs gens 3b}
& \Mnab \barwb (-d_+) = y_1 d_+^* \Mnab \barwb.
\end{align}

Since the $d_+$'s, $d_-$'s, and  $T_i$'s generate
$\AA_t$, they also generate $(y_1 \cdots y_r V_r)_{r\ge 0}$ from  $1 \in V_0$ by Theorem \ref{t At iso V}.
Hence,~\eqref{el nablaprime vs CarMel N 1} follows from the matching intertwining relations
\eqref{e N vs gens 1}--\eqref{e N vs gens 3} and \eqref{e N vs gens 1b}--\eqref{e N vs gens 3b}.

Now that we know  $\Mnab \barwb = \N^\Q$ as operators on $y_1 \cdots y_r V_r$,
using that $\barwb$ is an involution on $y_1 \cdots y_r V_r$, we find
$\Mnab = \N^\Q \barwb$ as operators on $y_1 \cdots y_r V_r$.
Then since  $\N^\Q$ is also an involution on $y_1 \cdots y_r V_r$, $\Mnab$ is invertible as an operator on $y_1 \cdots y_r V_r$ with inverse
 $\Mnab^{-1} = \barwb \N^Q$.  The identity \eqref{el nablaprime vs CarMel N 2} follows.
\end{proof}

Recall that  $\rho_{m,n}^*$, $\rho$, and $\rho^*$ are the algebra homomorphisms from $ \AA_t \to \AA_{t,q}$ from Proposition-Definition~\ref{pd rho} and
Remark~\ref{rem:why ddpa}.

\begin{prop}
\label{p barw nabla}
For any  $L \in \AA_t$,
\begin{align}
\rho_{m,1}^{*}(L) = \barwb \Mnab^{-m} \,  L \, \Mnab^m \barwb
\end{align}
as operators on  $(y_1 \cdots y_r V_r)_{r \ge 0}$.
\end{prop}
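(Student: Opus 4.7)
The plan is to view both sides of the claimed identity as antilinear algebra homomorphisms from $\AA_t$ to operators on $(y_1 \cdots y_r V_r)_{r \ge 0}$, and reduce to verification on the generators $T_i, d_-, d_+$.

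First I rewrite the right side. From Lemma~\ref{l nablaprime vs CarMel N} we have $\Mnab \barwb = \N^\Q = \barwb \Mnab^{-1}$, so iterating yields $\barwb \Mnab^{-m} = \Mnab^m \barwb$. Combined with $\barwb^2 = \idelm$ and $\barwb L \barwb = \barw(L)$ from \eqref{e barwprime}, this gives
\begin{align*}
\barwb \Mnab^{-m} L \Mnab^m \barwb
= \Mnab^m \barwb L \barwb \Mnab^{-m}
= \Mnab^m \barw(L) \Mnab^{-m}.
\end{align*}
Using the invertibility of $\Mnab$ on $(y_1 \cdots y_r V_r)_r$ (established in the proof of Lemma~\ref{l nablaprime vs CarMel N}), repeated application of \eqref{e nablaprime and N} gives $\Mnab^m L' \Mnab^{-m} = N^m(L')$ for any $L' \in \AA_{t,q}$. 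Hence the right side equals $N^m(\rho(\barw(L)))$.

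For the left side, the word $A' = N'^{m-1} S'$ yields the matrix product
$\bigl(\begin{smallmatrix} m & m-1 \\ 1 & 1 \end{smallmatrix}\bigr)$, so by Proposition-Definition~\ref{pd rho} we may take $\rho_{m,1}^{*} = N^{m-1} \circ S \circ \rho^{*}$. The identity therefore reduces to showing $N \circ \rho \circ \barw = S \circ \rho^{*}$ as antilinear algebra homomorphisms $\AA_t \to \AA_{t,q}$, and it suffices to check this on the generators $T_i, d_-, d_+$.

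For $T_i$ and $d_-$, both sides immediately give $T_i^{-1}$ and $d_-$. The substantive case is $d_+$ (at quiver vertex $r$): the left side is
\begin{align*}
N(\rho(\barw(d_+))) = N(-t^{-r} d_+) = (qt)^{-1} t^{-r} z_1 d_+
\end{align*}
by \eqref{e barw map} and \eqref{e N map}, while the right side is $S(\rho^{*}(d_+)) = S(d_+^*) = -y_1 d_+^*$. These coincide by the cross-relation $z_1 d_+ = -q t^{r+1} y_1 d_+^*$ from \eqref{eq:Star to unstar}, since $(qt)^{-1} t^{-r}(-qt^{r+1}) = -1$. The main obstacle is precisely this $d_+$ identification, which requires carefully threading the antilinearity of $\barw$ and $\rho^{*}$, the vertex-dependent scalar $-t^{-r}$ appearing in $\barw(d_+)$, and the nontrivial relation \eqref{eq:Star to unstar} linking the starred and unstarred halves of $\AA_{t,q}$; once this is in hand the rest is bookkeeping.
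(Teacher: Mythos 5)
Your proof is correct and follows essentially the same route as the paper's: rewrite $\barwb \Mnab^{-m} L \Mnab^m \barwb$ as $\Mnab^m \barw(L) \Mnab^{-m} = N^m(\rho(\barw(L)))$ using Lemma~\ref{l nablaprime vs CarMel N}, \eqref{e barwprime}, and \eqref{e nablaprime and N}, then identify this with $\rho^*_{m,1}(L) = (N^{m-1}\circ S\circ\rho^*)(L)$. The only difference is in the final step: where the paper cites Mellit's eq.~(19) for the identity $N\circ\rho\circ\barw = S\circ\rho^*$, you verify it directly on the generators $T_i, d_-, d_+$ (the $d_+$ case resting on the cross-relation~\eqref{eq:Star to unstar}), which is a self-contained alternative to the citation but not a structurally different argument.
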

Note that it is not clear that
$\rho_{m,1}^{*}(L)$
preserves  $(y_1 \cdots y_r V_r)_{r \ge 0}$, but this will follow from the proof since the right hand side does.

\begin{proof}
We have
\begin{align*}
  \barwb \Mnab^{-m} \,  L \, \Mnab^m \barwb
&= \Mnab^{m} \barwb \,  L \, \barwb \Mnab^{-m}  &&  \text{by  Lemma \ref{l nablaprime vs CarMel N}} \\
&= \Mnab^{m} \barw(L) \Mnab^{-m}  &&  \text{by \eqref{e barwprime}} \\
&=  N^m (\rho(\barw(L)))                      &&  \text{by \eqref{e nablaprime and N}} \\
&=  (N^{m-1} \circ S \circ \rho^{*})(L) && \text{by \cite[eq. (19)]{Mellit16}} \\
&=  \rho_{m,1}^{*}(L).                                  &&   \qedhere
\end{align*}
\end{proof}

\begin{thm}
\label{t umnab and alg}
The algebraic side of the signed nonsymmetric compositional $(km,kn)$-shuffle theorem for  $n=1$
(defined in \eqref{e def alg side})
can be expressed in terms of $\umnab$ and  $\barwb^\PP \defeq \PP_{r}^{-1} \barwb \PP_r$ as follows:
\begin{align}
\alg_{m,1}^\alpha
= (-1)^{k} (-t)^{r-k}
  \barwb^\PP \umnab^{-m} (x_1^{\alpha_1} \cdots x_r^{\alpha_r}).
\end{align}
\end{thm}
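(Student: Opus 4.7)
The plan is to unwind the definition of $\alg_{m,1}^\alpha$ from \eqref{e def alg side}, apply Proposition~\ref{p barw nabla} to rewrite $\rho^*_{m,1}$ in terms of $\Mnab$ and $\barwb$, compute the resulting element of $V_r$ explicitly, and finally translate from $\Mnab$ to $\umnab$ via Theorem~\ref{t two nablas} and the isomorphism $\PP_r$.

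First I would set $n=1$ in \eqref{e def alg side}, so that $(-1)^{k(m+n+1)} = (-1)^{km}$ and
\[
\alg_{m,1}^\alpha = (-1)^{km}\, t^{r-k}\, \PP_r^{-1}\, \rho^*_{m,1}\bigl( y_1^{\alpha_1-1}\cdots y_r^{\alpha_r-1} d_+^r\bigr)\cdot 1 \,.
\]
By Proposition~\ref{p barw nabla}, the operator $\rho^*_{m,1}(L)$ on $(y_1\cdots y_r V_r)_{r\ge 0}$ equals $\barwb\, \Mnab^{-m}\, L\, \Mnab^m\, \barwb$. Applying this with $L = y_1^{\alpha_1-1}\cdots y_r^{\alpha_r-1}d_+^r$ to $1\in V_0$, and using that $\barwb(1)=1$ and $\Mnab(1_k)=1_k$ from the proof of Theorem~\ref{t two nablas}, we obtain
\[
\rho^*_{m,1}\bigl( y_1^{\alpha_1-1}\cdots y_r^{\alpha_r-1} d_+^r\bigr)\cdot 1
= \barwb\, \Mnab^{-m}\bigl( y_1^{\alpha_1-1}\cdots y_r^{\alpha_r-1} d_+^r\cdot 1\bigr).
\]

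The next step is to compute $d_+^r\cdot 1$ explicitly using \eqref{e d+ def}. I claim that $d_+^r\cdot 1 = (-1)^r y_1\cdots y_r$. This is immediate by induction: assuming the formula for $r$, the formula \eqref{e d+ def} together with the fact that $y_1\cdots y_r$ has no $W$-dependence and is symmetric in $y_i,y_{i+1}$ (so fixed by each $T_i$) gives $d_+^{r+1}\cdot 1 = (-1)^{r+1}y_1\cdots y_{r+1}$. Consequently
\[
y_1^{\alpha_1-1}\cdots y_r^{\alpha_r-1} d_+^r\cdot 1 = (-1)^r\, y_1^{\alpha_1}\cdots y_r^{\alpha_r},
\]
which lies in $y_1\cdots y_r V_r$ since $\alpha_i \ge 1$, so Proposition~\ref{p barw nabla} applies legitimately.

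Finally, I would convert to $\P(r)$ using $\PP_r^{-1}(y_1^{\alpha_1}\cdots y_r^{\alpha_r}) = x_1^{\alpha_1}\cdots x_r^{\alpha_r}$ (from \eqref{e Pr def} with $g=1$), insert $\PP_r\PP_r^{-1}$ between $\barwb$ and $\Mnab^{-m}$, and between $\Mnab^{-m}$ and the monomial, yielding
\[
\alg_{m,1}^\alpha = (-1)^{km+r}\, t^{r-k}\, \barwb^\PP\, (\Mnab^\PP)^{-m}\, x_1^{\alpha_1}\cdots x_r^{\alpha_r}\,.
\]
Since $\umnab$ and hence $\Mnab^\PP$ preserve degree, Theorem~\ref{t two nablas} gives $(\Mnab^\PP)^{-m}f = (-1)^{md}\umnab^{-m}f$ for $f$ of degree $d$; with $d = |\alpha| = k$, this contributes a sign $(-1)^{mk}$ that cancels the $(-1)^{km}$ in front, leaving $(-1)^r t^{r-k} = (-1)^k(-t)^{r-k}$ as the overall scalar, which is the claimed identity. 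The only subtlety along the way is the careful sign bookkeeping and verifying that everything stays inside $y_1\cdots y_r V_r$ so that Proposition~\ref{p barw nabla} is applicable; the computation of $d_+^r\cdot 1$ is the one ingredient not already recorded in the previous sections.
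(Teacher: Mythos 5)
Your proposal is correct and follows essentially the same route as the paper's proof: apply Proposition~\ref{p barw nabla} to rewrite $\rho^*_{m,1}$ in terms of $\barwb$ and $\Mnab^{\pm m}$, use $d_+^r\cdot 1=(-1)^r y_1\cdots y_r$, conjugate by $\PP_r$, and invoke Theorem~\ref{t two nablas} to pass from $\Mnab^\PP$ to $\umnab$. The only difference is cosmetic: you spell out the induction for $d_+^r\cdot 1$ (which the paper dismisses as ``easily verified'') and flag the point that $\alpha_i\ge 1$ keeps everything inside $y_1\cdots y_r V_r$, but the structure, lemma dependencies, and sign bookkeeping all match the paper's argument.
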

\begin{proof}
Using Proposition \ref{p barw nabla} followed by the
easily verified fact  $d_+^r \cdot 1 = (-1)^r y_1 \cdots y_r$,
\begin{multline}
  \rho_{m,1}^{*}(y_1^{\alpha_1-1} \cdots y_r^{\alpha_r-1} d_+^r) \cdot 1
=
\barwb \Mnab^{-m} \,  y_1^{\alpha_1-1} \cdots y_r^{\alpha_r-1} d_+^r \Mnab^m \barwb \cdot 1 \\
=(-1)^r \barwb \Mnab^{-m} \,  y_1^{\alpha_1} \cdots y_r^{\alpha_r}
=(-1)^r \barwb \Mnab^{-m}  \PP_r\, x_1^{\alpha_1} \cdots x_r^{\alpha_r}.
\end{multline}
Hence,
\begin{multline}
\alg_{m,1}^\alpha
=
(-1)^{k(m+2)} t^{r-k}
(-1)^r \PP_r^{-1} \barwb \Mnab^{-m}  \PP_r\, x_1^{\alpha_1} \cdots x_r^{\alpha_r}  \\
=
(-1)^{k(m+2)+r} t^{r-k}
\barwb^\PP (\Mnab^\PP)^{-m} \, x_1^{\alpha_1} \cdots x_r^{\alpha_r}
= (-1)^{k} (-t)^{r-k}
  \barwb^\PP \umnab^{-m} (x_1^{\alpha_1} \cdots x_r^{\alpha_r}),
\end{multline}
where we have used Theorem \ref{t two nablas} for the last equality.
\end{proof}

We prefer to have a nonsymmetric shuffle statement without the $\barwb^\PP$,
and the following result makes this possible.
\begin{prop}
\label{p flip LLT}
The operator  $\barwb^\PP$ takes signed flagged row LLT polynomials to signed flagged column LLT polynomials:
for $\pi \in \DD_r(d)$ and a marking $\Sigma$ of $\pi$,
\begin{align}
\barwb^\PP  \Growpm_r(\pi, \Sigma)(\xx;t)
= (-1)^d \Gcolpm_r(\pi, \Sigma)(\xx;t^{-1}).
\end{align}
\end{prop}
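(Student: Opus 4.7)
The plan is to translate both sides into operator products over $\AA_t$ using Theorems \ref{t Gcal vs chiPP} and \ref{t Gcal vs chiPP v2}, writing $\Growpm_r(\pi,\Sigma) = \chi^\PP_r(\pi,\Sigma)$ and $\Gcolpm_r(\pi,\Sigma)(\xx;t^{-1}) = \tilde{\chi}^\PP_r(\pi,\Sigma)(\xx;t)$. The proposition then reduces to the operator-level identity $\barwb^\PP \chi^\PP_r(\pi,\Sigma) = (-1)^d \tilde{\chi}^\PP_r(\pi,\Sigma)(\xx;t)$, which I would establish by pushing $\barwb^\PP$ through the defining operator product of Definition \ref{d chi dplusminus} via the intertwining relation $\barwb L = \barw(L) \barwb$ for $L \in \AA_t$ together with $\barwb(1) = 1$. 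Before starting I would check that $\barwb^\PP$ even makes sense on $\chi^\PP_r(\pi,\Sigma)$: for any $\pi \in \DD_r(d)$ every pair $(i,j)$ with $1 \le i < j \le r$ lies in $\Area(\pi)$, so the flagging condition $|T(i)| \le i$ combined with Lemma \ref{l nonattack} forces $|T(i)| = i$ for all $i \in [r]$ in each surviving filling, yielding $\Growpm_r(\pi,\Sigma) \in x_1 \cdots x_r \P(r) = \PP_r^{-1}(y_1 \cdots y_r V_r)$, which is exactly the natural domain of $\barwb^\PP$.

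The proof then reduces to an operator-by-operator comparison between Definitions \ref{d chi dplusminus} and \ref{d chi dplusminus col}. For an N step, $\barw(d_-) = d_-$ contributes no change. For an unmarked E step at quiver vertex $r$, $\barw(-d_+) = t^{-r} d_+ = -(-t^{-r} d_+)$, producing a sign $-1$ relative to Definition \ref{d chi dplusminus col}(iii). For a marked corner starting at vertex $r$, the two $d_+$'s in $[d_-,-d_+] = d_-(-d_+) - (-d_+) d_-$ live at quiver vertices $r$ and $r-1$ respectively; combined with the antilinear image $\barw(1/(t-1)) = 1/(t^{-1}-1)$, a direct computation gives $\barw\bigl(\tfrac{1}{t-1}[d_-,-d_+]\bigr) = -\tfrac{1}{t^{-1}-1}\bigl(d_-(-t^{-r} d_+) - (-t^{-r+1} d_+) d_-\bigr)$, which is $-1$ times the commutator in Definition \ref{d chi dplusminus col}(i).

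Each of the $d-|\Sigma|$ unmarked E steps and each of the $|\Sigma|$ marked corners therefore contributes a factor $-1$, yielding the total sign $(-1)^{(d-|\Sigma|) + |\Sigma|} = (-1)^d$, which is exactly what is required. The main obstacle is the bookkeeping of quiver-vertex labels inside each marked-corner commutator, since the scalar $t^{-r}$ depends on where $d_+$ sits in the operator word; fortunately, the $r$-dependence lines up automatically on both sides of the comparison because consecutive operators shift the vertex label in the same way in both definitions.
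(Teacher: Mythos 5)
Your proposal is correct and follows the same route as the paper: express both sides as $\chi^\PP_r$ and $\tilde\chi^\PP_r$ via Theorems \ref{t Gcal vs chiPP} and \ref{t Gcal vs chiPP v2}, then push $\barwb^\PP$ through the operator word of Definition \ref{d chi dplusminus} using $\barwb L = \barw(L)\barwb$ and $\barwb(1)=1$, comparing to Definition \ref{d chi dplusminus col} term by term. The paper compresses the step-by-step sign bookkeeping to ``Comparing the definitions\dots we see $\barwb^\PP\chi^\PP_r(\pi,\Sigma)=(-1)^d\tilde\chi^\PP_r(\pi,\Sigma)$''; your expansion (a $-1$ from each of the $d-|\Sigma|$ unmarked E steps and each of the $|\Sigma|$ marked corners, with the $t^{-r}$ factors matching automatically because both definitions track the quiver vertex the same way) is exactly what is behind that line, and your preliminary check that $\Growpm_r(\pi,\Sigma)$ lies in $x_1\cdots x_r\,\P(r)$ via Lemma \ref{l nonattack} correctly addresses the domain of $\barwb^\PP$.
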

\begin{proof}
Comparing the definitions of $\chi^\PP_r(\pi, \Sigma)$ and $\tilde{\chi}^\PP_r(\pi, \Sigma)$
(Definitions \ref{d chi dplusminus} and \ref{d chi dplusminus col}), we see
$\barwb^\PP
\chi^\PP_r(\pi, \Sigma)
= (-1)^d \tilde{\chi}^\PP_r(\pi, \Sigma)$.
The result then follows from Theorems \ref{t Gcal vs chiPP} and \ref{t Gcal vs chiPP v2}.
\end{proof}

Using Theorem \ref{t umnab and alg} and Proposition \ref{p flip LLT} to rewrite
Theorem \ref{t ns shuffle unmod}, we obtain another version of the
signed nonsymmetric compositional $(km,kn)$-shuffle theorem for  $n=1$, which expresses $\umnab^{-m}$ of a monomial
in terms of signed flagged column LLT polynomials.

\begin{cor}
\label{c umnab shuffle}
Maintain the notation from the beginning of
\S\ref{ss Nonsymmetric compositional kmkn shuffle theorem}, \S \ref{ss Dyck path stats}, and \S \ref{ss flagged LLT polynomials for columns}.
Then for a positive integer $m$ and a strict composition $\alpha = (\alpha_1,\dots, \alpha_r)$
of size  $k$,
\begin{align}
\label{ec umnab shuffle}
(-t)^{r-k} \vartheta \umnab^{-m} (x_1^{\alpha_1} \cdots x_r^{\alpha_r}) =
\sum_{\pi \in \DD^\alpha_{km,k} }  q^{\area(\pi)} t^{\dinv(\pi)- \maxtdinv(\pi)}
\Gcolpm_r(\hat{\pi}', \Sigma_\pi),
\end{align}
where $\vartheta \colon \Pas \to \Pas$ is the  $\QQ(q,t)$-antilinear map
given by  $f(\xx;q,t) \mapsto f(\xx;q^{-1},t^{-1})$.
\end{cor}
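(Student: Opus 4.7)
The plan is to assemble Corollary \ref{c umnab shuffle} mechanically from the three named ingredients: Theorem \ref{t ns shuffle unmod} (specialized to $n=1$), Theorem \ref{t umnab and alg}, and Proposition \ref{p flip LLT}. Specifically, equating the right-hand side of Theorem \ref{t ns shuffle unmod} at $n=1$ with the expression for $\alg_{m,1}^\alpha$ from Theorem \ref{t umnab and alg} yields
\begin{equation*}
(-1)^{k}(-t)^{r-k}\, \barwb^\PP \umnab^{-m}(x_1^{\alpha_1}\cdots x_r^{\alpha_r})
= \sum_{\pi \in \DD^\alpha_{km,k}} q^{\area(\pi)} t^{\dinv(\pi)-\maxtdinv(\pi)}\, \Growpm_r(\hat{\pi}',\Sigma_\pi)(\xx;t).
\end{equation*}

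Next, I would apply $\barwb^\PP$ to both sides. On the left, $\barwb^\PP$ is $\QQ(q,t)$-antilinear and involutive, so the scalar $(-t)^{r-k}$ becomes $(-t^{-1})^{r-k}$ and the outer $\barwb^\PP$ cancels, yielding $(-1)^{k}(-t^{-1})^{r-k}\umnab^{-m}(\xx^\alpha)$. On the right, antilinearity flips each $q^{\area(\pi)} t^{\dinv(\pi)-\maxtdinv(\pi)}$ to $q^{-\area(\pi)} t^{\maxtdinv(\pi)-\dinv(\pi)}$, and Proposition \ref{p flip LLT} replaces each $\Growpm_r(\hat{\pi}',\Sigma_\pi)(\xx;t)$ by $(-1)^k\,\Gcolpm_r(\hat{\pi}',\Sigma_\pi)(\xx;t^{-1})$ (using that the relevant degree is $k$, since $\hat{\pi}' \in \DD_r(k)$). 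The $(-1)^k$ factors match up and cancel, producing
\begin{equation*}
(-t^{-1})^{r-k}\,\umnab^{-m}(\xx^\alpha)
= \sum_{\pi \in \DD^\alpha_{km,k}} q^{-\area(\pi)} t^{\maxtdinv(\pi)-\dinv(\pi)}\, \Gcolpm_r(\hat{\pi}',\Sigma_\pi)(\xx;t^{-1}).
\end{equation*}

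Finally, I would apply the antilinear map $\vartheta$ to both sides. On the left, antilinearity converts $(-t^{-1})^{r-k}$ into $(-t)^{r-k}$ and produces the required $\vartheta \umnab^{-m}(\xx^\alpha)$. On the right, $\vartheta$ inverts each $q$ and $t$ in the scalar coefficients (restoring $q^{\area(\pi)}t^{\dinv(\pi)-\maxtdinv(\pi)}$) and, since the column LLT polynomials do not involve $q$, simply sends the argument $t^{-1}$ of $\Gcolpm_r(\hat{\pi}',\Sigma_\pi)(\xx;t^{-1})$ back to $t$, recovering $\Gcolpm_r(\hat{\pi}',\Sigma_\pi)$. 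This yields \eqref{ec umnab shuffle} on the nose.

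There is no serious obstacle in this argument: the work is entirely done by the three cited results and the careful bookkeeping of the antilinear involutions $\barwb^\PP$ and $\vartheta$. The only subtle point to check is that the sign factor $(-1)^k$ coming from Proposition \ref{p flip LLT} indeed cancels the one already present in Theorem \ref{t umnab and alg}, which depends on confirming that the LLT-polynomial degree parameter $d$ in Proposition \ref{p flip LLT} equals $k$ for $\hat{\pi}' \in \DD_r(k)$, and on tracking the $\QQ(q,t)$-antilinearity of both $\barwb^\PP$ and $\vartheta$ when they act on the scalar $(-t)^{r-k}$.
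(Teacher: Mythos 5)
Your proof is correct and follows the same route the paper intends: the paper states Corollary~\ref{c umnab shuffle} with the one-line justification of rewriting Theorem~\ref{t ns shuffle unmod} via Theorem~\ref{t umnab and alg} and Proposition~\ref{p flip LLT}, which is exactly what you carry out, including the cancellation of the $(-1)^k$ signs (using $d=k$ for $\hat{\pi}'\in\DD_r(k)$) and the antilinearity bookkeeping for $\barwb^\PP$ and $\vartheta$.
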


We are now ready to prove Theorem \ref{t mod nsshuffle intro} from the introduction.
Recall that the nonsymmetric compositional Hall-Littlewood polynomial  $\nsC_\alpha$ is given by
\begin{align}
\nsC_\alpha =
(-t)^{|\alpha|-r} \Pisf_r( x_1^{\alpha_1} \cdots x_r^{\alpha_r} )
=
 (-t)^{|\alpha|-r} \pol
\Big(\frac{x_1^{\alpha_1} \cdots x_r^{\alpha_r}}{\prod_{1 \le i < j \le r}(1-t x_i/x_j)}  \Big)
\ \in \QQ(t)[\xx_r] \subseteq \P(r).
\end{align}
It is natural to allow  $\alpha\in \ZZ^r$ in the definition, but we will only use the case  $\alpha$ is a strict composition.
These are related to the compositional Hall-Littlewood polynomials by Weyl symmetrization:
$\Weyl_1 \cdots \Weyl_r \nsC_\alpha =C_{\alpha}(\xx;t^{-1})$. This follows from, e.g.,~\cite[Proposition 4.6]{BMPS19}.

Define a \emph{flagged word parking function} $w$ on $\pi \in \DD^\alpha_{km,kn}$
to be a map from the north steps of  $\pi$ to  $\ZZ_+$ which is strictly increasing going north along vertical runs and such that the entry on the north step from the point
$(m\sum_{i < j}\alpha_i, n \sum_{i < j}\alpha_i)$ is  $\le j$, for  $j = 1,\dots, r$.
Denote the set of these by  $\FWPF_\alpha(\pi)$.

For  $w \in \FWPF_\alpha(\pi)$, define the \emph{diagonal inversion statistic} by
\begin{align}
\label{e dinv HMZ def}
\dinv(\pi, w) = \big|\big\{ \text{pairs of north steps  $i$, $j$ of  $\pi$} : \text{ $i$ attacks   $j$ and $w(i) < w(j)$}  \big\}  \big|.
\end{align}

\begin{cor}
\label{c mod nsshuffle}
With notation as in Corollary \ref{c umnab shuffle} and just above,
\begin{align}
\label{ec mod nsshuffle1}
 \vartheta \modnab^{-m} (\nsC_\alpha)
& =
\sum_{\pi \in \DD^\alpha_{km,k} }  q^{\area(\pi)} t^{\dinv(\pi)- \maxtdinv(\pi)}
\Gcol_r(\hat{\pi}', \Sigma_\pi) \\
\label{ec mod nsshuffle2}
& =
\sum_{\pi \in \DD^\alpha_{km,k} } \sum_{w \in \FWPF_\alpha(\pi)} q^{\area(\pi)} t^{\dinv(\pi)-\maxtdinv(\pi)+\dinv(\pi,w)}
 \xx^{\content(w)}.
\end{align}
\end{cor}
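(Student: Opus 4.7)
The plan is to deduce Corollary \ref{c mod nsshuffle} from the signed version Corollary \ref{c umnab shuffle} by applying the operators $\vartheta$, then $\Pisf_r$, then $\vartheta$ once more to both sides of \eqref{ec umnab shuffle}, exploiting the intertwining $\Pisf_r \umnab^{-m} = \modnab^{-m} \Pisf_r$ from \eqref{e umnab and modnab} and the signed-to-unsigned transfer \eqref{e signed to unsigned cols} for column LLT polynomials under $\Pisf_r$.

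For the first equality, applying $\vartheta$ to \eqref{ec umnab shuffle} and using that $\vartheta$ is an involution sending $q, t \mapsto q^{-1}, t^{-1}$ flips the exponents of $q, t$ in the prefactor on the right-hand side and evaluates each $\Gcolpm_r(\hat{\pi}', \Sigma_\pi)$ at $t^{-1}$. Next I apply the $\QQ(q,t)$-linear operator $\Pisf_r$. On the left, the intertwining \eqref{e umnab and modnab} together with the identity $\Pisf_r(x_1^{\alpha_1} \cdots x_r^{\alpha_r}) = (-t)^{r-k} \nsC_\alpha$ (from \eqref{e:nsCalpha}) combine to give $\modnab^{-m}\nsC_\alpha$ after the prefactors $(-t^{-1})^{r-k}$ and $(-t)^{r-k}$ cancel. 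On the right, \eqref{e signed to unsigned cols} transforms $\Pisf_r(\Gcolpm_r(\hat{\pi}', \Sigma_\pi)(\xx;t^{-1}))$ into $\Gcol_r(\hat{\pi}', \Sigma_\pi)(\xx;t^{-1})$. A final application of $\vartheta$ returns each LLT polynomial to the variable $t$ and restores the prefactor $q^{\area(\pi)} t^{\dinv(\pi) - \maxtdinv(\pi)}$, yielding \eqref{ec mod nsshuffle1}.

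For the second equality, I expand $\Gcol_r(\hat{\pi}', \Sigma_\pi)$ via Definition \ref{d col LLT}, numbering the $kn$ north steps of $\pi$ by sweeping reading order so that the first $r$ are precisely the touchpoint north steps. Under this numbering, a flagged column semistandard word $T \in \FCSSYT(\hat{\pi}', \Sigma_\pi)$ corresponds bijectively to a flagged word parking function $w \in \FWPF_\alpha(\pi)$: the flag bound $T(j) \le j$ for $j \le r$ matches the flag on the $j$-th touchpoint north step, the column-strict condition $T(i) < T(j)$ for $(i,j) \in \Sigma_\pi$ expresses the strict increase of $w$ along vertical runs of $\pi$, and $\xx^T = \xx^{\content(w)}$. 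Since $\Area(\hat{\pi}') = \Area(\pi')$ encodes exactly the attacking pairs of north steps of $\pi$, the statistic $\tilde\inv(T)$ equals $\dinv(\pi,w)$, yielding \eqref{ec mod nsshuffle2}.

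The main bookkeeping challenge is tracking scalar factors through the antilinear $\vartheta$ and the $\QQ(q,t)$-linear $\Pisf_r$, since powers of $(-t)$ do not pass through either operator transparently. However, the normalization $(-t)^{|\alpha|-r}$ in the definition of $\nsC_\alpha$ is calibrated precisely so that these prefactors cancel cleanly; the only substantive inputs are the intertwining \eqref{e umnab and modnab} and the signed-to-unsigned identity \eqref{e signed to unsigned cols}, both already in hand from earlier sections.
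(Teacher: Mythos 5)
Your proposal is correct and follows essentially the same route as the paper: the paper likewise obtains \eqref{ec mod nsshuffle1} by applying $\vartheta\,\Pisf_r\,\vartheta$ to both sides of \eqref{ec umnab shuffle} and invoking \eqref{e signed to unsigned cols}, and obtains \eqref{ec mod nsshuffle2} by translating the definition of $\Gcol_r(\hat{\pi}',\Sigma_\pi)$ into the language of flagged word parking functions via the correspondence between $\Area(\pi')$ and the attacking pairs of north steps of $\pi$. Your write-up merely spells out the bookkeeping of the $(-t)^{r-k}$ and $(-t^{-1})^{r-k}$ factors that the paper leaves implicit.
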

\begin{proof}
Applying $\vartheta \spa \Pisf_r \spa \vartheta$ to both sides of \eqref{ec umnab shuffle} and using
\eqref{e signed to unsigned cols} gives \eqref{ec mod nsshuffle1}.
The second identity \eqref{ec mod nsshuffle2} then follows from
$\Gcol_r(\hat{\pi}', \Sigma_\pi) =
\sum_{w \in \FWPF_\alpha(\pi)} t^{\dinv(\pi,w)} \xx^{\content(w)}$,
which comes from
converting the
definition of  $\Gcol_r(\hat{\pi}', \Sigma_\pi)$ to the language of word parking functions, using the correspondence between attacking pairs of north steps of $\pi$ and  $\Area(\pi')$ from \S\ref{ss Dyck path stats}.
\end{proof}

Setting  $m=1$, we obtain Theorem \ref{t mod nsshuffle intro} once we check that the statistics simplify in this case.
\begin{prop}
For a $(k,k)$-Dyck path $\pi$, $\dinv(\pi) = \maxtdinv(\pi)$.
\end{prop}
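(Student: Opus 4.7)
The plan is to reduce both $\maxtdinv(\pi)$ and $\dinv(\pi)$ to the classical $(q,t)$-Catalan dinv statistic
\[
\mathrm{cdinv}(\pi) \;=\; \bigl|\{(i,j) : 1 \le i < j \le k,\; a_i - a_j \in \{0,1\}\}\bigr|,
\]
where $a_l := (l-1) - x_l$ is the diagonal of the $l$-th north step of $\pi$ in the natural bottom-to-top order (so this step goes from $(x_l, l-1)$ to $(x_l, l)$). The sequence $(a_1,\dots,a_k)$ is a lattice walk with $a_1 = 0$, $a_l \ge 0$, and $a_{l+1} - a_l \le 1$.

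First I would show $\maxtdinv(\pi) = \mathrm{cdinv}(\pi)$. For $m=n=1$ one has $s_- = 1-\epsilon$, so the sweeping reading order on north steps is: by diagonal $y-x$ first (increasing), then by $x$-coordinate (increasing) among the same diagonal. Unpacking the attacking-pair definition in $a$-coordinates yields exactly the two cases $i<j$ with $a_i = a_j$ (same-diagonal pair) and $i<j$ with $a_i = a_j + 1$ (the lower-indexed step lying on the higher diagonal), matching $\mathrm{cdinv}(\pi)$.

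Next I would show $\dinv(\pi) = \mathrm{cdinv}(\pi)$. A direct check with lines of slope $s_-$ shows that a pair (east $e$, north $f$) with $x_e + 1 \le x_f$ is a dinv pair iff $d_e - d_f \in \{1,2\}$, where $d_s := y_s - x_s$. Monotonicity of the $x$-coordinate along $\pi$ forces east steps left of $N_j$ to precede $N_j$ in path order; at each height the east steps sweep through a contiguous range of diagonals, giving
\[
\dinv(\pi) \;=\; \sum_{j=1}^{k} \bigl(E^{<j}_{a_j+1} + E^{<j}_{a_j+2}\bigr),
\qquad E^{<j}_d = \bigl|\{l < j : a_l \ge d-1 \ge a_{l+1}\}\bigr|.
\]
Setting $b_l := a_l - a_j$, the $j$-th summand becomes $C_0 + C_1$ with $C_m = |\{l < j : b_l \ge m \ge b_{l+1}\}|$, and the target identity is $C_0 + C_1 = V_0^{<j} + V_1^{<j}$ where $V_m^{<j} = |\{i < j : b_i = m\}|$. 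For each $m \ge 0$, the binary sequence $([b_l > m])_{l=1}^{j}$ starts and ends at $0$ (since $b_1 = -a_j \le 0$ and $b_j = 0$), so its numbers of $0\to 1$ and $1\to 0$ transitions agree. Using the increment bound $b_{l+1} - b_l \le 1$, these counts are exactly $R_m := |\{l : b_l = m,\ b_{l+1} = m+1\}|$ and $B_m := |\{l : b_l > m \ge b_{l+1}\}|$, so $B_m = R_m$. Decomposing $C_m = B_m + (V_m^{<j} - R_m)$ and applying $B_m = R_m$ gives $C_m = V_m^{<j}$; summing $m = 0,1$ and then $j$ finishes the proof.

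The main obstacle is the level-crossing identity $B_m = R_m$, which crucially uses both endpoints $b_1, b_j$ sitting at or below each level $m \ge 0$ — an instance of the Dyck path nonnegativity $a_l \ge 0$ — together with the unit step bound $a_{l+1} - a_l \le 1$ that matches strict down-crossings with unit up-crossings at every level simultaneously.
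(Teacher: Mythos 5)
Your proof is correct, but it proceeds by a genuinely different route from the paper's. The paper constructs a geometric map $\theta$ from east steps to north steps — $\theta(e)$ is the first north step hit when traveling southwest at slope~$1$ from the midpoint of $e$ — and asserts the bijection $(e,f) \leftrightarrow (\theta(e), f)$ between dinv pairs and attacking pairs, from which the count follows immediately. You instead normalize both sides to the classical $(q,t)$-Catalan statistic on the diagonal sequence $(a_l)$ and establish the key combinatorial identity $E^{<j}_{a_j+m+1} = |\{i < j : a_i = a_j + m\}|$ for $m \geq 0$ via a level-crossing argument (the count $B_m$ of strict down-crossings of level $m$ equals the count $R_m$ of unit up-crossings, since the truncated diagonal sequence $b_1,\dots,b_j$ begins and ends at or below $m$). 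The paper's argument is shorter and more conceptual but omits all verification; yours is more explicit, self-contained, and yields a stronger refinement — the level-by-level identity $C_m = V_m^{<j}$ for each $m \geq 0$, rather than only for the sum over $m \in \{0,1\}$ — at the cost of more bookkeeping in the $\Delta y \in \{\Delta x - 1, \Delta x - 2\}$ case analysis (it would be worth a sentence noting that $\Delta x = 1$, $d_e - d_f = 2$ cannot occur for a monotone lattice path, since $e$ and $f$ would then share a vertex). Both establish the result soundly.
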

\begin{proof}
For each east step  $e$ of  $\pi$, let  $\theta(e)$ be the first north step of  $\pi$ which is hit when following a line of slope 1 down from the midpoint of $e$.  One checks that for a pair $(e,f)$ with
$e$ an east step of  $\pi$ and  $f$ a north step of  $\pi$,  $(e,f)$ counts towards  $\dinv(\pi)$ if and only if $\theta(e)$ attacks  $f$ (as in \S\ref{ss Dyck path stats}).
The result then follows from \eqref{e d maxtdinv}.
\end{proof}

It is instructive to see how Corollary \ref{c mod nsshuffle} with $m=1$ Weyl symmetrizes
to the compositional shuffle theorem
conjectured by Haglund-Morse-Zabrocki \cite{HagMoZa12} and proven by Carlsson-Mellit \cite{CarMel}.

First note that we have the following identity of symmetric function operators:
\begin{align}
\label{e omega nab identity}
\vartheta \spa \omega \spa \nabla^{-1} \spa \omega \spa \vartheta  = \nabla,
\end{align}
which follows from $\omega \Htild_\mu(\xx;q,t)=q^{\mathsf{n}(\mu ^*)} t^{\mathsf{n}(\mu )} \Htild_\mu(\xx;q^{-1},t^{-1})$.
Then using \eqref{e modnab and sym},  $\Weyl_1 \cdots \Weyl_r \nsC_\alpha = C_{\alpha}(\xx;t^{-1})$, and \eqref{e omega nab identity}, we have
\begin{multline}
\Weyl_1 \cdots \Weyl_r \spa \vartheta \spa \modnab^{-m} (\nsC_\alpha) =
\vartheta \spa \Weyl_1 \cdots \Weyl_r \spa \modnab^{-m} (\nsC_\alpha) =
\vartheta \spa \omega \nabla^{-m} \omega \spa \Weyl_1 \cdots \Weyl_r \nsC_\alpha \\
= \vartheta \spa \omega \nabla^{-m} \omega \spa C_{\alpha}(\xx;t^{-1})
=
\nabla^m \vartheta \spa C_{\alpha}(\xx;t^{-1}) =
\nabla^m C_{\alpha}(\xx;t),
\end{multline}
while by \eqref{ep sym LLT col 2},
\begin{align}
\label{e sigma LLT}
\Weyl_1 \cdots \Weyl_r \, \Gcol_r(\hat{\pi}', \Sigma_\pi)
=\Gcol_0(\pi', \Sigma_\pi)
=\Gcal_{\etabold}(\xx;t),
\end{align}
where  $\Gcal_{\etabold}(\xx;t)$ is an ordinary LLT polynomial indexed by the tuple of single-column skew shapes $\etabold$,
determined from  $\hat{\pi}', \Sigma_\pi$ as in Remark \ref{r col two LLT match}.
Thus applying $\Weyl_1 \cdots \Weyl_r$ to both sides of
\eqref{ec mod nsshuffle1} yields
\begin{align}
\label{e recover classical}
 \nabla^m C_{\alpha}(\xx;t)
&=
\sum_{\pi \in \DD^\alpha_{km,k} }
\! q^{\area(\pi)} t^{\dinv(\pi)-\maxtdinv(\pi)}
\Gcol_0(\pi', \Sigma_\pi) \\
&= \! \sum_{\pi \in \DD^\alpha_{km,k} }
\sum_{\substack{\text{word parking} \\ \text{functions  $w$ of  $\pi$}}}
\!\! q^{\area(\pi)} t^{\dinv(\pi)-\maxtdinv(\pi)+\dinv(\pi,w)}
 \xx^{\content(w)}, \!
\end{align}
where the word parking functions on  $\pi$ are as in \eqref{e intro comp shuffle} (the same as the flagged word parking functions defined above but without the flag bounds).
After swapping  $q$ and  $t$, this statement for  $m=1$ is the same as the statement of the compositional shuffle conjecture in \cite[Conjecture 4.5]{HagMoZa12}.

\subsection{Atom positivity conjectures}
\label{ss atom pos 2}
We discuss atom positivity conjectures for several objects in this paper.
This ties into the discussion in \S\ref{ss atom pos} and provides context and motivation for
several objects on the modified side of Figure \ref{fig:summary diagram} including $\modnab^{-1} \nsC_\alpha$ and the modified $r$-nonsymmetric Macdonald polynomials  $\tE_{\eta|\lambda}$.

Recall that  $\xx= x_1,x_2, \dots, $ and  $\xx_N = x_1,\dots, x_N$, and that
for $f(\xx) \in \P(r)$, we write  $f[\xx_N]$ for $f(\xx)|_{x_{N+1} = x_{N+2} = \cdots = 0}$.

A typical element of  $\P(r)$ is an infinite sum of monomials and does not lie in
the span of the Demazure atoms, so we need stable versions of Demazure atoms which span $\P(r)$.
For  $(\eta | \lambda) \in \pairsr$ with  $k = \ell(\lambda)$, define the \emph{stable atom} by
\begin{align}
\Acal_{\eta| \lambda} = \Acal_{\eta| \lambda}(\xx) = \Weyl_{r+1} \cdots \Weyl_{r+k} \, \Acal_{(\eta; \lambda)}(\xx_{r+k}) \, \in \P(r),
\end{align}
where  $\Acal_{(\eta; \lambda)}$ is the Demazure atom (defined in \S\ref{ss atom pos}) indexed by the concatenation of  $\eta$ and  $\lambda$.
The stable atoms form a basis for  $\P(r)$ as $(\eta | \lambda)$ ranges over $\pairsr$.

The evaluations of stable atoms in a finite number of variables are related to Demazure atoms as follows.
For $(\eta|\lambda) \in \pairsr$ and  $N \ge r+\ell(\lambda)$, we have
\begin{align}
\label{e stable atom finite}
\Acal_{\eta| \lambda}[\xx_N] =
\pi_{x_{r+1}, \dots,x_N} \Acal_{(\eta; \lambda; 0^d)}(\xx_N) =
\sum_{w \in \SS_{(1^r;N-r)}}
 \Acal_{w(\eta; \lambda; 0^d)}(\xx_N),
\end{align}
where $\pi_{x_{r+1}, \dots,x_N}$ is the operator  $\pi_w$ (see \S \ref{ss atom pos})
for  $w$ the longest permutation in $\SS_{(1^{r}; N-r)}$,
and $d = N - r - \ell(\lambda)$.

\begin{remark}
By the first equality of \eqref{e stable atom finite}, the stable atom  $\Acal_{\eta|\lambda}(\xx)$ is also equal to the strongly convergent limit
$\lim_{N\to \infty }\pi_{x_{r+1}, \dots,x_N} \Acal_{(\eta; \lambda;0^d)}(\xx_N)$.  This is the definition of stable atom used in
\cite[\S 7.6]{BHMPS-nspleth}, and so our notions of stable atoms agree.
\end{remark}

\begin{remark}
\label{r stable atom vs atom}
(i)
Any $f(\xx) \in  \P(r)$ of degree  $d$ can be recovered from
$f[\xx_N]$ for  $N \ge d+r$ via  $f(\xx) = \Weyl_{r+1}\cdots\Weyl_{N} f[\xx_N]$.
This follows, for instance, by writing  $f$ in terms of the basis
$\big\{x_1^{\eta_1} \cdots x_r^{\eta_r} s_\lambda(x_{r+1},x_{r+2}, \dots ) : (\eta|\lambda) \in \pairsr\big\}$
of  $\P(r)$.

(ii) Stable atom positivity essentially reduces to the more familiar notion of atom positivity:
for any $f(\xx)\in \P(r)$ of degree  $d$ and $N \ge d+r$,
$f(\xx)$ is stable atom positive if and only if
$f[\xx_N]$ is atom positive.
This follows from (i), \eqref{e stable atom finite}, and the fact that any $g \in \QQ(q,t)[\xx_N]$ which is symmetric in
 $x_{r+1}, \dots, x_N$ has atom expansion  $g = \sum_\alpha c_\alpha \Acal_\alpha$ with the coefficients $c_\alpha$ constant
 on  $\SS_{(1^r;N-r)}$ orbits.
\end{remark}

Consider the full Weyl symmetrization operator   $\Weyl_1 \cdots \Weyl_r$ from  $\P(r)$ to  $\P(0)$.
By \cite[Lemma 7.8.2]{BHMPS-nspleth}, it acts on stable atoms by
\begin{align}
\label{e sigmabold atom}
\Weyl_1 \cdots \Weyl_r \, \Acal_{\eta| \lambda}(\xx) =
\begin{cases}
s_{(\eta; \lambda)}(\xx) &  \text{if  $(\eta; \lambda)$ is weakly decreasing}, \\
0           &      \text{otherwise}.
\end{cases}
\end{align}
Hence, similar to \eqref{e atom key pos},
we have for $f \in \P(r)$,
\begin{align}
\label{e stable atom key pos}
\text{$f$ is stable atom positive} \implies
\text{$\Weyl_1 \cdots \Weyl_r f$ is Schur positive},
\end{align}
and so stable atom positivity is a natural strengthening of Schur positivity to the space $\P(r)$.

We conjecture that the following elements of  $\P(r)$ on the modified side have coefficients in  $\NN[q,t]$ when expanded in the basis of stable atoms.
Although part (i) of the conjecture implies (ii)--(iv), we have stated these separately in order to emphasize that each could potentially have its
own representation-theoretic significance.

\begin{conj}
(i) The flagged LLT polynomials  $\flagGcal_{r, \nubold}(\xx;t)$ from \S\ref{ss Dpath vs shapes llt} are stable atom positive.
This includes in particular  $\Grow_r(\pi, \Sigma)(\xx;t)$ and
 $\Gcol_r(\pi, \Sigma)(\xx;t)$ from
Definitions \ref{d flag LLT} and \ref{d col LLT}.

(ii) The modified $r$-nonsymmetric Macdonald polynomials  $\tE_{\eta|\lambda}$ are stable atom positive.

(iii) The quantity $\modnab^{-1} \nsC_\alpha$ from Theorem \ref{t mod nsshuffle intro} is stable atom positive.

(iv) The quantity $\Pisf_r \, \alg_{m,n}^\alpha$ from Theorem \ref{t ns shuffle kmkn mod} is stable atom positive.
\end{conj}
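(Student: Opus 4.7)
The plan is to organize the argument around part (i), from which (ii), (iii), and (iv) follow by structural results already in the paper. For (i)$\Rightarrow$(ii), I would invoke the combinatorial expansion of $\tEnoflip_{\eta|\lambda}$ (and hence $\tE_{\eta|\lambda}$ after a $q$-flip) as a nonnegative $q,t$-combination of flagged LLT polynomials from \cite{BHMPS-nspleth}, since a finite $\NN[q,t]$-sum of stable-atom-positive polynomials is stable-atom positive. For (i)$\Rightarrow$(iii), the right hand side of \eqref{et mod nsshuffle intro} is, as noted just after Theorem~\ref{t mod nsshuffle intro}, a sum of flagged LLT polynomials $\Grow_r(\pi,\Sigma)$ with coefficients in $\NN[q^{-1},t^{-1}]$, so the same transitivity applies. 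For (i)$\Rightarrow$(iv), Theorem~\ref{t ns shuffle kmkn mod} writes $\Pisf_r\,\alg_{m,n}^{\alpha}$ as $\sum_\pi q^{\area(\pi)}\,t^{\dinv(\pi)-\maxtdinv(\pi)}\,\Grow_r(\hat\pi',\Sigma_\pi)$, with exponents in $\NN$ by standard Dyck path combinatorics, so (i) again suffices.

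For (i) itself, my strategy would be to first handle the special case $\Grow_r(\pi,\Sigma)$ and then reduce the general case $\flagGcal_{r,\nubold}$ to this via the row-breaking argument in the proof of Lemma~\ref{l Dpath LLT}, which shows every flagged LLT polynomial of the relevant form can be obtained from a flagged row LLT polynomial by sign-alternating partial-marking operations. The most promising attack on the core statement seems to be crystal-theoretic: construct an action of the $\mathfrak{b}$-crystal operators on $\FSSYT(\pi,\Sigma)$ such that the crystal splits into Demazure atom subcrystals in the sense of Assaf--Schilling, while showing that the attacking-inversion statistic $\inv(T)$ is constant on each component. If direct crystal construction is too rigid when $\Sigma\ne\varnothing$, an alternative is a right-key-style bijection sending each $T \in \FSSYT(\pi,\Sigma)$ to a triple (right key, shape residue, inversion weight) whose resulting expansion is manifestly stable-atom positive.

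A parallel, more representation-theoretic route would be to realize $\Grow_r(\pi,\Sigma)$ directly as the graded character of a $\mathfrak{b}$-module built from the Carlsson--Mellit / Mellit machinery of \S\ref{s ddpa}. Theorem~\ref{t Gcal vs chiPP} expresses the signed polynomial $\Growpm_r(\pi,\Sigma)$ as an explicit word in $d_\pm^\PP$ applied to $1 \in \P(0)$; applying $\Pisf_r$ and using Theorem~\ref{t signed to unsigned} converts this to the unsigned flagged LLT polynomial. One could attempt to lift each operator in Definition~\ref{d chi dplusminus} to a functor on a suitable category of $\mathfrak{b}$-modules whose effect on characters matches the scalar map, with the Feigin--Loktev module $\mathrm{W}_n^N$ from Problem~\ref{pr rep theory} serving as the anchor at $r=0$ and $\pi$ a staircase. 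Under such a functorial framework, (iii) would gain the representation-theoretic interpretation asked for in Problem~\ref{pr rep theory} as a byproduct.

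The hard part will be part (i), and specifically the case of nonempty marking. Demazure atom positivity is strictly stronger than Schur positivity of the symmetric LLT polynomial $\Weyl_1\cdots\Weyl_r\,\Grow_r(\pi,\Sigma)$ (which is the Grojnowski--Haiman theorem), and the added flagging constraint $T(i)\le i$ on the small side of the tableau interacts in an unfamiliar way with standard crystal operators, so even formulating the correct analogs of the crystal arrows is nontrivial when $\Sigma\ne\varnothing$. The representation-theoretic route faces the complementary difficulty that no such $\mathfrak{b}$-module is known even when $r=0$ (cf.\ the partial progress of Bergeron--Descouens--Zabrocki on $\nabla C_\alpha$). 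A useful intermediate target, before attempting full atom positivity, would be Demazure character positivity, which by \eqref{e:dem to atom} is a stronger statement but might be more tractable via the existing theory of excellent filtrations of $\mathfrak{b}$-modules cited in \S\ref{ss atom pos}; successful reduction there would both constitute evidence and pin down the combinatorial shape of an atom-level argument.
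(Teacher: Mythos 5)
This is a \emph{conjecture}, not a theorem, and the paper offers no proof of it. What the paper does establish is the implication structure: immediately after the conjecture it says that (i)~$\Rightarrow$~(ii) was shown in \cite{BHMPS-nspleth} via the expansion of $\tE_{\eta|\lambda}$ as a positive sum of flagged LLT polynomials for ribbon tuples, and that (i)~$\Rightarrow$~(iii) and (i)~$\Rightarrow$~(iv) follow from Theorems~\ref{t mod nsshuffle intro} and~\ref{t ns shuffle kmkn mod}, since those express the relevant quantities as positive $q,t$-combinations of flagged LLT polynomials. Your first paragraph reproduces exactly this reduction with the right references, and transitivity of stable atom positivity under $\NN[q^{\pm 1},t^{\pm 1}]$-combinations is indeed the operative fact. (A small detail: for (iii), the flagged LLT polynomials that appear in the $(k,k)$-shuffle theorem---see Corollary~\ref{c mod nsshuffle}---are the \emph{column} versions $\Gcol_r(\hat{\pi}',\Sigma_\pi)$, not the row versions $\Grow_r$, whereas the $(km,kn)$-version in Theorem~\ref{t ns shuffle kmkn mod} uses $\Grow_r$; both fall under conjecture~(i), so the reduction still goes through.)

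Everything from your second paragraph on---the crystal-theoretic, right-key, and representation-theoretic strategies for proving~(i)---goes beyond what the paper does. The paper deliberately leaves part (i) open; it motivates the representation-theoretic angle only speculatively in Problem~\ref{pr rep theory} and \S\ref{ss atom pos}, without claiming a construction. None of these sketches is, or could be, checked against the paper's own argument because no such argument exists. In particular, your observation that the flagging condition interacts in unfamiliar ways with crystal operators, and that Demazure character positivity might be a more tractable intermediate target, is a reasonable research assessment but is not part of the paper's content. If you intend the write-up to reflect the paper's stance, the second through fourth paragraphs should be reframed explicitly as open directions, not as steps of a proof.
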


Parts (i) and (ii) were formulated in \cite{BHMPS-nspleth},
where we also showed that (i) implies (ii) by
giving a formula for $\tE_{\eta|\lambda}$
as a positive sum of flagged LLT polynomials for tuples of ribbon skew shapes.
Note that by \eqref{e stable atom key pos} and Proposition \ref{p sym modnsmac}, (ii) is a conjectural strengthening of Macdonald positivity.
By Theorems \ref{t mod nsshuffle intro} and \ref{t ns shuffle kmkn mod},
(i) implies (iii) and (iv).

\subsection{Examples}
\label{ss examples}

\begin{remark}
\label{rem: small length enough}
(i) The following improvement on Remark \ref{r stable atom vs atom} (i)--(ii) is useful for explicit computations:
the degree  $d$ subspace of $\P(r)$ has a filtration  $(\P(r)_{d_1,d_2})_{d_1,d_2 \in \NN, \, d_1+d_2=d}$
given by
\begin{align*}
\P(r)_{d_1,d_2}  \defeq & \,
\spn_{\QQ(q,t)} \big\{x_1^{\eta_1} \cdots x_r^{\eta_r} s_\lambda(x_{r+1},x_{r+2}, \dots ) : (\eta| \lambda) \in \NN^r \times \Par, \, |\eta| \ge d_1, \, |\eta| + |\lambda| = d \big\}
 \\
 = & \,
\spn_{\QQ(q,t)} \big\{x_1^{\eta_1} \cdots x_r^{\eta_r} s_\lambda(\xx) : (\eta| \lambda) \in \NN^r \times \Par, \, |\eta| \ge d_1, \, |\eta| + |\lambda| = d \big\}.
\end{align*}
Any $f(\xx) \in  \P(r)_{d_1,d_2}$ can be recovered from
 $f[\xx_N]$ for  $N \ge d_2+r$ via
 $f(\xx) = \Weyl_{r+1}\cdots\Weyl_{N} f[\xx_N]$.
The subspace  $\P(r)_{d_1,d_2}$ is spanned by the subset of stable atoms indexed by  $(\eta|\lambda)$ with  $|\eta| \ge d_1$ and  $|\eta|+|\lambda| = d_1+d_2$.
Hence, as in Remark \ref{r stable atom vs atom} (ii),
for any $f(\xx)\in \P(r)_{d_1, d_2}$ and $N \ge d_2+r$,
$f(\xx)$ is stable atom positive if and only if
$f[\xx_N]$ is atom positive.

(ii)
Given (i), the following additional fact is useful for explicit computations:
for any $\pi \in \DD_r(k)$ and marking $\Sigma$ of $\pi$,
\begin{align}
\label{e deg bound}
\Gcol_r(\pi, \Sigma)(\xx;t) \in  \P(r)_{r,k-r}.
\end{align}
In particular, each LLT term in \eqref{ec mod nsshuffle1}
(and therefore  $\modnab^{-m} \nsC_\alpha$ as well) is determined by its evaluation in only the variables $x_1, \dots, x_k$ ($k = |\alpha|$ is the degree of all these terms).
To prove \eqref{e deg bound}, first note that the signed flagged column LLT polynomials have a formula in terms of non-attacking fillings analogous
to Lemma \ref{l nonattack}, by \cite[Proposition 6.7.9]{BHMPS-nspleth}. Hence, $x_1 \cdots x_r$ divides
$\Gcolpm_r(\pi, \Sigma)(\xx;t)$, and so  $\Gcolpm_r(\pi, \Sigma)(\xx;t) \in \P(r)_{r,k-r}$.
Then apply  $\Pisf_r$ to obtain \eqref{e deg bound}, noting that $\Pisf_r$ restricts to an isomorphism from $\P(r)_{d_1, d_2}$ to itself,
which is straightforward to check from the definition \eqref{e: nspleth def}.
\end{remark}

\begin{example}
\label{ex main ex 1}
We illustrate Corollary \ref{c mod nsshuffle} for $\alpha = (3)$ and $m=1$ (so  $r=1$,  $k=3$).
There are two  $(3,3)$-Dyck paths with touchpoint sequence  $(3)$. Denote these paths by
$\pi$ and $\theta$, and their corresponding attacking
Dyck paths $\pi'$, $\theta'$ and markings by $\Sigma_{\pi}$, $\Sigma_\theta$;
let $\hat{\pi}'$, $\hat{\theta}'$ denote the partial Dyck paths obtained by removing the  $r$ bottommost north steps from  ${\pi}'$, ${\theta}'$, respectively (see \S\ref{ss Dyck path stats}).  This data is shown below.
\[
\begin{tikzpicture}[scale=.46,baseline=.6cm]
\draw[help lines] (0,0) grid (3,3);
\draw[very thin, gray] (0,0) -- (3,3);
\draw[ultra thick] (0,0) -- (0,1) -- (0,2) -- (0,3) -- (1,3) -- (2,3) -- (3,3);
\node at (1.5, -1) {{\scriptsize $\pi$}};
\end{tikzpicture}
\ \
\begin{tikzpicture}[scale=.46,baseline=.6cm]
\draw[help lines] (0,0) grid (3,3);
\draw[very thin, gray] (0,0) -- (3,3);
\draw[ultra thick] (0,1) -- (1,1) -- (1,2) -- (2,2) -- (2,3) -- (3,3);
\node at (.5, 1.5) {{\footnotesize $\star$}};
\node at (1.5, 2.5) {{\footnotesize $\star$}};
\node at (1.5, -1) {{\scriptsize $\hat{\pi}'$ with marking $\Sigma_\pi$}};
\end{tikzpicture}
\qquad \qquad \ \
\begin{tikzpicture}[scale=.46,baseline=.6cm]
\draw[help lines] (0,0) grid (3,3);
\draw[very thin, gray] (0,0) -- (3,3);
\draw[ultra thick] (0,0) -- (0,1) -- (0,2) -- (1,2) -- (1,3) -- (2,3) -- (3,3);
\node at (1.5, -1) {{\scriptsize $\theta$}};
\end{tikzpicture}
\  \
\begin{tikzpicture}[scale=.46,baseline=.6cm]
\draw[help lines] (0,0) grid (3,3);
\draw[very thin, gray] (0,0) -- (3,3);
\draw[ultra thick] (0,1) -- (1,1) -- (1,2) --(1,3)--(2,3) -- (3,3);
\node at (.5, 1.5) {{\footnotesize $\star$}};
\node at (1.5, -1) {{\scriptsize $\hat{\theta}'$ with marking $\Sigma_\theta$}};
\end{tikzpicture}
\]
For the present example, identity \eqref{ec mod nsshuffle1} reads
\begin{align}
\label{e example key expansion 3a}
 \vartheta \modnab^{-1} \nsC_{(3)} =
\vartheta \modnab^{-1} (t^{2}x_1^3)
=q^{3}\Gcol_1(\hat{\pi}', \Sigma_\pi) +
q^{2}\Gcol_1(\hat{\theta}', \Sigma_\theta).
\end{align}
These flagged column LLT polynomials expand into stable atoms as follows:
\begin{align}
\label{e ex llt1}
\Gcol_1(\hat{\pi}', \Sigma_\pi)
& \ = \ \Acal_{1|11} \\
\label{e ex llt2}
\Gcol_1(\hat{\theta}', \Sigma_\theta)
& \ =  \  \Acal_{2|1}+ {\Acal}_{1|2}   +  t \, {\Acal}_{1|11}.
\end{align}
To see this explicitly, we display
the associated flagged  $(\hat{\pi}', \Sigma_\pi)$ and  $(\hat{\theta}', \Sigma_\theta)$-column semistandard words, with letters $\le k = 3$,
and using the same conventions as Example \ref{ex row flag llt}.
Computing in variables  $x_1,\dots, x_k$ is sufficient by Remark \ref{rem: small length enough}.

\newcommand{\thisllta}[3]{
\tikz[scale=.34, baseline=.4cm]{
\draw[help lines] (0,0) grid (3,3);
\draw[thick] (0,1) -- (1,1) -- (1,2) -- (2,2) -- (2,3) -- (3,3);
\node at (.5, 1.5) {{\tiny $\star$}};
\node at (1.5, 2.5) {{\tiny $\star$}};
\node at (.5, .5) {\tiny #1};
\node at (1.5, 1.5) {\tiny #2};
\node at (2.5, 2.5) {\tiny #3}; }
}
\newcommand{\thisfwpfa}[3]{
\begin{tikzpicture}[scale=.34,baseline=.4cm]
\draw[help lines] (0,0) grid (3,3);
\draw[thick] (0,0) -- (0,1) -- (0,2) -- (0,3) -- (1,3) -- (2,3) -- (3,3);
\node at (.5, .5)   {\tiny #1};
\node at (.5, 1.5) {\tiny #2};
\node at (.5, 2.5) {\tiny #3};
\end{tikzpicture}
}
\newcommand{\thisllt}[3]{
\tikz[scale=.34,baseline=.4cm]{
\draw[help lines] (0,0) grid (3,3);
\draw[thick] (0,1)--(1,1)--(1,2)--(1,3)--(2,3)--(3,3);
\node at (.5, 1.5) {{\tiny $\star$}};
\node at (.5, .5) {\tiny #1};
\node at (1.5, 1.5) {\tiny #2};
\node at (2.5, 2.5) {\tiny #3};
\ifnum#2<#3
  \node at (1.5, 2.5) {\tiny \(t\)};
\fi }
}

\newcommand{\thisfwpf}[3]{
\begin{tikzpicture}[scale=.34,baseline=.4cm]
\draw[help lines] (0,0) grid (3,3);
\draw[thick] (0,0) -- (0,1) -- (0,2) -- (1,2) -- (1,3) -- (2,3) -- (3,3);
\node at (.5, .5)   {\tiny #1};
\node at (.5, 1.5) {\tiny #2};
\node at (1.5, 2.5) {\tiny #3};
\end{tikzpicture}
}
\noindent\begin{tikzpicture}[ampersand replacement=\&]
\matrix[matrix of math nodes, nodes = {anchor=center}, row sep=0em] (m) {
{\text{$\substack{T \in \FCSSYT(\hat{\pi}', \Sigma_\pi) \, :\\ T(i) \le k}$}}
\&
\thisllta{1}{2}{3}
\&[.5cm]
{\text{$\substack{T \in \FCSSYT(\hat{\theta}', \Sigma_\theta) \, :\\ T(i) \le k}$}}
\&
\thisllt{1}{2}{1}
\&
\thisllt{1}{3}{1}
\&
\thisllt{1}{2}{2}
\&
\thisllt{1}{3}{2}
\&
\thisllt{1}{3}{3}
\&
\thisllt{1}{2}{3}
\\
t^{\widetilde{\inv}(T)} \xx^T \&  x_1x_2x_3 \&
t^{\widetilde{\inv}(T)}\xx^T  \& x_1^2x_2 \& x_1^2 x_3\& x_1 x_2^2\&
x_1x_2x_3 \& x_1 x_3^2               \& t \spa x_1x_2x_3  \\[1.2mm]
\text{\scriptsize $\Acal_{\eta|\lambda}[\xx_k]$ expansion} \&      \text{\small $\Acal^{[k]}_{1|11}$} \& \text{\scriptsize $\Acal_{\eta|\lambda}[\xx_k]$ expansion}
\&
\node (a) {};
\&
\node (b) {};
\&
\&
\text{\small $\Acal^{[k]}_{1|2}$}
\&
\&
t \, \text{\small $\Acal^{[k]}_{1|11}$} \\
};
\node at ($(a)!0.5!(b)$) {\text{\small $\Acal^{[k]}_{2|1}$}};
\draw[decorate,decoration={brace,mirror,amplitude=4pt}] (m-2-4.south west)
-- (m-2-5.south east);
\draw[decorate,decoration={brace,mirror,amplitude=4pt}] (m-2-6.south west)
-- (m-2-8.south east);
\draw (m-3-1.east |- m.south) ++(0,0.25) -- (m-3-1.east |- m-1-2.north);
\draw (m-3-3.east |- m.south) ++(0,0.25) -- (m-3-3.east |- m-1-4.north);
\end{tikzpicture}
Here we have abbreviated  $\Acal_{\eta|\lambda}[\xx_k]$ by  $\Acal^{[k]}_{\eta|\lambda}$.
Note that each finite evaluation  $\Acal_{\eta|\lambda}[\xx_k]$ is a sum of Demazure atoms by \eqref{e stable atom finite};
for example,  $\Acal^{[k]}_{1|2} = \Acal_{120} + \Acal_{102} = (x_1x_2^2) + (x_1x_2x_3 + x_1x_3^2)$.


We can then recover the corresponding instance of the (symmetric) compositional shuffle theorem
by applying the Weyl symmetrization operator  $\Weyl_1 \cdots \Weyl_r = \Weyl_1$ to \eqref{e example key expansion 3a},
 as we did in general in \eqref{e recover classical}.
Using \eqref{e sigmabold atom} yields
\begin{align}
\nabla C_{(3)}
= q^3 s_{111}  + q^2 ( s_{21}   +  t \, s_{111} ),
\end{align}
with  $s_{111}$ and  $s_{21}   +  t \, s_{111}$ being the ordinary LLT polynomials corresponding to
the tuples of single-column skew shapes  $\etabold = \big((111)\big)$ and  $\etabold = \big((1), (22)/(11))$, respectively, as in \eqref{e sigma LLT}.
\end{example}

\begin{example}
\label{ex main ex 2}
We illustrate Corollary \ref{c umnab shuffle} for $\alpha = (3,1)$ and $m=1$ (so  $r=2$,  $k=4$).
There are two  $(4,4)$-Dyck paths with touchpoint sequence  $(3,1)$.
We use notation  $\pi, \theta$,  $\Sigma_\pi, \Sigma_\theta$, etc. for these paths, markings, etc.
as in the previous example.
\[
\begin{tikzpicture}[scale=.46,baseline=.6cm]
\draw[help lines] (0,0) grid (4,4);
\draw[very thin, gray] (0,0) -- (4,4);
\draw[ultra thick] (0,0) -- (0,1) -- (0,2) -- (0,3) -- (1,3) -- (2,3) -- (3,3) -- (3,4) -- (4,4);
\node at (2, -1) {{\scriptsize $\pi$}};
\end{tikzpicture}
\ \
\begin{tikzpicture}[scale=.46,baseline=.6cm]
\draw[help lines] (0,0) grid (4,4);
\draw[very thin, gray] (0,0) -- (4,4);
\draw[ultra thick] (0,2) -- (1,2) -- (1,3) -- (2,3) -- (3,3) -- (3,4) -- (4,4);
\node at (.5, 2.5) {{\scriptsize $\star$}};
\node at (2.5, 3.5) {{\scriptsize $\star$}};
\node at (2, -1) {{\scriptsize $\hat{\pi}'$ with marking $\Sigma_\pi$}};
\end{tikzpicture}
\qquad \qquad \ \
\begin{tikzpicture}[scale=.46,baseline=.6cm]
\draw[help lines] (0,0) grid (4,4);
\draw[very thin, gray] (0,0) -- (4,4);
\draw[ultra thick] (0,0) -- (0,1) -- (0,2) -- (1,2) -- (1,3) -- (2,3) -- (3,3) -- (3,4) -- (4,4);
\node at (2, -1) {{\scriptsize $\theta$}};
\end{tikzpicture}
\  \
\begin{tikzpicture}[scale=.46,baseline=.6cm]
\draw[help lines] (0,0) grid (4,4);
\draw[very thin, gray] (0,0) -- (4,4);
\draw[ultra thick] (0,2) -- (1,2) -- (1,3) --(1,4)--(2,4) -- (3,4) -- (4,4);
\node at (.5, 2.5) {{\scriptsize $\star$}};
\node at (2, -1) {{\scriptsize $\hat{\theta}'$ with marking $\Sigma_\theta$}};
\end{tikzpicture}
\]

Identity \eqref{ec mod nsshuffle1} in this case reads
\begin{align}
\label{e example AK expansion}
& \vartheta \modnab^{-1} \nsC_{(3,1)} =
\vartheta \modnab^{-1} (t^{2} x_1^3x_2 + t^3 x_1^4)
=q^{3}\Gcol_2(\hat{\pi}', \Sigma_\pi) +
q^{2}\Gcol_2(\hat{\theta}', \Sigma_\theta).
\end{align}
These flagged column LLT polynomials expand into stable atoms as follows:
\begin{align*}
\Gcol_2(\hat{\pi}', \Sigma_\pi) \  =  \ \, &
t \spa \Acal_{21|1} + t \spa \Acal_{12|1} + t \spa \Acal_{20|11} + t^2 \Acal_{11| 11}
\\[1.4mm]
\Gcol_2(\hat{\theta}', \Sigma_\theta) \ = \ \, &
 t \spa \Acal_{31|\varnothing} +  t\spa \Acal_{13|\varnothing} + t \spa \Acal_{30|1}+
 t^2 \spa \Acal_{22|\varnothing} +
 t^2 \spa \Acal_{21|1} +
 t^2 \spa \Acal_{12|1} + t^2 \spa \Acal_{20|2}  \\
 &  t^3 \spa \Acal_{21|1} +
 t^3 \spa \Acal_{12|1} + t^3 \spa \Acal_{20|11} + t^3 \spa \Acal_{11|2} + t^4 \spa \Acal_{11|11}.
\end{align*}
We show the flagged
$(\hat{\theta}', \Sigma_\theta)$-column semistandard words for  $\Gcol_2(\hat{\theta}', \Sigma_\theta)(x_1,\dots, x_4;t)$
in the same conventions as the previous example.

\newcommand{\thisllt}[4]{
\tikz[scale=.34,baseline=.6cm]{
\draw[help lines] (0,0) grid (4,4);
\draw[thick] (0,2) -- (1,2) -- (1,4) -- (4,4);
\node at (.5, 2.5) {{\tiny $\star$}};
\node at (.5, .5) {\tiny #1};
\node at (1.5, 1.5) {\tiny #2};
\node at (2.5, 2.5) {\tiny #3};
\node at (3.5, 3.5) {\tiny #4};
\ifnum#1<#2
    \node at (.5,1.5) {\tiny \(t\)};
\fi
\ifnum#2<#3
    \node at (1.5,2.5) {\tiny \(t\)};
\fi
\ifnum#2<#4
    \node at (1.5,3.5) {\tiny \(t\)};
\fi
\ifnum#3<#4
    \node at (2.5,3.5) {\tiny \(t\)};
\fi
}
}

\noindent\begin{tikzpicture}[scale=.34,baseline=.6cm]
\pgfmathsetmacro{\rowone}{21}
\pgfmathsetmacro{\rowtwo}{13}
\pgfmathsetmacro{\rowthree}{5}
\pgfmathsetmacro{\colspa}{5}
\useasboundingbox (-2,\rowthree-5) rectangle (9.45*\colspa,\rowone+2.5);
\node at (0,\rowone) {\thisllt{1}{1}{2}{1}};
\node at (0,\rowone-4) {\(t \spa \Acal^{[k]}_{31|\varnothing}\)};
\node at (1*\colspa,\rowone) {\thisllt{1}{2}{2}{1}};
\node at (2*\colspa,\rowone) {\thisllt{1}{2}{2}{2}};\draw[decorate,decoration={brace,mirror,amplitude=4pt}] (0.55*\colspa,\rowone-2.5) -- (2.45*\colspa,\rowone-2.5);
\node at (1.5*\colspa,\rowone-4) {\(t \spa \Acal^{[k]}_{13|\varnothing}\)};
\node at (3*\colspa,\rowone) {\thisllt{1}{1}{3}{1}};
\node at (4*\colspa,\rowone) {\thisllt{1}{1}{4}{1}};
\draw[decorate,decoration={brace,mirror,amplitude=4pt}] (2.55*\colspa,\rowone-2.5) -- (4.45*\colspa,\rowone-2.5);
\node at (3.5*\colspa,\rowone-4) {\(t \spa \Acal^{[k]}_{30|1}\)};
%
%
\node at (0*\colspa,\rowtwo) {\thisllt{1}{1}{2}{2}};
\node at (0*\colspa,\rowtwo-4) {\(t^2 \spa \Acal^{[k]}_{22|\varnothing}\)};
\node at (1*\colspa,\rowtwo) {\thisllt{1}{2}{3}{1}};
\node at (2*\colspa,\rowtwo) {\thisllt{1}{2}{4}{1}};
\draw[decorate,decoration={brace,mirror,amplitude=4pt}] (0.55*\colspa,\rowtwo-2.5) -- (2.45*\colspa,\rowtwo-2.5);
\node at (1.5*\colspa,\rowtwo-4) {\(t^2\spa \Acal^{[k]}_{21|1}\)};
\node at (3*\colspa,\rowtwo) {\thisllt{1}{2}{3}{2}};
\node at (4*\colspa,\rowtwo) {\thisllt{1}{2}{4}{2}};
\draw[decorate,decoration={brace,mirror,amplitude=4pt}] (2.55*\colspa,\rowtwo-2.5) -- (4.45*\colspa,\rowtwo-2.5);
\node at (3.5*\colspa,\rowtwo-4) {\(t^2\spa \Acal^{[k]}_{12|1}\)};
\node at (5*\colspa,\rowtwo) {\thisllt{1}{1}{3}{2}};
\node at (6*\colspa,\rowtwo) {\thisllt{1}{1}{3}{3}};
\node at (7*\colspa,\rowtwo) {\thisllt{1}{1}{4}{2}};
\node at (8*\colspa,\rowtwo) {\thisllt{1}{1}{4}{3}};
\node at (9*\colspa,\rowtwo) {\thisllt{1}{1}{4}{4}};
\draw[decorate,decoration={brace,mirror,amplitude=4pt}] (4.55*\colspa,\rowtwo-2.5) -- (9.45*\colspa,\rowtwo-2.5);
\node at (7*\colspa,\rowtwo-4) {\(t^2 \spa \Acal^{[k]}_{20|2}\)};
%
%
\node at (0*\colspa,\rowthree) {\thisllt{1}{1}{2}{3}};
\node at (1*\colspa,\rowthree) {\thisllt{1}{1}{2}{4}};
\draw[decorate,decoration={brace,mirror,amplitude=4pt}] (-.55*\colspa,\rowthree-2.5) -- (1.45*\colspa,\rowthree-2.5);
\node at (0.5*\colspa,\rowthree-4) {\(t^3\spa \Acal^{[k]}_{21|1}\)};
\node at (2*\colspa,\rowthree) {\thisllt{1}{2}{2}{3}};
\node at (3*\colspa,\rowthree) {\thisllt{1}{2}{2}{4}};
\draw[decorate,decoration={brace,mirror,amplitude=4pt}] (1.55*\colspa,\rowthree-2.5) -- (3.45*\colspa,\rowthree-2.5);
\node at (2.5*\colspa,\rowthree-4) {\(t^3\spa \Acal^{[k]}_{12|1}\)};
\node at (4*\colspa,\rowthree) {\thisllt{1}{1}{3}{4}};
\node at (4*\colspa,\rowthree-4) {\(t^3 \spa \Acal^{[k]}_{20|11}\)};
\node at (5*\colspa,\rowthree) {\thisllt{1}{2}{3}{3}};
\node at (6*\colspa,\rowthree) {\thisllt{1}{2}{4}{3}};
\node at (7*\colspa,\rowthree) {\thisllt{1}{2}{4}{4}};
\draw[decorate,decoration={brace,mirror,amplitude=4pt}] (4.55*\colspa,\rowthree-2.5) -- (7.45*\colspa,\rowthree-2.5);
\node at (6*\colspa,\rowthree-4) {\(t^3 \spa \Acal^{[k]}_{11|2}\)};
\node at (8*\colspa,\rowthree) {\thisllt{1}{2}{3}{4}};
\node at (8*\colspa,\rowthree-4) {\(t^4 \spa \Acal^{[k]}_{11|11}\)};
\end{tikzpicture}
\vspace{1mm}

To also illustrate
\eqref{ec mod nsshuffle2}, here are the
flagged word parking functions
$w \in \FWPF_\alpha(\theta)$ with \(\dinv(\theta,w) = 2\), which correspond to the words in the middle row above.

\newcommand{\thisfwpf}[4]{
\tikz[scale=.34,baseline=.6cm]{
\draw[help lines] (0,0) grid (4,4);
\draw[thick] (0,0) -- (0,2) -- (1,2) -- (1,3) -- (3,3) -- (3,4) -- (4,4);
\node at (.5,.5) {\tiny #1};
\node at (3.5, 3.5) {\tiny #2};
\node at (.5,1.5) {\tiny #3};
\node at (1.5,2.5) {\tiny #4};
}}

\noindent\begin{tikzpicture}[scale=.34,baseline=.6cm]
\useasboundingbox (-2,-3) rectangle (47,3);
\pgfmathsetmacro{\colspa}{5}

\node at (0*\colspa,0) {\thisfwpf{1}{1}{2}{2}};
\node at (1*\colspa,0) {\thisfwpf{1}{2}{3}{1}};
\node at (2*\colspa,0) {\thisfwpf{1}{2}{4}{1}};
\node at (3*\colspa,0) {\thisfwpf{1}{2}{3}{2}};
\node at (4*\colspa,0) {\thisfwpf{1}{2}{4}{2}};
\node at (5*\colspa,0) {\thisfwpf{1}{1}{3}{2}};
\node at (6*\colspa,0) {\thisfwpf{1}{1}{3}{3}};
\node at (7*\colspa,0) {\thisfwpf{1}{1}{4}{2}};
\node at (8*\colspa,0) {\thisfwpf{1}{1}{4}{3}};
\node at (9*\colspa,0) {\thisfwpf{1}{1}{4}{4}};
\end{tikzpicture}

We can then recover the corresponding instance of the (symmetric) compositional shuffle theorem
by applying the Weyl symmetrization operator  $\Weyl_1\Weyl_2$ to \eqref{e example AK expansion}.
Using \eqref{e sigmabold atom}, this gives
\begin{align}
\nabla C_{(3,1)}
= q^3 ( t \, s_{211} + t^2 s_{1111}) +  q^2 \big(t \, s_{31}  + t^2 s_{211} + t^2 s_{22}+ t^3 s_{211}+t^4 s_{1111}  \big).
\end{align}
The coefficient of  $q^3$ (resp.  $q^2$) here is the ordinary LLT polynomial  $\Gcal_{\etabold}(\xx;t)$ corresponding to
the tuple of single-column skew shapes  $\etabold = \big((1), (111)\big)$ (resp. $\etabold = \big((2)/(1), (1), (22)/(11)\big)$)
as in \eqref{e sigma LLT}.
\end{example}

\vfill
\pagebreak

\begin{figure}
  \centerfloat
\begin{tikzpicture}[scale=1]
\def\drawLineBelowRow#1#2{
  \draw (#2.west|-#2-#1-1.south) -- (#2.east|-#2-#1-1.south);
}
\def\drawBoundingBox#1#2{
  \draw (#2.west|-#2-#1-1.south) -- (#2.east|-#2-#1-1.south) --
  (#2.east|-#2-1-1.north) -- (#2.west|-#2-1-1.north) -- cycle;
}

   \matrix[matrix of math nodes, nodes={anchor=west}] (nsunmod) at (0,10) {
     \node[align=center, minimum width=6.5cm] (nsunmod-1-1) {Unmodified Nonsymmetric}; \\
     \node[align=center, minimum width=6.5cm] (nsunmod-2-1) {$\P(r)$};\\
     \text{(a) }\Growpm_r(\pi, \Sigma)(\xx;t) = \chi_r^\PP(\pi,\Sigma)(\xx;t) \\
     \text{(b) } \Gcolpm_r(\pi, \Sigma)(\xx;t^{-1}) = \tilde{\chi}_r^\PP(\pi,\Sigma)(\xx;t)  \\
     \text{(c) } \stE_{\eta|\lambda}(\xx;q,t) \\
     \text{(d) } (-t)^{|\alpha|-r} \xx^\alpha\\
     \text{(e) } \fh^\pm_\eta(\xx;t)\\
     \phantom{\text{(f)}} \\
   };
   \drawBoundingBox{7}{nsunmod}
   \drawLineBelowRow{2}{nsunmod}

  \matrix [matrix of math nodes, nodes={anchor=west}] (symunmod) at (0,0) {
    \node[align=center, minimum width=4.5cm] (symunmod-1-1) {Unmodified Symmetric}; \\
    \node[align=center, minimum width=4.5cm] (symunmod-2-1) {\(\Lambda(\xx) \cong
      \Pcal(0)\)}; \\
    \text{(a) } \Growpm_0(\mathsf{N}^r \pi, \Sigma)(\xx;t) \\
    \text{(b) } \Gcolpm_{0} (\mathsf{N}^r \pi, \Sigma)(\xx;t^{-1})\\
    \text{(c) } q^{\mathsf{n}(\mu^*)} J_\mu(\xx;q^{-1},t)\\
    \text{(d) } C_\alpha[(1-t)\xx;t^{-1}]\\
    \text{(e) } h_{\eta_+}[(1-t)\xx]\\
    \phantom{\text{(f)}} \\
   };
   \drawBoundingBox{7}{symunmod}
  \drawLineBelowRow{2}{symunmod}

  \matrix [matrix of math nodes, nodes={anchor=west}] (nsmod) at (9.5,10) {
    \node[align=center, minimum width=4.5cm] (nsmod-1-1) {Modified Nonsymmetric};\\
    \node[align=center, minimum width=4.5cm] (nsmod-2-1) {$\P(r)$};\\
    \text{(a) }\Grow_r(\pi,\Sigma)(\xx;t) \\
    \text{(b) }\Gcol_r(\pi,\Sigma)(\xx;t^{-1}) \\
    \text{(c) }\tE_{\eta|\lambda}(\xx;q,t) \\
    \text{(d) }\nsC_\alpha(\xx;t) \\
    \text{(e) }\fh_\eta(\xx) \\
    \text{(f) }\vartheta \modnab^{-1} \nsC_\alpha(\xx;t) \\
  };
   \drawBoundingBox{8}{nsmod}
  \drawLineBelowRow{2}{nsmod}

  \matrix [matrix of math nodes, nodes={anchor=west}] (symmod) at (9.5,0) {
    \node[align=center, minimum width=6.5cm] (symmod-1-1)
    {Modified Symmetric}; \\
    \node[align=center, minimum width=6.5cm] (symmod-2-1)
    {\(\Lambda(\xx) \cong \Pcal(0)\)}; \\
    \text{(a) }\Grow_0(\mathsf{N}^r \pi, \Sigma)(\xx;t)
    = \omega
    \Gcal_{\etabold}(\xx;t) \\
    \text{(b) }\Gcol_0(\mathsf{N}^r \pi, \Sigma)(\xx;t^{-1}) =
    \Gcal_{\etabold}(\xx;t^{-1}) \\
    \text{(c) }\omega \Htild_\mu(\xx;q,t) \\
    \text{(d) }C_\alpha(\xx;t^{-1})\\
    \text{(e) }h_{\eta_+}(\xx) \\
    \text{(f) }\nabla C_\alpha(\xx;t)\\
  };
  \drawBoundingBox{8}{symmod}
  \drawLineBelowRow{2}{symmod}

  \draw[->, shorten >=0.1cm, shorten <=-.75cm] (nsunmod) -- (symunmod) node[midway, left] {\(\hsym_1 \cdots
    \hsym_r = (d_-^\PP)^r\)};
  \draw[->, shorten >=0.1cm, shorten <=0.1cm] (nsunmod) -- (nsmod) node[midway, above] {\(\Pisf_r\)};
  \draw[->, shorten >=0.1cm, shorten <=0.1cm] (nsmod) -- (symmod) node[midway, right] {\(\pibold_1
    \cdots \pibold_r\)};
  \draw[->, shorten >=0.1cm, shorten <=0.1cm] (symunmod) -- (symmod)
  node[midway, above] {\(\Pisf_0 = \)}
  node[midway, below] {\(f(\xx) \mapsto f[\xx/(1-t)]\)};

  \matrix [matrix of math nodes, draw, nodes={anchor=west}] (Vr) at (4.75,5) {
    \node[align=center, minimum width=3.5cm] (Vr-1-1) {\(V_r\)}; \\
    \text{(a) }(-1)^d \chi_r(\pi,\Sigma) \\
  };
  \drawLineBelowRow{1}{Vr}

  \draw[->, shorten >=0.2cm, shorten <=-1cm] (nsunmod) -- (Vr) node[midway, right] {\(\mathsf{P}_r\)};
  \draw[->, shorten >=0.1cm, shorten <=0.1cm] (Vr) -- (symmod) node[midway, right] {\((-1)^d \omega (d_-)^r\)};

  \draw[->] (nsunmod.north west)+(-0.1,-1) arc (300:60:3mm)
  node[midway, left] {\(\umnab\)};
  \draw[->] (nsmod.north east)+(0.1,-1) arc (-120:120:3mm)
  node[midway, right] {\(\modnab\)};
  \draw[->] (Vr.north east)+(0.1,-.7) arc (-120:120:3mm)
  node[midway, right] {\((-1)^d \Mnab\)};
  \draw[->] (symmod.north east)+(0.1,-1) arc (-120:120:3mm)
  node[midway, right] {\(\omega \nabla \omega\)};
\end{tikzpicture}
\caption{\label{fig:summary diagram}This diagram summarizes most of the notation in the paper.
Restricting each table to a fixed row gives a commutative
  diagram between families of almost symmetric polynomials, e.g., restricting to (c)
  gives~\eqref{e mac comm diagram}.
  Automorphisms given by curved arrows are related via intertwining relations implied by the diagram, e.g.,
  \(\modnab\) and \(\omega \nabla \omega\) are automorphisms of
  \(\P(r)\) and \(\P(0)\), respectively, and
  $\Weyl_1 \cdots \Weyl_r \modnab = \omega \nabla \omega \Weyl_1
  \cdots \Weyl_r$ (see~\eqref{e modnab and sym}). Another example is
  given by \(\PP_r \umnab = (-1)^d \Mnab \PP_r\) (see~\eqref{et two nablas}).}
\end{figure}

\clearpage
\bibliographystyle{plain}
\bibliography{references}
\addresseshere

\end{document}